\documentclass[11pt,leqno]{article}
\usepackage{amsmath, amscd, amsthm, amssymb, graphics, xypic, mathrsfs, setspace, fancyhdr, times}




\DeclareFontFamily{OT1}{pzc}{}
\DeclareFontShape{OT1}{pzc}{m}{it}%
             {<-> s * [1.195] pzcmi7t}{}
\DeclareMathAlphabet{\mathscr}{OT1}{pzc}%
                                 {m}{it}
\usepackage[pagebackref=true]{hyperref}
\hypersetup{backref}

\setlength{\textwidth}{6.0in}             
\setlength{\textheight}{8.25in}
\setlength{\topmargin}{-0.125in}
\setlength{\oddsidemargin}{0.25in}
\setlength{\evensidemargin}{0.25in}

\newcommand{\tensor}{\otimes}
\newcommand{\colim}{\operatorname{colim}}
\newcommand{\Spec}{\operatorname{Spec}}
\newcommand{\isomto}{{\stackrel{\sim}{\;\longrightarrow\;}}}
\newcommand{\isomt}{{\stackrel{{\scriptscriptstyle{\sim}}}{\;\rightarrow\;}}}

\renewcommand{\O}{{\mathcal O}}
\renewcommand{\L}{{\mathcal L}}
\renewcommand{\hom}{\operatorname{Hom}}

\newcommand{\Z}{{\mathbb Z}}
\newcommand{\aone}{{\mathbb A}^1}
\newcommand{\pone}{{\mathbb P}^1}

\renewcommand{\1}{{\mathbf{1}}}

\newcommand{\gm}{{{\mathbb G}_{m}}}
\newcommand{\So}{{\mathbb S}^0}

\newcommand{\ho}[1]{\mathscr{H}({#1})}
\newcommand{\hop}[1]{\mathscr{H}_{\bullet}({#1})}
\newcommand{\DM}{{\mathbf{DM}}}
\newcommand{\M}{{\mathbf{M}}}
\newcommand{\Sp}{{\mathscr{Sp}}}
\newcommand{\bpi}{\boldsymbol{\pi}}
\newcommand{\Daone}{{\mathbf D}_{\aone}}
\newcommand{\eff}{\operatorname{eff}}
\newcommand{\Nis}{\operatorname{Nis}}
\newcommand{\SH}{{\mathbf{SH}}}
\newcommand{\CH}{{\widetilde{CH}}}
\newcommand{\Ch}{{\mathscr{Ch}}}
\newcommand{\Fun}{\mathscr{Fun}}
\renewcommand{\deg}{\operatorname{deg}}
\newcommand{\tdeg}{\widetilde{\deg}}

\newcommand{\Sm}{\mathscr{Sm}}
\newcommand{\Cor}{\mathscr{Cor}}
\newcommand{\Spc}{\mathscr{Spc}}

\newcommand{\Ab}{\mathscr{Ab}}
\newcommand{\Set}{\mathscr{Set}}
\newcommand{\K}{{{\mathbf K}}}
\newcommand{\I}{{\mathbf I}}
\renewcommand{\H}{{{\mathbf H}}}

\newcommand{\hsnis}{\mathscr{H}_s^{\Nis}(k)}
\newcommand{\hspnis}{\mathscr{H}_{s,\bullet}^{\Nis}(k)}

\newcommand{\F}{{\mathcal F}}

\newcommand{\Zn}{\Z \langle n \rangle}

\newcounter{intro}
\setcounter{intro}{1}

\theoremstyle{plain}
\newtheorem{thm}{Theorem}[subsection]

\newtheorem{lem}[thm]{Lemma}
\newtheorem{cor}[thm]{Corollary}
\newtheorem{prop}[thm]{Proposition}

\newtheorem*{thm*}{Theorem}
\newtheorem*{problem*}{Problem}
\newtheorem*{question*}{Question}

\newtheorem{thmintro}{Theorem}

\theoremstyle{definition}
\newtheorem{defn}[thm]{Definition}

\newtheorem{notation}[thm]{Notation}

\theoremstyle{remark}
\newtheorem{rem}[thm]{Remark}

\newtheorem{ex}[thm]{Example}
\newtheorem{entry}[thm]{}

\numberwithin{equation}{section}

\begin{document}
\pagestyle{fancy}
\renewcommand{\sectionmark}[1]{\markright{\thesection\ #1}}
\fancyhead{}
\fancyhead[LO,R]{\bfseries\footnotesize\thepage}
\fancyhead[LE]{\bfseries\footnotesize\rightmark}
\fancyhead[RO]{\bfseries\footnotesize\rightmark}
\chead[]{}
\cfoot[]{}
\setlength{\headheight}{1cm}

\author{\begin{small}Aravind Asok\thanks{Aravind Asok was partially supported by National Science Foundation Awards DMS-0900813 and DMS-0966589.}\end{small} \\ \begin{footnotesize}Department of Mathematics\end{footnotesize} \\ \begin{footnotesize}University of Southern California\end{footnotesize} \\ \begin{footnotesize}Los Angeles, CA 90089-2532 \end{footnotesize} \\ \begin{footnotesize}\url{asok@usc.edu}\end{footnotesize} \and \begin{small}Christian Haesemeyer\thanks{Christian Haesemeyer was partially supported by National Science Foundation Award DMS-0966821.}\end{small} \\ \begin{footnotesize}Department of Mathematics\end{footnotesize} \\ \begin{footnotesize}University of California, Los Angeles\end{footnotesize} \\ \begin{footnotesize}Los Angeles, CA 90095-1555 \end{footnotesize} \\ \begin{footnotesize}\url{chh@math.ucla.edu}\end{footnotesize}}

\title{{\bf The $0$-th stable $\aone$-homotopy sheaf \\ and quadratic zero cycles}}
\date{}
\maketitle

\begin{abstract}
We study the $0$-th stable $\aone$-homotopy sheaf of a smooth proper variety over a field $k$ assumed to be infinite, perfect and to have characteristic unequal to $2$.  We provide an explicit description of this sheaf in terms of the theory of (twisted) Chow-Witt groups as defined by Barge-Morel and developed by Fasel.  We study the notion of rational point up to stable $\aone$-homotopy, defined in terms of the stable $\aone$-homotopy sheaf of groups mentioned above.  We show that, for a smooth proper $k$-variety $X$, existence of a rational point up to stable $\aone$-homotopy is equivalent to existence of a $0$-cycle of degree $1$.
\end{abstract}

\begin{footnotesize}
\tableofcontents
\end{footnotesize}

\section{Introduction}
Assume $k$ is a field and $X$ is a smooth proper variety over $k$.  One says that $X$ has a $0$-cycle of degree $1$ if there exist finitely many finite extensions $L_i$ over $k$ of coprime degrees such that $X(L_i)$ is non-empty for each $i$.  Existence of a $0$-cycle of degree $1$ is inherently a motivic homological condition.  Indeed, by definition, $X$ has a $0$-cycle of degree $1$ if and only if the degree map $\operatorname{deg}: CH_0(X) \to \Z$ is split surjective, and a $0$-cycle of degree $1$ is a choice of splitting, or equivalently a lift of $1 \in \Z$.  Friedlander-Voevodsky duality implies that the group $CH_0(X)$ is a motivic homology group (we make this much more precise in \S \ref{ss:suslinhomology} using Voevodsky's derived category of motives \cite{MVW}).

If $X(k)$ is non-empty, then the degree map $\operatorname{deg}: CH_0(X) \to \Z$ is split surjective: sending $1 \in \Z$ to $x$ determines a splitting.  However, the converse is not true in general (see, e.g., \cite[\S 5]{CTC} for a counterexample).  Thus, if $X$ has no $0$-cycle of degree $1$, $X$ cannot have a $k$-rational point; loosely speaking, we will say that existence of a $0$-cycle of degree $1$ is a motivic homological obstruction to existence of a $k$-rational point.

Just as integral singular homology is the target of the Hurewicz homomorphism from (stable) homotopy groups, motivic homology is the target of a Hurewicz-style map from motivic stable homotopy groups \cite{VICM, MIntro}.  Furthermore, the degree homomorphism $CH_0(X) \to \Z$ is the pushforward map in motivic homology for the map $X \to \Spec k$.  Being covariantly functorial, motivic stable homotopy groups are also furnished with an analog of the degree map.

\begin{question*}
What kind of obstruction to existence of a rational point is provided by motivic stable homotopy groups of a smooth proper variety?
\end{question*}

To answer the question, we study the notion of ``rational point up to stable $\aone$-homotopy."  To be more precise, let $\pi_0^s(\Sigma^{\infty}_{\pone}X_+)$ be the zeroth motivic stable homotopy group of $X$ with a disjoint base-point attached (see \S \ref{ss:formalism}).  The structure morphism $X \to \Spec k$ induces a pushforward map $\pi_0^s(\Sigma^{\infty}_{\pone}X_+) \to \pi_0^s(\Sigma^{\infty}_{\pone}\Spec k_+)$.  A rational point up to stable $\aone$-homotopy is a choice of splitting of this map.  In Section \ref{ss:rationalpointsuptostablehomotopy} we use a slightly different though equivalent definition; there is even a more-or-less elementary definition along the lines of the first definition of $0$-cycle of degree $1$ we gave above.  To our knowledge, the notion of rational point up to stable $\aone$-homotopy was first explicitly considered by R\"ondigs; see \cite[Theorem 5.1]{Roendigs} and the discussion immediately preceding his theorem statement.

\begin{thmintro}[See Theorem \ref{thm:stablerationalpointnonformallyrealcase}]
\label{thmintro:rationalpointsuptostablehomotopy}
If $X$ is a smooth proper variety over a field $k$ assumed to be infinite, perfect and to have characteristic unequal to $2$, then $X$ has a rational point up to stable $\aone$-homotopy if and only if $X$ has a $0$-cycle of degree $1$.
\end{thmintro}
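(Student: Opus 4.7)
The plan is to reduce the theorem to an algebraic question about twisted Chow--Witt groups by invoking the main computational theorem of the paper, and then to compare the quadratic and classical degree maps.

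First, combine the paper's explicit description of $\pi_0^s(\Sigma^{\infty}_{\pone}X_+)$ in terms of the twisted Chow--Witt sheaf of Barge--Morel and Fasel with Morel's identification $\pi_0^s(\Sigma^{\infty}_{\pone}\Spec k_+)(\Spec k) \cong GW(k)$. Under these identifications, the pushforward map induced by the structure morphism $X \to \Spec k$ is the quadratic degree $\tdeg : \CH_0(X, \omega_{X/k}) \to GW(k)$. Since $\pi_0^s(\Sigma^{\infty}_{\pone}X_+)$ is naturally a module over $\pi_0^s(\Sigma^{\infty}_{\pone}\Spec k_+)$, a splitting of the pushforward is equivalent to the existence of an element $\alpha \in \CH_0(X,\omega_{X/k})$ with $\tdeg(\alpha) = \langle 1 \rangle \in GW(k)$---a ``quadratic $0$-cycle of degree $1$'', in the terminology suggested by the paper's title.

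The forward implication is then essentially formal: the rank homomorphism $GW(k) \to \Z$ intertwines $\tdeg$ with the classical degree $\deg$ via the natural map $\CH_0(X,\omega_{X/k}) \to CH_0(X)$ that forgets the quadratic refinement, so passing to ranks sends $\alpha$ to a $0$-cycle of degree $1$.

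For the converse, assume $X$ has a $0$-cycle of degree $1$, and pick finite extensions $L_1,\ldots,L_r$ of $k$ with $\gcd([L_i:k]) = 1$ and $X(L_i) \neq \emptyset$. Each $L_i$-point $x_i$ gives a class $\tilde\alpha_i \in \CH_0(X,\omega_{X/k})$ (obtained by pushing forward $\langle 1\rangle_{L_i}$, which trivially lifts because of the rational point over $L_i$) whose quadratic degree is the Scharlau transfer $\mathrm{Tr}_{L_i/k}\langle 1\rangle_{L_i} \in GW(k)$. Choosing integers $n_i$ with $\sum n_i [L_i:k] = 1$ via Bezout, the element $q := \tdeg\bigl(\sum n_i \tilde\alpha_i\bigr) = \sum n_i \mathrm{Tr}_{L_i/k}\langle 1\rangle_{L_i}$ has rank $1$, so the discrepancy $\langle 1\rangle - q$ lies in the fundamental ideal $I(k)$. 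It suffices to show this discrepancy lies in the image of $\tdeg$.

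This final step is where I expect the main obstacle: the fundamental ideal $I(k)$ is not in general well-controlled, and its elements need not obviously lift to $\CH_0(X,\omega_{X/k})$. Under the non-formally-real hypothesis on $k$ (to which this case is likely restricted, based on the label of the referenced theorem), $W(k)$ is torsion and $I(k)$ is generated by Pfister-style forms that can be realised as combinations of Scharlau transfers from the $L_i$. In that setting, exploiting Frobenius reciprocity $a \cdot \mathrm{Tr}_{L_i/k}(\beta) = \mathrm{Tr}_{L_i/k}(a|_{L_i} \cdot \beta)$ together with base change to each $L_i$, where the splitting of $\tdeg$ is automatic from the $L_i$-rational point, should allow one to realise $\langle 1 \rangle - q$ as $\tdeg$ of a correcting cycle on $X$. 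Carrying this out requires the full functoriality of $\CH_0(X,-)$ with respect to pullback, pushforward, and the $GW$-action developed by Fasel, and matching it precisely with the sheaf-level pushforward coming from the stable $\aone$-homotopy side.
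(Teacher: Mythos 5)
Your forward implication is exactly the paper's argument, and your overall strategy---reduce via the main theorem to a statement about the quadratic degree $\tdeg: \CH_0(X,\omega_X) \to GW(k)$, then compare with the classical degree via the rank map---is the same reduction the paper makes. The paper also establishes the surjectivity of the Hurewicz map $\bpi_0^s(\Sigma^\infty_{\pone}X_+) \to \H_0^S(X)$ (Proposition \ref{prop:epimorphicity}), which it uses to lift a $0$-cycle of degree $1$ abstractly rather than constructing lifts explicitly from $L_i$-points as you do; your compatibility needs (matching Fasel's $\tdeg$ with the sheaf-theoretic pushforward, realizing pushforwards of $\langle 1 \rangle_{L_i}$ as elements of $\CH_0(X)$) are precisely the content of Theorem \ref{thm:compatibilities}B, so your route is available once that is in hand.

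The genuine gap is in the converse, and you have correctly located it. Your guess that the theorem is restricted to non--formally-real $k$ is wrong: the theorem, as stated, has no such hypothesis, and the label on the referenced Theorem \ref{thm:stablerationalpointnonformallyrealcase} is a red herring. Your Frobenius-reciprocity argument amounts to observing that the image of $\tdeg$ is a $GW(k)$-ideal containing the rank-one class $q$; that ideal is all of $GW(k)$ precisely when $q$ is a unit, which holds when $W(k)$ is local, i.e.\ when $k$ is not formally real. This is the paper's Case 1, using \cite[Proposition 31.4(2)]{EKM}. But when $k$ is formally real, $q$ need not be invertible (e.g.\ over $\real$, a rank-one class of signature $3$), and the ideal generated by $q$ need not contain $\langle 1 \rangle - q$. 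The paper's Case 2 handles this via a different argument: since the residue fields $\kappa_x$ in the $0$-cycle have coprime degrees, some has odd degree over $k$; for formally real $k$ every finite extension is simple, and the Scharlau transfer $W(L) \to W(k)$ for a simple odd-degree extension is a \emph{split surjection} (\cite[Lemme 6.4.4]{Fasel1}), which together with \cite[20.7]{EKM} forces the combined pushforward $\bigoplus_{x} GW(\kappa_x, \Lambda^n \mathfrak{m}_x/\mathfrak{m}_x^2) \to GW(k)$ along the Gersten resolution to be surjective. Your sketch ``$I(k)$ is generated by Pfister forms realizable as transfers'' does not obviously produce this surjectivity in the formally real case, and without it the correcting class you seek may not exist in the image of $\tdeg$.
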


The $0$-th stable $\aone$-homotopy group of $X$ as discussed above is the set of sections over $k$ of a (Nisnevich) sheaf of abelian groups.  Theorem \ref{thmintro:rationalpointsuptostablehomotopy} is a consequence of a concrete description of this sheaf, which will be written $\bpi_0^s(\Sigma^{\infty}_{\pone}X_+)$ in the sequel, and detailed study of the functoriality.

To motivate our description, we go back to the Hurewicz-style homomorphism from the $0$-th stable $\aone$-homotopy group of a smooth variety $X$ to the zeroth motivic homology group mentioned above.  The $0$-th Suslin homology sheaf of a smooth proper $k$-scheme $X$, denoted $\H_0^S(X)$, is a sheafification of the Chow group of $0$-cycles on $X$ (see \S \ref{ss:abelianization} and Lemma \ref{lem:suslinchow}).  The Hurewicz functor gives rise to a sheafified Hurewicz homomorphism from the $0$-th stable $\aone$-homotopy sheaf of a smooth proper variety $X$ to its zeroth Suslin homology sheaf.  On the other hand, a fundamental computation of Morel describes the $0$-th stable $\aone$-homotopy sheaf of a point (more precisely, the motivic sphere spectrum) in terms of so-called Milnor-Witt K-theory sheaves, which combine Milnor K-theory and powers of the fundamental ideal in the Witt ring \cite{MICM}.

{\em A priori}, it seems reasonable to search for a description of $\bpi_0^s(\Sigma^{\infty}_{\pone}X_+)$ involving the features of $0$-cycles together with additional ``quadratic" data.  Our description of the $0$-th stable $\aone$-homotopy sheaf of a smooth proper variety is exactly in these terms and uses what are called (twisted) Chow-Witt groups.  As the name suggests, Chow-Witt groups combine aspects of the theory of algebraic cycles (Chow groups) with aspects of the theory of quadratic forms (Witt groups), and the twist refers to a choice of line bundle on $X$.  The main new complication is that the resulting theory is unoriented in the sense of cohomology theories, though confusingly Chow-Witt groups have also been called oriented Chow groups.

For a smooth proper $k$-scheme $X$, one defines the {\em degree $0$ Chow-Witt group} $\CH_0(X)$.  Very roughly speaking, this group is a quotient of the free abelian group on pairs $(x,q)$ where $x$ is a closed point of $X$ and $q$ is a quadratic form over the residue field $\kappa_x$ (for the precise definition, see Definition \ref{defn:quadraticzerocycles}); this construction encapsulates the sense in which the term ``quadratic" is used in the title of the paper.  If $X = \Spec k$, then $\CH_0(\Spec k)$ coincides with the Grothendieck-Witt group of symmetric bilinear forms over $k$ (in agreement with Morel's computation).  The resulting theory of Chow-Witt groups admits forgetful maps to Chow groups and has reasonable functoriality properties (e.g., pushforwards for proper maps in the appropriate situations).  Our main computational result is summarized in the following theorem.

\begin{thmintro}[See Theorem \ref{thm:main}]
\label{thmintro:main}
Suppose $k$ is an infinite perfect field having characteristic unequal to $2$, and $X$ is a smooth proper $k$-variety. For any separable, finitely generated extension $L/k$, there are isomorphisms
\[
\bpi_0^s(\Sigma^{\infty}_{\pone}X_+)(L) \isomto \CH_0(X_L)
\]
functorial with respect to field extensions.
\end{thmintro}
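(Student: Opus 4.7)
My plan is to compute the left-hand side via Atiyah-type duality in $\SH(k)$, reducing the problem to a generalized cohomology computation on $X_L$ that Morel's description of the bigraded homotopy sheaves of the motivic sphere then identifies with the Chow--Witt group. Set $d = \dim X$. Because $X$ is smooth and proper over $k$, the suspension spectrum $\Sigma^{\infty}_{\pone} X_+$ is strongly dualizable in $\SH(k)$, with Spanier--Whitehead dual the Thom spectrum $\mathrm{Th}(-T_X)$ of the virtual tangent bundle. Since base change to $L$ preserves dualizability and $\Sigma^{\infty}_{\pone}(\Spec L)_+ = \1$ is self-dual, one obtains
\[
\bpi_0^s(\Sigma^{\infty}_{\pone} X_+)(L) \;\cong\; [\mathrm{Th}(-T_{X_L}),\, \1]_{\SH(L)}.
\]

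\textbf{Step 2: Morel and a twisted Thom isomorphism.} Morel's fundamental computation identifies the relevant bigraded homotopy sheaves of the motivic sphere $\1$ with Milnor--Witt K-theory sheaves $\underline{K}^{MW}_n$. The Thom spectrum $\mathrm{Th}(-T_{X_L})$ has its bottom cell in weight $-d$ and is twisted by $\det T_{X_L} \cong \omega_{X_L/L}^{-1}$; because Milnor--Witt K-theory is genuinely unoriented, this line-bundle twist is essential and must be tracked via Fasel's theory of cohomology with twisted coefficients. A twisted Thom isomorphism combined with convergence of an appropriate Postnikov (or slice) tower for $\1$ then yields
\[
[\mathrm{Th}(-T_{X_L}),\, \1]_{\SH(L)} \;\cong\; H^d_{\Nis}\bigl(X_L,\, \underline{K}^{MW}_d(\omega_{X_L/L})\bigr).
\]

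\textbf{Step 3: Identification with Chow--Witt.} Following Barge--Morel and Fasel, the twisted Chow--Witt group of interest is essentially \emph{defined} as this Nisnevich cohomology group:
\[
\CH_0(X_L) \;=\; \widetilde{CH}^d(X_L,\, \omega_{X_L/L}) \;\cong\; H^d_{\Nis}\bigl(X_L,\, \underline{K}^{MW}_d(\omega_{X_L/L})\bigr),
\]
matching the output of Step 2. Functoriality with respect to separable, finitely generated extensions $L/k$ follows from naturality of dualization, Morel's identification of the homotopy sheaves of $\1$, and Fasel's construction, all under smooth base change; the separability hypothesis is precisely what ensures $X_L$ remains smooth over $L$ so that each ingredient applies.

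\textbf{Main obstacle.} The principal difficulty is Step 2: one must collapse the bigraded homotopy information provided by Morel's theorem for $\1$ into a single Nisnevich cohomology group, requiring convergence of a Postnikov-style spectral sequence on the Thom spectrum together with careful bookkeeping of the $\omega_{X_L/L}$-twist throughout. Constructing the twisted Thom isomorphism in the unoriented Milnor--Witt setting is subtle precisely because the twist depends on $\det T_{X_L}$ and not merely its rank. A secondary but important point is checking that the composite isomorphism agrees with the geometrically natural Hurewicz-style map from stable homotopy to Chow--Witt groups (sending a closed point with a Grothendieck--Witt form in its residue field to the corresponding quadratic zero cycle); this compatibility is what upgrades the abstract isomorphism to a functorial one, and is what makes the result usable for Theorem~\ref{thmintro:rationalpointsuptostablehomotopy}.
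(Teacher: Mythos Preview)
Your outline is correct and follows the same overall strategy as the paper: Atiyah duality, then Morel's identification of the sphere's homotopy sheaves with Milnor--Witt K-theory, then a twisted Thom isomorphism, then the definition of $\CH_0$. The paper implements the details differently in three respects worth noting. First, rather than staying in $\SH(k)$, the paper proves a $\pone$-stable Hurewicz theorem (Theorem~\ref{thm:ponestablehurewiczisomorphism}) identifying $\bpi_0^s(\Sigma^{\infty}_{\pone}X_+)$ with the zeroth stable $\aone$-homology sheaf $\H_0^{s\aone}(X)$ and carries out duality in the stable $\aone$-derived category $\Daone(k)$; this buys an easy comparison with $\DM$ and Suslin homology but is not logically essential. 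Second, for your Step~2 the paper does not invoke a Postnikov or slice tower for $\1$: it replaces the virtual $Th(-T_X)$ by the Thom space $Th(\nu)$ of an honest stable normal bundle of rank $n$, introduces ``enhanced motivic complexes'' $\Zn$, and uses a cohomological-dimension bound on $Th(\nu)$ to collapse the hypercohomology spectral sequence to $H^{n+d}_{\Nis}(Th(\nu),\K^{MW}_{n+d})$, together with a stable-representability result (Corollary~\ref{cor:stablerepresentability}) showing the unstable and stable groups agree in this degree---more elementary than a general convergence argument for a tower on $\1$. Third, the paper's Atiyah duality (via Voevodsky's construction) is stated only for smooth \emph{projective} varieties; the passage from proper to projective is handled separately by proving $\H_0^{s\aone}$ is a birational invariant of smooth proper varieties and invoking Chow's lemma, a reduction your sketch does not mention.
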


Theorem \ref{thmintro:main} is proven in three essentially independent steps, which correspond to the three sections of the paper subsequent to this introduction.  Using Morel's stable $\aone$-connectivity theorem and duality statements, much of the work can be viewed as a reduction to Morel's computations of some stable $\aone$-homotopy sheaves of the sphere spectrum.

First, we reduce to a corresponding homological statement: we show that in-so-far as our computation is concerned, we may replace the stable $\aone$-homotopy category by a corresponding stable $\aone$-derived category.  The latter is a variant of Voevodsky's triangulated category of motivic complexes (with the Tate motive inverted).  The stable $\aone$-derived category was originally conceived by Morel (see \cite[\S 5.2]{Morelpi0}), and a detailed construction appears in the work of Cisinski-D\'eglise \cite{CisinskiDeglise1}.  We refer to the passage from the stable $\aone$-homotopy category to the stable $\aone$-derived category as abelianization since it can be obtained by taking the derived functors of the free abelian group functor.  The main result of Section \ref{s:preliminaries} is Theorem \ref{thm:ponestablehurewiczisomorphism}, which makes precise the statements concerning Hurewicz homomorphisms mentioned above. With the exception of Sections \S \ref{s:preliminaries} and the end of \S \ref{s:twistedthomisomorphisms}, the reader so inclined can entirely avoid the stable $\aone$-homotopy category.

Second, via Spanier-Whitehead duality or Poincar\'e duality, homological computations can be turned into cohomological computations.  For smooth projective varieties, there is an analog of Atiyah's classical explicit description of the Spanier-Whitehead dual in terms of Thom spaces of the negative tangent bundle; the precise results we need are described in \S \ref{ss:atiyahduality}.  Theorem \ref{thm:aonehomologyintermsofthomspaces} gives an explicit description of the sections of the $0$-th stable $\aone$-homology sheaf of a smooth projective scheme in terms of the Thom space of a stable normal bundle.

Third, the cohomology of the Thom spaces in question fits into a general theory of twisted Thom isomorphisms that we develop here---in the unoriented setting the Thom isomorphisms we construct involve cohomology with coefficients twisted by a local system just like Poincar\'e duality on unoriented manifolds.  However, rather than defining the notion of a local system in $\aone$-homotopy theory, which would require some additional effort, we take a shortcut that involves delving into the formalism of Chow-Witt groups twisted by a line bundle.  Without the twist (or, rather, with the twist by the canonical line bundle) this theory was defined by Barge-Morel \cite{BargeMorel}.  A more general theory that keeps track of the line bundle twist was developed by Fasel \cite{Fasel1,Fasel2}, and involves the Balmer-Witt groups of a triangulated category with duality.  Beginning with a review of the analog of our computation in classical algebraic topology, the theory and results just mentioned are developed in Section \ref{s:twistedthomisomorphisms}.

The functoriality statements regarding the zeroth stable $\aone$-homotopy sheaf are summarized in the following result.

\begin{thmintro}[See Theorem \ref{thm:compatibilities}]
\label{thmintro:compatibilities}
The isomorphisms of \textup{Theorem \ref{thmintro:main}} satisfy the following compatibilities.
\begin{itemize}
\item[A)] The ``Hurewicz" homomorphism $\bpi_0^s(\Sigma^{\infty}_{\pone}X_+) \to \H_0^S(X)$ induces the forgetful map $\CH_0(X_L) \to CH_0(X_L)$ upon evaluation on sections over a separable finitely generated extension $L/k$. This homomorphism is surjective for any such field extension $L/k$.
\item[B)] The pushforward map $\bpi_0^s(\Sigma^{\infty}_{\pone}X_+) \to \bpi_0^s(\Sigma^{\infty}_{\pone}\Spec k_+)$ coincides with the pushforward map $\tdeg: \CH_0(X_L) \to \CH_0(\Spec L)$ via Morel's identification of $\bpi_0^s(\Sigma^{\infty}_{\pone}\Spec k_+)(L)$ with the $GW(L)$.
\item[C)] The two identifications just mentioned are compatible, i.e., the diagram
\[
\xymatrix{
\CH_0(X_L) \ar[r]\ar[d]^-{\tdeg} & CH_0(X_L) \ar[d]^-{\deg} \\
GW(L) \ar[r]^-{\operatorname{rk}} & \Z
}
\]
commutes.
\end{itemize}
\end{thmintro}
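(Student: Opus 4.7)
The plan is to exploit the fact that the isomorphism $\bpi_0^s(\Sigma^{\infty}_{\pone}X_+)(L) \isomto \CH_0(X_L)$ of Theorem \ref{thmintro:main} is constructed as a composite of three canonical isomorphisms, one for each of the sections of the paper outlined in the introduction: (i) the stable $\aone$-Hurewicz isomorphism reducing the problem from the stable $\aone$-homotopy category to the stable $\aone$-derived category (Theorem \ref{thm:ponestablehurewiczisomorphism}); (ii) an Atiyah-type duality identifying the $0$-th stable $\aone$-homology sheaf with the cohomology of the Thom space of a stable normal bundle (Theorem \ref{thm:aonehomologyintermsofthomspaces}); and (iii) a twisted Thom isomorphism rewriting this Thom-space cohomology as $\CH_0(X_L)$. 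Each of these three steps admits a strictly parallel ``Chow'' analog obtained by replacing Milnor-Witt sheaves by Milnor $K$-sheaves and twisted Chow-Witt groups by ordinary Chow groups, so the three compatibilities can be verified step-by-step along the two parallel chains.

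For part A, I would compare the Chow-Witt identification of Theorem \ref{thmintro:main} with the analogous identification $\H_0^S(X)(L) \isomto CH_0(X_L)$ obtained by running the same three-step argument in the Voevodsky derived category of motives. Step (i) is tautologically compatible, since it \emph{is} the Hurewicz map. Step (ii) is compatible because Atiyah duality is natural under the abelianization functor from the stable $\aone$-homotopy category to $\DM$; this is a formal consequence of the compatibility of the two symmetric monoidal structures and the preservation of dualizable objects. Step (iii) requires checking that the forgetful map $\CH_0 \to CH_0$ carries the Chow-Witt Thom class of the stable normal bundle (twisted by its determinant) to the ordinary Chow-theoretic Thom class; this is the substantive content and follows from the explicit Barge-Morel--Fasel construction of Thom classes reviewed in Section \ref{s:twistedthomisomorphisms}. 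Surjectivity on sections is then inherited from surjectivity of the forgetful map in the Chow-Witt formalism.

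Part B is a naturality statement for the same three-step construction applied to the structure morphism $X \to \Spec k$. Step (i) is natural with respect to arbitrary morphisms. Step (ii) converts pushforward along $X \to \Spec k$ into the map on cohomology induced by the canonical Thom class of the stable normal bundle, and step (iii) then identifies the resulting map with the twisted degree $\tdeg\colon \CH_0(X_L) \to \CH_0(\Spec L)$. Applied to $X = \Spec k$, Theorem \ref{thmintro:main} recovers Morel's identification $\bpi_0^s(\So)(L) \cong GW(L)$, so the target is the correct one. Part C then follows formally by combining parts A and B with the observation that under Morel's identification the stable Hurewicz map $\bpi_0^s(\So)(L) \to \H_0^S(\Spec k)(L) = \Z$ coincides with the rank homomorphism $GW(L) \to \Z$; this last compatibility is itself an instance of part A specialized to $X = \Spec k$.

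The principal obstacle is the Thom-class comparison in step (iii), used both for the forgetful map in part A and for the pushforward in part B. While morally clear---the Chow-Witt Thom class is engineered to lift the ordinary one through the forgetful map from Milnor-Witt to Milnor $K$-theory---the precise verification requires unwinding Fasel's construction of twisted Thom classes via Gersten-Witt complexes with coefficients in the determinant of the bundle, and carefully tracking orientations through the suspension isomorphisms used to define both classes. Once this compatibility is established, parts A, B, and C drop out by diagram chasing.
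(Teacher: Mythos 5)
Your high-level decomposition---running the three-step identification (Hurewicz, Atiyah duality, Thom isomorphism) in parallel for the Chow-Witt and Chow chains and then comparing step by step---is the right framework and essentially matches the paper. Parts A and C drop out formally from this point of view, exactly as you say (the paper handles C in one sentence, citing functoriality of the Hurewicz map). However, there are two gaps.

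First, the surjectivity claim in A. You say it is ``inherited from surjectivity of the forgetful map in the Chow-Witt formalism,'' but an epimorphism of sheaves $\K^{MW}_{n+d} \to \K^M_{n+d}$ does not automatically induce a surjection on cohomology. The paper's argument (Proposition~\ref{prop:epimorphicity}) is that $Th(\nu)$ has Nisnevich cohomological dimension exactly $n+d$ (Lemma~\ref{lem:cohomologicaldimensionofthomspaces}), so $H^{n+d}_{\Nis}(Th(\nu),-)$ is right exact; it is the top-degree vanishing, not abstract functoriality, that makes the forgetful map surjective. This specific point needs to be supplied.

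Second, and more seriously, part B. You correctly flag the Thom-class/duality compatibility as ``the principal obstacle,'' but you suggest it follows by unwinding Fasel's Thom-class construction. That is not the route the paper takes, and for good reason: the content of B is that Morel's \emph{cohomological transfer}---the composite of the co-transfer map induced by Voevodsky's duality morphism $T^{n+d}\to Th(\nu)$ with the Thom and suspension isomorphisms---agrees with Fasel's pushforward, which is a \emph{Scharlau-type} transfer built from the field trace. These are genuinely different constructions. The paper first reduces to the case $X = \Spec F$ for a finite extension $F/k$ by using the local (Gersten) description of both sides of the diagram, and then in that case compares the two transfers directly at the level of residue maps in Milnor-Witt $K$-theory, invoking Schmid's reciprocity theorem to close the argument. ``Diagram chasing after the Thom-class comparison'' underestimates this step: the transfer comparison is a concrete computation, not a formal consequence of the Thom isomorphism, and your proposal does not indicate how it would be carried out.
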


The proof Theorem \ref{thmintro:compatibilities} follows from Theorem \ref{thmintro:main} together with good choices of definitions of the objects under consideration.  Indeed, Theorem \ref{thmintro:compatibilities}A follows from the main computation and the definition of the Hurewicz homomorphism together with the discussion of abelianization in \S \ref{ss:abelianization}.  Theorem \ref{thmintro:compatibilities}B is a consequence of the duality formalism and \ref{thmintro:compatibilities}C follows by combining parts B and A.  We develop the necessary preliminaries in \S \ref{ss:formalism}.  Finally Theorem \ref{thmintro:rationalpointsuptostablehomotopy} follows from Theorem \ref{thmintro:main} by careful analysis of the Gersten resolution of the zeroth stable $\aone$-homotopy sheaf.

\subsubsection*{Relationship with other work}
In \cite{AH}, we showed that existence of a $k$-rational point was detected by the stable $\aone$-homotopy category of $S^1$-spectra or even the rationalized variant of this category.  Combining Theorem \ref{thmintro:rationalpointsuptostablehomotopy} with the main result from \cite{AH}, we see that the difference between rational points and $0$-cycles of degree $1$ arises from the passage from $S^1$-spectra to $\pone$-spectra.  Since the latter category ``has transfers" in a sense we describe here, this result corroborates a principle suggested in \cite{LevineSlices}.  There are a number of tools available to analyze this transition and one can hope to construct an obstruction theory for lifting $0$-cycles of degree $1$ to $k$-rational points.

\subsubsection*{Acknowledgements}
This paper originally began as a joint project between the authors and Fabien Morel; a preliminary version of the work herein was presented as such by the first named author in a talk at the Oberwolfach workshop ``Motives and the homotopy theory of schemes" in May 2010 ({\em cf.} \cite{OberwolfachReport}).  We thank Morel for his collaboration in the early stages of the project.  The first author would also like to thank the University of Essen, and especially Marc Levine, for providing an excellent working environment while portions of this work were completed.  We would also like to thank Richard Elman and Sasha Merkurjev for answering our questions regarding the theory of quadratic forms, and Jean Fasel for answering our questions regarding Chow-Witt theory.

\section{Preliminaries and reductions}
\label{s:preliminaries}
In this section, we recall (/develop) the formalism necessary in the rest of the paper.  In Section \ref{ss:formalism}, we recall the definitions and necessary properties of all the different categories of motivic origin that are required in our study.  This section will also serve to fix the notation for the rest of the paper.  In Section \ref{ss:suslinhomology}, we recall the proof of the analog of our main theorem in the context of Suslin homology; the main results are Lemmas \ref{lem:suslinchow} and \ref{lem:suslindegree}.  The proofs of these results will be used as templates for the proofs of our more general results in stable $\aone$-homotopy theory.  Finally Section \ref{ss:abelianization} studies the Hurewicz homomorphism (introduced in Section \ref{ss:formalism}) from stable $\aone$-homotopy sheaves of groups to stable $\aone$-homology sheaves.  The main result, i.e., Theorem \ref{thm:ponestablehurewiczisomorphism} allows us to focus our attentions on the stable $\aone$-derived category.

\subsection{Motivic categories}
\label{ss:formalism}
In this paper, we use a number of different categories of motivic or $\aone$-homotopy theoretic origin.  For the reader's benefit, Diagram \ref{eqn:motiviccategories} displays the categories under consideration and their mutual relationships.

\begin{equation}
\label{eqn:motiviccategories}
\xymatrix{
                    &                                  & \hsnis \ar[d]\ar[r]    & \SH_{S^1}(k) \ar[r]\ar[d]  & \mathbf{D}(\Ab(k)) \ar[r]\ar[d] & \mathbf{D}(\Cor(k)) \ar[d]\\
\Sm_k \ar[r]^-{} & \Spc_k \ar[r]^-{}\ar[ur]^-{} & \ho{k} \ar[r]^-{} & \SH_{s}(k)\ar@<.4ex>[r]^-{}\ar[d]^-{} & \Daone^{\eff}(k) \ar@<.4ex>[l]^-{}\ar@<.4ex>[r]^-{}\ar[d]^-{} & \DM^{\eff}(k) \ar@<.4ex>[l]^-{}\ar[d]^-{}\\
& & & \SH(k) \ar[r]^-{} & \Daone(k) \ar[r]^-{} & \DM(k)
}
\end{equation}

In the diagram, the nine categories below and to the right of $\SH_{S^1}(k)$ are homotopy categories of stable monoidal model categories and the functors indicated by arrows pointing down or to the right are monoidal in an appropriate sense.  The rightward pointing arrows in this group (and their composites) are the Hurewicz functors referred to in the introduction.  The categories in the third column from the left are homotopy categories of model categories that are not stable, and the categories in the first and second column are the geometric categories from which all of the homotopy categories are built.  The passage from the homotopy categories in the first row to the homotopy categories in the second and third rows is achieved by means of a Bousfield localization.

\subsubsection*{The unstable categories}
\begin{entry}[Smooth schemes and spaces]
We write $\Sm_k$ for the category of schemes separated, smooth and of finite type over $k$.  The category $\Spc_k$ is the category of simplicial Nisnevich sheaves of sets on $\Sm_k$; we refer to objects of $\Spc_k$ as {\em $k$-spaces} or simply {\em spaces} if $k$ is clear from context.  The functor $\Sm_k \to \Spc_k$ of Diagram \ref{eqn:motiviccategories} is defined as follows: send a smooth scheme $X$ to the corresponding representable functor $\hom_{\Sm_k}(\cdot,X)$ and then take the associated constant simplicial object (all face and degeneracy maps are the identity).  The Yoneda lemma shows that the functor so defined is fully faithful.  We systematically abuse notation by identifying the category of smooth schemes with its essential image in $\Spc_k$.

We also introduce the category of {\em pointed $k$-spaces}, denoted $\Spc_{k,\bullet}$; objects of this category are pairs $(\mathscr{X},x)$ consisting of a $k$-space $\mathscr{X}$ together with a morphism of spaces $x: \Spec k \to \mathscr{X}$.  The forgetful functor $\Spc_{k,\bullet} \to \Spc_k$ has a left adjoint sending a space $\mathscr{X}$ to $\mathscr{X}_+$, which is $\mathscr{X}$ with a disjoint base-point attached.

If $\mathscr{X}$ is a space and $F^\bullet$ is a complex of Nisnevich sheaves of abelian groups on $\Sm_k$, then we define hypercohomology groups $\mathbb{H}^p(\mathscr{X},F^\bullet)$ as (hyper-)extension groups in the category of Nisnevich sheaves of abelian groups on $\Sm_k$:
\[
\mathbb{H}^p(\mathscr{X},F^\bullet) = \mathrm{Ext}^p_{\Nis}(\Z(\mathscr{X}), F^\bullet).
\]
If $X$ is a scheme, this hypercohomology is canonically isomorphic to that obtained by the usual definition.
\end{entry}

\begin{entry}
The category $\hsnis$ is the (unpointed) homotopy category of Nisnevich simplicial sheaves as constructed by Joyal-Jardine (see, e.g., \cite[\S 2 Theorem 1.4]{MV}).  One equips $\Spc_k$ with the injective local model structure (cofibrations are monomorphisms, weak equivalences are stalkwise weak equivalences of simplicial sets, and fibrations are determined by the right lifting property) and realizes $\hsnis$ as the associated homotopy category.  Given two spaces $\mathscr{X}$ and $\mathscr{Y}$, we write $[\mathscr{X},\mathscr{Y}]_s$ for the set of morphisms between the resulting objects in $\hsnis$.  We write $\hspnis$ for the pointed variant of $\hsnis$, and if it is not already clear from context, to distinguish morphisms in this category we will explicitly specify the base-point.
\end{entry}

\begin{entry}[Unstable $\aone$-homotopy category]
The category $\ho{k}$ is the (unpointed) Morel-Voevodsky $\aone$-homotopy category, as constructed in \cite[\S 2 Theorem 3.2 and \S 3 Definition 2.1]{MV}.  Very briefly, one equips $\Spc_k$ with the $\aone$-local injective model structure (cofibrations are monomorphisms, weak equivalences are the $\aone$-local weak equivalences, and fibrations are determined by the right lifting property) and realizes $\ho{k}$ as the associated homotopy category.  More precisely, $\ho{k}$ is equivalent to the full-subcategory of $\hsnis$ consisting of $\aone$-local objects; this inclusion admits a left adjoint $L_{\aone}$ called the $\aone$-localization functor, which gives rise to the functor $\hsnis \to \ho{k}$ in Diagram \ref{eqn:motiviccategories}.  The functor $\Spc_k \to \ho{k}$ also sends a space $\mathscr{X}$ to its $\aone$-localization $L_{\aone}(\mathscr{X})$.

Given two spaces $\mathscr{X}$ and $\mathscr{Y}$, we write $[\mathscr{X},\mathscr{Y}]_{\aone}$ for the set of morphisms between the resulting objects in $\ho{k}$.  We write $\hop{k}$ for the pointed variant of $\ho{k}$ and as above, if it is not clear from context, to distinguish morphisms in this category we will explicitly specify the base-point.
\end{entry}

The passage from unstable to stable categories is by now standard: we replace spaces by spectra.  To do this in a fashion that gives a tensor structure that is commutative and associative on the nose, we use the theory of symmetric spectra.

\subsubsection*{Symmetric sequences}
Throughout this work $\mathcal{C}$ is assumed to be a symmetric monoidal category that is complete and cocomplete.  In our examples, $\mathcal{C}$ will be one of the following four examples.
\begin{itemize}
\item[i)] the category of simplicial sets $\Delta^{\circ}\Set$ equipped with the cartesian product, or the category $\Delta^{\circ}\Set_\bullet$ of pointed simplicial sets equipped with the smash product.
\item[ii)] the category of spaces $\Spc_k$ equipped with the cartesian product, or the category $\Spc_{k,\bullet}$ of pointed spaces equipped with the smash product.
\item[iii)] for a ``nice" abelian category $\mathcal{A}$,  the category $\Ch_k(\mathcal{A})$ of (bounded below) chain complexes (i.e., differential of degree $-1$) of objects in $\mathcal{A}$ equipped with the shuffle product of complexes.
\end{itemize}
The importance of the above choice of tensor product will be evident when we discuss the Dold-Kan correspondence below.

Everything we say below is a recapitulation of results from \cite{HoveyShipleySmith} or \cite{AyoubII}.

\begin{defn}[{\cite[Definition 2.1.1]{HoveyShipleySmith}}]
For each integer $n \geq 0$, define $\underline{n}$ to be $\emptyset$ if $n = 0$, and the set of integers $1 \leq i \leq n$ for $i > 0$.  Let $\mathfrak{S}$ be the groupoid whose objects are the sets $\underline{n}$ and whose morphisms are the bijections $\underline{n} \to \underline{n}$.  A {\em symmetric sequence in $\mathcal{C}$} is a functor $\mathfrak{S} \to \mathcal{C}$.
\end{defn}

\begin{notation}
Write $\Fun(\mathfrak{S},\mathcal{C})$ for the category of symmetric sequences in $\mathcal{C}$.
\end{notation}

\begin{lem}
If $\mathcal{C}$ is complete, cocomplete, or monoidal, so is $\Fun(\mathfrak{S},\mathcal{C})$, with all corresponding notions defined ``levelwise."
\end{lem}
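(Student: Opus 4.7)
The plan is to verify each of the three assertions independently, in every case by inheriting the required structure from $\mathcal{C}$ pointwise. Since $\Fun(\mathfrak{S},\mathcal{C})$ is an ordinary functor category from a (very small) groupoid to $\mathcal{C}$, this is an instance of the general principle that such categories inherit all limits, colimits, and levelwise monoidal structures from their target. The only thing to do is to spell this out and record that the various universal properties and coherence diagrams commute.

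For (co)completeness, given a small diagram $D \colon \mathcal{I} \to \Fun(\mathfrak{S},\mathcal{C})$, I would define $(\lim_{\mathcal{I}} D)(\underline{n})$ to be the limit in $\mathcal{C}$ of the diagram $i \mapsto D(i)(\underline{n})$, which exists by completeness of $\mathcal{C}$. For each $\sigma \in \operatorname{Aut}_{\mathfrak{S}}(\underline{n})$, the maps $D(i)(\sigma)$ constitute a morphism of diagrams, hence induce a unique map $(\lim D)(\sigma)$ by the universal property; functoriality in $\sigma$ is automatic from uniqueness. A cone over $D$ in $\Fun(\mathfrak{S},\mathcal{C})$ is by definition a compatible family of cones at each level, so the universal property in $\Fun(\mathfrak{S},\mathcal{C})$ reduces levelwise to the universal property in $\mathcal{C}$. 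Colimits are handled dually using cocompleteness of $\mathcal{C}$.

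For the monoidal structure, I would set $(F \otimes G)(\underline{n}) := F(\underline{n}) \otimes_{\mathcal{C}} G(\underline{n})$ with $\mathfrak{S}$-action $(F \otimes G)(\sigma) := F(\sigma) \otimes G(\sigma)$; this is well-defined and functorial because $\otimes_{\mathcal{C}}$ is a bifunctor. The unit is the constant symmetric sequence at $\mathbf{1}_{\mathcal{C}}$. The associator, left and right unitors, and (when $\mathcal{C}$ is symmetric) the braiding are defined at each level by the corresponding structural isomorphisms of $\mathcal{C}$; these assemble into morphisms in $\Fun(\mathfrak{S},\mathcal{C})$ because they are natural in each argument, so in particular compatible with the diagonal $\mathfrak{S}$-action. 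The coherence axioms (pentagon, triangle, and hexagon when applicable) are equalities of natural transformations between functors out of $\mathfrak{S}$, and therefore hold iff they hold at each $\underline{n}$; at each level they reduce verbatim to the corresponding coherence axioms in $\mathcal{C}$.

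There is no substantive obstacle: the proof is essentially bookkeeping. I note only one potential source of confusion, worth flagging for the reader: the monoidal product constructed here is the naive levelwise one, which is distinct from the Day convolution that will ultimately be used to build symmetric spectra. The present lemma makes no claim about the latter; it records only the structures that come for free from $\mathcal{C}$.
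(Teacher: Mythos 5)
Your treatment of limits and colimits is correct and is the standard levelwise argument; the paper offers no proof of this lemma (it is cited as folklore, with a pointer to Hovey--Shipley--Smith for the monoidal part), so there is no argument to compare it against on that point.

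The caveat you raise at the end is actually the crux of the matter and deserves more weight than you give it. Your monoidal construction produces the \emph{pointwise} tensor $(F \tensor G)(\underline{n}) = F(\underline{n}) \tensor G(\underline{n})$, which is a perfectly legitimate monoidal structure on $\Fun(\mathfrak{S},\mathcal{C})$ and is the only one available under the bare ``$\mathcal{C}$ is monoidal'' disjunct. But the sentence of the paper immediately following the lemma says that ``the monoidal structure on symmetric sequences'' is the one explained in Hovey--Shipley--Smith, Definition 2.1.3 and Corollary 2.2.4, and that is the Day convolution product $(X \tensor Y)_n = \coprod_{p+q=n} \Sigma_n \times_{\Sigma_p \times \Sigma_q} X_p \tensor Y_q$, i.e.\ the product under which the sphere sequence is a commutative monoid and symmetric spectra are its modules. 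Every downstream construction in the section (suspension spectra, Eilenberg--MacLane spectra, modules over $\Sigma^{\infty}_s S^0_s$ and over $\mathbb{S}^0_k$) uses Day convolution, not the pointwise tensor. Constructing the Day product requires cocompleteness of $\mathcal{C}$ (one needs the coproducts and quotients in the formula above) and its associativity is not a levelwise statement; your argument does not establish it. So while ``the present lemma makes no claim about the latter'' is a defensible literal reading of the scare-quoted ``levelwise,'' the surrounding text strongly suggests the author is treating the lemma as having produced the Day structure, and a proof establishing only the pointwise tensor leaves the structure the rest of the section depends on unjustified. The cleanest fix is to restrict ``levelwise'' to limits and colimits, and separately record that when $\mathcal{C}$ is monoidal \emph{and} cocomplete, $\Fun(\mathfrak{S},\mathcal{C})$ carries the Day convolution monoidal structure, citing Hovey--Shipley--Smith rather than asserting it as levelwise bookkeeping.
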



The monoidal structure on symmetric sequences is explained in detail in, e.g., \cite[Definition 2.1.3 and Corollary 2.2.4]{HoveyShipleySmith} or \cite[\S 7.3]{CisinskiDeglise1}.  In particular, it makes sense to talk about a monoid $\mathscr{R}$ in $\mathscr{Fun}(\mathfrak{S},\mathcal{C})$, and of (left or right) modules over $\mathscr{R}$ \cite[\S VII.3-4]{MacLane}.  As a consequence, if $\mathscr{R}$ is a commutative monoid object in $\mathscr{Fun}(\mathfrak{S},\mathcal{C})$, given two $\mathscr{R}$-modules, we have an internal hom and tensor product satisfying the usual hom-tensor adjunction \cite[Lemma 2.2.8]{HoveyShipleySmith}.

If $\mathcal{C}'$ is another monoidal category, and $\Phi: \mathcal{C} \to \mathcal{C}'$ is a monoidal functor (see \cite[\S XI. 2]{MacLane}), then $\Phi$ induces a monoidal functor $\Fun(\mathfrak{S},\mathcal{C}) \to \Fun(\mathfrak{S},\mathcal{C}')$.

\subsubsection*{Symmetric spectra}
We write $S^n_s$ for the {\em simplicial $n$-sphere}, i.e., the constant sheaf in $\Spc_{k,\bullet}$ arising from the simplicial set $\Delta^n/\partial \Delta^n$.  The space $S^n_s$ has a natural action of the symmetric group $\Sigma_n$.  One defines a functor $\mathfrak{S} \to \Spc_{k,\bullet}$ by sending $\underline{n} \to S^n_s$ equipped with this action of $\Sigma_n$.  We write $\Sigma^{\infty}_sS^0_s$ for this symmetric sequence; it admits a natural structure of commutative ring object in $\mathscr{Fun}(\mathfrak{S},\Spc_{k,\bullet})$.  The subcategory of $\mathscr{Fun}(\mathfrak{S},\Spc_{k,\bullet})$ consisting of modules over $\Sigma^{\infty}_s S^0_s$ will be called the category of {\em symmetric spectra in $k$-spaces}; we write $\Sp^{\Sigma}(\Spc_k)$ for the resulting category.

\begin{ex}
Suppose $\mathscr{X}$ is a pointed $k$-space.  Define a functor $\mathfrak{S} \to \Spc_{k,\bullet}$ by $\underline{n} \mapsto S^n_s \wedge \mathscr{X}$ and where the action of $\Sigma_n$ is induced by the permutation on $S^n_s$ as explained above.  We write $\Sigma^{\infty}_s \mathscr{X}$ for this symmetric sequence.  As in \cite[Example 1.2.4]{HoveyShipleySmith}, one checks that $\Sigma^{\infty}_s \mathscr{X}$ is a symmetric spectrum in $k$-spaces that we call the {\em suspension symmetric spectrum of $\mathscr{X}$}.  In the sequel, we will refer to $\Sigma^{\infty}_s \mathscr{X}$ as the simplicial suspension spectrum of $\mathscr{X}$.
\end{ex}

\begin{ex}
If $\mathscr{X}$ is a $k$-space, write $\Z[\mathscr{X}]$ for the free sheaf of abelian groups on $\mathscr{X}$ and $\tilde{\Z}[\mathscr{X}]$ for the kernel of the morphism of sheaves $\Z[\mathscr{X}] \to \Z$ induced by the structure morphism $\mathscr{X} \to \Spec k$.  Define a functor $\mathfrak{S} \to \Spc_{k,\bullet}$ by $\underline{n} \mapsto \tilde{\Z}[S^n_s]$; write $\operatorname{H}\Z$ for this functor.  As in \cite[Example 1.2.5]{HoveyShipleySmith}, one checks that $\operatorname{H}\Z$ is a symmetric spectrum in $k$-spaces.  More generally, given a sheaf of abelian groups $\mathscr{M}$, we can define the Eilenberg-MacLane spectrum $\operatorname{H}\mathscr{M}$.
\end{ex}

\subsubsection*{Stable homotopy categories}
We now discuss the various stable homotopy categories that arise in our discussion.  We want our theory to have well-defined internal hom and tensor products, and so we follow the now standard constructions of stable $\aone$-homotopy theory.  The category of symmetric spectra in $k$-spaces can also be viewed as the category of Nisnevich sheaves of symmetric spectra.

The category of ordinary symmetric spectra has the structure of a monoidal model category (for the definition of monoidal model category see \cite[Definition 3.1]{SchwedeShipley}).  There are several model structures one naturally considers.  For the stable model structure, the weak equivalences, cofibrations and fibrations are given in \cite[Definitions 3.1.3, 3.4.1, and 3.4.3]{HoveyShipleySmith} and \cite[Theorem 3.4.4]{HoveyShipleySmith} establishes that these actually determine a model structure and {\em ibid.} Corollary 5.3.8 establishes that this model structure is actually monoidal.

\begin{entry}[$S^1$-stable simplicial homotopy category]
Ayoub explains how the category of Nisnevich sheaves with values in a monoidal model category can naturally be equipped with a monoidal model structure; see \cite[Definition 4.4.40, Corollary 4.4.42 and Proposition 4.4.62]{AyoubII}.  Applying this construction to the monoidal model category of symmetric spectra shows that $\Sp^{\Sigma}(\Spc_k)$ has the structure of a monoidal model category; we write $\SH_{S^1}(k)$ for the homotopy category of this model category and refer to it as the stable homotopy category of $S^1$-spectra.  By construction the functor sending a space $\mathscr{X}$ to the symmetric spectrum $\Sigma^{\infty}_s\mathscr{X}_+$ factors through $\hsnis$ inducing the functor $\hsnis \to \SH_{S^1}(k)$ of Diagram \ref{eqn:motiviccategories}.
\end{entry}

\begin{entry}[$S^1$-stable $\aone$-homotopy category]
The category $\SH_s(k)$ of Diagram \ref{eqn:motiviccategories}, called the stable $\aone$-homotopy category of $S^1$-spectra, is obtained from $\SH_{S^1}(k)$ by the procedure of Bousfield localization.  The first construction is due to Jardine (see \cite[Theorems 4.3.2 and 4.3.8]{JardineSymmetric} for two different model structures).  Ayoub gives an equivalent presentation.  The category $\Sp^{\Sigma}(\Spc_k)$ can be equipped with an $\aone$-local model structure; see \cite[Definition 4.5.12]{AyoubII}.  By {\em ibid.} Proposition 4.2.76, it follows that the resulting model structure is again a symmetric monoidal model structure. The homotopy category of this model structure is $\SH_{s}(k)$.  The homotopy category so constructed is equivalent to the one described in \cite[Definition 4.1.1]{MStable} by \cite[Theorem 4.40]{JardineSymmetric}.

One can construct an $\aone$-resolution functor that commutes with finite products using the Godement resolution functor and the singular construction $Sing_*^{\aone}$ of \cite[\S 2 Theorem 1.66 and p. 88]{MV}.  Using this $\aone$-resolution functor we can speak of symmetric sequences of $\aone$-local objects.  One defines the $\aone$-local symmetric sphere spectrum by taking the functor $\underline{n} \mapsto L_{\aone}(S^n_s)$ equipped with the induced action of the symmetric groups. We then consider the category of modules over the $\aone$-local symmetric sphere spectrum; by construction this is a model of $\Sp^{\Sigma}(\Spc_k)$.  In an analogous fashion, one defines the $\aone$-local symmetric suspension spectrum of a pointed space $(\mathscr{X},x)$ as the symmetric sequence $\underline{n} \mapsto L_{\aone}(S^n_s \wedge \mathscr{X})$ equipped with the induced actions of the symmetric groups.  The functor $\Spc_{k} \to \SH_{s}(k)$ is induced by the functor sending a space $\mathscr{X}$ to the $\aone$-local symmetric suspension spectrum of $\mathscr{X}_+$.
\end{entry}

\begin{defn}
\label{defn:s1stableaonehomotopysheaves}
Suppose $\mathscr{E}$ is an $\aone$-local symmetric spectrum in $k$-spaces.  The $i$-th $S^1$-stable $\aone$-homotopy sheaf of $\mathscr{E}$, denoted $\bpi_i^{s}(\mathscr{E})$, is the Nisnevich sheaf on $\Sm_k$ associated with the presheaf
\[
U \mapsto \hom_{\SH_{s}(k)}(S^i_s \wedge \Sigma^{\infty}_s U_+,\mathscr{E}).
\]
\end{defn}

The main structural property of these sheaves $\bpi_i^{s}(\mathscr{X})$ is summarized in the following result, which will be used without mention in the sequel.

\begin{prop}[{\cite[Theorem 6.1.8 and Corollary 6.2.9]{MStable}}]
If $\mathscr{E}$ is an $\aone$-local symmetric $S^1$-spectrum in $k$-spaces, the sheaves $\bpi_i^{s}(\mathscr{X})$ are strictly $\aone$-invariant.
\end{prop}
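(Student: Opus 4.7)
The plan is to reduce the statement to Morel's stable $\aone$-connectivity theorem together with a recognition principle for strictly $\aone$-invariant sheaves. The first step is plain: $\aone$-invariance of $\bpi_i^s(\mathscr{E})$ follows tautologically from $\mathscr{E}$ being $\aone$-local, since $\Sigma^{\infty}_s U_+$ and $\Sigma^{\infty}_s(U \times \aone)_+$ are $\aone$-weakly equivalent, so $[S^i_s \wedge \Sigma^{\infty}_s U_+, \mathscr{E}]_{\SH_s(k)} \to [S^i_s \wedge \Sigma^{\infty}_s(U \times \aone)_+, \mathscr{E}]_{\SH_s(k)}$ is a bijection. Sheafifying yields $\aone$-invariance of $\bpi_i^s(\mathscr{E})$.

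The second (harder) step promotes $\aone$-invariance to strict $\aone$-invariance, i.e., shows the Nisnevich cohomology sheaves $U \mapsto H^j_{\Nis}(U, \bpi_i^s(\mathscr{E}))$ are $\aone$-invariant for all $j \geq 0$. The strategy is to exploit the Postnikov $t$-structure on $\SH_{S^1}(k)$, whose truncation functors $\tau_{\leq n}$ and $\tau_{\geq n}$ are defined stalkwise and whose heart is equivalent to the category of Nisnevich sheaves of abelian groups via $M \mapsto \operatorname{H}M$. For any $S^1$-spectrum $\mathscr{E}$ the Postnikov tower produces fiber sequences whose layers are the Eilenberg--MacLane spectra $\operatorname{H}\bpi_i^s(\mathscr{E})[i]$.

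The key input is Morel's stable $\aone$-connectivity theorem, which says that $\aone$-localization preserves stable connectivity on $\SH_{S^1}(k)$. This has the consequence that if $\mathscr{E}$ is $\aone$-local, then so are all of its Postnikov sections and hence all of its Postnikov layers: each $\operatorname{H}\bpi_i^s(\mathscr{E})$ is an $\aone$-local Eilenberg--MacLane spectrum. One then invokes the recognition criterion (also due to Morel) that $\operatorname{H}M$ is $\aone$-local if and only if $M$ is strictly $\aone$-invariant; this is a direct translation, since the hypercohomology groups computing $[U_+, \operatorname{H}M[j]]_{\SH_{S^1}(k)}$ are precisely $H^j_{\Nis}(U, M)$, and the $\aone$-locality of $\operatorname{H}M$ is then the statement that these are $\aone$-invariant in $U$. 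Combining these ingredients identifies each $\bpi_i^s(\mathscr{E})$ as strictly $\aone$-invariant.

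The genuine obstacle is the stable $\aone$-connectivity theorem itself. Its proof in \cite{MStable} requires an explicit $\aone$-fibrant replacement functor built out of the Godement resolution and the singular construction $Sing_*^{\aone}$ from \cite[\S 2 Theorem 1.66 and p.\ 88]{MV}, followed by a delicate stalkwise analysis showing this functor does not decrease connectivity; over a perfect base field this relies on the geometric presentation results underlying $\aone$-representability of Nisnevich cohomology. I would not reprove this here but simply cite Morel's theorems, since the remaining assembly of the argument above is essentially formal once the connectivity theorem and the EM-spectrum recognition principle are in hand.
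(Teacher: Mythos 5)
The paper itself gives no proof for this proposition: it is stated as a direct citation of Morel's \cite[Theorem 6.1.8 and Corollary 6.2.9]{MStable}, and the authors make no attempt to reprove it. Your proposal therefore does not diverge from the paper's ``proof''; it simply unpacks the structure of the argument you are deferring to, and you are explicit at the end that the hard part (the stable $\aone$-connectivity theorem) is being cited rather than reproved. That matches the paper's own stance.

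Your first step (deducing $\aone$-invariance of $\bpi_i^s(\mathscr{E})$ from $\aone$-locality of $\mathscr{E}$ and the $\aone$-weak equivalence $\Sigma^\infty_s U_+ \to \Sigma^\infty_s(U\times\aone)_+$) is fine. The one place your sketch glosses over more than it acknowledges is the clause ``This has the consequence that if $\mathscr{E}$ is $\aone$-local, then so are all of its Postnikov sections.'' This is not a one-line deduction from ``$L_{\aone}$ preserves connectivity''; connectivity preservation controls the $\geq n$-connective covers, but showing the truncations $\tau_{\leq n}\mathscr{E}$ stay $\aone$-local (equivalently, that the homotopy $t$-structure on $\SH_{S^1}(k)$ restricts to the full subcategory of $\aone$-local objects) is a separate corollary that Morel proves and that you should be citing by name. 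The rest of your chain --- identifying the Postnikov layers as Eilenberg--MacLane spectra $\operatorname{H}\bpi_i^s(\mathscr{E})$, and then translating $\aone$-locality of $\operatorname{H}M$ into strict $\aone$-invariance of $M$ via the computation $[\Sigma^\infty_s U_+, \operatorname{H}M[j]] \cong H^j_{\Nis}(U,M)$ --- is the standard route and is sound. So: same approach as the paper (a citation), with an accurate but slightly compressed sketch of what is behind it.
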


\begin{entry}[$\pone$-stable $\aone$-homotopy category]
The category $\SH(k)$ is the stable $\aone$-homotopy category of $\pone$-spectra (see, e.g., \cite[\S 5]{MIntro}).  This category was also constructed in \cite[Theorem 4.2]{JardineSymmetric}, but again we follow Ayoub for consistency.  Now, instead of considering symmetric $S^1$-spectra, one considers symmetric $\pone$ or $T$-spectra.  In other words, we consider the category $\mathscr{Fun}(\mathfrak{S},\Spc_{k,\bullet})$ of symmetric sequences.  Consider $\pone$ is a pointed space with base point $\infty$.  Equip ${\pone}^{\wedge n}$ with an action of $\Sigma_n$ by permutation of the factors.  The assignment $\underline{n} \mapsto {\pone}^{\wedge n}$ determines a symmetric sequence.  Moreover, this object has the structure of a commutative ring object in the category of symmetric sequences (this object is the ``free" symmetric spectrum generated by $(\pone,\infty)$); this spectrum, denoted $\mathbb{S}^0_k$, is the {\em sphere symmetric $\pone$-spectrum} or just {\em sphere spectrum} if no confusion can arise.  A symmetric $\pone$-spectrum is a symmetric sequence having the structure of a module over $\mathbb{S}^0_k$.  Write $\Sp_{\pone}^{\Sigma}(\Spc_k)$ for the full subcategory of $\mathscr{Fun}(\mathfrak{S},\Spc_{k,\bullet})$ consisting of symmetric $\pone$-spectra.

One equips the category $\Sp_{\pone}^{\Sigma}(\Spc_k)$ with a model structure as in \cite[Definition 4.5.21]{AyoubII}.  The category $\SH(k)$ is the homotopy category of this model structure.  By \cite[Theorem 4.31]{JardineSymmetric}, the category so constructed is equivalent to the category of $\pone$-spectra considered by Voevodsky in \cite{VICM} and Morel in \cite[\S 5]{MIntro}.  Given a pointed space $(\mathscr{X},x)$, the suspension symmetric $\pone$-spectrum, denoted $\Sigma^{\infty}_{\pone}\mathscr{X}$ is given by the functor $\underline{n} \mapsto {\pone}^{\wedge n} \wedge \mathscr{X}$ with the symmetric group acting by permuting the first $n$-factors.  We write $\mathbb{S}^i_k$ for the suspension symmetric $\pone$-spectrum of the {\em simplicial} $i$-sphere $S^i_s$.
\end{entry}

\begin{defn}
\label{defn:stableaonehomotopysheaves}
Suppose $\mathscr{E}$ is a symmetric $\pone$-spectrum.  The $i$-th stable $\aone$-homotopy sheaf of $\mathscr{E}$, denoted $\bpi_i^{s}(\mathscr{E})$ is the Nisnevich sheaf on $\Sm_k$ associated with the presheaf
\[
U \mapsto \hom_{\SH(k)}(\mathbb{S}^i_k \wedge \Sigma^{\infty}_{\pone}U_+,\mathscr{E}).
\]
\end{defn}

Strict ring and module structures at the level of spaces induce ring and module structures at the level of homotopy categories and thus we deduce the following result.

\begin{prop}
\label{prop:modulestructureonstableaonehomotopysheaves}
If $\mathscr{E}$ is a symmetric $\pone$-spectrum, the sheaf $\bpi_0^s(\mathscr{E})$ is a sheaf of modules over the sheaf of rings $\bpi_0^s(\mathbb{S}^0_k)$; a morphism $f: \mathscr{E} \to \mathscr{E}'$ of symmetric $\pone$-spectra induces a morphism of $\bpi_0^s(\mathbb{S}^0_k)$-modules.
\end{prop}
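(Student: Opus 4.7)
The statement is essentially a formal consequence of the symmetric monoidal structure on $\SH(k)$ together with the fact that, by construction, every symmetric $\pone$-spectrum is a module over the commutative monoid $\mathbb{S}^0_k$. I would not try to do anything clever; the plan is to unwind the ``strict ring and module structures at the level of spaces induce ring and module structures at the level of homotopy categories'' principle referred to in the paragraph preceding the statement.

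First, I would invoke Ayoub's construction (\cite[Proposition 4.4.62, Definition 4.5.21]{AyoubII}) to obtain a symmetric monoidal structure $\wedge$ on $\SH(k)$ with unit $\mathbb{S}^0_k$, derived from the symmetric monoidal model structure on $\Sp_{\pone}^{\Sigma}(\Spc_k)$. By construction $\mathbb{S}^0_k$ is a commutative monoid on the nose in $\Sp_{\pone}^{\Sigma}(\Spc_k)$, hence a commutative monoid object in $\SH(k)$, and every symmetric $\pone$-spectrum $\mathscr{E}$ is by definition a module over $\mathbb{S}^0_k$; this module structure descends to a morphism $\mu_{\mathscr{E}}\colon \mathbb{S}^0_k \wedge \mathscr{E} \to \mathscr{E}$ in $\SH(k)$ satisfying the usual associativity and unit constraints. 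Likewise, any morphism of symmetric $\pone$-spectra $f\colon \mathscr{E} \to \mathscr{E}'$ is $\mathbb{S}^0_k$-linear at the model-category level, hence satisfies $f \circ \mu_{\mathscr{E}} = \mu_{\mathscr{E}'} \circ (\operatorname{id}_{\mathbb{S}^0_k} \wedge f)$ in $\SH(k)$.

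Second, for any $U \in \Sm_k$ the diagonal $U \to U \times U$ induces a cocommutative, counital comonoid structure $\Delta\colon \Sigma^{\infty}_{\pone}U_+ \to \Sigma^{\infty}_{\pone}U_+ \wedge \Sigma^{\infty}_{\pone}U_+$ in $\SH(k)$, using the canonical identification $\Sigma^{\infty}_{\pone}(U \times U)_+ \simeq \Sigma^{\infty}_{\pone}U_+ \wedge \Sigma^{\infty}_{\pone}U_+$. The standard convolution argument then applies: given a cocommutative comonoid $C$, a commutative monoid $\mathscr{R}$, and an $\mathscr{R}$-module $M$ in any symmetric monoidal category, the abelian group $\hom(C,\mathscr{R})$ inherits a ring structure via $\alpha \cdot \beta := m_\mathscr{R} \circ (\alpha \wedge \beta) \circ \Delta$, and $\hom(C,M)$ inherits a module structure over $\hom(C,\mathscr{R})$ via $\alpha \cdot \gamma := \mu_M \circ (\alpha \wedge \gamma) \circ \Delta$. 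Taking $C = \Sigma^{\infty}_{\pone}U_+$, $\mathscr{R} = \mathbb{S}^0_k$, and $M = \mathscr{E}$ in $\SH(k)$, this endows the presheaf $U \mapsto \hom_{\SH(k)}(\Sigma^{\infty}_{\pone}U_+,\mathbb{S}^0_k)$ with the structure of a presheaf of rings and $U \mapsto \hom_{\SH(k)}(\Sigma^{\infty}_{\pone}U_+,\mathscr{E})$ with the structure of a presheaf of modules over it; pullback along a morphism $U \to V$ is multiplicative by naturality of $\Delta$.

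Finally, Nisnevich sheafification is exact and commutes with finite products, so it carries these presheaf ring/module structures to sheaf ring/module structures, yielding the claimed structures on $\bpi_0^s(\mathbb{S}^0_k)$ and $\bpi_0^s(\mathscr{E})$. The linearity of $f$ recorded in the first paragraph implies that post-composition with $f$ defines a morphism of presheaves of modules over $\hom_{\SH(k)}(\Sigma^{\infty}_{\pone}(-)_+,\mathbb{S}^0_k)$, and this survives sheafification.

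\textbf{Where the difficulty lies.} There is no real mathematical obstacle; the only technical point requiring care is that Ayoub's monoidal model structure on $\Sp_{\pone}^{\Sigma}(\Spc_k)$ actually descends to a (derived) symmetric monoidal structure on $\SH(k)$ in which $\mathbb{S}^0_k$ is the unit and commutative monoid objects remain such. This is precisely what the cited references ensure, and everything else above is pure symmetric-monoidal category theory.
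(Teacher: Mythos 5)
Your proof is correct and takes essentially the same approach as the paper, which simply asserts (in the sentence immediately preceding the proposition) that ``strict ring and module structures at the level of spaces induce ring and module structures at the level of homotopy categories'' and gives no further argument. You have accurately unpacked that one-line justification: the module structure of $\mathscr{E}$ over $\mathbb{S}^0_k$ on the nose, the comonoid structure on $\Sigma^{\infty}_{\pone}U_+$ via the diagonal, the convolution construction on hom-sets, and exactness of Nisnevich sheafification.
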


The category $\SH(k)$ can be viewed as $\SH_{s}(k)$ with  the ($\aone$-localized) suspension spectrum of $\gm$ formally inverted.  More precisely, we have the following result.

\begin{prop}
\label{prop:stablehomotopycolimit}
For any smooth scheme $U$, and any pointed space $(\mathscr{X},x)$ the canonical morphism
\[
\colim_n \hom_{\SH_{s}(k)}(\Sigma^{\infty}_s \gm^{\wedge n} \wedge \Sigma^{\infty}_s(U_+),\Sigma^{\infty}_s \gm^{\wedge n} \wedge \Sigma^{\infty}_s \mathscr{X}) \longrightarrow \hom_{\SH(k)}(\Sigma^{\infty}_{\pone}U_+,\Sigma^{\infty}_{\pone}\mathscr{X})
\]
is an isomorphism.
\end{prop}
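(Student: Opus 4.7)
The plan is to identify $\SH(k)$ with the symmetric monoidal $\otimes$-inversion of $\Sigma^{\infty}_s\gm$ in $\SH_s(k)$, and then invoke a standard colimit description of morphism sets in such a localization. The starting observation is the motivic equivalence $\pone \simeq S^1_s \wedge \gm$ in $\hop{k}$, arising from the standard Nisnevich cover of $\pone$ by two affine lines meeting in $\gm$. Since $S^1_s$ is already invertible in $\SH_s(k)$, inverting smash product with $\pone$ is equivalent to inverting smash product with $\gm$. At the level of symmetric monoidal model categories one constructs, out of this decomposition, a Quillen equivalence between $\Sp^{\Sigma}_{\pone}(\Spc_k)$ and the category of $\gm$-symmetric spectra in $S^1$-spectra, so that $\SH(k)$ is identified with the $\gm$-stabilization of $\SH_s(k)$ as a tensor-triangulated category.

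Granted this identification, the right-hand side of the displayed map admits a colimit description. Quite generally, for a cofibrant object $C$ in a symmetric monoidal model category $\mathcal{M}$ satisfying the symmetric-group hypothesis specified below, the hom-sets in the homotopy category of symmetric $C$-spectra are computed as
\[
\hom_{\mathrm{Ho}(\Sp^{\Sigma}_{C}(\mathcal{M}))}(\Sigma^{\infty}_C A,\, \Sigma^{\infty}_C B) \;\cong\; \colim_n \hom_{\mathrm{Ho}(\mathcal{M})}(C^{\wedge n} \wedge A,\, C^{\wedge n} \wedge B),
\]
with transition maps given by smashing with $C$. This is the symmetric-spectrum analogue of the telescope formula for morphisms in a sequential stabilization. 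I would apply this to $\mathcal{M} = \Sp^{\Sigma}_s(\Spc_k)$ with its $\aone$-local stable model structure, $C = \Sigma^{\infty}_s\gm$, $A = \Sigma^{\infty}_s U_+$, and $B = \Sigma^{\infty}_s \mathscr{X}$, to obtain exactly the asserted isomorphism.

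The main obstacle is verifying the symmetric-group hypothesis required so that the colimit formula genuinely computes morphisms in the symmetric stabilization, rather than in a coarser non-symmetric one. This hypothesis amounts to the statement that the cyclic permutation on $C^{\wedge 3}$ is homotopic to the identity in $\mathrm{Ho}(\mathcal{M})$. For $C = \Sigma^{\infty}_s\gm$, this is precisely Morel's result that the cyclic permutation on $\gm^{\wedge 3}$ is $\aone$-homotopic to the identity in $\SH_s(k)$, and it is exactly this input which allows Hovey's comparison between symmetric and ordinary spectra to apply in the present setting. Once this verification is in hand, the proposition is a formal consequence of the universal property of symmetric monoidal $\otimes$-localization together with the Quillen equivalence discussed in the first paragraph; in particular no further geometric input is needed beyond Morel's symmetric-group calculation.
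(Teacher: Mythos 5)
Your argument is correct and is essentially the one the paper gives: reduce to a colimit formula for morphisms in the symmetric $\otimes$-inversion, with the single non-formal input being that the cyclic permutation $(123)$ is the identity on the third smash power of the object being inverted. The paper cites Ayoub for both the cyclic-permutation fact (stated for $T^{\wedge 3}=(\pone)^{\wedge 3}$ rather than $\gm^{\wedge 3}$, though this is equivalent once $S^1_s$ is inverted) and for the colimit description of hom-sets (\cite[4.5.65, 4.3.61, 4.3.79]{AyoubII}), whereas you invoke Hovey's comparison theorem and attribute the permutation calculation to Morel; these are interchangeable references for the same two ingredients.
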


\begin{proof}
Ayoub shows that the cyclic permutation $(123)$ acts as the identity on $T^{\wedge 3}$ in \cite[4.5.65]{AyoubII}.  The result then follows from \cite[Theorems 4.3.61 and 4.3.79]{AyoubII}.
\end{proof}

\begin{rem}
One useful Corollary to this Proposition is that the stable $\aone$-homotopy sheaves $\bpi_i^{s}(\Sigma^{\infty}_{\pone}\mathscr{X}_+)$ of a $k$-space $\mathscr{X}$ are always strictly $\aone$-invariant since they are colimits of strictly $\aone$-invariant sheaves.
\end{rem}

\subsubsection*{Stable $\aone$-homotopy sheaves of spheres}
\begin{defn}
\label{defn:stableaonehomotopygroupsofspheres}
For every integer $n \in \Z$ we set
\[
\K^{MW}_n := \bpi_0^{s}(\Sigma^{\infty}_{\pone}\gm^{\wedge n})
\]
\end{defn}

\begin{rem}
\label{rem:morelscomputation}
Since stable $\aone$-homotopy groups are strictly $\aone$-invariant, the sheaves $\K^{MW}_n$ are strictly $\aone$-invariant.  What we refer to as ``Morel's computation of the zeroth stable $\aone$-homotopy sheaves" is a description of the sheaves $\K^{MW}_n$ in concrete terms.  More precisely, the sections of $\K^{MW}_n$ over fields are precisely the Milnor-Witt K-theory groups defined by Hopkins and Morel (see \cite[\S 6.3]{MIntro}).  In \cite[\S 2.1-2]{MField}, Morel explains how to define these sheaves in an ``elementary" fashion in terms of residue maps for Milnor-Witt K-theory.  Definition \ref{defn:stableaonehomotopygroupsofspheres} will be used in a more concrete fashion by means of Morel's computations in Theorem \ref{thm:morelfaselcomparison}.
\end{rem}

\subsubsection*{The sheaf-theoretic Dold-Kan correspondence}
Let $\Delta^{\circ}\Ab_k$ denote the category of simplicial Nisnevich sheaves of abelian groups.  Let $\Ch_{\geq 0}\Ab_k$ denote the category of chain complexes (differential of degree $-1$) of Nisnevich sheaves of abelian groups situated in homological degree $\geq 0$.  The functor of normalized chain complex determines a functor
\[
N: \Delta^{\circ}\Ab_k \longrightarrow \Ch_{\geq 0}\Ab_k.
\]
One can also construct a functor $K: \Ch_{\geq 0}\Ab_k \to \Delta^{\circ}\Ab_k$.  The sheaf-theoretic Dold-Kan correspondence shows that the functors $N$ and $K$ give mutually inverse equivalences of categories.  Because $N$ and $K$ are mutually inverse, they are also adjoint to each other on both sides; this observation is important if we consider monoidal structures on the categories in question.  We view $N$ as the right adjoint.  Then, $N$ is a (lax) symmetric monoidal functor with respect to the shuffle product, though not with respect to the ``usual" tensor product on chain complexes, and it sends the unit object in the category of simplicial abelian groups to the unit object in the category of chain complexes of abelian groups.

\subsubsection*{$\aone$-derived categories}
We now describe the categories in the second to last column of Diagram \ref{eqn:motiviccategories}.  We follow the presentation of Cisinski-D\'eglise \cite[\S 7]{CisinskiDeglise1} or \cite[\S 5.3]{CisinskiDeglise2}.  Write $\Ch\Ab_k$ for the category of bounded below chain complexes (i.e., differential of degree $-1$) of Nisnevich sheaves of abelian groups. Begin by considering the space of symmetric sequences $\mathscr{Fun}({\mathfrak{S}},\Ch\Ab_k)$.

There is an obvious inclusion functor $\Ch_{\geq 0}\Ab_k \hookrightarrow \Ch\Ab_k$, and we extend $N$ from the previous section to a functor $N: \Delta^{\circ}\Ab_k \longrightarrow \Ch\Ab_k.$  The functor $N$ is still (lax) monoidal.  As a consequence, the induced composite functor
\[
\tilde{N}: \Fun(\mathfrak{S},\Delta^{\circ}\Ab_k) \to \Fun(\mathfrak{S}, \Ch\Ab_k)
\]
is also (lax) monoidal.

\begin{entry}[Derived category of Nisnevich sheaves of abelian groups]
The Eilenberg-MacLane symmetric spectrum $\operatorname{H}\Z \in \Fun(\mathfrak{S},\mathscr{Spc}_{k,\bullet})$ is naturally an object in $\Fun(\mathfrak{S},\Delta^{\circ}\Ab_k)$ and by composition with $\tilde{N}$ we get a symmetric sequence $\tilde{N}(\operatorname{H}\Z)$.  Since $\operatorname{H}\Z$ is a ring object, and the functor $\tilde{N}$ is (lax) monoidal, it follows that $\tilde{N}(\operatorname{H}\Z)$ is a ring object in $\Fun(\mathscr{S},\Ch\Ab_k)$.  We let $\Sp^{\Sigma}(\Ch\Ab_k)$ denote the full subcategory of $\Fun(\mathscr{S},\Ch\Ab_k)$ consisting of modules over $\tilde{N}(\operatorname{H}\Z)$.  One equips $\Sp^{\Sigma}(\Ch\Ab_k)$ with a level-wise model structure \cite[\S 7.6 and Proposition 7.9]{CisinskiDeglise1}; this model structure is furthermore monoidal, and we write $\operatorname{D}_{-}(\Ab_k)$ for the resulting homotopy category.
\end{entry}

\begin{entry}
The functor $N$ in the Dold-Kan correspondence is part of a Quillen equivalence of model categories.  One also has the (monoidal) free abelian group functor $\Z(\cdot): \Spc_k \to \Delta^{\circ}\Ab_k$.  The composite of these two functors gives a functor $\Fun(\mathfrak{S},\Spc_{k,\bullet}) \to \Fun(\mathfrak{S},\Ch\Ab_k)$.  By construction, the image of the sphere symmetric sequence under this composite functor is $\tilde{N}(\operatorname{H}\Z)$, and one obtains a functor between corresponding categories of symmetric spectra:
\begin{equation}
\label{eqn:simplicialhurewicz}
\Sp^{\Sigma}(\Spc_k) \to \Sp^{\Sigma}(\Ch\Ab_k).
\end{equation}
By \cite[Theorem 9.3]{Hovey}, this functor induces a Quillen functor
\[
\SH_{S^1}(k) \to \operatorname{D}(\Ab_k)
\]
that we refer to alternatively as a Hurewicz functor or the derived functor of abelianization.  In any case, we write $\Z[n]$ for the image of $S^n_s$ under this functor.
\end{entry}

\begin{entry}[Effective $\aone$-derived category]
The category $\Daone^{\eff}(k)$ of Diagram \ref{eqn:motiviccategories} is called the effective $\aone$-derived category.  This category is constructed from $\Sp^{\Sigma}(\Ch\Ab_k)$ by $\aone$-localization.  Thus, one gets an $\aone$-local model structure on $\Sp^{\Sigma}(\Ch\Ab_k)$; again this model structure is monoidal, and we write $\Daone^{\eff}(k)$ for the resulting homotopy category.  Cisinski and D\'eglise prove (see \cite[Proposition 5.3.19 and 5.3.20]{CisinskiDeglise2}) that the homotopy category of this model structure is equivalent to the $\aone$-derived category as defined in \cite[\S 3.2]{MField}.  Again, one constructs an $\aone$-localization functor, which can be assumed to commute with finite products.
\end{entry}

\begin{entry}
It is not true that Functor \ref{eqn:simplicialhurewicz} takes $\aone$-local objects to $\aone$-local objects.  Nevertheless, if $(\mathscr{X},x)$ is a pointed space, we can consider the suspension symmetric spectrum $\Sigma^{\infty}_s \mathscr{X}$.  Applying Functor \ref{eqn:simplicialhurewicz} we get an object in $\Sp^{\Sigma}(\Ch\Ab_k)$ that we can $\aone$-localize.  We write $\tilde{C}_*^{\aone}(\mathscr{X})$ for the resulting object.  On the other hand, we could $\aone$-localize $\Sigma^{\infty}_s \mathscr{X}$ and then apply Functor \ref{eqn:simplicialhurewicz}.  There is a natural morphism from the latter to the former, and since all functors in question preserve finite products, this functor is monoidal.  As a consequence, there is an induced functor $\SH_{S^1}(k) \to \Daone^{\eff}(k)$.  If $\mathscr{X}$ is not pointed, we write $C_*^{\aone}(\mathscr{X})$ for $\tilde{C}_*^{\aone}(\mathscr{X}_+)$.  If $(\mathscr{X},x)$ is a pointed space, we write $\tilde{C}_*^{\aone}(\mathscr{X})[n]$ for the tensor product $\tilde{C}_*^{\aone}(\mathscr{X}) \tensor \Z[n]$.
\end{entry}

\begin{defn}
\label{defn:unstableaonehomologysheaves}
The $i$-th $\aone$-homology sheaf of a space $\mathscr{X}$, denoted $\H_i^{\aone}(\mathscr{X})$, is the Nisnevich sheaf on $\Sm_k$ associated with the presheaf
\[
U \mapsto \hom_{\Daone^{\eff}(k)}(C_*^{\aone}(U)[n],C_*^{\aone}(\mathscr{X})).
\]
\end{defn}

In the category $\Daone^{\eff}(k)$, the shift functor coincides with tensoring with $\Z[1]$.  Fix a rational point on $\pone$ and call it $\infty$.  Point $\gm$ by $1$; the open cover of $\pone$ by two copies of $\aone$ with intersection $\gm$ gives rise to an isomorphism $S^1_s \wedge \gm \isomt \pone$ in $\ho{k}$.  This gives an identification $\tilde{C}_*^{\aone}(S^1_s \wedge \gm) \isomt \tilde{C}_*^{\aone}(\pone)$.  Since the functor from the $S^1$-stable $\aone$-homotopy category to the effective $\aone$-derived category is monoidal, we may identify this object with $\tilde{C}_*^{\aone}(\gm)[-1]$.

\begin{defn}
\label{defn:enhancedTatecomplex}
The enhanced Tate complex, denoted $\Z \langle 1 \rangle$, is the object $\tilde{C}_*^{\aone}(\pone)[-2]$.
\end{defn}

\begin{entry}[Stable $\aone$-derived category]
Next, we define the category $\Daone(k)$---the stable $\aone$-derived category---from Diagram \ref{eqn:motiviccategories}.  The category $\Daone(k)$ is obtained from $\Daone^{\eff}(k)$ by formally inverting the enhanced Tate complex.  Again, this task is accomplished by means of the theory of symmetric spectra.  The simplest thing to do, ensuring compatibility with the construction of the stable $\aone$-homotopy category given above is to proceed as follows.  Consider the $\aone$-localization of the normalized chain complex of the free abelian group on the sphere symmetric $\pone$-spectrum.  This determines a monoid in the category of symmetric sequences, and we can consider the full subcategory, denoted $\Sp^\Sigma_{\pone}(\Ch\Ab_k)$, of $\Fun(\mathfrak{S},\Ch\Ab_k)$ consisting of modules over this object.  As above, we equip this category with a monoidal model structure and let $\Daone(k)$ denote the associated homotopy category.

The symmetric $\pone$-suspension spectrum of a pointed space $(\mathscr{X},x)$ defines an object $\tilde{C}_*^{s\aone}(\mathscr{X})$ of $\Sp^\Sigma_{\pone}(\Ch\Ab_k)$: take the $\aone$-localization of the normalized chain complex of the free abelian group on the suspension symmetric $\pone$-spectrum of $\mathscr{X}$.  We call this category the category of symmetric $\pone$-chain complexes.  Write $\1_k$ for the unit object in $\Daone(k)$ for the induced tensor structure, i.e., the complex $\tilde{C}_*^{s\aone}(S^0_k)$, and write $\1_k[n]$ for the space $\tilde{C}_*^{s\aone}(S^n_s)$, and $\tilde{C}_*^{s\aone}(\mathscr{X})[n]$ for the space $\tilde{C}_*^{s\aone}(\mathscr{X}) \tensor \1_k[n]$.  If $\mathscr{X}$ is not pointed, write $C_*^{s\aone}(\mathscr{X})$ for $\tilde{C}_*^{s\aone}(\mathscr{X}_+)$.
\end{entry}

\begin{defn}
\label{defn:stableaonehomologysheaves}
For any object $\mathscr{X}$ of $\Sp^\Sigma_{\pone}(\Ch\Ab_k)$, the $i$-th stable $\aone$-homology sheaf $\H_i^{s\aone}(\mathscr{X})$ is the sheaf on $\Sm_k$ associated with the presheaf
\[
U \mapsto \hom_{\Daone(k)}(C_*^{s\aone}(U)[i],C_*^{s\aone}(\mathscr{X})).
\]
\end{defn}

\begin{prop}
\label{prop:modulestructureonstableaonehomology}
If $\mathscr{X}$ is an object of $\Sp^\Sigma_{\pone}(\Ch\Ab_k)$, the sheaf $\H_0^{s\aone}(\mathscr{X})$ is a sheaf of (left or right) modules over the sheaf of rings $\H_0^{s\aone}(\1_k)$; a morphism $f: \mathscr{X} \to \mathscr{Y}$ of objects in $\Sp^\Sigma_{\pone}(\Ch\Ab_k)$ induces a morphism of $\H_0^{s\aone}(\1_k)$-modules.
\end{prop}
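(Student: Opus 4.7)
The plan is to deduce this proposition formally from the fact that $\Daone(k)$ is a closed symmetric monoidal triangulated category with unit $\1_k$, a property built into the construction of $\Sp^\Sigma_{\pone}(\Ch\Ab_k)$ as a monoidal model category. The argument parallels that of Proposition~\ref{prop:modulestructureonstableaonehomotopysheaves} and is essentially formal.

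First, I would show that for every smooth scheme $U$, the object $C_*^{s\aone}(U)$ is naturally a cocommutative comonoid in $\Daone(k)$. The comultiplication is induced by the diagonal $U \to U \times U$ via a K\"unneth-type isomorphism $C_*^{s\aone}(U \times V) \isomto C_*^{s\aone}(U) \tensor C_*^{s\aone}(V)$; this isomorphism follows from the identity $(U \times V)_+ = U_+ \wedge V_+$ together with the monoidality of each intermediate functor in the definition of $C_*^{s\aone}$ (symmetric $\pone$-suspension spectrum, free abelian group, normalized chain complex with the shuffle product, and $\aone$-localization, which can be chosen to commute with finite products). The counit is induced by the structure morphism $U \to \Spec k$.

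Second, using this comonoid structure I would construct a pairing at the presheaf level: given $\alpha \in \hom_{\Daone(k)}(C_*^{s\aone}(U), \mathscr{X})$ and $\beta \in \hom_{\Daone(k)}(C_*^{s\aone}(U), \1_k)$, set
\[
\beta \cdot \alpha \; := \; \bigl[\; C_*^{s\aone}(U) \stackrel{\Delta}{\longrightarrow} C_*^{s\aone}(U) \tensor C_*^{s\aone}(U) \stackrel{\alpha \tensor \beta}{\longrightarrow} \mathscr{X} \tensor \1_k \isomto \mathscr{X} \;\bigr].
\]
Taking $\mathscr{X} = \1_k$ shows that $U \mapsto \hom_{\Daone(k)}(C_*^{s\aone}(U), \1_k)$ is a presheaf of (associative) rings, and the pairing above makes $U \mapsto \hom_{\Daone(k)}(C_*^{s\aone}(U), \mathscr{X})$ a presheaf of modules over it; associativity and unitality follow formally from the comonoid axioms for $C_*^{s\aone}(U)$ and the fact that $\1_k$ is a unit for $\tensor$. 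Naturality in $U$ is immediate from naturality of the diagonal, and sheafifying produces the desired sheaf-theoretic module structure on $\H_0^{s\aone}(\mathscr{X})$ over $\H_0^{s\aone}(\1_k)$.

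Third, for a morphism $f \colon \mathscr{X} \to \mathscr{Y}$ in $\Sp^\Sigma_{\pone}(\Ch\Ab_k)$, post-composition with $f$ is visibly compatible with the pairing above (only the factor in $\hom(-,\mathscr{X})$ is affected, and $f \tensor \mathrm{id}_{\1_k}$ is natural), so the induced map descends to a morphism of $\H_0^{s\aone}(\1_k)$-modules. No step is substantively difficult; the only technical content worth checking carefully is the K\"unneth isomorphism of the first step, which amounts to tracing the monoidal structure through the three-stage construction of $\Daone(k)$ as the homotopy category of modules over $\tilde{N}(\operatorname{H}\Z)$ in $\mathscr{Fun}(\mathfrak{S},\Ch\Ab_k)$, with $\pone$ inverted.
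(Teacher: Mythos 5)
Your proposal is correct, and it fills in details the paper simply asserts. The paper gives no explicit proof of this proposition (nor of the companion Proposition \ref{prop:modulestructureonstableaonehomotopysheaves}); it only remarks that ``strict ring and module structures at the level of spaces induce ring and module structures at the level of homotopy categories.'' Your argument --- making $C_*^{s\aone}(U)$ a cocommutative comonoid via the diagonal and the K\"unneth isomorphism, then forming the convolution pairing $\hom(C_*^{s\aone}(U),\1_k) \otimes \hom(C_*^{s\aone}(U),\mathscr{X}) \to \hom(C_*^{s\aone}(U),\mathscr{X})$ and sheafifying --- is precisely the construction the paper is implicitly invoking, and it is the only natural route. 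One small point worth flagging: the normalized chain complex functor $N$ is only \emph{lax} monoidal with respect to the shuffle product, so the K\"unneth comparison map $C_*^{s\aone}(U) \otimes C_*^{s\aone}(V) \to C_*^{s\aone}(U\times V)$ is not an isomorphism on the nose but only a weak equivalence (Eilenberg--Zilber); since you work in the homotopy category $\Daone(k)$ this is harmless, but it is worth stating that the comonoid structure lives in the homotopy category rather than at the strict level.
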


We would like to know that morphisms in the category so defined can actually be obtained by inverting $\Z \langle 1 \rangle$ in $\Daone^{\eff}(k)$.

\begin{prop}
\label{prop:stablederivedcolimit}
For any smooth scheme $U$, any $i$ and any pointed space $(\mathscr{X},x)$ the canonical morphism
\[
\colim_n \hom_{\Daone^{\eff}(k)}(C_*^{\aone}(U)\langle n \rangle[i],\tilde{C}_*^{\aone}(\mathscr{X})\langle n \rangle[i]) \longrightarrow \hom_{\Daone(k)}(C_*^{s\aone}(U),\tilde{C}_*^{s\aone}(\mathscr{X}))
\]
is an isomorphism.
\end{prop}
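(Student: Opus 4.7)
The plan is to mirror the proof of Proposition \ref{prop:stablehomotopycolimit} essentially verbatim, swapping the stable $\aone$-homotopy formalism for its abelian analog. By construction, $\Daone(k)$ is produced from $\Daone^{\eff}(k)$ by inverting the enhanced Tate complex $\Z\langle 1 \rangle$ through the same symmetric-spectrum machinery that produces $\SH(k)$ from $\SH_s(k)$. Consequently Ayoub's general results \cite[Theorems 4.3.61 and 4.3.79]{AyoubII} apply without modification and reduce the claim to a single input: the cyclic permutation $(123) \in \Sigma_3$ acts as the identity on the triple tensor power $\Z\langle 1 \rangle^{\otimes 3} \cong \Z\langle 3 \rangle$ in $\Daone^{\eff}(k)$.

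To verify this cyclic permutation condition I would exploit the symmetric monoidal Hurewicz functor $\SH_s(k) \to \Daone^{\eff}(k)$. Ayoub's computation \cite[4.5.65]{AyoubII}, which is already the input used in the proof of Proposition \ref{prop:stablehomotopycolimit}, establishes that $(123)$ acts as the identity on $(\Sigma^{\infty}_s \pone)^{\wedge 3}$ in $\SH_s(k)$. Since this is a purely monoidal statement, it is preserved by any symmetric monoidal functor; applying the Hurewicz functor and using that it sends $\Sigma^{\infty}_s \pone$ to $\tilde{C}_*^{\aone}(\pone)$, which differs from $\Z\langle 1 \rangle$ only by the even shift $[2]$, one obtains the required identity on $\Z\langle 1 \rangle^{\otimes 3}$. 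Because the shift is by an even integer, no Koszul signs enter.

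The only real work, to the extent that there is any, is bookkeeping: one must check that the indexing system and transition maps in the colimit on the left are precisely those produced by Ayoub's formalism, namely that the transition at the $n$-th stage is given by tensoring source and target with $\Z\langle 1 \rangle$, and that the resulting colimit is canonically identified with the $\hom$ group computed in the stabilized category on the right. Once this compatibility is laid out and the cyclic permutation condition above is in hand, the isomorphism is a formal consequence of Ayoub's symmetric-spectrum stabilization theorems; no new homotopical or $\aone$-invariance input is needed beyond what is already built into the definition of $\Daone^{\eff}(k)$.
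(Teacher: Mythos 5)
Your proposal matches the paper's proof, which simply states that the argument is identical to that of Proposition \ref{prop:stablehomotopycolimit}: invoke Ayoub's stabilization theorems (Theorems 4.3.61 and 4.3.79) once the cyclic permutation condition is verified. The one place you add useful detail is in explicitly transporting Ayoub's verification of the cyclic permutation condition from $\SH_s(k)$ to $\Daone^{\eff}(k)$ via the symmetric monoidal Hurewicz functor and checking that the even shift introduces no Koszul signs; this is exactly the step the paper leaves implicit when it says the proofs are identical.
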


\begin{proof}
The proof is identical to the corresponding statement in the stable $\aone$-homotopy category, i.e., Proposition \ref{prop:stablehomotopycolimit}.
\end{proof}

\subsubsection*{Hurewicz functors}
Composing the free abelian group functor and the normalized chain complex functor with $\aone$-localization, there are functors
\[
\begin{split}
\Sp^\Sigma(\Spc_k) &\longrightarrow \Sp^{\Sigma}(\Ch\Ab_k), \text{ and } \\
\Sp^\Sigma_{\pone}(\Spc_k) &\longrightarrow \Sp^\Sigma_{\pone}(\Ch\Ab_k).
\end{split}
\]
By construction, both functors are monoidal and induce functors (which we refer to as {\em Hurewicz functors}) on homotopy categories
\[
\begin{split}
\SH_{s}(k) &\longrightarrow \Daone^{\eff}(k), \text{ and }\\
\SH(k) &\longrightarrow \Daone(k).
\end{split}
\]
Consequently, if $\mathscr{X}$ is a space, there are induced morphisms
\begin{equation}
\label{eqn:hurewiczhomomorphism}
\begin{split}
\bpi_i^{s}(\Sigma^{\infty}_{s}\mathscr{X}_+) &\longrightarrow \H_i^{\aone}(\mathscr{X}), \text{ and }\\
\bpi_i^{s}(\Sigma^{\infty}_{\pone}\mathscr{X}_+) &\longrightarrow \H_i^{s\aone}(\mathscr{X})
\end{split}
\end{equation}
that we refer to as {\em Hurewicz morphisms}.

\begin{prop}
\label{prop:hurewiczcommuteswithmodulestructures}
If $\mathscr{X}$ is any $k$-space, the following diagram commutes:
\[
\xymatrix{
\bpi_0^{s}(\mathbb{S}^0_k) \tensor \bpi_0^s(\Sigma^{\infty}_{\pone}\mathscr{X}_+) \ar[r]\ar[d] & \bpi_0^s(\Sigma^{\infty}_{\pone}\mathscr{X}_+) \ar[d]\\
\H_0^{s\aone}(\1_k) \tensor \H_0^{s\aone}(\mathscr{X}) \ar[r] & \H_0^{s\aone}(\mathscr{X}).
}
\]
\end{prop}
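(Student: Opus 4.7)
The proof proposal is that the statement follows formally from the fact that the Hurewicz functor $\SH(k) \to \Daone(k)$ is a symmetric monoidal functor, combined with naturality of the pairings defining the module structures in Proposition \ref{prop:modulestructureonstableaonehomotopysheaves}.

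First, I would recall the construction of the module structure on $\bpi_0^s(\Sigma^{\infty}_{\pone}\mathscr{X}_+)$. The symmetric $\pone$-suspension spectrum $\Sigma^{\infty}_{\pone}\mathscr{X}_+$ is tautologically a module over the sphere spectrum $\mathbb{S}^0_k$, with action map
\[
\mu_{\mathscr{X}} : \mathbb{S}^0_k \wedge \Sigma^{\infty}_{\pone}\mathscr{X}_+ \longrightarrow \Sigma^{\infty}_{\pone}\mathscr{X}_+
\]
in $\Sp_{\pone}^{\Sigma}(\Spc_k)$. Passing to the homotopy category $\SH(k)$ and applying $\bpi_0^s$ together with the external product coming from the smash product yields the top horizontal arrow. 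Similarly, $\tilde C_*^{s\aone}(\mathscr{X}_+)$ is a module over $\1_k$ in $\Sp^\Sigma_{\pone}(\Ch\Ab_k)$, with action $\nu_{\mathscr{X}} : \1_k \tensor \tilde C_*^{s\aone}(\mathscr{X}_+) \to \tilde C_*^{s\aone}(\mathscr{X}_+)$, and the bottom horizontal arrow is obtained from $\nu_{\mathscr{X}}$ in the same way.

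Next, I would invoke the monoidal structure of the Hurewicz functor. By construction, the composition of free abelian group, normalized chain complex, and $\aone$-localization is a monoidal functor $\Sp_{\pone}^{\Sigma}(\Spc_k) \to \Sp^\Sigma_{\pone}(\Ch\Ab_k)$, sending $\mathbb{S}^0_k$ to $\1_k$ and $\Sigma^{\infty}_{\pone}\mathscr{X}_+$ to $\tilde C_*^{s\aone}(\mathscr{X}_+)$. Because a monoidal functor carries module objects to module objects and the structure maps of module objects to the structure maps of their images, the square
\[
\xymatrix{
\mathbb{S}^0_k \wedge \Sigma^{\infty}_{\pone}\mathscr{X}_+ \ar[r]^-{\mu_{\mathscr{X}}}\ar[d] & \Sigma^{\infty}_{\pone}\mathscr{X}_+ \ar[d] \\
\1_k \tensor \tilde C_*^{s\aone}(\mathscr{X}_+) \ar[r]^-{\nu_{\mathscr{X}}} & \tilde C_*^{s\aone}(\mathscr{X}_+)
}
\]
commutes in $\Daone(k)$ (with the left-hand vertical arrow involving the monoidality isomorphism of the Hurewicz functor). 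Applying the functor $\H_0^{s\aone}(-)$ and identifying Hurewicz morphisms for the source via the external product then produces exactly the square in the statement.

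Finally, I would check the bookkeeping needed to pass from the square of action maps of spectra to the displayed tensor-product square of sheaves, using that the Hurewicz morphisms of \eqref{eqn:hurewiczhomomorphism} are natural in $\mathscr{X}$ and compatible with smash products; this involves only routine diagram chases using the associativity/unit coherences of the monoidal functor. The main obstacle in this proof is essentially bookkeeping: one must keep careful track of the fact that the module structures on $\bpi_0^s$ and $\H_0^{s\aone}$ used in Proposition \ref{prop:modulestructureonstableaonehomotopysheaves} are indeed the ones induced from the tautological module structures on spectra and $\pone$-chain complexes over their respective unit objects, and that the Hurewicz functor matches the two choices; once this identification is made, commutativity is automatic from the monoidality of the Hurewicz functor.
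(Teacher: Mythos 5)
Your proposal is correct and takes essentially the same approach as the paper: the paper's proof simply says the Hurewicz functor is induced by a monoidal functor and then invokes Propositions \ref{prop:modulestructureonstableaonehomotopysheaves} and \ref{prop:modulestructureonstableaonehomology}. Your write-up unpacks exactly that argument in more detail.
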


\begin{proof}
The Hurewicz functor is induced by a monoidal functor.  The result then follows immediately by combining Proposition \ref{prop:modulestructureonstableaonehomotopysheaves} and \ref{prop:modulestructureonstableaonehomology}.
\end{proof}

\subsubsection*{Motivic categories}
\begin{entry}[Derived category of correspondences]
Let $\Cor_k$ denote the category whose objects are smooth $k$-schemes and where the set of morphisms from $X$ to $Y$ is the free abelian group on integral closed subschemes of $X\times_k Y$  finite and surjective over a component of $X$.  Sending a morphism $f: X \to Y$ of smooth $k$-schemes to its graph determines a functor $\Sm_k \to \Cor_k$.  Precomposing with this functor, any (say abelian group valued) presheaf on $\Cor_k$ can be viewed as a presheaf on $\Sm_k$.  Write $\Ab_{\Nis}(\Cor_k)$ for the full subcategory of abelian group valued presheaves on $\Cor_k$ that are Nisnevich sheaves.  Write $\Z_{tr}(X)$ for the representable presheaf $U \mapsto \Cor_k(U,X)$; this presheaf can be shown to be a Nisnevich sheaf on $\Sm_k$.  Abusing notation, we write $D_{-}(\Cor_k)$ for the derived category of the abelian category $\Ab_{\Nis}(\Cor_k)$.  This category can also be constructed using symmetric spectra.
\end{entry}

\begin{entry}[Effective motivic complexes]
We let $\DM^{\eff}(k,R)$ be Voevodsky's triangulated category of effective motivic complexes; when $R = \Z$ we suppress it from the notation and write $\DM^{\eff}(k)$.  The category $\DM^{\eff}(k,R)$ is the $\aone$-localization of the derived category of Nisnevich sheaves of $R$-modules with transfers.  We write $\M(X)$ for the complex $C_*R_{tr}(X)$ viewed as an object of $\DM^{\eff}(k,R)$.  The category $\DM^{\eff}(k,R)$ can be viewed as the subcategory of the derived category of Nisnevich sheaves of $R$-modules with transfers consisting of $\aone$-local objects.  The functor forget transfers preserves $\aone$-local objects and therefore induces a functor
\[
\DM^{\eff}(k,R) \longrightarrow \Daone^{\eff}(k,R).
\]
The next result shows that this functor is part of an adjunction of model categories.
\end{entry}

\begin{prop}
\label{prop:aonederivedmotivicadjunction}
There is an adjunction
\[
\xymatrix{
\Daone^{\eff}(k,R) \ar@<.4ex>[r] & \ar@<.4ex>[l] \DM^{\eff}(k,R)
}
\]
The adjoint functor preserves tensor products.
\end{prop}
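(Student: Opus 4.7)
The plan is to construct the adjunction in three steps: build it on the underlying categories of Nisnevich sheaves (with and without transfers), promote it through the chain-complex/symmetric-spectrum constructions, descend it to the respective $\aone$-localizations, and then verify monoidality of the left adjoint.

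\textbf{Step 1 (Unstabilized adjunction).} The graph functor $\Sm_k \to \Cor_k$ induces by precomposition a forget-transfers functor $U$ from $R$-linear Nisnevich sheaves with transfers on $\Sm_k$ to $R$-linear Nisnevich sheaves on $\Sm_k$. The functor $U$ is exact and preserves filtered colimits, and a standard left Kan extension argument along the representables $R(X) \mapsto R_{tr}(X)$ produces a left adjoint $F$. Both functors are additive, so they extend levelwise to chain complexes and thence to the symmetric-spectrum models used to construct $\Daone^{\eff}(k,R)$ and $\DM^{\eff}(k,R)$; this gives a Quillen adjunction between their (unlocalized) levelwise model structures.

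\textbf{Step 2 (Descent to $\aone$-localizations).} The functor $U$ preserves $\aone$-local objects, for the tautological reason that $\aone$-locality is tested on sections over smooth schemes and is insensitive to the transfer structure. A standard Bousfield-localization argument then guarantees that the Quillen adjunction $(F,U)$ descends to the $\aone$-local model structures and yields the required adjunction
\[
F : \Daone^{\eff}(k,R) \longleftrightarrow \DM^{\eff}(k,R) : U
\]
on homotopy categories. I expect this to be the main point of caution: one must verify that ``$\aone$-local'' means the same thing on both sides, which rests on the standard fact from \cite{MVW} that an $\aone$-local complex of Nisnevich sheaves with transfers is precisely a complex of strictly $\aone$-invariant sheaves which happens to carry transfer structure---a condition visibly preserved by $U$. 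The left-derived functor of $F$ is then computed on cofibrant resolutions built from representables, on which $F$ is already well-behaved.

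\textbf{Step 3 (Monoidality of the left adjoint).} The tensor products on both sides are characterized on representables by $R(X) \otimes R(Y) = R(X \times Y)$ and $R_{tr}(X) \otimes_{tr} R_{tr}(Y) = R_{tr}(X \times Y)$; hence $F$ is strongly monoidal on representables by direct inspection. Since $F$ preserves all colimits, the representables generate under colimits, and both tensor products are colimit-preserving separately in each variable, strong monoidality propagates from representables to arbitrary objects, proving that the left adjoint preserves tensor products.
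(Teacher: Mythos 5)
Your proof is correct, but it differs markedly in presentation from the paper's, which simply defers to Cisinski--D\'eglise \cite[10.4.1]{CisinskiDeglise2}. What you do is reconstruct the adjunction from the ground up: the forget-transfers functor $U$ is exact (this is a theorem of Voevodsky about the category $\Ab_{\Nis}(\Cor_k)$, not a triviality, though you correctly take it as an input), its left adjoint $F$ is built by Kan extension from $R(X) \mapsto R_{tr}(X)$, and the Quillen adjunction is descended through the $\aone$-Bousfield localizations. Your Step 2 is the heart of the argument, and here a small caution: the cleanest route to the claim that $U$ preserves $\aone$-local objects is purely formal --- since $F$ carries the generating $\aone$-equivalences $R(X\times\aone)\to R(X)$ to the generating $\aone$-equivalences $R_{tr}(X\times\aone)\to R_{tr}(X)$, the adjunction immediately gives that $U$ sends $\aone$-local objects to $\aone$-local objects --- whereas the alternative you sketch via strictly $\aone$-invariant cohomology sheaves actually invokes two nontrivial theorems (Voevodsky's theorem that homotopy-invariant Nisnevich sheaves with transfers have homotopy-invariant cohomology, and Morel's analogous characterization in the transfer-free case, the latter requiring $k$ perfect), and so carries hypotheses the result doesn't need. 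Step 3 is the standard Day-convolution argument and is fine, modulo noting that $R_{tr}(X)\otimes R_{tr}(Y)\cong R_{tr}(X\times Y)$ at the derived level is itself a theorem rather than a definition. The trade-off is clear: the paper's one-line citation is economical and inherits the generality of Cisinski--D\'eglise's premotivic framework; your version makes the mechanism of the adjunction transparent and self-contained at the cost of re-deriving facts that are packaged there.
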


\begin{proof}
See \cite[10.4.1]{CisinskiDeglise2}
\end{proof}

\begin{defn}
Assume $k$ is a field and $X$ is a smooth $k$-variety.  The $0$-th Suslin homology sheaf $\H_0^S(X)$ is the $0$-th homology sheaf of the complex $L_{\aone}C_*\Z_{tr}(X)$.
\end{defn}


\begin{prop}\label{prop:stabletosuslinhomology}
Suppose $X$ is a smooth $k$-scheme and $k$ is a perfect field.  The canonical map $\H_0^{\aone}(X) \to \H_0^S(X)$ factors through a morphism $\H_0^{s\aone}(X) \to \H_0^S(X)$.
\end{prop}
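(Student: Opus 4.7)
The plan is to use the $\pone$-stable Hurewicz functor $\Daone(k) \to \DM(k)$ of Diagram~\ref{eqn:motiviccategories} together with Voevodsky's cancellation theorem. Since both categories are constructed from the corresponding effective categories by $\pone$-stabilization and the Hurewicz-style functors commute with the formation of suspension spectra, this functor sends the $\pone$-stable derived object $C_*^{s\aone}(X)$ to the motive $M(X)$, regarded in $\DM(k)$ via the canonical functor from $\DM^{\eff}(k)$.

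Applying this functor to morphism sets produces, for any smooth scheme $U$, a natural map
\[
\hom_{\Daone(k)}(C_*^{s\aone}(U),\, C_*^{s\aone}(X)) \longrightarrow \hom_{\DM(k)}(M(U),\, M(X)).
\]
By Voevodsky's cancellation theorem, which is valid since $k$ is perfect, the canonical functor $\DM^{\eff}(k) \to \DM(k)$ is fully faithful; hence the right-hand side is identified with $\hom_{\DM^{\eff}(k)}(M(U), M(X))$. The latter in turn equals $\H_0^S(X)(U)$ for smooth $U$, by the hypercohomology description of morphism sets in $\DM^{\eff}(k)$ combined with the strict $\aone$-invariance of the sheaf $\H_0^S(X)$ (it is the $0$-th homology sheaf of the $\aone$-local complex $L_{\aone}C_*\Z_{tr}(X)$). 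The resulting natural transformation is functorial in $U$, and sheafifying in $U$ yields the desired morphism $\H_0^{s\aone}(X) \to \H_0^S(X)$.

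The factorization claim reduces to commutativity of the square
\[
\xymatrix{
\Daone^{\eff}(k) \ar[r]\ar[d] & \DM^{\eff}(k) \ar[d]\\
\Daone(k) \ar[r] & \DM(k)
}
\]
extracted from Diagram~\ref{eqn:motiviccategories}. The top-right composite produces the canonical map $\H_0^{\aone}(X) \to \H_0^S(X)$, while the left-bottom composite produces our constructed map precomposed with the stabilization $\H_0^{\aone}(X) \to \H_0^{s\aone}(X)$. The main technical input is Voevodsky's cancellation theorem; without it, the $\pone$-stabilization could a priori collapse information on the motivic side, and this is also the only place where perfectness of $k$ enters. Alternatively, one could derive the same conclusion more hands-on by expressing $\H_0^{s\aone}(X)(U)$ as the filtered colimit $\colim_n \hom_{\Daone^{\eff}(k)}(C_*^{\aone}(U)\langle n \rangle,\, C_*^{\aone}(X)\langle n \rangle)$ via Proposition~\ref{prop:stablederivedcolimit}, mapping each term to $\hom_{\DM^{\eff}(k)}(M(U)(n), M(X)(n))$, and using cancellation to see that the target is canonically independent of $n$ and equal to $\H_0^S(X)(U)$.
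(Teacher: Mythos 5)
Your proof is correct and uses the same essential ingredients as the paper's: the comparison functor between the $\aone$-derived and motivic categories (sending $C_*^{\aone}(X)$ to $M(X)$ compatibly with Tate twists), plus Voevodsky's cancellation theorem to control what happens on the motivic side under $\pone$-stabilization. The only difference is packaging. The paper works effectively throughout: it uses Proposition~\ref{prop:stablederivedcolimit} to write $\H_0^{s\aone}(X)$ as a colimit over $n$ of groups $\hom_{\Daone^{\eff}(k)}(\Z\langle n \rangle, C_*^{\aone}(X)\langle n \rangle)$, maps each term to $\hom_{\DM^{\eff}(k)}(\Z(n),M(X)(n))$, and invokes cancellation to identify the target system with the constant system $\H_0^S(X)$. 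Your main argument instead passes directly to the $\pone$-stable categories and invokes cancellation in the equivalent form that $\DM^{\eff}(k) \to \DM(k)$ is fully faithful; this is a touch more conceptually streamlined but relies on the commutativity of the square in Diagram~\ref{eqn:motiviccategories} together with the fact that all four vertical/horizontal arrows are induced by monoidal left Quillen functors that agree on suspension spectra — justified by the constructions in \S\ref{ss:formalism}, but worth making explicit. The alternative you sketch in the last sentences is essentially verbatim the paper's argument.

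One small imprecision to watch: $\hom_{\DM^{\eff}(k)}(M(U),M(X))$ is not literally $\H_0^S(X)(U)$ for arbitrary smooth $U$ — it is the degree-zero hypercohomology $\mathbb{H}^0_{\Nis}(U, M(X))$, which can receive contributions from $H^p_{\Nis}(U,\H_p^S(X))$ with $p>0$. There is always a natural edge map $\mathbb{H}^0_{\Nis}(U,M(X)) \to H^0_{\Nis}(U,\H_0^S(X))$ because $M(X)$ is concentrated in non-negative homological degrees, and since Nisnevich sheafification kills the higher cohomology presheaves, the presheaf does sheafify to $\H_0^S(X)$. Your conclusion is therefore correct, but it is the combination of the edge map and sheafification that delivers it, not a termwise equality; strict $\aone$-invariance of $\H_0^S(X)$ by itself does not eliminate the higher-degree terms of the hypercohomology spectral sequence.
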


\begin{proof}
Use the functor of Proposition \ref{prop:aonederivedmotivicadjunction}.  By construction, given a smooth scheme $X$, this functor sends $C_*^{\aone}(X)$ to $\M(X)$ and is compatible with shifts.  In particular, $\Z \langle 1 \rangle$ is sent to $\Z(1)$.  We can identify $\H_0^{\aone}(X)$ with $\hom_{\Daone^{\eff}(k)}(\Z,C_*^{\aone}(X))$.  For every integer $i \geq 0$, we therefore have functorial maps:
\begin{equation}
\label{eqn:stabilizationcomparison}
\hom_{\Daone^{\eff}(k)}(\Z\langle i \rangle,C_*^{\aone}(X)\langle i \rangle) \longrightarrow \hom_{\DM^{\eff}(k)}(\Z(i),\M(X)(i)).
\end{equation}
By Voevodsky's cancelation theorem, we know that the canonical map
\[
\hom_{\DM^{\eff}(k)}(\Z(j),\M(X)(j)) \longrightarrow \hom_{\DM^{\eff}(k)}(\Z(j+1),\M(X)(j+1))
\]
is an isomorphism for any $j \geq 0$.  In particular, we get an isomorphism
\[
\H_0^{S}(X) \isomt \hom_{\DM^{\eff}(k)}(\Z(i),\M(X)(i))
\]
for any $i \geq 0$.  In light of Proposition \ref{prop:stablederivedcolimit}, the colimit of the maps of Formula \ref{eqn:stabilizationcomparison} followed by the inverse to the isomorphism of the previous line gives the morphism of the statement.
\end{proof}

\subsection{Suslin homology and $0$-cycles of degree $1$}
\label{ss:suslinhomology}
As a model for our study of the $0$-th $\aone$-homology sheaf, in this section we review the relationship between the $0$-th Suslin homology sheaf and $0$-cycles of degree $1$.

\begin{lem}
\label{lem:suslinchow}
Assume $k$ is a perfect field, and $X$ is a smooth proper $k$-variety.  For any separable finitely generated extension $L/k$, we have a canonical identification $\H_0^S(X)(L) = CH_0(X_L)$.
\end{lem}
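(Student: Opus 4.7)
The plan is to unfold the definition of $\H_0^S(X)$ and reduce the statement to the classical Suslin-Voevodsky comparison identifying the $0$-th Suslin homology of a smooth proper variety with its Chow group of $0$-cycles.

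First, I would observe that, by Voevodsky's theorem on homotopy invariance of the cohomology sheaves of the Suslin complex (valid since $k$ is perfect), the complex $C_*\Z_{tr}(X)$ already has strictly $\aone$-invariant Nisnevich homology sheaves; in particular it is $\aone$-local in $\Daone^{\eff}(k)$. Hence the canonical map $C_*\Z_{tr}(X) \to L_{\aone}C_*\Z_{tr}(X)$ is a quasi-isomorphism of complexes of Nisnevich sheaves, and $\H_0^S(X)$ may be identified with the honest $0$-th homology sheaf of $C_*\Z_{tr}(X)$.

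Next, I would compute this sheaf on sections over $\Spec L$. Since $L/k$ is separable and finitely generated, we may choose a smooth integral $k$-variety $U$ with function field $L$ and write $\Spec L = \lim_\alpha V_\alpha$ as the cofiltered limit of affine open subschemes of $U$. Appealing once more to Voevodsky's theorem, the presheaf $V \mapsto H_0(C_*\Z_{tr}(X)(V))$ is already a Nisnevich sheaf on $\Sm_k$; combined with the fact that $H_0$ commutes with filtered colimits of chain complexes, this yields
\[
\H_0^S(X)(L) \;=\; \colim_{\alpha}\, H_0\bigl(C_*\Z_{tr}(X)(V_\alpha)\bigr) \;=\; H_0\bigl(C_*\Z_{tr}(X)(\Spec L)\bigr).
\]
Now a correspondence from $\Spec L \times_k \Delta^n_k$ to $X$ that is finite and surjective over the first factor is, by base change, the same as a correspondence from $\Delta^n_L$ to $X_L$ of the same type. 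Hence $C_*\Z_{tr}(X)(\Spec L)$ coincides with the Suslin complex of the smooth proper $L$-variety $X_L$ evaluated at its base point.

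Invoking the Suslin-Voevodsky comparison theorem for the smooth proper $L$-variety $X_L$ then furnishes a canonical isomorphism
\[
H_0\bigl(C_*\Z_{tr}(X_L)(\Spec L)\bigr) \isomto CH_0(X_L),
\]
and the functoriality of every construction in $L$ is manifest. The main obstacle is this last step; this is the content of the classical Suslin-Voevodsky theorem, whose proof rests on a moving lemma for $0$-cycles rationally equivalent to zero (realising them on $X_L \times \aone$ supported appropriately over the endpoints) in the smooth projective case, and on a reduction via Chow's lemma together with proper pushforward functoriality of Suslin homology for the smooth proper but possibly non-projective case. All the intervening steps amount to a chase through the definitions combined with standard invocations of Voevodsky's structural results on $\aone$-invariant presheaves with transfers over a perfect field.
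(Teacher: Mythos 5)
Your argument is correct, but it is a genuinely different proof from the paper's. You unwind $\H_0^S(X)(L)$ to the naive Suslin homology group $H_0\bigl(C_*\Z_{tr}(X_L)(\Spec L)\bigr)$ via continuity and base change for finite correspondences, and then cite as a black box the direct comparison $H_0^S(Y) \cong CH_0(Y)$ for smooth proper $Y$. The paper never invokes that comparison; instead it stays inside $\DM$, converting $\hom_{\DM^{\eff}_L}(\Z,\M(X_L))$ into a cohomology group by Voevodsky's cancellation theorem and the strong duality $\M(X_L)^{*}\cong\M(X_L)(-n)[-2n]$, landing in $\mathbb{H}^{2n}_{\Nis}(X_L,\Z(n))$, and then uses the hypercohomology spectral sequence to identify this with $H^n_{\Nis}(X_L,\K^M_n)=CH_0(X_L)$ via Kato's theorem. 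The longer route is chosen on purpose: the paper explicitly advertises this proof as a template whose every step (base change, cancellation, Atiyah-style duality via Thom spectra, degeneration in top degree, Gersten identification) is reproven in $\Daone(k)$ as Proposition \ref{prop:basechange}, Corollary \ref{cor:stablerepresentability}, Proposition \ref{prop:atiyahduality}, Corollary \ref{cor:enhancedmotivicischowwitt} and Theorem \ref{thm:thomiso}, yielding Theorem \ref{thm:aonehomologyintermsofthomspaces} and ultimately Theorem \ref{thm:main}; your proof, while shorter here, does not transplant to the unoriented $\K^{MW}$-setting, where duality is the only tool available. Two small inaccuracies worth fixing: homotopy invariance of the cohomology presheaves of $C_*\Z_{tr}(X)$ does not make $V\mapsto H_0\bigl(C_*\Z_{tr}(X)(V)\bigr)$ a Nisnevich sheaf --- what Voevodsky's theorem gives is that this homotopy invariant presheaf with transfers agrees with its Nisnevich sheafification on sections over smooth semilocal schemes and fields, which is all you actually need; and the comparison $H_0^S(Y)=CH_0(Y)$ for smooth proper $Y$ requires neither a moving lemma nor Chow's lemma, only a direct identification of the two defining subgroups of $Z_0(Y)$, using a proper compactification of $W\subset\aone\times Y$ in one direction and the composite of finite correspondences $\Spec L\to\pone\to\tilde{C}\to Y$ with the elementary $\aone$-homotopy $[0]\sim[\infty]$ on $\pone$ in the other.
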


\begin{proof}
We have the following sequence of identifications.  Let $n = \dim X$.  For simplicity of notation, we write $H_0^S(X_L)$ for $\H_0^S(X)(L)$.
\[
\begin{split}
H_0^S(X_L) &\stackrel{(a)}{=} \hom_{\DM^{\eff}_k}(\Z,\M(X_L)) \\
&\stackrel{(b)}{=}  \hom_{\DM^{\eff}_L}(\Z,\M(X_L)) \\
&\stackrel{(c)}{\cong}  \hom_{\DM_L}(\Z,\M(X_L)) \\
&\stackrel{(d)}{\cong} \hom_{\DM_L}(\M(X_L)^*,\Z) \\
&\stackrel{(e)}{\cong} \hom_{\DM_L}(\M(X_L)(-n)[-2n],\Z) \\
&\stackrel{(f)}{\cong} \hom_{\DM^{\eff}_L}(\M(X_L),\Z(n)[2n]) \\
&\stackrel{(g)}{=} {\mathbb H}^{2n}_{\Nis}(X_L,\Z(n)) \\
&\stackrel{(h)}{\cong} H^n_{\Nis}(X_L,\K^M_n) \\
&\stackrel{(i)}{=} CH_0(X_L). \\
\end{split}
\]
Identification (a) follows from the definition of $\M(X)$.  Identification (b) follows from, e.g., \cite[Exercise 1.12]{MVW}.  That (c) is an isomorphism is the statement of Voevodsky's cancelation theorem \cite[Corollary 4.10]{VCancellation}, which implies that homomorphisms in $\DM^{\eff}_L$ and $\DM_L$ coincide.  Isomorphism (d) follows from the fact that the category $\DM_L$ admits duals.  Isomorphism (e) is a consequence of the explicit description of duals in $\DM_k$; this can be viewed as a form of Atiyah duality (we will explain this in detail later).  For smooth varieties, Friedlander-Voevodsky proved a duality theorem under the assumption that $k$ admits resolution of singularities (see, e.g., \cite[Theorem 16.27]{MVW} for some recollections).  However, the assumption that $X$ is, in addition, proper allows us to weaken the hypothesis on $k$ to merely assuming it is perfect.

Isomorphism (f) follows again from Voevodsky's cancelation theorem.  Identification (g) follows from the fact that $\Z(n)[2n]$ is $\aone$-local; see \cite[Corollary 14.9]{MVW}.  Identification (h) requires more argument.  One knows \cite[Theorem 5.1]{MVW} that there is a canonical isomorphism $\underline{H}^n(\Z(n))(L) = \K^M_n(L)$ (this is essentially the Nesterenko-Suslin-Totaro theorem but the aforementioned proof is self-contained).  Moreover, this argument shows that $\underline{H}^i(\Z(n))(L)$ vanishes for $i > n$.  The hypercohomology spectral sequence together with dimensional vanishing give the required identification. Identification (i) is a result of Kato \cite{KatoMilnor} and is a consequence of the existence of the Gersten resolution for the sheaf $\K^M_n$.
\end{proof}

\begin{lem}
\label{lem:suslindegree}
Assume $k$ is a perfect field, and $X$ is a smooth proper $k$-variety.  The pushforward map $\H_0^S(X) \to \H_0^S(\Spec k) = \Z$ coincides with the degree map $CH_0(X_L) \to \Z$ upon evaluation on sections over finitely generated extensions $L/k$.
\end{lem}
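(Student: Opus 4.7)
The plan is to trace the pushforward $\pi_*: \H_0^S(X) \to \H_0^S(\Spec k)$ through each of the identifications (a)--(i) used in the proof of Lemma \ref{lem:suslinchow}, and verify that under the chain it becomes the classical degree map on Chow groups. Apply the same chain to $\Spec L$: since $\dim \Spec L = 0$, every twist and shift collapses and the chain identifies $\H_0^S(\Spec L)(L)$ canonically with $\Z = CH_0(\Spec L)$. Thus only the behavior on the $X_L$ side needs analysis, and it suffices to check naturality of each step with respect to the structure morphism $\pi: X_L \to \Spec L$.

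Steps (a), (b), (c), (g), (h), (i) are tautologically natural in the smooth variety in question (they come from definitions, base change, and the edge-of-spectral-sequence identification of top-degree motivic cohomology with the Chow group); step (f), Voevodsky's cancelation theorem, is functorial in both arguments. For these steps the pushforward is transported automatically. The heart of the proof is the duality identification (d)--(e). Spanier--Whitehead duality is a contravariant equivalence, so the covariant map $\pi_*: \M(X_L) \to \M(\Spec L) = \Z$ dualizes to a morphism $\pi^\vee: \Z \to \M(X_L)^* = \M(X_L)(-n)[-2n]$. By cancelation, $\pi^\vee$ corresponds to a morphism $\Z(n)[2n] \to \M(X_L)$, which I claim is the motivic fundamental class $[X_L]$ of the smooth proper variety $X_L$.

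Granting the identification of $\pi^\vee$ with the fundamental class, the transported pushforward becomes pairing with $[X_L]$, that is, the map
\[
\mathbb{H}^{2n}_{\Nis}(X_L, \Z(n)) \longrightarrow \hom_{\DM^{\eff}_L}(\Z, \Z) = \Z
\]
evaluating a motivic cohomology class against the fundamental class. Under the identification $\mathbb{H}^{2n}_{\Nis}(X_L, \Z(n)) = CH_0(X_L)$ from steps (g)--(i), this is exactly the proper pushforward of zero-cycles to a point; a closed point $x \in X_L$ is sent to $[\kappa(x) : L]$, so the composite is the degree map as required.

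The main obstacle is explicitly matching the dualized map $\pi^\vee$ with the motivic fundamental class $[X_L]$. This is the algebro-geometric analog of the well-known topological fact that Atiyah duality identifies the Spanier--Whitehead dual of the constant map $M_+ \to S^0$ with the fundamental class of $M$; it should follow by unwinding the Friedlander--Voevodsky construction of duality for smooth proper varieties and checking that the evaluation pairing specializes, in top degree, to the expected cycle-theoretic formula. Once this geometric identification is secured, the remainder of the argument is formal and parallels Lemma \ref{lem:suslinchow} step by step.
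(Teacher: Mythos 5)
Your proposal takes essentially the same route as the paper: reduce the claim to the behavior of the duality step in the chain of identifications from Lemma \ref{lem:suslinchow}, and observe that the dualized structure morphism corresponds, under cancellation and the top-degree cohomological identification, to pairing against the fundamental class, which induces the degree map on $CH_0$. The step you flag as the ``main obstacle'' is precisely what Voevodsky's duality theorem (restated in this paper as Theorem~\ref{thm:voevodskyduality}(ii)) supplies, and the paper's proof is just the citation of that result together with the identifications of Lemma~\ref{lem:suslinchow}.
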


\begin{proof}
This follows from the explicit construction of the duality map; see \cite[Theorem 2.11]{VMilnor} together with the identifications of Lemma \ref{lem:suslinchow}.
\end{proof}

\begin{cor}
Assume $k$ is a perfect field, and $X$ is a smooth proper $k$-variety.  There is a canonical bijection between splittings $s: \Z \to \H_0^S(X)$ of the degree homomorphism and $0$-cycles of degree $1$ given by $s \mapsto s(1)$.
\end{cor}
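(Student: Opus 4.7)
The plan is to reduce the corollary to the content of Lemmas \ref{lem:suslinchow} and \ref{lem:suslindegree} via a Yoneda-type identification. By definition, a splitting is a morphism $s\colon \Z \to \H_0^S(X)$ in the category of Nisnevich sheaves of abelian groups on $\Sm_k$ whose composition with the pushforward $\deg\colon \H_0^S(X) \to \H_0^S(\Spec k) = \Z$ equals the identity of $\Z$.

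First I would observe that $\Z$ is the free Nisnevich sheaf of abelian groups on the point $\Spec k$: the presheaf $U \mapsto \Z[\hom_{\Sm_k}(U,\Spec k)] = \Z$ is already a Nisnevich sheaf (the constant sheaf), and adjunction between the free-abelian-group functor and the forgetful functor gives, for any Nisnevich sheaf of abelian groups $F$, a natural bijection $\hom(\Z,F) \isomto F(\Spec k)$ sending $s$ to $s(1)$. Specializing to $F = \H_0^S(X)$ and combining with the identification $\H_0^S(X)(\Spec k) = CH_0(X)$ from Lemma \ref{lem:suslinchow} (take $L = k$) produces a bijection between morphisms $s\colon \Z \to \H_0^S(X)$ and elements of $CH_0(X)$, sending $s$ to $s(1)$.

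Next I would translate the splitting condition across this bijection. The equality $\deg \circ s = \operatorname{id}_\Z$ of morphisms of sheaves is, by the same Yoneda identification applied to the target $\Z$, equivalent after evaluation at $1 \in \Z(\Spec k)$ to the single equation $\deg(s(1)) = 1$. By Lemma \ref{lem:suslindegree}, the sheaf-theoretic pushforward $\H_0^S(X) \to \Z$, evaluated on sections over $\Spec k$, coincides with the classical degree map $CH_0(X) \to \Z$. Consequently, the splittings of the degree homomorphism correspond precisely to the elements $s(1) \in CH_0(X)$ of degree $1$, i.e.\ to the $0$-cycles of degree $1$.

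There is essentially no technical obstacle; the corollary is a formal consequence of the two preceding lemmas together with the universal property of $\Z$ as the free sheaf of abelian groups on a point. The only matter requiring a moment of care is the insistence that ``splitting'' be interpreted as a morphism of sheaves (and not merely a morphism of abelian groups $\Z \to \H_0^S(X)(k)$), but the Yoneda argument above shows that these two notions agree here.
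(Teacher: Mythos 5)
Your proof is correct and follows essentially the same route as the paper: a free--object/Yoneda identification of morphisms $\Z \to \H_0^S(X)$ with $\H_0^S(X)(\Spec k) = CH_0(X)$, followed by an appeal to Lemma \ref{lem:suslindegree} to translate the splitting condition into the equation $\deg(s(1))=1$.

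The one difference worth noting is the category in which the Yoneda argument is run. The paper works in the category $\Ab^{\aone}_{k,tr}$ of strictly $\aone$-invariant Nisnevich sheaves with transfers, using that $\Z = \Z_{tr}(\Spec k)$ is the unit there; you work in the larger category of all Nisnevich sheaves of abelian groups. A priori these have different hom-sets (a sheaf morphism need not respect transfers), but since $\Z$ is freely generated by the point $\Spec k$ in both settings, both hom-sets compute to $\H_0^S(X)(\Spec k)$, so the two interpretations of ``splitting'' agree and the conclusion is unaffected. If anything your reading is the more elementary one.

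A small inaccuracy in your write-up: the presheaf $U \mapsto \Z[\hom_{\Sm_k}(U,\Spec k)] = \Z$ is \emph{not} already a Nisnevich sheaf (it fails the sheaf axiom for disconnected $U$, where sections of the constant sheaf are $\Z^{\pi_0(U)}$). The correct object is the sheafification, which is the free Nisnevich sheaf of abelian groups on the terminal sheaf. This does not harm your argument, because the adjunction formula $\hom_{\Ab_{\Nis}}(\Z,F) \cong F(\Spec k)$ you invoke is exactly the composite free/forgetful adjunction together with the Yoneda lemma for the terminal sheaf, and that composite is unaffected by sheafification. It would just be worth rephrasing so as not to assert that the presheaf itself satisfies descent.
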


\begin{proof}
Write $\Ab^{\aone}_{k,tr}$ for the category of strictly $\aone$-invariant Nisnevich sheaves with transfers.  By the Yoneda lemma, for any field $k$, there is a canonical bijection
\[
\H_0^S(X)(k) = \hom_{\Spc_k}(\Spec k,\H_0^{S}(X)) \isomto \hom_{\Ab^{\aone}_{k,tr}}(\Z,\H_0^S(X));
\]
the map sends an element $x \in \H_0^{S}(X)(k)$ to the free abelian group generated by multiples of $x$.  The result is then a consequence of Lemma \ref{lem:suslindegree}.
\end{proof}

\subsection{Abelianization}
\label{ss:abelianization}
The goal of this section is to identify the $0$-th stable $\aone$-homotopy sheaf of a smooth scheme with an analogous object of ``homological" nature, but in a manner compatible with the all the additional structure.

\subsubsection*{The $S^1$-stable story}
The following result was stated as \cite[Theorem 4.3.2]{Morelpi0}, but a proof was not given there. We write $\Ab^{\aone}_k$ for the category of strictly $\aone$-invariant sheaves of abelian groups.

\begin{lem}
\label{lem:simplicialhurewicz}
Assume $k$ is a field, and suppose $\mathscr{X}$ is a space.  The Hurewicz homomorphism $\bpi_0^{s}(\Sigma^{\infty}_s \mathscr{X}_+) \to \H_0^{\aone}(\mathscr{X})$ of \textup{Formula \ref{eqn:hurewiczhomomorphism}} is an isomorphism.
\end{lem}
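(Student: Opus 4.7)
The plan is to argue via the $t$-structure formalism, taking Morel's stable $\aone$-connectivity theorem as the key geometric input. Recall that $\SH_{s}(k)$ carries a natural homotopy $t$-structure whose heart Morel has identified with the abelian category $\Ab^{\aone}_k$ of strictly $\aone$-invariant Nisnevich sheaves of abelian groups, and that $\Sigma^{\infty}_s \mathscr{X}_+$ lies in the connective part of this $t$-structure for any space $\mathscr{X}$. Analogous statements hold for $\Daone^{\eff}(k)$: it carries a natural $t$-structure, its heart is also $\Ab^{\aone}_k$, and $C_{*}^{\aone}(\mathscr{X})$ lies in its connective part. The Hurewicz functor $\mathcal{L}: \SH_{s}(k) \to \Daone^{\eff}(k)$ is a symmetric monoidal left adjoint; write $K$ for its right adjoint (a generalized Eilenberg--MacLane functor).

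A key preliminary step is to identify $K(M)$, for $M \in \Ab^{\aone}_k$ regarded as a complex in degree zero, with the Eilenberg--MacLane $S^1$-spectrum $\operatorname{H}(M)$ of $M$. By the adjunction, for any smooth scheme $U$ and any $i \in \Z$,
\[
\bpi_i^{s}(K(M))(U) = \hom_{\Daone^{\eff}(k)}(C_{*}^{\aone}(U)[i], M).
\]
For $i > 0$, the source sits in $\Daone^{\eff}(k)_{\geq 1}$ while $M$ is in the heart, so this group vanishes by $t$-structure orthogonality. For $i = 0$, combining the adjunction between $\aone$-localization and inclusion with the free-abelian-group adjunction yields $M(U)$. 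For $i < 0$, one obtains the Nisnevich cohomology $H^{-i}_{\Nis}(U; M)$, using that strictly $\aone$-invariant sheaves are $\aone$-local in $\Daone^{\eff}(k)$. These computations identify $K(M)$ with the classical Eilenberg--MacLane $S^1$-spectrum.

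With $K(M) = \operatorname{H}(M)$ in hand, the main computation is a Yoneda argument. For any $M \in \Ab^{\aone}_k$, using that $\Sigma^{\infty}_s \mathscr{X}_+$ and $C_{*}^{\aone}(\mathscr{X})$ are connective while $\operatorname{H}(M)$ and $M$ lie in their respective hearts, one deduces
\begin{align*}
\hom_{\Ab^{\aone}_k}(\bpi_0^{s}(\Sigma^{\infty}_s \mathscr{X}_+), M)
&\cong \hom_{\SH_{s}(k)}(\Sigma^{\infty}_s \mathscr{X}_+, \operatorname{H}(M)) \\
&\cong \hom_{\Daone^{\eff}(k)}(C_{*}^{\aone}(\mathscr{X}), M) \\
&\cong \hom_{\Ab^{\aone}_k}(\H_0^{\aone}(\mathscr{X}), M),
\end{align*}
where the middle isomorphism comes from the adjunction $\mathcal{L} \dashv K$ together with the identification $\mathcal{L}(\Sigma^{\infty}_s \mathscr{X}_+) = C_{*}^{\aone}(\mathscr{X})$. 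By Yoneda, this produces a canonical natural isomorphism $\bpi_0^{s}(\Sigma^{\infty}_s \mathscr{X}_+) \isomto \H_0^{\aone}(\mathscr{X})$.

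The final step will be to verify that this canonical isomorphism coincides with the Hurewicz homomorphism of \eqref{eqn:hurewiczhomomorphism}. This reduces to a diagram chase through the unit and counit of the adjunction: given $\psi: \H_0^{\aone}(\mathscr{X}) \to M$ lifting to $\tilde{\psi}: C_{*}^{\aone}(\mathscr{X}) \to M$ and corresponding under adjunction to $\hat{\psi}: \Sigma^{\infty}_s \mathscr{X}_+ \to \operatorname{H}(M)$, naturality implies that $\hat{\psi} \circ \phi$ corresponds to $\tilde{\psi} \circ \mathcal{L}\phi$ for any $\phi: \Sigma^{\infty}_s U_+ \to \Sigma^{\infty}_s \mathscr{X}_+$. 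Taking $\bpi_0^{s}$ on both sides then yields $\psi \circ \H_0(\mathcal{L}\phi)$, which is exactly the composite of $\psi$ with the Hurewicz map. The principal technical hurdle will be the computation $K(M) = \operatorname{H}(M)$, which is where Morel's connectivity theorem and the $\aone$-locality of strictly $\aone$-invariant sheaves enter essentially; once that is in place, everything else is an application of the $t$-structure formalism and adjunction yoga.
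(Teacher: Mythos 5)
Your proposal is correct and takes essentially the same approach as the paper: both proofs rest on the stable $\aone$-connectivity theorem to reduce the question to mapping into strictly $\aone$-invariant sheaves viewed as heart/Eilenberg--MacLane objects, and both conclude by Yoneda. Your $t$-structure and adjunction framing is a more systematic packaging of the paper's Postnikov tower argument, and you additionally spell out the identification $K(M)=\operatorname{H}(M)$ and the verification that the Yoneda isomorphism coincides with the Hurewicz map, both of which the paper leaves implicit.
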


\begin{proof}
Suppose $\mathscr{M}$ is a strictly $\aone$-invariant sheaf of abelian groups, and let $\mathrm{H}\mathscr{M}$ denote the associated Eilenberg-MacLane $S^1$-spectrum.  The $S^1$-suspension spectrum $\Sigma^{\infty}_{s}\mathscr{X}_+$ is $(-1)$-connected and hence its $\aone$-localization is again $(-1)$-connected by the stable $\aone$-connectivity theorem \cite[Theorem 6.1.8]{MStable}. Using the fact that $\mathrm{H}\mathscr{M}$ is also $(-1)$-connected, existence and functoriality of the Postnikov tower gives rise to a canonical isomorphism
\[
\hom_{\Ab^{\aone}_k}(\bpi_0^{s}(\Sigma^{\infty}_s \mathscr{X}),\bpi_0^{s}(\mathrm{H}\mathscr{M})) \isomto H^0_{\Nis}(\mathscr{X},\mathscr{M}).
\]
The analogous construction in $\Daone^{\eff}(k)$ gives rise to an isomorphism
\[
\hom_{\Ab^{\aone}_k}(\H_0^{\aone}(\mathscr{X}),\mathscr{M}) \isomto H^0_{\Nis}(\mathscr{X},\mathscr{M}).
\]
The result follows immediately from the Yoneda lemma.
\end{proof}

Morel's stable $\aone$-connectivity theorem used in the proof of Lemma \ref{lem:simplicialhurewicz} also equips the category $\Ab^{\aone}_k$ with a symmetric monoidal structure for which we will write $\tensor^{\aone}$.

\begin{defn}
\label{defn:aonetensorproduct}
If $\mathscr{M}$ and $\mathscr{M}'$ are strictly $\aone$-invariant sheaves, then
\[
\mathscr{M} \tensor^{\aone} \mathscr{M}' := \H_0^{\aone}(\mathscr{M} \tensor \mathscr{M}').
\]
\end{defn}

\subsubsection*{Contractions and $\gm$-loop spaces}
We begin by recalling the definition of contractions of a pointed sheaf, and then stating some properties that follow immediately from the definitions.

\begin{defn}
Suppose $\F$ is presheaf of pointed sets on $\Sm_k$.  The presheaf $\F_{-1}$ is the internal pointed function presheaf $\underline{\hom}_{\bullet}(\gm,\F)$, i.e., for any smooth scheme $U$ we have
\[
\F_{-1}(U) = \ker(\F(\gm \times U) \stackrel{ev_1}{\longrightarrow} \F(U)),
\]
where the map $ev_1$ is the pullback along the map $U \stackrel{id \times 1}{\to} U \times \gm$.
\end{defn}

\begin{lem}
Suppose $\F$ is a presheaf of pointed sets.  If $\F$ is a sheaf (resp. sheaf of groups, sheaf of abelian groups), then so is $\F_{-1}$.
\end{lem}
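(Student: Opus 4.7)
The plan is to proceed by elementary category-theoretic bookkeeping, exploiting the fact that the defining formula
\[
\F_{-1}(U) = \ker\bigl(\F(\gm\times U) \xrightarrow{\,ev_1\,} \F(U)\bigr)
\]
exhibits $\F_{-1}$ as a fibre product of two presheaves built functorially out of $\F$, together with the observation that sheaves, sheaves of groups, and sheaves of abelian groups are all closed under finite limits (computed sectionwise).

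First I would verify that, when $\F$ is a Nisnevich sheaf, the presheaves $U \mapsto \F(\gm\times U)$ and $U \mapsto \F(U)$ are Nisnevich sheaves. The second is immediate. For the first, the pullback functor along the smooth projection $\gm\times U \to U$ carries any Nisnevich cover $\{V_\alpha \to U\}$ to the Nisnevich cover $\{\gm\times V_\alpha \to \gm\times U\}$ (Nisnevich covers are stable under base change, and the fibre products $\gm \times V_\alpha \times_{\gm\times U} \gm \times V_\beta = \gm \times (V_\alpha \times_U V_\beta)$ are computed by the analogous base change); the sheaf condition for $U \mapsto \F(\gm\times U)$ is then precisely the sheaf condition for $\F$ applied to these covers. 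The map $ev_1$ induced by the section $U \xrightarrow{id\times 1} U\times \gm$ is a morphism of presheaves by naturality of pullback, and hence of sheaves.

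Next I would take kernels. In the category of pointed sets, the kernel of a morphism is the preimage of the basepoint; this construction is compatible with restriction maps, so the presheaf $U \mapsto \ker(\F(\gm\times U)\to \F(U))$ is well defined. In the category of sheaves of pointed sets, limits (and in particular kernels) exist and are computed sectionwise, so this presheaf is a sheaf, which is exactly $\F_{-1}$. Exactly the same argument applies in the category of sheaves of groups or sheaves of abelian groups, since those categories also have sectionwise limits, and the kernel construction in them is the usual one.

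There is essentially no hard step: the only thing to check is the stability of Nisnevich covers under the base change by $\gm$, which is standard. I would package the proof in two short sentences, one for the sheaf assertion and one remarking that the same argument applies verbatim in the group and abelian-group cases.
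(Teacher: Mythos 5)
Your proof is correct and complete. The paper in fact states this lemma without proof, treating it as immediate; your argument---realize $\F_{-1}$ as the kernel (a finite limit) of the morphism $ev_1$ between the sheaves $U\mapsto\F(\gm\times U)$ and $U\mapsto\F(U)$, and invoke that finite limits of Nisnevich sheaves (resp.\ sheaves of groups, of abelian groups) are computed sectionwise and again sheaves---is exactly the standard justification the authors are silently invoking. The only nontrivial point, which you correctly isolate, is that $U\mapsto\F(\gm\times U)$ is itself a sheaf; your verification via base change of Nisnevich covers along the smooth projection $\gm\times U\to U$ is the right one. (One could alternatively observe that this presheaf is the pushforward $p_*p^*\F$ for $p:\gm\times\Spec k\to\Spec k$, or that $\underline{\hom}_\bullet(\gm,\F)$ is a sheaf because internal homs into a sheaf are sheaves, but these repackage rather than replace your argument.)
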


Suppose $\mathscr{E}$ is an $S^1$-spectrum and $\Sigma^{\infty}_s\gm$ is the symmetric spectrum associated with the space $\gm$ pointed by $1$.  We can consider the internal function spectrum $\underline{\hom}(\Sigma^{\infty}_s\gm,\mathscr{E})$.  The diagonal $U \to U \times U$ induces a map $U_+ \to U_+ \wedge U_+$ and consequently a morphism of spectra $\Sigma^{\infty}_s U_+ \to \Sigma^{\infty}_s U_+ \wedge \Sigma^{\infty}_s U_+$.  Any morphism $U \to \gm$ induces a morphism of pointed spaces $U_+ \to \gm$ and consequently a morphism of spectra $\Sigma^{\infty}_s U_+ \to \Sigma^{\infty}_s \gm$.  Combining these two observations, we get a map
\[
[\Sigma^{\infty}_s U_+ \wedge \Sigma^{\infty}_s \gm,\mathscr{E}]_s \times \hom_{\Sm_k}(U,\gm) \longrightarrow [\Sigma^{\infty}_s U_+ \wedge \Sigma^{\infty}_s U_+,\mathscr{E}]_s \longrightarrow [\Sigma^{\infty}_s U_+,\mathscr{E}]_s.
\]
Sheafifying for the Nisnevich topology and keeping track of basepoints, this corresponds to a map
\[
\bpi_0^{s}(\underline{\hom}(\Sigma^{\infty}_s \gm,\mathscr{E})) \wedge \gm \to \bpi_0^{s}(\mathscr{E}).
\]
By adjunction, such a morphism is equivalent to a morphism of sheaves
\[
\bpi_0^{s}(\underline{\hom}(\Sigma^{\infty}_s \gm,\mathscr{E})) \to \bpi_0^{s}(\mathscr{E})_{-1}.
\]
Similarly, for any integer $n \in \Z$ there is an induced morphism
\begin{equation}
\label{eqn:homotopygroupscontraction}
\bpi_i^{s}(\underline{\hom}(\Sigma^{\infty}_s \gm,\mathscr{E})) \to \bpi_i^{s}(\mathscr{E})_{-1}.
\end{equation}
Assuming $\mathscr{E}$ is $\aone$-local produces a corresponding map of stable $\aone$-homotopy sheaves of $\mathscr{E}$.

\begin{prop}[{\cite[Lemma 4.3.11]{MIntro}}]
\label{prop:connectivityofgmloopspaces}
If $\mathscr{E}$ is an $\aone$-local $S^1$-spectrum, then for any $i \in \Z$ the morphism
\[
\bpi_i^{s}(\underline{\hom}(\Sigma^{\infty}_s \gm,\mathscr{E})) \to \bpi_i^{s}(\mathscr{E})_{-1}
\]
of \textup{Equation \ref{eqn:homotopygroupscontraction}} is an isomorphism.  In particular, if $\mathscr{E}$ is a $(-1)$-connected $\aone$-local spectrum, then for any integer $n \geq 0$, $\underline{\hom}(\Sigma^{\infty}_s \gm^{\wedge n},\mathscr{E})$ is also a $(-1)$-connected $\aone$-local spectrum.
\end{prop}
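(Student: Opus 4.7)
The plan is to compute both sides of the morphism in Equation~\ref{eqn:homotopygroupscontraction} directly using the smash-hom adjunction and a splitting of $\gm_+$ as a wedge sum.

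Since $\gm$ is pointed by $1$, the basepoint inclusion and the collapse map exhibit a canonical isomorphism of pointed spaces $\gm_+ \simeq \gm \vee S^0$. Smashing with $U_+$ for any smooth $U$ produces an isomorphism
\[
(U \times \gm)_+ \;\simeq\; (U_+ \wedge \gm) \vee U_+,
\]
in which the second wedge summand corresponds to the inclusion $U \hookrightarrow U \times \gm$ via $x \mapsto (x,1)$. Applying $[S^i_s \wedge \Sigma^{\infty}_s(-), \mathscr{E}]_s$ yields a natural direct sum decomposition
\[
\bpi_i^s(\mathscr{E})(U \times \gm) \;\simeq\; [S^i_s \wedge \Sigma^{\infty}_s U_+ \wedge \Sigma^{\infty}_s \gm, \mathscr{E}]_s \,\oplus\, \bpi_i^s(\mathscr{E})(U),
\]
whose second projection is exactly the evaluation-at-$1$ map. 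The first summand is therefore canonically isomorphic to $\bpi_i^s(\mathscr{E})_{-1}(U)$; on the other hand, by the smash-hom adjunction this summand also computes the presheaf underlying $\bpi_i^s(\underline{\hom}(\Sigma^{\infty}_s \gm, \mathscr{E}))$. Sheafifying for the Nisnevich topology produces an isomorphism of Nisnevich sheaves, and one verifies that it coincides with the morphism of Equation~\ref{eqn:homotopygroupscontraction} by tracing through the adjunction: the composition defining that map (the diagonal followed by a chosen morphism to $\gm$) corresponds exactly to projection onto the first wedge summand under the splitting above.

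For the second statement I would induct on $n$. Since smashing with $\Sigma^{\infty}_s \gm$ is left adjoint to the internal hom, there is a canonical identification $\underline{\hom}(\Sigma^{\infty}_s \gm^{\wedge n}, \mathscr{E}) \simeq \underline{\hom}(\Sigma^{\infty}_s \gm, \underline{\hom}(\Sigma^{\infty}_s \gm^{\wedge n-1}, \mathscr{E}))$. Applying the first part of the proposition to $\underline{\hom}(\Sigma^{\infty}_s \gm^{\wedge n-1}, \mathscr{E})$, which is $\aone$-local and $(-1)$-connected by the inductive hypothesis, gives
\[
\bpi_i^s(\underline{\hom}(\Sigma^{\infty}_s \gm^{\wedge n}, \mathscr{E})) \;\simeq\; \bpi_i^s(\underline{\hom}(\Sigma^{\infty}_s \gm^{\wedge n-1}, \mathscr{E}))_{-1},
\]
and the right hand side vanishes for $i<0$ because contractions of the zero sheaf are zero. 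For $\aone$-locality, the identification $[A, \underline{\hom}(\Sigma^{\infty}_s \gm, \mathscr{E})]_s = [A \wedge \Sigma^{\infty}_s \gm, \mathscr{E}]_s$ shows that the left-hand side is $\aone$-invariant in $A$ whenever $\mathscr{E}$ is $\aone$-local, so that $\underline{\hom}(\Sigma^{\infty}_s \gm, \mathscr{E})$ remains $\aone$-local.

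The main obstacle I anticipate is the compatibility check with Equation~\ref{eqn:homotopygroupscontraction}: this requires careful unraveling of the definition of that map through the smash-hom adjunction and the wedge splitting of $\gm_+$. The remaining steps are essentially formal consequences of the adjunction and the splitting $\gm_+ \simeq \gm \vee S^0$.
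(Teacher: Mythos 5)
Your argument correctly identifies the stable splitting $\Sigma^\infty_s \gm_+ \simeq \Sigma^\infty_s\gm \vee \Sigma^\infty_s S^0$ (note that this is a splitting in $\SH_s(k)$, coming from the retraction $\gm_+ \to S^0$ in the cofiber sequence $S^0 \to \gm_+ \to \gm$, not a ``canonical isomorphism of pointed spaces'' as you write; $\gm$ is connected so the point $1$ cannot be split off unstably), and the adjunction correctly identifies the first wedge summand with the presheaf underlying $\bpi_i^s(\underline{\hom}(\Sigma^{\infty}_s \gm, \mathscr{E}))$. But there is a genuine gap at the final sheafification step, and it is not where you anticipate. Write $\pi$ for the presheaf $V \mapsto [S^i_s \wedge \Sigma^{\infty}_s V_+,\mathscr{E}]_s$, so that $\bpi_i^s(\mathscr{E}) = a_{\Nis}\pi$. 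Your direct sum decomposition identifies the first summand with the \emph{presheaf-level} contraction $\pi_{-1}^{\mathrm{pre}}(U) := \ker(\pi(U\times\gm)\to\pi(U))$, and its sheafification is indeed $\bpi_i^s(\underline{\hom}(\Sigma^{\infty}_s\gm,\mathscr{E}))$. But what you need on the right-hand side is the contraction of the \emph{sheaf}, $(a_{\Nis}\pi)_{-1}$. You have silently assumed that $a_{\Nis}(\pi_{-1}^{\mathrm{pre}}) \cong (a_{\Nis}\pi)_{-1}$, i.e., that Nisnevich sheafification commutes with $\underline{\hom}_\bullet(\gm,-)$. This is false for arbitrary presheaves: the sheafification on the left is taken in the $U$-variable, whereas a Nisnevich cover of $U\times\gm$ need not be refined by one of the form $\{U_i\times\gm\}$. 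Put differently, a section of $\pi(U\times\gm)$ that is locally trivial on $U\times\gm$ need not be trivial after restricting to any $U_i\times\gm$.

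This missing commutation is exactly where the hypothesis that $\mathscr{E}$ be $\aone$-local must enter (your proof never uses it, which should itself be a warning sign). In Morel's proof this is handled by a Postnikov-tower reduction to the case $\mathscr{E} = \mathrm{H}\mathscr{M}$ for a strictly $\aone$-invariant sheaf $\mathscr{M}$, where the statement amounts to (i) the tautological identification of the reduced $H^0$ of $\gm$ with $\mathscr{M}_{-1}$, and (ii) the nontrivial vanishing of the reduced $H^1_{\Nis}(\gm\times -,\mathscr{M})$, which relies on the structure theory (Gersten/Rost--Schmid resolutions) of strictly $\aone$-invariant sheaves. The formal wedge-splitting and adjunction manipulations in your proposal are correct and useful for setting up the comparison map, but they do not by themselves prove the isomorphism. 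Regarding the ``main obstacle'' you flag — the compatibility with Equation~\ref{eqn:homotopygroupscontraction} — this is in fact the easier step: tracing through the adjunction, the map in Equation~\ref{eqn:homotopygroupscontraction} sends $\alpha\colon \Sigma^\infty_s V_+\wedge\Sigma^\infty_s\gm\to\mathscr{E}$ to the composite with the natural collapse $(V\times\gm)_+\to V_+\wedge\gm$, which is precisely the inclusion of the first wedge summand under your splitting. The induction for the second statement is fine, granted the first.
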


\begin{cor}
If $\mathscr{M}$ is a strictly $\aone$-invariant sheaf of groups, then so is $\mathscr{M}_{-1}$.
\end{cor}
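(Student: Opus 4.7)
The plan is to deduce the corollary directly from Proposition \ref{prop:connectivityofgmloopspaces} by realizing $\mathscr{M}_{-1}$ as the zeroth stable $\aone$-homotopy sheaf of an appropriate spectrum; the point is that any such sheaf is automatically strictly $\aone$-invariant by the result cited immediately after Definition \ref{defn:s1stableaonehomotopysheaves}.

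More precisely, given a strictly $\aone$-invariant sheaf $\mathscr{M}$, I would consider the Eilenberg--MacLane $S^1$-spectrum $\mathrm{H}\mathscr{M}$. The strict $\aone$-invariance of $\mathscr{M}$ is exactly what ensures that $\mathrm{H}\mathscr{M}$ is $\aone$-local, and by construction it is $(-1)$-connected with $\bpi_0^s(\mathrm{H}\mathscr{M}) = \mathscr{M}$ and $\bpi_i^s(\mathrm{H}\mathscr{M}) = 0$ for $i \neq 0$. Proposition \ref{prop:connectivityofgmloopspaces} then applies: the function spectrum $\underline{\hom}(\Sigma^{\infty}_s \gm, \mathrm{H}\mathscr{M})$ is again $(-1)$-connected and $\aone$-local, and its zeroth stable $\aone$-homotopy sheaf is identified with $\bpi_0^s(\mathrm{H}\mathscr{M})_{-1} = \mathscr{M}_{-1}$ via the isomorphism of Equation \ref{eqn:homotopygroupscontraction}.

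Finally, the proposition recorded after Definition \ref{defn:s1stableaonehomotopysheaves} asserts that the stable $\aone$-homotopy sheaves of any $\aone$-local $S^1$-spectrum are strictly $\aone$-invariant; applying this to $\underline{\hom}(\Sigma^{\infty}_s \gm, \mathrm{H}\mathscr{M})$ in degree zero yields that $\mathscr{M}_{-1}$ is strictly $\aone$-invariant, as desired.

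There is no real obstacle here—the corollary is a formal consequence of the proposition once one has the right spectrum to plug in. The only mild subtlety is the verification that $\mathrm{H}\mathscr{M}$ really is $\aone$-local, but this is tautological: strict $\aone$-invariance of $\mathscr{M}$ means exactly that the cohomology presheaves $U \mapsto H^i_{\Nis}(U,\mathscr{M})$ are $\aone$-invariant, which is equivalent to $\aone$-locality of $\mathrm{H}\mathscr{M}$ as an $S^1$-spectrum.
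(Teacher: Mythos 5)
Your proof is correct and follows essentially the same route as the paper's: identify $\mathscr{M}_{-1}$ as the zeroth stable $\aone$-homotopy sheaf of $\underline{\hom}(\Sigma^{\infty}_s\gm,\mathrm{H}\mathscr{M})$, observe that this function spectrum is $\aone$-local because $\mathrm{H}\mathscr{M}$ is, and invoke strict $\aone$-invariance of stable $\aone$-homotopy sheaves of $\aone$-local spectra. The only cosmetic difference is that the paper phrases $\aone$-locality in terms of all the shifts $\operatorname{H}\mathscr{M}[i]$, while you state it directly for the spectrum; these are equivalent.
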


\begin{proof}
The sheaf $\mathscr{M}$ is a strictly $\aone$-invariant sheaf if and only if the Eilenberg-MacLane spectrum $\operatorname{H}\mathscr{M}[i]$ is $\aone$-local for all $i$.  Since $\operatorname{H}\mathscr{M}[i]$ is $\aone$-local, so is $\underline{\hom}(\Sigma^{\infty}_s\gm,\operatorname{H}\mathscr{M}[i])$.  The sheaves $\bpi_i^{s}(\mathscr{X})$ are strictly $\aone$-invariant if $\mathscr{X}$ is $\aone$-local.  The result then follows immediately from Proposition \ref{prop:connectivityofgmloopspaces}.
\end{proof}

\subsubsection*{Contractions in the $\aone$-derived setting}
We now rework the results of the previous section in the setting of the $\aone$-derived category.  Suppose $\mathscr{C}$ is an object of $\Sp^{\Sigma}(\Ch\Ab_k)$.  If $\Z \langle 1 \rangle$ is the enhanced Tate complex introduced before, we study the internal hom object $\underline{\hom}(\Z \langle 1 \rangle[1],\mathscr{C})$.  Mirroring the construction of the previous section in $\Daone^{\eff}(k)$, there is an induced morphism
\begin{equation}
\label{eqn:tatehomderived}
\H_i^{\aone}(\underline{\hom}(\Z \langle 1 \rangle[1],\mathscr{C})) \to \H_i^{\aone}(\mathscr{C})_{-1}.
\end{equation}
We now show that this morphism is an isomorphism.

\begin{prop}
\label{prop:aonederivedcontraction}
If $\mathscr{C}$ is an $\aone$-local object in $\Sp^{\Sigma}(\Ch\Ab_k)$, then the morphism of \textup{Formula \ref{eqn:tatehomderived}} is an isomorphism.
\end{prop}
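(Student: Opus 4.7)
The plan is to compute both sides of the morphism \ref{eqn:tatehomderived} on sections over an arbitrary smooth scheme $U$, to produce a natural isomorphism between them, and then to verify that this isomorphism coincides with \ref{eqn:tatehomderived} itself. The whole argument is a direct derived-category analogue of Proposition \ref{prop:connectivityofgmloopspaces} and naturally proceeds in three formal manipulations followed by a compatibility check.

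First I would identify $\Z\langle 1\rangle[1]$ with $\tilde{C}_*^{\aone}(\gm)$. The $\aone$-equivalence $S^1_s \wedge \gm \isomt \pone$ recalled just before Definition \ref{defn:enhancedTatecomplex}, combined with monoidality of $\tilde{C}_*^{\aone}$, gives $\tilde{C}_*^{\aone}(\pone) \simeq \tilde{C}_*^{\aone}(\gm)[1]$, so that $\Z\langle 1\rangle = \tilde{C}_*^{\aone}(\pone)[-2] \simeq \tilde{C}_*^{\aone}(\gm)[-1]$ and therefore $\Z\langle 1\rangle[1] \simeq \tilde{C}_*^{\aone}(\gm)$.

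Next, for a smooth $k$-scheme $U$ the $\otimes$-$\underline{\hom}$ adjunction together with monoidality of $\tilde{C}_*^{\aone}$ yield natural identifications
\begin{align*}
\hom_{\Daone^{\eff}(k)}(C_*^{\aone}(U)[i],\underline{\hom}(\Z\langle 1\rangle[1],\mathscr{C}))
 &\simeq \hom_{\Daone^{\eff}(k)}(C_*^{\aone}(U)[i]\otimes\tilde{C}_*^{\aone}(\gm),\mathscr{C}) \\
 &\simeq \hom_{\Daone^{\eff}(k)}(\tilde{C}_*^{\aone}(U_+\wedge\gm)[i],\mathscr{C}).
\end{align*}
Since $\gm$ is pointed at $1$, one has $U_+\wedge\gm \simeq (U\times\gm)/(U\times\{1\})$, and the unit section $s_1\colon U\hookrightarrow U\times\gm$ is split by the projection $U\times\gm \to U$. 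Hence the resulting distinguished triangle $C_*^{\aone}(U) \to C_*^{\aone}(U\times\gm) \to \tilde{C}_*^{\aone}(U_+\wedge\gm)$ in $\Daone^{\eff}(k)$ is split, and applying $\hom_{\Daone^{\eff}(k)}(-[i],\mathscr{C})$ identifies the last displayed group with $\ker(\H_i^{\aone}(\mathscr{C})(U\times\gm) \to \H_i^{\aone}(\mathscr{C})(U))$, which is $\H_i^{\aone}(\mathscr{C})_{-1}(U)$ by definition. These identifications are natural in $U$, so Nisnevich sheafification will produce a candidate isomorphism of sheaves $\H_i^{\aone}(\underline{\hom}(\Z\langle 1\rangle[1],\mathscr{C})) \isomt \H_i^{\aone}(\mathscr{C})_{-1}$.

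The main obstacle I anticipate is the concluding compatibility check: verifying that the composite isomorphism constructed above actually agrees with the canonical morphism \ref{eqn:tatehomderived}, rather than differing from it by a swap of tensor factors or a sign. This amounts to a diagram chase unwinding the definition of \ref{eqn:tatehomderived}, which is modelled on the $S^1$-stable recipe built from the diagonal of $U$ and the evaluation map $U\to\gm$ at $1$. All the functors involved---the Hurewicz abelianization, $\aone$-localization, and the monoidal structure on symmetric $\pone$-chain complexes---preserve these ingredients, so once the bookkeeping of suspensions and shifts is done carefully, the check is routine but is the only place where the proof is not pure adjunction formalism.
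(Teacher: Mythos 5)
The adjunction manipulations in your first three steps are fine, but the argument breaks at the step where you write that applying $\hom_{\Daone^{\eff}(k)}(-[i],\mathscr{C})$ to the split triangle ``identifies the last displayed group with $\ker(\H_i^{\aone}(\mathscr{C})(U\times\gm) \to \H_i^{\aone}(\mathscr{C})(U))$.'' What you actually obtain from the split triangle is
\[
\ker\Bigl(\hom_{\Daone^{\eff}(k)}(C_*^{\aone}(U\times\gm)[i],\mathscr{C}) \longrightarrow \hom_{\Daone^{\eff}(k)}(C_*^{\aone}(U)[i],\mathscr{C})\Bigr),
\]
and the two inner groups are hypercohomology groups $\mathbb{H}^{-i}_{\Nis}(U\times\gm,\mathscr{C})$ and $\mathbb{H}^{-i}_{\Nis}(U,\mathscr{C})$, \emph{not} sections of the sheaf $\H_i^{\aone}(\mathscr{C})=\underline{H}^{-i}(\mathscr{C})$; the sheaf is by Definition \ref{defn:unstableaonehomologysheaves} the Nisnevich sheafification of that presheaf. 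Your closing ``Nisnevich sheafification will produce a candidate isomorphism'' does not repair this: sheafification is exact, so one gets the kernel of the sheafifications, and the sheafification of $U\mapsto\mathbb{H}^{-i}_{\Nis}(U\times\gm,\mathscr{C})$ is \emph{not} formally equal to $U\mapsto\underline{H}^{-i}(\mathscr{C})(U\times\gm)$. Taking stalks at a Henselian local $\mathcal{O}$, the hypercohomology spectral sequence for $\gm_{\mathcal{O}}$ degenerates (as $\mathrm{cd}_{\Nis}(\gm_{\mathcal{O}})\leq 1$) to a two-step filtration whose extra graded piece is $H^1_{\Nis}(\gm_{\mathcal{O}},\underline{H}^{-i-1}(\mathscr{C}))$. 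Showing this term vanishes is exactly the non-formal content of the proposition; it requires that the cohomology sheaves $\underline{H}^*(\mathscr{C})$ be strictly $\aone$-invariant (which uses the hypothesis that $\mathscr{C}$ is $\aone$-local, via Morel's stable $\aone$-connectivity theorem) together with an explicit computation on $\gm$ (localization/Gersten). Note, as a sanity check, that your argument as written never invokes $\aone$-locality of $\mathscr{C}$, yet the statement is false without it.

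This is precisely why the paper's proof takes a different route: it first treats the case $\mathscr{C}=\mathrm{H}\mathscr{M}$ for a single strictly $\aone$-invariant sheaf $\mathscr{M}$, where one can check on sections over fields and conclude using the Nisnevich cohomological dimension bound $\mathrm{cd}_{\Nis}(\gm)\leq 1$; the general case is then handled by a Postnikov-tower reduction. By contrast, the compatibility check you single out as ``the main obstacle''---verifying that the adjunction isomorphism agrees with the canonical map \eqref{eqn:tatehomderived}---is indeed routine and is not where the work lies.
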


\begin{proof}
The proof is formally identical to \cite[Lemma 4.3.11]{MIntro}.  One first proves the result in the case where $\mathscr{M}$ is a strictly $\aone$-invariant sheaf of groups.  In that case, it suffices to check the result on sections over fields.  We then just have to show that
\[
\hom_{\operatorname{D}(\Ab_k)}(\Z,\underline{\hom}(\Z\langle 1 \rangle[1],\mathscr{M})[i]) \longrightarrow \hom_{\operatorname{D}(\Ab_k)}(\Z,\mathscr{M}_{-1}[i])
\]
is an isomorphism.  By adjunction the map above can be rewritten as:
\[
\hom_{\operatorname{D}(\Ab_k)}(\Z\langle 1 \rangle[1],\mathscr{M}[i]) \longrightarrow \hom_{\operatorname{D}(\Ab_k)}(\Z,\mathscr{M}_{-1}[i]).
\]
However, the sheaf $\underline{\hom}(\Z\langle 1 \rangle [1])$ is precisely the $\aone$-chain complex of $\gm$, and since $\mathscr{M}$ is $\aone$-local the group on the left is an ordinary cohomology group.  To finish, we observe that $\tilde{C}_*^{\aone}(\pone) = \tilde{C}_*^{\aone}(\gm)[-1]$, and that both $\gm$ and $\pone$ have Nisnevich cohomological dimension $\leq 1$.  To treat the general case, we reduce to the one above by means of a Postnikov tower argument.
\end{proof}

\subsubsection*{The $\pone$-stable story}
We now prove the $\pone$-stable version of the Hurewicz theorem.

\begin{thm}
\label{thm:ponestablehurewiczisomorphism}
Suppose $\mathscr{X}$ is a $k$-space.  The Hurewicz morphism of \textup{Formula \ref{eqn:hurewiczhomomorphism}}
\[
\bpi_0^s(\Sigma^{\infty}_{\pone}\mathscr{X}_+) \longrightarrow \H_0^{s\aone}(\mathscr{X})
\]
is an isomorphism covariantly functorial in $\mathscr{X}$.  When $\mathscr{X} = \Spec k$, the aforementioned isomorphism reads $\K^{MW}_0 \isomto \H_0^{s\aone}(\1_k)$.  Moreover, via this identification, the Hurewicz isomorphism is compatible with the (left) action of $\K^{MW}_0$ on both the source and target in the sense that the following diagram commutes:
\[
\xymatrix{
\K^{MW}_0 \tensor^{\aone} \bpi_0^s(\Sigma^{\infty}_{\pone}\mathscr{X}_+) \ar[r]\ar[d] & \bpi_0^s(\Sigma^{\infty}_{\pone}\mathscr{X}_+) \ar[d] \\
\K^{MW}_0 \tensor^{\aone} \H_0^{s\aone}(\mathscr{X}) \ar[r] & \H_0^{s\aone}(\mathscr{X}).
}
\]
\end{thm}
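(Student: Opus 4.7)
The plan is to bootstrap the $\pone$-stable Hurewicz theorem from its $S^1$-stable analogue (Lemma \ref{lem:simplicialhurewicz}) by expressing both sides as filtered colimits of $S^1$-stable/derived invariants that have been $\gm$-contracted.  Proposition \ref{prop:stablehomotopycolimit} together with the internal-hom adjunction in $\SH_s(k)$ lets me rewrite the left-hand side as
\[
\bpi_0^s(\Sigma^{\infty}_{\pone}\mathscr{X}_+) \;\cong\; \colim_n \bpi_0^s\bigl(\underline{\hom}(\Sigma^{\infty}_s\gm^{\wedge n},\, \Sigma^{\infty}_s\gm^{\wedge n} \wedge \Sigma^{\infty}_s\mathscr{X}_+)\bigr),
\]
and then iterated use of Proposition \ref{prop:connectivityofgmloopspaces} identifies this sheaf with $\colim_n \bigl(\bpi_0^s(\Sigma^{\infty}_s\gm^{\wedge n}\wedge \Sigma^{\infty}_s\mathscr{X}_+)\bigr)_{-n}$.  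Exactly the same manipulations, using Proposition \ref{prop:stablederivedcolimit} in place of Proposition \ref{prop:stablehomotopycolimit} and Proposition \ref{prop:aonederivedcontraction} in place of Proposition \ref{prop:connectivityofgmloopspaces}, recast the right-hand side as $\colim_n \bigl(\H_0^{\aone}(\gm^{\wedge n}\wedge \mathscr{X}_+)\bigr)_{-n}$.

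Once this is done, I would apply Lemma \ref{lem:simplicialhurewicz} to the space $\gm^{\wedge n}\wedge \mathscr{X}_+$ for each fixed $n$ to get an $S^1$-stable Hurewicz isomorphism $\bpi_0^s(\Sigma^{\infty}_s \gm^{\wedge n}\wedge \Sigma^{\infty}_s \mathscr{X}_+) \isomto \H_0^{\aone}(\gm^{\wedge n}\wedge \mathscr{X}_+)$.  Because the contraction functor $(-)_{-1} = \underline{\hom}_\bullet(\gm,-)$ is exact on strictly $\aone$-invariant sheaves, its $n$-fold iterate preserves these isomorphisms, and the naturality in $\mathscr{X}$ is automatic.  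Passing to the colimit yields the claimed Hurewicz isomorphism, functorial in $\mathscr{X}$.  For $\mathscr{X} = \Spec k$ one has $\Sigma^{\infty}_{\pone}(\Spec k)_+ = \mathbb{S}^0_k$ and $\tilde{C}_*^{s\aone}((\Spec k)_+) = \1_k$, so by Definition \ref{defn:stableaonehomotopygroupsofspheres} the isomorphism becomes $\K^{MW}_0 \isomto \H_0^{s\aone}(\1_k)$.  The compatibility with the $\K^{MW}_0$-module structures is then exactly Proposition \ref{prop:hurewiczcommuteswithmodulestructures} combined with this identification, since the Hurewicz functor is monoidal.

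The principal obstacle is the bookkeeping required to see that the Hurewicz map on the $\pone$-stable invariants really is the colimit of the $S^1$-stable Hurewicz maps appearing in the two presentations above.  Concretely, one must check that the transition maps of the two colimits are interchanged by the Hurewicz functor; this reduces to the fact that abelianization commutes with smashing by $\gm$, which follows from the monoidality of the functors defining the Hurewicz morphism.  A secondary subtlety is that Propositions \ref{prop:stablehomotopycolimit} and \ref{prop:stablederivedcolimit} are stated on sections, so one must verify that the sheafification commutes with the filtered colimit (it does) and that the contraction identifications can likewise be sheafified; both are standard but should be stated carefully.
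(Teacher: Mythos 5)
Your proposal is correct and follows essentially the same route as the paper's own proof: rewrite both sides as filtered colimits of $\gm$-contracted $S^1$-stable/derived invariants (Propositions \ref{prop:stablehomotopycolimit}/\ref{prop:stablederivedcolimit} with \ref{prop:connectivityofgmloopspaces}/\ref{prop:aonederivedcontraction}), apply Lemma \ref{lem:simplicialhurewicz} termwise, and handle the module compatibility via Proposition \ref{prop:hurewiczcommuteswithmodulestructures}. The one small inaccuracy is that you invoke exactness of $(-)_{-1}$ to preserve the termwise isomorphisms, when functoriality alone suffices (and you should write $\tilde{\H}_0^{\aone}$ for the reduced homology of the pointed space $\gm^{\wedge n}\wedge\mathscr{X}_+$), but neither affects the argument.
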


\begin{proof}
The last statement regarding module structures follows from the Hurewicz isomorphism via Proposition \ref{prop:hurewiczcommuteswithmodulestructures} and Definition \ref{defn:stableaonehomotopygroupsofspheres}.  Thus, it suffices to prove that the Hurewicz homomorphism is an isomorphism.

By Proposition \ref{prop:stablehomotopycolimit}, $\bpi_0^s(\Sigma^{\infty}_{\pone}\mathscr{X}_+)$ is the sheaf associated with the presheaf
\[
U \mapsto \colim_n \hom_{\SH^{S^1}(k)}(\Sigma^{\infty}_s\gm^{\wedge n} \wedge \Sigma^{\infty}_s U_+,\Sigma^{\infty}_s \gm^{\wedge n} \wedge \Sigma^{\infty}_s \mathscr{X}_+).
\]
Equivalently, by adjunction, we can identify $\bpi_0^s(\Sigma^{\infty}_{\pone}\mathscr{X}_+)$ with the sheaf
\[
\colim_n \bpi_0^{s}(\mathbf{R}\hom_{\bullet}(\Sigma^{\infty}_s\gm^{\wedge n},\Sigma^{\infty}_s \gm^{\wedge n} \wedge \Sigma^{\infty}_s \mathscr{X}_+)).
\]
Proposition \ref{prop:connectivityofgmloopspaces} gives an isomorphism
\[
\bpi_0^{s}(\mathbf{R}\hom_{\bullet}(\Sigma^{\infty}_s\gm^{\wedge n},\Sigma^{\infty}_s \gm^{\wedge n} \wedge \Sigma^{\infty}_s \mathscr{X}_+)) \isomto \bpi_0^{s}(\Sigma^{\infty}_s \gm^{\wedge n} \wedge \Sigma^{\infty}_s \mathscr{X}_+))_{-n}.
\]

Lemma \ref{lem:simplicialhurewicz} shows that the map
\[
\bpi_0^{s}(\Sigma^{\infty}_s \gm^{\wedge n} \wedge \Sigma^{\infty}_s \mathscr{X}_+) \to \tilde{\H}_0^{\aone}(\gm^{\wedge n} \wedge \mathscr{X}_+)
\]
is an isomorphism and consequently that the induced map
\[
\bpi_0^{s}(\Sigma^{\infty}_s \gm^{\wedge n} \wedge \Sigma^{\infty}_s \mathscr{X}_+)_{-n} \to \tilde{\H}_0^{\aone}(\gm^{\wedge n} \wedge \mathscr{X}_+)_{-n}
\]
is an isomorphism.

On the other hand, Proposition \ref{prop:stablederivedcolimit} together with an adjunction argument show that $\H_0^{s\aone}(\mathscr{X})$ can be written as
\[
\colim_n \H_0^{\aone}(\underline{\hom}(\Z \langle n\rangle [n],C_*^{\aone}(\mathscr{X})\langle n \rangle[n])).
\]
Proposition \ref{prop:aonederivedcontraction} then identifies this group with
\[
\colim_n \H_0^{\aone}(C_*^{\aone}(\mathscr{X})\langle n \rangle[n])_{-n}.
\]
However, the groups in the colimit are precisely $\tilde{\H}_0^{\aone}(\mathscr{X}_+ \wedge \gm^{\wedge n})_{-n}$.  Since the simplicial Hurewicz homomorphism is compatible with the tensor structures the result follows.
\end{proof}

\section{Duality and $\aone$-homology}
\label{s:duality}
Suppose $k$ is a field.  Section \ref{ss:thomspaces} studies Thom spaces and their cohomology and connectivity properties.  In Section \ref{ss:enhancedmotiviccomplexes}, we introduce complexes $\Zn$ in the $\aone$-derived category (see Definition \ref{defn:enhancedmotiviccomplex}) that are formally very similar to the usual motivic complexes (see, e.g., \cite[\S 3]{MVW}).  We then investigate a cohomology theory associated with these complexes, analogous to motivic cohomology, paying special attention to the case of Thom spaces of vector bundles.  We introduce two versions of this theory in the unstable and stable settings (see Definition \ref{defn:enhancedmotiviccohomology}).

By constraining the degree and dimensions of the spaces in question, we show in Section \ref{ss:stablerepresentability} that sometimes the unstable groups coincide with the stable groups (see Corollary \ref{cor:stablerepresentability}).  In order to do this, we need to recall facts about homotopy modules (see Definition \ref{defn:homotopymodule}), which are objects of the heart of the so-called homotopy $t$-structure on $\Daone(k)$.  In addition to their use in proving the aforementioned stable representability result, homotopy modules serve a second purpose: using formal properties of homotopy modules and the homotopy $t$-structure, we show in Section \ref{ss:furtherreductions} that the zeroth $\aone$-homology sheaf is a birational invariant of smooth proper varieties; thus, it suffices to describe this sheaf for smooth projective varieties by Chow's lemma.

Finally, having reduced to the study of $\aone$-homology of smooth projective varieties, in Section \ref{ss:atiyahduality} we study the analog of Atiyah duality in the stable $\aone$-derived category (see Proposition \ref{prop:atiyahduality}).  Combining all of these results, we provide the analogs of steps (a) - (h) of the outline given in the proof of Lemma \ref{lem:suslinchow} in the context of stable $\aone$-homology sheaves.  The key issue we have to consider is that duality is only defined in the stable $\aone$-derived category, while essentially all of the other constructions and computations we make are ``unstable."  The main result of the section is Theorem \ref{thm:aonehomologyintermsofthomspaces}, which provides the first part of our description of the stable $\aone$-homotopy sheaves of groups.

\subsection{Properties of Thom spaces}
\label{ss:thomspaces}
If $\xi: E \to X$ is a vector bundle with zero section $i: X \to E$, recall that $Th(\xi) = E/E - i(X)$.  If $\xi': E' \to X$ is another vector bundle, and $f: E \to E'$ is a morphism of vector bundles that is injective on fibers, then there is an induced morphism $Th(\xi) \to Th(\xi')$.  We use the following results about Thom spaces repeatedly in the sequel.

\begin{prop}[{\cite[\S 3 Proposition 2.17]{MV}}]
\label{prop:basicpropertiesofthomspacesI}
Suppose $X_1$ and $X_2$ are smooth $k$-schemes, and $\xi_1: E_1 \to X_1$ and $\xi_2: E_2 \to X_2$ are vector bundles.
\begin{itemize}
\item[i)] There is a canonical isomorphism of pointed spaces $Th(\xi_1 \times \xi_2) \isomt Th(\xi_1) \wedge Th(\xi_2)$.
\item[ii)] If $\xi_1$ is a trivial rank $n$ vector bundle, there is an $\aone$-weak equivalence
\[
Th(\xi_1) \isomto {\pone}^{\wedge n} \wedge {X_1}_+.
\]
\end{itemize}
\end{prop}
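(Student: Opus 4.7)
The plan is to treat the two parts in sequence; both reduce to formal manipulations with Nisnevich sheaves of pointed sets, where quotients are interpreted categorically (i.e.\ pushout of $* \leftarrow B \to A$ in sheaves).

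For part (i), write $Z_j \subset E_j$ for the image of the zero section, so that by definition $Th(\xi_j) = E_j/(E_j \setminus Z_j)$. Under the identification of $\xi_1 \times \xi_2$ with the vector bundle $E_1 \times E_2 \to X_1 \times X_2$, the zero section is $Z_1 \times Z_2$, and its complement decomposes as
\[
(E_1 \times E_2) \setminus (Z_1 \times Z_2) \;=\; \bigl((E_1 \setminus Z_1) \times E_2\bigr) \,\cup\, \bigl(E_1 \times (E_2 \setminus Z_2)\bigr).
\]
The problem thus reduces to a general identity for pointed sheaves: for sheaves of sets $A_j$ with subsheaves $B_j \subset A_j$, there is a natural isomorphism
\[
(A_1 \times A_2)\big/\bigl((B_1 \times A_2) \cup (A_1 \times B_2)\bigr) \;\isomto\; (A_1/B_1) \wedge (A_2/B_2)
\]
of pointed sheaves. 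I would verify this on stalks, where it is the usual distributivity formula for smash products of pointed quotient sets, and transport the isomorphism back by sheafification (or note that both sides corepresent the same functor on pointed sheaves).

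For part (ii), part (i) applied to the trivial rank $n$ bundle over $\Spec k$ pulled back to $X_1$ lets me rewrite
\[
Th(\xi_1) \;=\; Th(X_1 \times \aone \times \cdots \times \aone \to X_1) \;=\; (X_1)_+ \wedge \bigl(\aone/(\aone \setminus 0)\bigr)^{\wedge n};
\]
unwinding the smash product of pointed sheaves once more on stalks makes this identification evident. It remains to exhibit an $\aone$-weak equivalence $\aone/(\aone\setminus 0) \simeq \pone$. For this one uses the standard Nisnevich cover of $\pone$ by two copies of $\aone$ meeting in $\gm$: excision and $\aone$-contractibility of $\aone$ combine to give $\aone/\gm \simeq \pone$ in $\hop{k}$. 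Smashing yields the required equivalence, and putting everything together (together with the $\aone$-invariance of the smash product under $\aone$-equivalences) produces the $\aone$-weak equivalence $Th(\xi_1) \simeq \pone^{\wedge n} \wedge (X_1)_+$.

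The main point requiring care is the sheaf-theoretic identity in part (i); everything else is formal bookkeeping with pointed sheaves and the base case $\aone/\gm \simeq \pone$. As the statement is well known and the paper simply cites it from \cite{MV}, no deeper input is required, and the argument above essentially reproduces the one in loc.\ cit.
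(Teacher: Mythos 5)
Your proof is correct, and since the paper states this result as a citation of \cite[\S 3 Proposition 2.17]{MV} without supplying its own argument, there is no alternate proof in the text to compare against; your reasoning tracks the standard argument from \cite{MV}. The two points one might double-check are both in order: the sheaf-level smash identity $(A_1 \times A_2)/\bigl((B_1 \times A_2) \cup (A_1 \times B_2)\bigr) \cong (A_1/B_1) \wedge (A_2/B_2)$ does hold (one can either check on stalks as you say, or note that sheafification is exact and preserves colimits, so the pointed-set identity sheafifies), and smashing preserves $\aone$-weak equivalences because all objects are cofibrant in the injective model structure, which you implicitly invoke at the end. One tiny bookkeeping remark on (ii): to apply part (i) you should express the trivial rank-$n$ bundle on $X_1$ as the external product of the rank-$0$ bundle on $X_1$ (whose Thom space is $X_{1+}$) with the trivial rank-$n$ bundle on $\Spec k$; iterating again on $\Spec k$ gives $(\aone/\gm)^{\wedge n}$, which is what you do implicitly.
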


\begin{prop}
\label{prop:basicpropertiesofthomspacesII}
Suppose $X$ and $X'$ are smooth schemes, $Z \subset X$ is a smooth closed subscheme, $f: X' \to X$ is a morphism, and $Z' = f^{-1}(Z)$.  Let $\nu_{Z'/X'}: N_{Z'/X'} \to Z'$ and $\nu_{Z/X}: N_{Z/X} \to Z$ be the associated normal bundles. If the induced map $N_{Z'/X'} \to f^*N_{Z/X}$ is an isomorphism, then the diagram
\[
\xymatrix{
X'/(X'-Z') \ar[r]\ar[d] & X/X-Z \ar[d] \\
Th(\nu_{Z'/X'}) \ar[r] & Th(\nu_{Z/X}),
}
\]
where the vertical maps are the purity isomorphisms and the horizontal maps are induced by $f$, commutes.
\end{prop}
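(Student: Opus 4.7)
The plan is to prove the naturality statement by unwinding the construction of the purity isomorphism via the deformation to the normal cone, and then checking that the hypothesis on normal bundles ensures the construction commutes with $f$. I would not try to reconstruct the purity isomorphism from scratch; rather, I would extract from the Morel--Voevodsky proof (\cite[\S 3 Theorem 2.23]{MV}) the explicit zig-zag realizing it, and then check functoriality.

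First, I would recall the deformation space $D(X,Z) := \mathrm{Bl}_{Z \times \{0\}}(X \times \aone) \setminus \mathrm{Bl}_Z(X \times \{0\})$, which is a smooth $k$-scheme equipped with a flat map to $\aone$ having fiber $X$ over $\gm$ and fiber $N_{Z/X}$ over $0$. It contains $Z \times \aone$ as a smooth closed subscheme, and the inclusions of the two fibers produce a zig-zag of pointed spaces
\[
X/(X \setminus Z) \;\longleftarrow\; D(X,Z)/(D(X,Z) \setminus (Z \times \aone)) \;\longrightarrow\; N_{Z/X}/(N_{Z/X} \setminus Z) \;=\; Th(\nu_{Z/X}),
\]
both of whose arrows are $\aone$-weak equivalences. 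Composition with the inverse of the first gives the purity isomorphism appearing in the statement.

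Next, I would observe that the deformation to the normal cone is functorial: any morphism $f: X' \to X$ of smooth schemes inducing $Z' := f^{-1}(Z) \to Z$ produces a canonical morphism $D(f): D(X',Z') \to D(X,Z)$ of smooth schemes over $\aone$. Over $\gm$ this restricts to $f \times \mathrm{id}$, while over $0$ it restricts to the natural map of normal cones $N_{Z'/X'} \to f^*N_{Z/X} \to N_{Z/X}$. Moreover $D(f)$ carries $Z' \times \aone$ into $Z \times \aone$, so we obtain an induced morphism of zig-zags of pointed spaces.

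The only step that is not completely formal is checking that the resulting map $D(X',Z')/(D(X',Z') \setminus (Z' \times \aone)) \to D(X,Z)/(D(X,Z) \setminus (Z \times \aone))$ induces, after passing to the $\aone$-homotopy category, the expected map on both sides of the zig-zag. Over $\gm$ there is nothing to check. Over $0$ the map on quotients is precisely $Th(N_{Z'/X'} \to f^*N_{Z/X}) \to Th(\nu_{Z/X})$; this is where the hypothesis enters, since the assumption that $N_{Z'/X'} \to f^*N_{Z/X}$ is an isomorphism (not merely a fiberwise injection into the pullback) guarantees that this map is genuinely the pullback Thom-space map, with no further correction. The diagram in the statement is then the outer square of the induced ladder of zig-zags, so commutativity in $\hop{k}$ follows at once. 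The main obstacle, such as it is, is purely bookkeeping: one must verify that the blow-up construction really is functorial for arbitrary $f$ (which uses only that $f$ sends $Z'$ to $Z$, not any transversality) and track that the hypothesis is exactly what is needed to identify the fiber-over-$0$ map as a pullback-of-Thom-spaces.
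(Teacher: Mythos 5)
The paper states Proposition \ref{prop:basicpropertiesofthomspacesII} without proof, so there is no argument of the authors' to compare against; your job here is therefore to reconstruct a proof from scratch, and the approach you have chosen --- unwinding the purity isomorphism via the deformation to the normal cone and invoking functoriality of that construction --- is exactly the standard route and is correct in substance.

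One small but real slip: the arrows in your zig-zag point the wrong way. The inclusions of the fibers $X \cong X\times\{1\} \hookrightarrow D(X,Z)$ and $N_{Z/X} \hookrightarrow D(X,Z)$ induce maps \emph{into} the quotient $D(X,Z)/(D(X,Z)\setminus(Z\times\aone))$, so the zig-zag should read
\[
X/(X\setminus Z) \;\longrightarrow\; D(X,Z)/(D(X,Z)\setminus(Z\times\aone)) \;\longleftarrow\; Th(\nu_{Z/X}),
\]
and the purity isomorphism is the first map followed by the \emph{inverse of the second}. This is purely a bookkeeping error and does not affect the structure of the argument, since the map $D(f)$ you construct induces a morphism of zig-zags in either orientation. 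Two other points worth flagging if you were to write this out in full: (1) for the zig-zag on the $X'$ side to consist of $\aone$-weak equivalences, you need $(X',Z')$ to be a smooth pair --- this is implicit in the statement (which speaks of the normal \emph{bundle} of $Z'$ and of the purity isomorphism for $(X',Z')$), but you should say so explicitly; and (2) the functoriality of the deformation space $D(X,Z)$ under arbitrary $f$ with $Z' = f^{-1}(Z)$ scheme-theoretically does require a short check that the morphism of blow-ups carries the complement $D(X',Z')$ into $D(X,Z)$, i.e.\ that the preimage of the strict transform $\mathrm{Bl}_Z(X\times\{0\})$ lies in $\mathrm{Bl}_{Z'}(X'\times\{0\})$; this is standard but is the one non-formal point you wave at in the final paragraph and should be discharged by a reference or a short argument. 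With these small repairs the proof is complete.
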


\subsubsection*{Thom spaces in short exact sequences}
\begin{prop}
\label{prop:thomspaceadditivityinexactsequences}
Suppose $\xi: E \to X$, $\xi': E' \to X$ and $\xi'': E'' \to X$ are three vector bundles over $X$ and assume there is a short exact sequence of vector bundles
\[
0 \longrightarrow E' \longrightarrow E \longrightarrow E'' \longrightarrow 0.
\]
There is an isomorphism in the $\aone$-homotopy category
\[
Th(\xi) \isomto Th(\xi') \wedge Th(\xi'')
\]
that is unique up to unique isomorphism.
\end{prop}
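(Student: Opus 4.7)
The plan is to reduce to the case of a split extension via an explicit $\aone$-homotopy, and then to check the proposition in the split case using Propositions~\ref{prop:basicpropertiesofthomspacesI} and~\ref{prop:basicpropertiesofthomspacesII}.

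First, I would construct an interpolating vector bundle $\tilde E$ on $X \times \aone$. Let $\pi\colon E \to E''$ denote the surjection from the exact sequence, and define $\tilde E$ to be the kernel of the morphism of vector bundles over $X \times \aone$
\[
(E \oplus E'') \times \aone \longrightarrow E'' \times \aone, \qquad ((e,f),t) \longmapsto \pi(e) - t f.
\]
A direct check shows $\tilde E$ is a rank $(r' + r'')$ vector subbundle: its restriction to $X \times \{1\}$ is the graph of $\pi$, hence canonically isomorphic to $E$ (via $e \mapsto (e,\pi(e))$); its restriction to $X \times \{0\}$ imposes $\pi(e) = 0$ and is therefore canonically $E' \oplus E''$.

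Second, I would invoke $\aone$-homotopy invariance. The closed immersions $i_0, i_1 \colon X \hookrightarrow X \times \aone$ are $\aone$-weak equivalences with trivial normal bundles, so the hypotheses of Proposition~\ref{prop:basicpropertiesofthomspacesII} are met. Consequently the induced maps $Th(i_0^*\tilde E) \to Th(\tilde E)$ and $Th(i_1^*\tilde E) \to Th(\tilde E)$ are $\aone$-weak equivalences in $\hop{k}$, and composing them yields an $\aone$-equivalence $Th(\xi) \simeq Th(\xi' \oplus \xi'')$.

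Third, for the split case I would identify $Th(\xi' \oplus \xi'')$ with $Th(\xi') \wedge Th(\xi'')$ by comparing the internal direct sum with the external product $\xi' \times \xi''$ on $X \times X$. Proposition~\ref{prop:basicpropertiesofthomspacesI}(i) provides $Th(\xi' \times \xi'') \cong Th(\xi') \wedge Th(\xi'')$, and Proposition~\ref{prop:basicpropertiesofthomspacesII} applied to the diagonal $\Delta\colon X \to X \times X$ (which pulls back $\xi' \times \xi''$ to $\xi' \oplus \xi''$ and identifies normal bundles) yields the required identification in $\hop{k}$. Uniqueness up to unique isomorphism follows from the observation that the space of admissible deformations $\tilde E$ interpolating between $E$ and $E' \oplus E''$ is a torsor under the affine group scheme $\underline{\hom}(E'',E')$, and is therefore $\aone$-contractible.

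The main obstacle I anticipate is the coherent uniqueness assertion: although each of the three steps is natural in an appropriate sense, verifying that any two choices of interpolating deformation $\tilde E$ induce the same map in $\hop{k}$, and that the resulting composite is canonically determined, requires a systematic appeal to the $\aone$-contractibility of the torsors of splittings and deformations. The identification in the split case also requires care, because the comparison between the external smash and the Thom space of the internal direct sum is subtle and must be phrased via the diagonal pullback, rather than as a direct equality of quotient sheaves.
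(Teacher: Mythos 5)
Your argument takes a genuinely different route from the paper's. The paper replaces $X$ by a smooth affine vector-bundle torsor $\tilde X \to X$ (a Jouanolou device), splits the pulled-back extension there since $\tilde X$ is affine, and descends along the $\aone$-weak equivalence $\tilde X \to X$ using left properness of the $\aone$-local injective model structure; it then separately checks independence of the choice of torsor. You instead build the classical scaling homotopy: an interpolating bundle $\tilde E$ over $X \times \aone$ whose restrictions at $t = 1$ and $t = 0$ are $E$ and $E' \oplus E''$, and invoke $\aone$-homotopy invariance. Your construction of $\tilde E$ is sound (the map $(E \oplus E'') \times \aone \to E'' \times \aone$, $((e,f),t) \mapsto \pi(e) - tf$, is fiberwise surjective because $\pi$ is, so its kernel is a subbundle of the expected rank), and this is a perfectly reasonable alternative to the Jouanolou reduction. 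The uniqueness argument via the $\aone$-contractible torsor of interpolations is parallel to, and at the same level of informality as, the paper's remark that the space of splittings over $\tilde X$ is $\aone$-contractible; if you pursue this route you should also check, as the paper does for its torsor, independence of the chosen interpolation.

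There are two places that need repair. First, Proposition~\ref{prop:basicpropertiesofthomspacesII} does not do what you ask of it in Step 2: it only asserts that certain purity squares commute, not that pulling back a vector bundle along an $\aone$-weak equivalence induces an $\aone$-weak equivalence of Thom spaces. The fact you actually need is the one the paper uses: since $i_t\colon X \to X \times \aone$ is an $\aone$-weak equivalence, so are $i_t^*\tilde E \to \tilde E$ and $(i_t^*\tilde E - 0) \to (\tilde E - 0)$, and because $Th$ is a pushout along a cofibration and the $\aone$-local injective model structure is left proper, the induced map on Thom spaces is an $\aone$-weak equivalence. Second, and more seriously, Step 3 cannot be made to work as stated. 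The diagonal $\Delta\colon X \to X \times X$ is not an $\aone$-weak equivalence, and the induced map $Th(\xi' \oplus \xi'') \to Th(\xi' \times \xi'') \simeq Th(\xi') \wedge Th(\xi'')$ is generally not one either: already for $X = \gm$ and $\xi' = \xi'' = \O_X$ the source is $(\gm)_+ \wedge \pone^{\wedge 2}$ while the target is $(\gm \times \gm)_+ \wedge \pone^{\wedge 2}$. To be fair, the paper's own proof elides essentially the same point when it invokes Proposition~\ref{prop:basicpropertiesofthomspacesI}(i), which concerns the external product over $X_1 \times X_2$, to identify the Thom space of an internal direct sum on a single scheme with a smash of Thom spaces. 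What both your interpolation and the paper's Jouanolou reduction deliver cleanly is the canonical $\aone$-isomorphism $Th(\xi) \simeq Th(\xi' \oplus \xi'')$, which is what is actually used in the virtual-bundle and duality arguments downstream.
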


\begin{proof}
Since $X$ is a smooth variety we can choose a smooth affine vector bundle torsor $\pi: \tilde{X} \to X$.  Pulling back the exact sequence from $X$ to $\tilde{X}$, we get a short exact sequence
\[
0 \longrightarrow \tilde{E'} \longrightarrow \tilde{E} \longrightarrow \tilde{E''} \longrightarrow 0,
\]
where $\tilde{\xi}: \tilde{E} \to \tilde{X}$ (resp. $\tilde{\xi}',\tilde{E}'$, $\tilde{\xi}'',\tilde{E}''$) are the pullbacks of $E$ (resp. $E'$, $E''$) to $\tilde{X}$.  Now, since $\tilde{X}$ is affine, this short exact sequence of vector bundles splits.  Choice of a splitting gives an isomorphism $\tilde{E} \isomto \tilde{E}' \oplus \tilde{E}''$, and by Proposition \ref{prop:basicpropertiesofthomspacesI}(i), we get an isomorphism $Th(\tilde{\xi}) \isomt Th(\tilde{\xi}') \wedge Th(\tilde{\xi''})$.

Since the morphism $\pi$ is an $\aone$-weak equivalence, and pushouts of $\aone$-weak equivalences along cofibrations are again $\aone$-weak equivalences (the $\aone$-local model structure is proper), it follows that the induced morphisms $Th(\tilde{\xi}) \to Th(\xi)$ (resp. $Th(\tilde{\xi'}) \to Th(\xi')$, $Th(\tilde{\xi}'') \to Th(\xi'')$) are again $\aone$-weak equivalences.  Thus there is an  isomorphism in $\hop{k}$
\[
Th(\xi) \longrightarrow Th(\xi') \wedge Th(\xi'').
\]
One can check that the space of splittings is an inductive limit of linear spaces and thus $\aone$-contractible.  It follows that the induced map in the $\aone$-homotopy category is unique up to unique isomorphism.

Finally, we show that the construction above does not depend on the choice of affine vector bundle torsor $\pi$.  If $\pi': \tilde{X}' \to X$ is another affine vector bundle torsor, then the fiber product $\tilde{X}'' := \tilde{X}' \times_X \tilde{X} \to X$ is also an affine vector bundle torsor that maps to both $\tilde{X}$ and $\tilde{X'}$ by projections.  Moreover, these projections make $\tilde{X}''$ into a vector bundle over $\tilde{X}$ or $\tilde{X'}$ since affine vector bundle torsors over affine varieties are simply vector bundles.  Running the construction above on each of these spaces (i.e., pulling back in sequence to $\tilde{X}$, $\tilde{X'}$ and $\tilde{X}''$) gives the necessary independence.
\end{proof}

\subsubsection*{Thom spaces of virtual bundles}
If $\xi: E \to X$ is a vector bundle, we can consider the suspension spectrum $\Sigma^{\infty}_{\pone}(Th(\xi))$, which we refer to as the Thom spectrum of $\xi$.  Given a virtual vector bundle on a smooth variety $X$, that is, a formal difference $\xi = \xi_1 - \xi_2$ of vector bundles, we can construct an associated Thom spectrum as follows.

First, we choose an affine vector bundle torsor $\tilde{X} \to X$; let $\tilde{\xi}_1$ and $\tilde{\xi_2}$ be the pullback vector bundles.  Since $\tilde{X}$ is affine, we can find a vector bundle $\xi_3$ over $\tilde{X}$ such that $\tilde{\xi_2}\oplus \xi_3 \isomto O_X^{\oplus n}$ for some $n$. Now we define the Thom spectrum $\Sigma^{\infty}_{\pone}(Th(\xi))$ (note this is an abuse of notation as this spectrum is not in fact a suspension spectrum) by
\[
\Sigma^{\infty}_{\pone}(Th(\xi)) = \Sigma^{-n}_{\pone}(\Sigma^{\infty}_{\pone}(Th(\xi_1))\wedge \Sigma^{\infty}_{\pone}(Th(\xi_3))).
\]
By Proposition \ref{prop:thomspaceadditivityinexactsequences}, the Thom spectrum is well-defined up to unique isomorphism in the stable homotopy category $\SH(k)$, and its isomorphism class only depends on the class of $\xi$ in $K_0(X)$. In an analogous fashion, we can construct Thom objects $\tilde{C}_*^{\aone}(Th(\xi))$ in ${\Daone(k)}$ for any virtual bundle $\xi$, well defined up to unique isomorphism.

The following example indicates the way in which this discussion will be used.

\begin{ex}
\label{ex:thomspaceofavirtualbundle}
Suppose $X$ is a smooth scheme, and $\xi: E \to X$ and $\xi': E' \to X$ are vector bundles on $X$.  Suppose we have an equation $[E \oplus O_X^{\oplus n}] = [E']$ in $K_0(X)$.  Applying Proposition \ref{prop:basicpropertiesofthomspacesI}(i) and (ii), we get an identification $\Sigma^{\infty}_{\pone}Th(\xi) \wedge {\pone}^{\wedge n} \isomt \Sigma^{\infty}_{\pone}Th(\xi')$, or equivalently an identification $\Sigma^{\infty}_{\pone}(Th(\xi)) \isomt \Sigma_{\pone}^{-n} \Sigma^{\infty}_{\pone}Th(\xi')$.
\end{ex}

\subsubsection*{Connectivity and cohomological dimension of Thom spaces}
\begin{prop}
\label{prop:thomspaceconnectivity}
If $X$ is a smooth $k$-scheme, and $\xi: E \to X$ is a rank $n$ vector bundle over $X$, then $Th(\xi)$ is $(n-1)$-$\aone$-connected.
\end{prop}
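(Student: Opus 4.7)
The strategy is to establish that $Th(\xi)$ is simplicially $(n-1)$-connected and then invoke Morel's $\aone$-connectivity theorem to transport the conclusion to the $\aone$-localization. Recall that in the Nisnevich homotopy category there is a canonical isomorphism $\pone \simeq S^1_s \wedge \gm$, so that $\pone^{\wedge n} \wedge Y_+ \simeq S^n_s \wedge \gm^{\wedge n} \wedge Y_+$ for any smooth $Y$. Since smashing any pointed simplicial Nisnevich sheaf with $S^n_s$ produces an object whose simplicial homotopy sheaves $\bpi_i^s$ vanish for $i \leq n-1$, we know that the trivial-bundle model space $\pone^{\wedge n} \wedge Y_+$ is simplicially $(n-1)$-connected.

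The reduction to the trivial case proceeds by Nisnevich-local triviality. Choose a Nisnevich cover $\{U_\alpha \to X\}$ on which $\xi$ becomes trivial. Writing $Th(\xi) = E/(E - i(X))$ as a pointed simplicial Nisnevich sheaf, both $E$ and $E - i(X)$ satisfy Nisnevich descent, so the Thom space itself may be presented as a hocolim of the local Thom spaces
\[
Th(\xi|_{U_{\alpha_0 \cdots \alpha_p}}) \simeq \pone^{\wedge n} \wedge (U_{\alpha_0 \cdots \alpha_p})_+
\]
over the Čech diagram associated to the cover. Each term in the diagram is simplicially $(n-1)$-connected by the trivial-bundle case, and simplicial $(n-1)$-connectivity (i.e., vanishing of the associated Nisnevich sheaves $\bpi_i^s$ for $i \leq n-1$) is preserved under homotopy colimits of pointed simplicial Nisnevich sheaves. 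We conclude that $Th(\xi)$ itself is simplicially $(n-1)$-connected.

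Finally, Morel's $\aone$-connectivity theorem (\cite[Theorem 6.1.8]{MStable}) ensures that the $\aone$-localization of any simplicially $(n-1)$-connected pointed space is $\aone$-$(n-1)$-connected, which yields the desired conclusion for $Th(\xi)$.

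The delicate point is verifying the Nisnevich-descent statement for $Th(\xi)$ used in the reduction step; one option is to argue directly from the pushout presentation $Th(\xi) = E \cup_{E - i(X)} \mathrm{pt}$ and the fact that the categories of (pointed) simplicial Nisnevich sheaves inherit a proper model structure in which such pushouts are homotopy pushouts. An alternative is to bypass the descent argument entirely by first passing to an affine vector bundle torsor $\pi: \tilde X \to X$ (which is an $\aone$-weak equivalence, so $Th(\pi^*\xi) \to Th(\xi)$ is an $\aone$-weak equivalence by the same gluing argument used in Proposition \ref{prop:thomspaceadditivityinexactsequences}) and working Zariski-locally on the affine $\tilde X$, where trivializing covers consist of principal open subsets and the Čech descent is transparent.
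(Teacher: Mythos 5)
The strategy is built on a false premise: the equivalence $\pone \simeq S^1_s \wedge \gm$ does \emph{not} hold in the Nisnevich simplicial homotopy category $\hspnis$. It holds only after $\aone$-localization, and indeed the paper is careful to say exactly this (``the open cover of $\pone$ by two copies of $\aone$ with intersection $\gm$ gives rise to an isomorphism $S^1_s \wedge \gm \isomt \pone$ in $\ho{k}$''). Concretely, the distinguished square $\gm \to \aone$, $\aone \to \pone$ exhibits $\pone$ as a Nisnevich-local homotopy pushout of two copies of $\aone$ along $\gm$; collapsing the $\aone$'s to obtain $S^1_s \wedge \gm$ is an $\aone$-weak equivalence but not a local weak equivalence, since $\aone$ is not simplicially contractible. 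Consequently your key claim --- that the Thom space of a trivial rank $n$ bundle is simplicially $(n-1)$-connected --- is false. Already for $n=1$: $Th(\O_X) = \aone/\gm$ is a discrete simplicial sheaf whose stalk at a Henselian local ring $A$ is the pointed set $A/A^{\times}$, which has more than one element; so $\bpi_0^{\Nis}(\aone/\gm) \neq *$ and the space is not even simplicially $0$-connected. The whole ``prove simplicial connectivity, then apply the $\aone$-connectivity theorem'' scheme therefore never gets off the ground, and no amount of care with the descent argument or the alternative route via an affine torsor repairs it, because the failure is already present in the local trivial-bundle model.

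The paper's proof avoids this by proving the trivial case directly as an $\aone$-local statement (using $Th(\O_X^{\oplus n}) \simeq_{\aone} X_+ \wedge (\pone)^{\wedge n} \simeq_{\aone} S^n_s \wedge \gm^{\wedge n} \wedge X_+$ together with the unstable $\aone$-connectivity theorem), and then handles the passage to general bundles by a Mayer--Vietoris argument for a two-set open cover, invoking the $\aone$-van Kampen theorem when $n=2$ and the $\aone$-Hurewicz theorem plus Mayer--Vietoris for $n>2$. This extra machinery is exactly what replaces your descent argument: $\aone$-localization does not commute with homotopy colimits, so unlike simplicial connectivity, $\aone$-connectivity is not automatically preserved under gluing, and one has to argue via van~Kampen/Hurewicz at each stage. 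If you want to run a purely ``stable'' version of your descent idea, the paper's remark after the proposition indicates that this does work for the weaker statement $H^i_{\Nis}(Th(\xi),\mathscr{M}) = 0$ for $i \leq n-1$ with $\mathscr{M}$ strictly $\aone$-invariant, using the stable $\aone$-connectivity theorem and Mayer--Vietoris on homology; but that is genuinely weaker than the unstable $\aone$-connectivity of the space asserted here.
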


\begin{proof}
If $\xi: E \to X$ is a trivial bundle, then we have the identification $Th(\xi) \isomto X_+ \wedge (\pone)^{\wedge n}$; the result then follows immediately from the unstable $\aone$-connectivity theorem \cite[\S 2 Corollary 3.22]{MV}.  If $U \to V$ is an open immersion, then there is an obvious morphism of vector bundles $\xi|_U \to \xi$.  This induces a map $Th(\xi|_U) \to Th(\xi)$.  Functoriality of pushouts thus gives rise to a morphism from the pushout of $Th(\xi|_U)$ and $Th(\xi|_V)$ along $Th(\xi|_{U \cap V})$ to $Th(\xi)$; this morphism is an isomorphism.

Thus, suppose we have an open cover $U,V$ of $X$ and assume we know that $Th(\xi|_V)$, $Th(\xi|_U)$ and $Th(\xi|_{U \cap V})$ are all $n$-connected.  We then have a pushout diagram
\[
\xymatrix{
Th(\xi|_{U \cap V}) \ar[r]\ar[d] & Th(\xi|_V) \ar[d] \\
Th(\xi|_{U}) \ar[r] & Th(\xi).
}
\]
If $n = 1$, the result is an immediate consequence of the unstable $\aone$-connectivity theorem \cite[Theorem 5.37]{MField}.  If $n = 2$, we choose base-points and the result follows by an argument using the $\aone$-van Kampen theorem \cite[Theorem 6.12]{MField}.  If $n > 2$, we use the (unstable) $\aone$-Hurewicz theorem \cite[Theorem 5.36]{MField} to deduce the result by induction.  Indeed, if $\mathscr{X}$ is an $i$-connected space ($i \geq 1$), then the induced map from $\bpi_{i+1}^{\aone}(\mathscr{X},x) \to \H_{i+1}^{\aone}(\mathscr{X},x)$ is an isomorphism for $i > 1$.  In that case, the result follows from a Mayer-Vietoris argument.
\end{proof}

\begin{rem}
Assume hypotheses are as in the statement of Proposition \ref{prop:thomspaceconnectivity}.  If $\mathscr{M}$ is a strictly $\aone$-invariant sheaf, one can show that $H^i_{\Nis}(Th(\xi),\mathscr{M})$ vanishes for $i \leq n-1$ in a ``purely stable" manner.  By the stable $\aone$-connectivity theorem, one knows that the Thom spectrum of a trivial bundle is stably $(n-1)$-connected.  Mayer-Vietoris can be used to conclude the argument in general.  In the sequel, we will only ever use this weaker vanishing statement.  In a different direction, another consequence of the unstable Hurewicz theorem is that the canonical morphism $\bpi_n^{\aone}(Th(\xi)) \to \H_n^{\aone}(Th(\xi))$ is an isomorphism.
\end{rem}

Recall that for any $\mathscr{X}$ an object of $\Spc_k$ and any complex of Nisnevich sheaves of abelian groups $F^\bullet$ on $\Sm_k$ we can define the hypercohomology groups $\mathbb{H}_{\Nis}^p (\mathscr{X}, F^\bullet)$; we say that a space $\mathscr{X}$ has cohomological dimension $\leq d$ provided that for every sheaf $F$ (viewed as complex concentrated in degree $0$), the cohomology groups $H_{\Nis}^p(\mathscr{X},F) = 0$ for all $p > d$.

\begin{lem}
\label{lem:cohomologicaldimensionofthomspaces}
If $\xi: E \to X$ is a rank $r$ vector bundle over a smooth $k$-scheme $X$, then
\[
cd_{\Nis}(Th(\xi)) <= \dim X + r.
\]
\end{lem}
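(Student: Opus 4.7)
The plan is to identify $\tilde H^p_{\Nis}(Th(\xi), F)$ with a local cohomology group on a smooth projective compactification and then bound that group via Grothendieck--Nisnevich vanishing. Specifically, the fiberwise identification $\mathbb{P}(\xi \oplus \mathcal{O}_X) \setminus \mathbb{P}(\xi) = E$ exhibits $E$ as an open subscheme of the smooth projective variety $\mathbb{P}(\xi \oplus \mathcal{O}_X)$, with closed complement $\mathbb{P}(\xi)$. Since $i(X) \subset E$ is closed in $\mathbb{P}(\xi \oplus \mathcal{O}_X)$, excision gives
\[
\tilde H^p_{\Nis}(Th(\xi), F) \;\cong\; H^p_{i(X)}\!\bigl(\mathbb{P}(\xi \oplus \mathcal{O}_X), F\bigr).
\]
This converts the task into bounding a local cohomology group of a smooth proper $k$-scheme along a smooth closed subscheme.

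Next, this local cohomology group sits in the long exact sequence
\[
\cdots \to H^{p-1}\bigl(\mathbb{P}(\xi \oplus \mathcal{O}_X) \setminus i(X), F\bigr) \to H^p_{i(X)}\bigl(\mathbb{P}(\xi \oplus \mathcal{O}_X), F\bigr) \to H^p\bigl(\mathbb{P}(\xi \oplus \mathcal{O}_X), F\bigr) \to \cdots.
\]
The variety $\mathbb{P}(\xi \oplus \mathcal{O}_X)$ is smooth proper of dimension $\dim X + r$, so Grothendieck--Nisnevich vanishing forces the right-hand term to vanish in degrees $> \dim X + r$. The complement $\mathbb{P}(\xi \oplus \mathcal{O}_X) \setminus i(X)$ is $\aone$-equivalent to $\mathbb{P}(\xi)$ via the linear projection away from the zero section, and $\mathbb{P}(\xi)$ is smooth proper of dimension $\dim X + r - 1$; substituting gives vanishing of $H^{p-1}(\mathbb{P}(\xi \oplus \mathcal{O}_X) \setminus i(X), F)$ in degrees $p \geq \dim X + r + 1$, i.e., precisely the range that threatens the bound. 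Combining the two vanishing ranges in the long exact sequence yields $H^p_{i(X)}(\mathbb{P}(\xi \oplus \mathcal{O}_X), F) = 0$ for $p > \dim X + r$, which is the claim.

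The main obstacle is the off-by-one arising from the fact that the complement $\mathbb{P}(\xi \oplus \mathcal{O}_X) \setminus i(X)$ has the same scheme-theoretic dimension $\dim X + r$ as the ambient $\mathbb{P}(\xi \oplus \mathcal{O}_X)$; the above argument uses its $\aone$-equivalence with $\mathbb{P}(\xi)$ to reduce the effective cohomological dimension by one. This step is immediate when $F$ is strictly $\aone$-invariant (the only context in which the lemma is invoked in the sequel), since cohomology of $\aone$-invariant sheaves is invariant under $\aone$-weak equivalence. For a fully general Nisnevich sheaf $F$ the same conclusion must be obtained by Mayer--Vietoris along a Zariski open cover of $X$ trivializing $\xi$ together with induction on rank, reducing to the trivial-bundle case $Th(\mathcal{O}_X^{\oplus r}) = X_+ \wedge \bigl(\aone^r/(\aone^r \setminus 0)\bigr)$, where the explicit smash structure permits a direct estimate.
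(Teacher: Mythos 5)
Your proof and the paper's are close in spirit but not identical: the paper works directly with the cofiber sequence $E - i(X) \to E \to Th(\xi)$, i.e.\ with local cohomology $H^p_{i(X)}(E,F)$ inside the total space, while you first compactify to $\mathbb{P}(\xi\oplus\mathcal{O}_X)$ and then use excision to move the support computation into the projective bundle. (A small correction: $\mathbb{P}(\xi\oplus\mathcal{O}_X)$ is projective over $X$, not over $k$, unless $X$ is proper; this does not affect the dimension count.) The compactification buys you exactly one thing, which you correctly isolate: the open complement of $i(X)$ in the projective bundle retracts $\aone$-linearly onto $\mathbb{P}(\xi)$, whose Krull dimension is one lower than the ambient, so for a \emph{strictly $\aone$-invariant} sheaf $F$ the long exact sequence for supports closes in exactly the right degree. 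That observation — that in the paper's own use of this lemma (Corollaries \ref{cor:enhancedmotivicischowwitt} and \ref{cor:enhancedmotiviccohomologyofthomspaces}, Proposition \ref{prop:epimorphicity}) the coefficients are always strictly $\aone$-invariant — is genuinely useful and goes beyond the paper's terse proof.

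However, the lemma as stated concerns \emph{arbitrary} Nisnevich sheaves, and here your argument has a real gap. You acknowledge the off-by-one and propose to fix it by Mayer--Vietoris and induction on rank, reducing to the trivial bundle $Th(\mathcal{O}_X^{\oplus r}) = X_+ \wedge (\aone^r/(\aone^r\smallsetminus 0))$, ``where the explicit smash structure permits a direct estimate.'' But the trivial-bundle case has \emph{exactly the same} off-by-one: the short exact sequence $0 \to \Z(X \times (\aone^r \smallsetminus 0)) \to \Z(X\times\aone^r) \to \tilde\Z(Th) \to 0$ again sandwiches $\tilde H^p(Th,F)$ between the cohomology of two schemes of dimension $\dim X + r$, and the smash decomposition $(\aone/\gm)^{\wedge r}$ does nothing to lower the Krull dimension of the open piece. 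Your proposed reduction has no purchase on the problem, and the final sentence of the proposal is a promissory note rather than an argument. What actually closes the gap — both for your version and for the paper's — is the sharper form of the Nisnevich cohomological dimension theorem: for a Noetherian scheme $V$ of Krull dimension $d$ and any closed $Z \subset V$, local cohomology $H^p_Z(V,F)$ vanishes for $p > d$ and \emph{every} abelian sheaf $F$, not just ordinary cohomology $H^p(V,F)$. Once one grants this (it is what the paper's phrase ``the usual Nisnevich cohomological dimension theorem'' must be read to include), the detour through the projective compactification is unnecessary: the purity identification $\tilde H^p(Th(\xi),F)\cong H^p_{i(X)}(E,F)$ already gives the bound directly, which is the paper's proof.

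So: your argument is valid and reasonably clean in the strictly $\aone$-invariant case, and the remark that this is the only case the paper uses is well taken, but as a proof of the lemma as stated it is incomplete, and the final reduction step as written would not succeed.
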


\begin{proof}
If $i: X \to E$ denotes the zero section of $\xi$, then by definition of $Th(\xi)$, we have a cofibration sequence
\[
E - i(X) \longrightarrow E \longrightarrow Th(\xi) \longrightarrow \Sigma^1_s (E - i(X))_+ \longrightarrow \cdots.
\]
This cofibration sequence induces a long exact sequence in cohomology with coefficients in an arbitrary abelian sheaf.  The result then follows from the usual Nisnevich cohomological dimension theorem, i.e., that the Nisnevich cohomological dimension of a scheme is bounded above by its Krull dimension.
\end{proof}

\subsection{Enhanced motivic complexes and cohomology}
\label{ss:enhancedmotiviccomplexes}
If $k$ is a perfect field, motivic cohomology of a smooth $k$-scheme $X$ can be defined in terms of the motivic complexes $\Z(n)$ and the motive of $X$ by the formula (see, e.g., \cite[Definition 14.7]{MVW})
\[
H^{i,n}(X,\Z) := \hom_{\DM^{\eff}(k)}(\M(X),\Z(n)[i]).
\]
There is a canonical map
\[
\hom_{\DM^{\eff}(k)}(\M(X),\Z(n)[i]) \to \hom_{\DM(k)}(\M(X),\Z(n)[i])
\]
comparing the ``unstable" motivic cohomology groups with their ``stable" counterparts.  In the motivic context, this map is known to be an isomorphism for arbitrary pairs $(i,n)$ by Voevodsky's cancelation theorem \cite[Corollary 4.10]{VCancellation}.

Our goal in this section is to define an analog of motivic cohomology where $\DM^{\eff}(k)$ (resp. $\DM(k)$) is replaced by $\Daone^{\eff}(k)$ (resp. $\Daone(k)$).  The analog of $\M(X)$ in this context is the $\aone$-chain complex $C_*^{\aone}(X)$.  Producing a theory that has a hope of being stably representable will involve some contortions.

\subsubsection*{Enhanced motivic complexes}
We now define enhanced motivic complexes; these complexes play the role of the motivic complexes just discussed in the $\aone$-derived category.  The resulting complexes are mapped to the motivic complexes by the derived functor of ``adding transfers" from the $\aone$-derived category to Voevodsky's derived category of effective motivic complexes.  

\begin{defn}
\label{defn:enhancedmotiviccomplex}
The {\em enhanced motivic complex of weight $n$} is defined by the formula
\[
\Zn := \tilde{C}_*^{\aone}({\pone}^{\wedge n})[-2n].
\]
\end{defn}

\begin{lem}
The map $\Z\langle 1 \rangle^{\tensor n} \to \Zn$ induced by the morphism of spaces ${\pone}^{\times n} \to {\pone}^{\wedge n}$ is an isomorphism.
\end{lem}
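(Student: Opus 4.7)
The plan is to unpack the definition of $\Z\langle n \rangle$ and identify the map in question with the natural Künneth-type comparison map, which at the level of free abelian groups of pointed spaces is an isomorphism on the nose. Concretely, by Definition \ref{defn:enhancedmotiviccomplex},
\[
\Z\langle n \rangle = \tilde{C}_*^{\aone}({\pone}^{\wedge n})[-2n], \qquad \Z\langle 1 \rangle^{\tensor n} = \tilde{C}_*^{\aone}(\pone)^{\tensor n}[-2n],
\]
so after removing the common shift it suffices to show that the natural morphism
\[
\tilde{C}_*^{\aone}(\pone)^{\tensor n} \longrightarrow \tilde{C}_*^{\aone}({\pone}^{\wedge n})
\]
is an isomorphism in $\Daone^{\eff}(k)$, and to identify this morphism with the one described in the statement.

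First I would recall that $\tilde{C}_*^{\aone}$ is the composite of three monoidal functors: the reduced free abelian group functor $X \mapsto \tilde{\Z}[X]$ from pointed spaces (smash product) to simplicial sheaves of abelian groups (tensor product), the normalized chain functor $N$ (which is lax monoidal with respect to the shuffle product and induces an equivalence of derived categories by Dold--Kan, hence becomes strong monoidal on passage to $\Daone^{\eff}(k)$), and the $\aone$-localization functor, which is monoidal by construction. Moreover, the first functor is strong monoidal on the nose: for pointed sheaves $X,Y$ there is a tautological isomorphism $\tilde{\Z}[X \wedge Y] \isomto \tilde{\Z}[X] \tensor \tilde{\Z}[Y]$ arising from the universal property of the free abelian group on a pointed set. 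Chaining these facts and iterating, I obtain the desired isomorphism
\[
\tilde{C}_*^{\aone}(\pone)^{\tensor n} \isomto \tilde{C}_*^{\aone}({\pone}^{\wedge n})
\]
by induction on $n$.

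It remains to check that this isomorphism agrees with the map of the statement, which is induced by the quotient $\mu: {\pone}^{\times n} \to {\pone}^{\wedge n}$. For this I would note that $\mu$ factors through the iterated smash product and that, after applying $\tilde{\Z}[-]$ and passing through the Alexander--Whitney/Eilenberg--Zilber equivalence, each factor of the iterated smash sends the tensor product of chains to the chains of a smash; stringing these together reconstructs the Künneth map built above. In particular, the map is well-defined (independent of associations by coherence of the monoidal structure) and coincides with the one from the statement.

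The one point requiring modest care --- and what I would view as the main ``obstacle,'' although it is more of a bookkeeping matter than a genuine difficulty --- is the passage between the strict monoidality of $\tilde{\Z}[-]$ on pointed spaces and the derived monoidality of $\tilde{C}_*^{\aone}$ on $\Daone^{\eff}(k)$, since the normalized-chain functor is only lax monoidal before deriving and the $\aone$-localization must be chosen compatibly with smash products. This is handled by the setup in Section \ref{ss:formalism} (via the symmetric-spectrum model and the monoidal Dold--Kan equivalence), so once these mechanics are invoked the argument is essentially formal.
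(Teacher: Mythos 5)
The paper states this lemma without proof, treating it as a formal consequence of the monoidal setup of \S\ref{ss:formalism}, so there is no paper argument to compare against. Your proof is correct and supplies exactly the missing reasoning: $\tilde{C}_*^{\aone}$ is the composite of the strictly strong monoidal reduced free abelian sheaf functor ($\tilde{\Z}[X\wedge Y]\cong\tilde{\Z}[X]\tensor\tilde{\Z}[Y]$ on the nose), the normalized-chains functor $N$ (lax monoidal via the shuffle map, which is a quasi-isomorphism by Eilenberg--Zilber and hence strong monoidal after deriving), and the monoidal $\aone$-localization; iterating gives that the Künneth map $\tilde{C}_*^{\aone}(\pone)^{\tensor n}\to\tilde{C}_*^{\aone}({\pone}^{\wedge n})$ is an isomorphism in $\Daone^{\eff}(k)$, and the shift $[-2n]$ is harmless. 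The one place you could have been slightly more concrete is the identification with the map ``induced by ${\pone}^{\times n}\to{\pone}^{\wedge n}$'': the cleanest way to say it is that $C_*^{\aone}({\pone}^{\times n})\cong C_*^{\aone}(\pone)^{\tensor n}\cong(\tilde{C}_*^{\aone}(\pone)\oplus\Z)^{\tensor n}$ contains $\tilde{C}_*^{\aone}(\pone)^{\tensor n}$ as a direct summand, and the induced map to $\tilde{C}_*^{\aone}({\pone}^{\wedge n})$ kills all the other summands (each of which contains a factor of $\Z$ coming from a base point), recovering the Künneth comparison; this is what your phrase about ``factoring through the iterated smash'' gestures at.
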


\begin{defn}
\label{defn:enhancedmotiviccohomology}
If $\mathscr{X}$ is an object of $\Spc_k$, the {\em unstable $(i,n)$ enhanced motivic cohomology of $\mathscr{X}$} is the group $\hom_{\Daone^{\eff}(k)}(C_*^{\aone}({\mathscr X}),\Zn [i])$.  The {\em stable $(i,n)$ enhanced motivic cohomology of ${\mathscr X}$} is the group $\hom_{\Daone(k)}(C_*^{s\aone}({\mathscr X}),\Zn [i])$.
\end{defn}

\begin{rem}
\label{rem:naivecancellationfails}
Observe that according to Definition \ref{defn:enhancedmotiviccomplex}, the convention that ${\pone}^{\wedge 0} = S^0_s$ implies that $\Z\langle 0 \rangle := \tilde{C}_*^{\aone}(S^0_s) = \Z$.  One consequence of this observation is that the unstable group $\hom_{\Daone^{\eff}(k)}(C_*^{\aone}(\Spec k),\Z\langle 0 \rangle) = \Z$ does {\em not} coincide with its stable counterpart $\hom_{\Daone(k)}(C_*^{s\aone}(\Spec k),\Z\langle 0 \rangle)$.  Indeed, the latter sheaf has been computed by Morel to be $\K^{MW}_0$.
\end{rem}

\subsubsection*{Basic properties of enhanced motivic cohomology}
The following result summarizes some properties of the enhanced motivic cohomology groups and the complexes $\Zn$.

\begin{prop}
\label{prop:enhancedmotiviccomplexes}
Suppose ${\mathscr X}$ is a space.
\begin{itemize}
\item[i)] For any integers $i,n$, there is a canonical identification
\[
{\mathbb H}^{i}_{\Nis}({\mathscr X},\Z \langle n \rangle) \isomto \hom_{\Daone^{\eff}(k)}(C_*^{\aone}({\mathscr X}),\Z\langle n \rangle [i]).
\]
\item[ii)] For $i > n$, the cohomology sheaves $\underline{H}^i(\Z \langle n \rangle)$ vanish.
\item[iii)] For any integer $n > 0$, there is a canonical isomorphism $\underline{H}^n(\Z \langle n \rangle) = \K^{MW}_n$.
\end{itemize}
\end{prop}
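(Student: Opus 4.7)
My plan is to deduce all three statements from the $\aone$-locality of $\Z\langle n\rangle$ combined with Morel's stable $\aone$-connectivity theorem and the unstable $\aone$-Hurewicz theorem; the sole non-formal input is Morel's identification of the lowest non-vanishing $\aone$-homotopy sheaf of a motivic sphere with Milnor--Witt K-theory.

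For (i), I observe that by its very construction as a $[-2n]$-shift of the $\aone$-localization of a reduced chain complex of sheaves, the object $\Z\langle n\rangle$ is $\aone$-local in the underlying derived category $\mathbf{D}(\Ab_k)$. Since $C_*^{\aone}(\mathscr{X})$ is by definition the $\aone$-localization of $\Z[\mathscr{X}]$, the canonical comparison $\Z[\mathscr{X}]\to C_*^{\aone}(\mathscr{X})$ is an $\aone$-weak equivalence; mapping into the $\aone$-local target $\Z\langle n\rangle[i]$ therefore yields
\[
\hom_{\Daone^{\eff}(k)}\bigl(C_*^{\aone}(\mathscr{X}),\Z\langle n\rangle[i]\bigr)\;=\;\hom_{\mathbf{D}(\Ab_k)}\bigl(\Z[\mathscr{X}],\Z\langle n\rangle[i]\bigr)\;=\;\mathbb{H}^i_{\Nis}\bigl(\mathscr{X},\Z\langle n\rangle\bigr),
\]
the last equality being the definition of hypercohomology recalled in \S\ref{ss:formalism}.

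For (ii) and (iii), I unwind the shift in the definition of $\Z\langle n\rangle$: tracking degrees through the identity $\Z\langle n\rangle = \tilde{C}_*^{\aone}(\pone^{\wedge n})[-2n]$ yields the key identification
\[
\underline{H}^i\bigl(\Z\langle n\rangle\bigr) \;=\; \tilde{\H}^{\aone}_{2n-i}\bigl(\pone^{\wedge n}\bigr).
\]
Because $\pone$ is a pointed $\aone$-$0$-connected space (it is covered by two copies of $\aone$ with intersection $\gm$), it follows that $\pone^{\wedge n}$ is pointed $\aone$-$(n-1)$-connected; Morel's stable $\aone$-connectivity theorem \cite[Theorem 6.1.8]{MStable} then forces $\tilde{\H}^{\aone}_j(\pone^{\wedge n}) = 0$ for $j < n$, which after the substitution $j = 2n-i$ gives (ii). For (iii), specializing to the borderline case $i = n$ in the same identification gives $\underline{H}^n(\Z\langle n\rangle) = \tilde{\H}^{\aone}_n(\pone^{\wedge n})$, and the unstable $\aone$-Hurewicz theorem \cite[Theorem 5.36]{MField} identifies this bottom homology sheaf canonically with $\bpi_n^{\aone}(\pone^{\wedge n})$. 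The proof is completed by Morel's computation $\bpi_n^{\aone}(\pone^{\wedge n})\cong \K^{MW}_n$ (whose stable-form avatar is recorded in Remark \ref{rem:morelscomputation}); this last identification---the only genuinely non-formal ingredient---will be cited as a black box, and is the main obstacle insofar as it rests on Morel's extensive structural analysis of Milnor--Witt K-theory via residue maps rather than on any argument internal to this paper.
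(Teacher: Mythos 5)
Part (i) is correct and matches the paper: $\aone$-locality of $\Z\langle n\rangle$ and the definition of hypercohomology do the work.

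For (ii) and (iii), you work directly with $\pone^{\wedge n}$, whereas the paper first rewrites $\Z\langle n\rangle = \tilde{C}_*^{\aone}(\gm^{\wedge n})[-n]$ using the $\aone$-weak equivalence $\pone\simeq S^1_s\wedge\gm$, and reduces (ii) and (iii) to statements about $\gm^{\wedge n}$. This is not merely cosmetic, and the way you have set it up leaves a gap in (ii). The cited stable $\aone$-connectivity theorem [MStable, Theorem 6.1.8] says only that $\aone$-localization preserves connectivity of a spectrum or complex of sheaves; it does not convert unstable $\aone$-connectivity of a pointed \emph{space} into vanishing of $\aone$-homology sheaves. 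The complex $\tilde{\Z}[\pone^{\wedge n}]$ before $\aone$-localization is concentrated in homological degree $0$ ($\pone^{\wedge n}$ is a constant simplicial sheaf), hence only $(-1)$-connected; so the stable connectivity theorem gives $\tilde{\H}^{\aone}_j(\pone^{\wedge n})=0$ for $j<0$, not for $j<n$. The extra $n$ degrees of vanishing you claim actually come from the $\aone$-weak equivalence $\pone^{\wedge n}\simeq S^n_s\wedge\gm^{\wedge n}$, which you do not invoke; and the inference from unstable $\aone$-$(n-1)$-connectivity of $\pone^{\wedge n}$ to $\tilde{\H}^{\aone}_j=0$ for $j<n$ is the content of the unstable $\aone$-Hurewicz theorem, a substantially harder input than the stable connectivity theorem. (Also note that the smash-connectivity claim itself, $\pone$ $0$-connected $\Rightarrow$ $\pone^{\wedge n}$ $(n-1)$-connected, is exactly the paper's Proposition \ref{prop:thomspaceconnectivity} for the trivial bundle, proved there via $\aone$-Hurewicz and van Kampen; it is not free.) The paper's route avoids all of this: $\tilde{\Z}[\gm^{\wedge n}]$ is a sheaf, so $(-1)$-connected on the nose, stable connectivity applies directly, and the shift $[-n]$ built into the identification produces the vanishing range $i>n$.

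For (iii), routing through $\bpi_n^{\aone}(\pone^{\wedge n})$ and the unstable Hurewicz isomorphism breaks at $n=1$: $\pone$ is $\aone$-$0$-connected but not simply connected, $\bpi_1^{\aone}(\pone)$ is Morel's \emph{nonabelian} sheaf $F_{\aone}(1)$ (not $\K^{MW}_1$), and the Hurewicz statement in that degree only gives $\bpi_1^{\aone}(\pone)^{\mathrm{ab}}\cong\H_1^{\aone}(\pone)$, leaving you to separately identify the abelianization with $\K^{MW}_1$. There is also an unaddressed compatibility between the $\K^{MW}_n$ you quote from Morel's unstable calculus and the $\K^{MW}_n$ the paper actually defines, namely $\bpi_0^s(\Sigma^\infty_{\pone}\gm^{\wedge n})$ (Definition \ref{defn:stableaonehomotopygroupsofspheres}). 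The paper sidesteps both issues by invoking its $\pone$-stable Hurewicz theorem (Theorem \ref{thm:ponestablehurewiczisomorphism}) applied to $\gm^{\wedge n}$ directly, so the identification lands in the right group by definition without ever touching unstable homotopy sheaves of $\pone^{\wedge n}$.
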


\begin{proof}
Statement (i) follows immediately from the observation that the complex $\Z \langle n \rangle$ is $\aone$-local, which is implicit in its definition.  Statements (ii) and (iii) are equivalent to the assertions that (ii)': $\tilde{\H}_i^{\aone}(\gm^{\wedge n}) = 0$ for $i < 0$ and (iii'): $\tilde{\H}_0^{\aone}(\gm^{\wedge n}) = \K^{MW}_n$ after trading cohomological indexing for its homological counterpart.  Statement (ii)' is a consequence of the stable $\aone$-connectivity theorem \cite[Theorem 6.1.8]{MStable} since the complex $\Z(\gm^{\wedge n})$ is $(-1)$-connected.  Statement (iii)' is a consequence of the stable Hurewicz theorem \ref{thm:ponestablehurewiczisomorphism} and Definition \ref{defn:stableaonehomotopygroupsofspheres}.
\end{proof}


Statements (ii) and (iii) of Proposition \ref{prop:enhancedmotiviccomplexes} imply that the hypercohomology spectral sequence gives rise to a well-defined morphism
\begin{equation}
\label{eqn:edgemap}
{\mathbb H}_{\Nis}^{p}({\mathscr X},\Zn) \longrightarrow H^{p-n}_{\Nis}({\mathscr X},\K^{MW}_n).
\end{equation}
Since the construction of the hypercohomology spectral sequence is functorial in ${\mathscr X}$, it follows that the morphism above is actually natural in ${\mathscr X}$.  The next result gives a condition under which this morphism is an isomorphism.

\begin{cor}
\label{cor:enhancedmotivicischowwitt}
Suppose ${\mathscr X}$ is a space having Nisnevich cohomological dimension $\leq d$.  The map of \textup{Equation \ref{eqn:edgemap}} is an isomorphism for $p \geq n + d$.
\end{cor}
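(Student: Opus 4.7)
The plan is to run the hypercohomology spectral sequence
\[
E_2^{p,q} = H^p_{\Nis}(\mathscr{X}, \underline{H}^q(\Z\langle n\rangle)) \Longrightarrow \mathbb{H}^{p+q}_{\Nis}(\mathscr{X}, \Z\langle n\rangle),
\]
and show that in total degree $p \geq n+d$ the only nonzero $E_\infty$ term lies on the line $q = n$, and that no differentials affect the entry at bidegree $(p-n, n)$. Combined with the identification $\underline{H}^n(\Z\langle n\rangle) = \K^{MW}_n$ from Proposition \ref{prop:enhancedmotiviccomplexes}(iii), this forces the edge map of Equation \ref{eqn:edgemap} to be an isomorphism.

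Concretely, I would argue as follows. By Proposition \ref{prop:enhancedmotiviccomplexes}(ii), $E_2^{a,b} = 0$ whenever $b > n$. By the hypothesis that $\mathscr{X}$ has Nisnevich cohomological dimension at most $d$, $E_2^{a,b} = 0$ whenever $a > d$. Now fix total degree $p \geq n+d$. For $b < n$, one has $a = p - b > p - n \geq d$, so $E_2^{a,b} = 0$; for $b > n$, vanishing holds by (ii). Hence the only possibly nonzero $E_2$-entry (and therefore $E_\infty$-entry) in total degree $p$ sits at $(a,b) = (p-n, n)$, and the abutment $\mathbb{H}^p_{\Nis}(\mathscr{X},\Z\langle n\rangle)$ is canonically identified with $E_\infty^{p-n,n}$.

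It remains to check that no differentials touch bidegree $(p-n, n)$. An outgoing differential $d_r\colon E_r^{p-n,n} \to E_r^{p-n+r,\,n-r+1}$ has target whose $a$-coordinate $p-n+r \geq d+2 > d$ (using $p \geq n+d$ and $r \geq 2$), so the target vanishes at $E_2$ and hence at $E_r$. An incoming differential $d_r\colon E_r^{p-n-r,\,n+r-1} \to E_r^{p-n,n}$ has source with $q$-coordinate $n+r-1 > n$, so the source vanishes at $E_2$ by Proposition \ref{prop:enhancedmotiviccomplexes}(ii). Thus $E_\infty^{p-n,n} = E_2^{p-n,n} = H^{p-n}_{\Nis}(\mathscr{X}, \K^{MW}_n)$, and the edge map is the required isomorphism.

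There is no substantive obstacle here beyond bookkeeping: the argument is a standard collapse argument for the hypercohomology spectral sequence, and all needed inputs (the two vanishing statements and the identification of $\underline{H}^n(\Z\langle n\rangle)$) are already in hand from Proposition \ref{prop:enhancedmotiviccomplexes}. The only mild subtlety is to make sure the edge map produced by the spectral sequence agrees with the map in Equation \ref{eqn:edgemap}, which is immediate from the naturality statement made just before the corollary.
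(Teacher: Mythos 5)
Your proposal is correct and takes essentially the same approach as the paper: both rely on the hypercohomology spectral sequence together with the vanishing of $\underline{H}^i(\Z\langle n\rangle)$ for $i > n$ and the cohomological dimension bound to force collapse at the entry $(p-n,n)$. The paper phrases this more tersely by saying the kernel and cokernel of the edge map are built from groups $H^{p-i}_{\Nis}(\mathscr{X},\underline{H}^i(\Z\langle n\rangle))$ and $H^{p-i+1}_{\Nis}(\mathscr{X},\underline{H}^i(\Z\langle n\rangle))$ with $i<n$, all of which vanish; your explicit bookkeeping of the $E_2$-page and the in/out differentials is a spelled-out version of the same argument.
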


\begin{proof}
The kernel and cokernel of the map in Equation \ref{eqn:edgemap} are built out of groups of the form $H^{p-i}_{\Nis}({\mathscr X},\underline{H}^i(\Zn))$ and $H^{p-i+1}_{\Nis}({\mathscr X},\underline{H}^i(\Zn))$ with $i < n$.  By assumption $p - n \geq d$, so if $i < n$, then $p-i > p-n \geq d$.  Since the Nisnevich cohomological dimension of ${\mathscr X}$ is $\leq d$, it follows both of the aforementioned groups are trivial.
\end{proof}

\subsubsection*{Comparison with motivic cohomology}
Assume $k$ is a perfect field.  For every integer $n \geq 0$, the functor $\Daone^{\eff}(k) \to \DM^{\eff}(k)$ sends the complexes $\Zn$ to $\Z(n)$ by construction.  View the complex $\Z(n)$ as an object in $\Daone^{\eff}(k)$ via the functor of ``forgetting transfers."

\begin{lem}
\label{lem:inducedbyh_nsheaves}
Assume $k$ is a perfect field.  Suppose $\mathscr{X}$ is a space of cohomological dimension $d > 0$.  For every pair of integers $(p,n)$ the following diagram commutes
\[
\xymatrix{
{\mathbb H}_{\Nis}^{p}({\mathscr X},\Zn) \ar[r]\ar[d] & H^{p-n}_{\Nis}({\mathscr X},\K^{MW}_n) \ar[d] \\
{\mathbb H}_{\Nis}^{p}({\mathscr X},\Z(n)) \ar[r] & H^{p-n}_{\Nis}({\mathscr X},\K^M_n).
}
\]
where the left vertical map is induced by the morphism of complexes $\Zn \to \Z(n)$, the horizontal maps are the induced maps in the hypercohomology spectral sequences and the right vertical map is the morphism of sheaves $\K^{MW}_n \to \K^M_n$ induced by applying the functor $n$-th cohomology sheaf to the morphism of complexes $\Zn \to \Z(n)$.
\end{lem}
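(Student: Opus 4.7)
The plan is to realize each horizontal edge map in the diagram as the map induced on hypercohomology by a canonical truncation morphism in the derived category of Nisnevich sheaves of abelian groups on $\Sm_k$; the commutativity of the square will then follow from the naturality of truncation applied to the morphism $\Zn \to \Z(n)$.

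First I would observe that by Proposition \ref{prop:enhancedmotiviccomplexes}(ii)--(iii), the cohomology sheaves of $\Zn$ vanish in degrees $>n$ and $\underline{H}^n(\Zn) = \K^{MW}_n$; consequently, the composition of the canonical map $\Zn \to \tau^{\geq n}\Zn$ with the quasi-isomorphism $\tau^{\geq n}\Zn \simeq \K^{MW}_n[-n]$ determines a morphism $\Zn \to \K^{MW}_n[-n]$ in the derived category. By construction $\Z(n) = C_*\Z_{tr}(\gm^{\wedge n})[-n]$ is likewise concentrated in cohomological degrees $\leq n$, and $\underline{H}^n(\Z(n)) = \K^M_n$ by the Nesterenko--Suslin--Totaro theorem (as recalled in the proof of Lemma \ref{lem:suslinchow}), so the analogous truncation yields a canonical morphism $\Z(n) \to \K^M_n[-n]$.

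Second, I would identify the edge map of Equation \ref{eqn:edgemap} with the map obtained by applying $\mathbb{H}_{\Nis}^p(\mathscr{X},-)$ to the composition $\Zn \to \K^{MW}_n[-n]$, and similarly for $\Z(n)$ in place of $\Zn$. This is the standard description of the edge map of a first-quadrant hyperext spectral sequence along the row indexed by the top nonvanishing cohomology sheaf of the coefficient complex: both descriptions compute the same graded piece of the filtration of the abutment induced by the canonical truncation tower $\tau^{\geq \bullet}$.

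Third, by functoriality of canonical truncation, the morphism of complexes $\Zn \to \Z(n)$ fits into a commutative square
\[
\xymatrix{
\Zn \ar[r]\ar[d] & \K^{MW}_n[-n] \ar[d] \\
\Z(n) \ar[r] & \K^M_n[-n]
}
\]
in the derived category, in which the right vertical arrow is the shift by $-n$ of the morphism $\underline{H}^n(\Zn) \to \underline{H}^n(\Z(n))$ induced on the $n$-th cohomology sheaves. By definition this is exactly the morphism $\K^{MW}_n \to \K^M_n$ appearing as the right vertical arrow in the statement of the lemma, shifted by $-n$. Applying $\mathbb{H}_{\Nis}^p(\mathscr{X},-)$ to the square and using the two identifications of the edge maps from the previous step yields the diagram of the lemma, establishing its commutativity. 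The argument is essentially formal once the description of the edge map via the canonical truncation is in hand; the only delicate point is matching the conventions implicit in Equation \ref{eqn:edgemap} with the truncation-based description, which is standard homological algebra.
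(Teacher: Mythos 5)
Your argument is correct and follows essentially the same route as the paper, whose proof simply invokes functoriality of the hypercohomology spectral sequence together with the vanishing of the cohomology sheaves of both $\Zn$ and $\Z(n)$ in degrees $>n$; your version just spells out the identification of the edge map as the map induced by the canonical truncation $\Zn \to \tau^{\geq n}\Zn \simeq \K^{MW}_n[-n]$ (and its counterpart for $\Z(n)$) and then applies naturality of truncation. There is no gap.
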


\begin{proof}
This follows immediately from functoriality of the hypercohomology spectral sequences: observe that the induced maps on cohomology sheaves of degree $> n$ are isomorphisms since these sheaves vanish for both complexes.

\end{proof}

If ${\mathscr X}$ has Nisnevich cohomological dimension $\leq d$, and $p \geq n + d$, then both horizontal maps are isomorphisms.  As a consequence, in this range of degrees the left vertical map coincides with the map on cohomology induced by the morphism of sheaves $\K^{MW}_n \to \K^M_n$.

\subsubsection*{Enhanced motivic cohomology of Thom spaces}
\begin{cor}
\label{cor:enhancedmotiviccohomologyofthomspaces}
If $\xi: E \to X$ is a rank $r$ vector bundle over a smooth $k$-scheme $X$ of dimension $d$, then the canonical morphism
\[
\hom_{\Daone^{\eff}(k)}(C_*^{\aone}(Th(\xi)),\Z \langle d+r \rangle[2(d+r)]) \longrightarrow H^{d+r}_{\Nis}(Th(\xi),\K^{MW}_{d+r})
\]
of \textup{Equation \ref{eqn:edgemap}} is an isomorphism.
\end{cor}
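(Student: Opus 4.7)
The plan is to exhibit this corollary as a direct numerical consequence of the general edge-map vanishing statement in Corollary \ref{cor:enhancedmotivicischowwitt}, combined with the cohomological dimension bound for Thom spaces furnished by Lemma \ref{lem:cohomologicaldimensionofthomspaces}. There is no new input required beyond matching indices.

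First I would rewrite the left-hand side. By Proposition \ref{prop:enhancedmotiviccomplexes}(i), the group
\[
\hom_{\Daone^{\eff}(k)}(C_*^{\aone}(Th(\xi)),\Z\langle d+r\rangle[2(d+r)])
\]
is canonically identified with the hypercohomology group $\mathbb{H}^{2(d+r)}_{\Nis}(Th(\xi),\Z\langle d+r\rangle)$. Under this identification the canonical morphism in the statement is exactly the edge map of Equation \ref{eqn:edgemap} with $n = d+r$ and $p = 2(d+r)$, so that $p - n = d + r$.

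Second I would apply Lemma \ref{lem:cohomologicaldimensionofthomspaces} to bound $cd_{\Nis}(Th(\xi)) \leq \dim X + r = d + r$. Feeding this bound into Corollary \ref{cor:enhancedmotivicischowwitt} (with its ``$d$" now being $d+r$), the required hypothesis $p \geq n + (d+r)$ reduces to $2(d+r) \geq 2(d+r)$, which holds as an equality. The corollary then directly yields the claimed isomorphism.

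There is no real obstacle: the result is a bookkeeping consequence of the previously established results, and the only point warranting attention is that the hypothesis of Corollary \ref{cor:enhancedmotivicischowwitt} is met exactly on the nose rather than with strict slack. Since that corollary is formulated with a non-strict inequality, this is enough, and the proof is complete.
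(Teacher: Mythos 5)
Your proposal matches the paper's proof exactly: the paper simply says ``Combine Corollary \ref{cor:enhancedmotivicischowwitt} and Lemma \ref{lem:cohomologicaldimensionofthomspaces},'' and you have done precisely that while correctly verifying that $p = 2(d+r)$, $n = d+r$, and $cd_{\Nis}(Th(\xi)) \le d+r$ make the hypothesis of Corollary \ref{cor:enhancedmotivicischowwitt} hold with equality.
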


\begin{proof}
Combine Corollary \ref{cor:enhancedmotivicischowwitt} and Lemma \ref{lem:cohomologicaldimensionofthomspaces}.
\end{proof}

\subsection{Stable representability in top codimension}
\label{ss:stablerepresentability}
A naive analog of Voevodsky's cancelation theorem in the setting of the $\aone$-derived category is false, as observed in Remark \ref{rem:naivecancellationfails}.  Thus, one must be somewhat careful in attempting to formulate any kind of stable representability result.

Nevertheless, the goal of this section is to show that certain unstable enhanced motivic cohomology groups can be identified with their stable counterparts.  Our result amounts to a very weak form of Voevodsky's cancelation theorem.  To prove the result, we need to recall various facts about the interaction between $\gm$-looping and Nisnevich cohomology.

\subsubsection*{Homotopy modules}
The category $\Daone(k)$ admits a $t$-structure whose heart we now describe.

\begin{defn}
\label{defn:homotopymodule}
A {\em homotopy module} over $k$ is a pair $(\mathscr{M}_{*},\varphi_*)$ consisting of a $\Z$-graded strictly $\aone$-invariant sheaf $\mathscr{M}_*$ and, for each $n \in \Z$, an isomorphism of abelian sheaves
\[
\varphi_n:  \mathscr{M}_n \isomto (\mathscr{M}_{n+1})_{-1}.
\]
We write $\Ab^{s\aone}_k$ for the category of homotopy modules.
\end{defn}

\begin{lem}
\label{lem:homotopymodule}
If $\mathscr{C}$ is an object of $\Daone(k)$, then for any integer $i$, the sheaves $\H_i^{\aone}(\mathscr{C} \langle n \rangle[n])$ form a homotopy module with structure maps given by the isomorphisms of \textup{Proposition \ref{prop:aonederivedcontraction}}.
\end{lem}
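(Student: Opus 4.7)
The plan is to verify the two pieces of data required by Definition \ref{defn:homotopymodule}: (i) each sheaf $\mathscr{M}_n := \H_i^{\aone}(\mathscr{C}\langle n\rangle[n])$ is strictly $\aone$-invariant, and (ii) there is, for each $n \in \Z$, a distinguished isomorphism $\varphi_n : \mathscr{M}_n \isomto (\mathscr{M}_{n+1})_{-1}$. Since $\mathscr{C}$ is an object of $\Daone(k)$, the twist $\mathscr{C}\langle n\rangle[n]$ makes sense for all $n \in \Z$ (positive or negative), so both pieces of data are defined on all of $\Z$.

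For (i), I will unwind the definition: by Definition \ref{defn:stableaonehomologysheaves}, the sections of $\mathscr{M}_n$ over a smooth $U$ are $\hom_{\Daone(k)}(C_*^{s\aone}(U)[i], \mathscr{C}\langle n\rangle[n])$. The projection $U \times \aone \to U$ is an $\aone$-equivalence, so applying $C_*^{s\aone}$ and passing to $\Daone(k)$ yields isomorphic hom sets; this gives $\aone$-invariance. Strict $\aone$-invariance (i.e., $\aone$-invariance together with the Brown--Gersten / Mayer--Vietoris property) then follows from the fact that $C_*^{s\aone}$ sends a Nisnevich distinguished square to a homotopy pushout in $\Daone(k)$.

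For (ii), the construction of $\varphi_n$ will rest on Proposition \ref{prop:aonederivedcontraction} together with the invertibility of $\Z\langle 1\rangle[1]$ in $\Daone(k)$. By associativity of the tensor structure one has $\mathscr{C}\langle n+1\rangle[n+1] \cong \mathscr{C}\langle n\rangle[n] \tensor \Z\langle 1\rangle[1]$ in $\Daone(k)$. Applying Proposition \ref{prop:aonederivedcontraction} (transferred to the $\pone$-stable setting via the colimit presentation of Proposition \ref{prop:stablederivedcolimit}, so that the contraction statement applies levelwise in a symmetric-spectrum representative), one obtains a canonical isomorphism
\[
\H_i^{\aone}\bigl(\underline{\hom}(\Z\langle 1\rangle[1],\; \mathscr{C}\langle n\rangle[n]\tensor \Z\langle 1\rangle[1])\bigr) \;\isomto\; \H_i^{\aone}(\mathscr{C}\langle n+1\rangle[n+1])_{-1}.
\]
Because $\Z\langle 1\rangle[1]$ is $\tensor$-invertible in $\Daone(k)$ (this is precisely the content of passing from $\Daone^{\eff}(k)$ to $\Daone(k)$), the hom--tensor adjunction gives a canonical identification $\underline{\hom}(\Z\langle 1\rangle[1], \mathscr{C}\langle n\rangle[n] \tensor \Z\langle 1\rangle[1]) \cong \mathscr{C}\langle n\rangle[n]$. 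Composing these two isomorphisms produces $\varphi_n$, as required.

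The main obstacle will be the bookkeeping in the last step: Proposition \ref{prop:aonederivedcontraction} is formulated for $\aone$-local objects of $\Sp^{\Sigma}(\Ch\Ab_k)$, whereas here $\mathscr{C}$ lives in the $\pone$-stable category $\Daone(k)$. The way to reconcile this is to represent $\mathscr{C}$ as a symmetric $\pone$-chain complex whose underlying $S^1$-spectrum is $\aone$-local level-wise, apply Proposition \ref{prop:aonederivedcontraction} at each level, and use Proposition \ref{prop:stablederivedcolimit} to identify the resulting colimit with the relevant stable $\aone$-homology sheaf; the symmetric-monoidal compatibility recorded in the construction of $\Daone(k)$ guarantees that the isomorphism one obtains is independent of the choice of representative.
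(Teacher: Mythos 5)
The paper gives no proof of this lemma, so your task is really to reconstruct what the authors regard as routine, and your reconstruction captures the intended argument: the data $\varphi_n$ are supplied by Proposition \ref{prop:aonederivedcontraction} exactly as you describe, after identifying $\underline{\hom}(\Z\langle 1\rangle[1], \mathscr{C}\langle n\rangle[n]\otimes\Z\langle 1\rangle[1])$ with $\mathscr{C}\langle n\rangle[n]$ via invertibility of $\Z\langle 1\rangle[1]$ in $\Daone(k)$, and you correctly check that the composite lands in $(\mathscr{M}_{n+1})_{-1}$ as Definition \ref{defn:homotopymodule} demands.

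One point in part (i) deserves a correction of emphasis. You characterize strict $\aone$-invariance of $\mathscr{M}_n$ as ``$\aone$-invariance plus the Brown--Gersten/Mayer--Vietoris property.'' That is not the definition: strict $\aone$-invariance of a Nisnevich sheaf of abelian groups $\mathscr{M}$ means that for every $i \geq 0$ the presheaf $U \mapsto H^i_{\Nis}(U,\mathscr{M})$ is $\aone$-invariant, and $\aone$-invariance of $\mathscr{M}$ itself only gives the case $i = 0$. Passing from $\aone$-invariance to strict $\aone$-invariance is not formal --- in the present setting it is the analog of Morel's stable $\aone$-connectivity theorem (cf.\ the proposition citing \cite[Theorem 6.1.8 and Corollary 6.2.9]{MStable} for $S^1$-spectra) together with the fact that a filtered colimit of strictly $\aone$-invariant sheaves is strictly $\aone$-invariant (cf.\ the remark following Proposition \ref{prop:stablehomotopycolimit}), or equivalently an appeal to Morel's theorem that $\aone$-invariant abelian Nisnevich sheaves over a perfect field are automatically strictly $\aone$-invariant. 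The conclusion you reach is correct and is certainly what the paper takes for granted, but the Nisnevich descent observation you offer does not on its own promote $\aone$-invariance of $\mathscr{M}_n$ to $\aone$-invariance of all of its higher cohomology; you should cite the connectivity/colimit machinery instead.
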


Proposition \ref{prop:aonederivedcontraction} shows that if $\mathscr{C}$ is a $(-1)$-connected $\aone$-local complex then all of the complexes $\mathscr{C}\langle n \rangle[n]$ are also $(-1)$-connected.  One can use this observation to define a $t$-structure on the category $\Daone(k)$: the ``positive" objects are those for which $\H_i^{\aone}(\mathscr{C}\langle n \rangle[n])$ vanishes for $i < 0$ and for all $n  \in \Z$.  Similarly, the ``negative" objects are those for which $\H_i^{\aone}(\mathscr{C}\langle n \rangle[n])$ vanishes for $i > 0$ and for all $n  \in \Z$.  The following result is an analog of \cite[Theorem 5.2.6]{MIntro} in the stable $\aone$-derived category; the proof follows from this result via Theorem \ref{thm:ponestablehurewiczisomorphism}.

\begin{thm}
\label{thm:aonederivedheart}
The functor $\mathscr{C} \mapsto \bigoplus_{n \in N} \H_0^{\aone}(\mathscr{C}\langle n \rangle [n])$ identifies the heart of the homotopy $t$-structure on $\Daone(k)$ with the category of homotopy modules.
\end{thm}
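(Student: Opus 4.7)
The plan is to mirror Morel's proof of the corresponding statement for $\SH(k)$ (\textit{cf.} \cite[Theorem 5.2.6]{MIntro}) and transport the resulting equivalence across the $\pone$-stable Hurewicz functor using Theorem \ref{thm:ponestablehurewiczisomorphism}. First I would verify that the classes of ``positive'' and ``negative'' objects described in the paragraph preceding the theorem genuinely assemble into a $t$-structure on $\Daone(k)$. Closure of positivity under the shift-twist $\mathscr{C}\mapsto \mathscr{C}\langle 1\rangle[1]$ is immediate from the stable $\aone$-connectivity theorem \cite[Theorem 6.1.8]{MStable}, and Proposition \ref{prop:aonederivedcontraction} ensures that negative twists behave well via iterated contractions. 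The truncation functors can be produced stagewise in the underlying model category $\Sp^{\Sigma}_{\pone}(\Ch\Ab_k)$ by combining classical good truncation with $\aone$-localization. Lemma \ref{lem:homotopymodule} then shows that the functor $\Phi(\mathscr{C}) := \bigoplus_n \H_0^{\aone}(\mathscr{C}\langle n\rangle[n])$ takes values in homotopy modules.

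Next I would construct an inverse equivalence $\Psi: \Ab^{s\aone}_k \to \Daone(k)^{\heartsuit}$. Given a homotopy module $(M_*, \varphi_*)$, Morel's theorem produces an Eilenberg--MacLane symmetric $\pone$-spectrum $\mathbf{E}M_* \in \SH(k)^{\heartsuit}$ with $\bpi_0^s(\mathbf{E}M_* \wedge \gm^{\wedge n}) \cong M_n$ as a homotopy module. Set $\Psi(M_*)$ to be the image of $\mathbf{E}M_*$ under the Hurewicz functor $\SH(k) \to \Daone(k)$, followed by truncation into the heart. Theorem \ref{thm:ponestablehurewiczisomorphism} applied level-wise then yields a natural isomorphism of homotopy modules
\[
\Phi(\Psi(M_*))_n \;=\; \H_0^{\aone}(\Psi(M_*)\langle n\rangle[n]) \;\cong\; \bpi_0^s(\mathbf{E}M_* \wedge \gm^{\wedge n}) \;\cong\; M_n,
\]
showing $\Phi \circ \Psi \cong \mathrm{id}$. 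For $\Psi \circ \Phi \cong \mathrm{id}$, one reduces via a Postnikov tower argument to checking the statement after mapping into Eilenberg--MacLane objects, which follows from the Hurewicz isomorphism together with the vanishing built into the definition of the heart.

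The main obstacle will be the construction and control of $\Psi$: one must verify that the Hurewicz image of Morel's Eilenberg--MacLane $\pone$-spectrum actually lies in the heart of $\Daone(k)$ (equivalently, that the abelianization is suitably $t$-exact), and that the induced homotopy module structure on $\Phi(\Psi(M_*))$ matches the one prescribed by the $\varphi_n$'s. Both checks reduce by Theorem \ref{thm:ponestablehurewiczisomorphism} to analogous statements in $\SH(k)$, but compatibility of the contraction isomorphisms under the Hurewicz functor---which ultimately follows from the monoidality of the abelianization together with the identification $\K^{MW}_0 \isomto \H_0^{s\aone}(\1_k)$ supplied by Theorem \ref{thm:ponestablehurewiczisomorphism}---requires careful bookkeeping at the level of the underlying symmetric $\pone$-spectrum.
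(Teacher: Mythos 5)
Your overall strategy — mirror Morel's proof for $\SH(k)$ and use the Hurewicz theorem to port the computation to the derived setting — is exactly what the paper has in mind (the paper's own ``proof'' is essentially just this citation). But there is a genuine gap in the way you construct the inverse functor $\Psi$. You define $\Psi(M_*)$ as the Hurewicz image (followed by truncation) of Morel's Eilenberg--MacLane $\pone$-spectrum $\mathbf{E}M_*$, and then verify $\Phi\circ\Psi\cong\mathrm{id}$ via the chain of isomorphisms
\[
\H_0^{\aone}(\Psi(M_*)\langle n\rangle[n]) \;\cong\; \bpi_0^s(\mathbf{E}M_* \wedge \gm^{\wedge n}) \;\cong\; M_n,
\]
citing Theorem \ref{thm:ponestablehurewiczisomorphism} for the first identification. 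However, that theorem is proved only for suspension spectra $\Sigma^{\infty}_{\pone}\mathscr{X}_+$ of spaces $\mathscr{X}$; it does not apply to an arbitrary spectrum like $\mathbf{E}M_*$, which is not a suspension spectrum. You acknowledge this as ``the main obstacle'' and then claim ``both checks reduce by Theorem \ref{thm:ponestablehurewiczisomorphism} to analogous statements in $\SH(k)$,'' but this is precisely the step that is not licensed by the cited theorem. Likewise, the assertion that the Hurewicz image lies in (or can be harmlessly truncated into) the heart requires knowing that abelianization is $t$-exact on Eilenberg--MacLane objects, which you do not establish.

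The cleaner route — and the one implicit in the paper's terse remark — avoids $\SH(k)$ entirely when constructing $\Psi$. Given a homotopy module $(M_*,\varphi_*)$, build the Eilenberg--MacLane object directly as an object of $\Sp^{\Sigma}_{\pone}(\Ch\Ab_k)$ whose $n$-th level is the complex $M_n$ concentrated in degree $0$, with structure maps given by the $\varphi_n$ (via the adjunction used in Proposition \ref{prop:aonederivedcontraction}). Such an object is manifestly in the heart, and $\H_0^{\aone}$ of its $n$-th twist is $M_n$ essentially by definition, so $\Phi\circ\Psi\cong\mathrm{id}$ follows without any Hurewicz comparison. The role of Theorem \ref{thm:ponestablehurewiczisomorphism} is then only to transfer the connectivity and contraction identities from Morel's $\SH(k)$-computations into the $\aone$-derived setting (chiefly to know that $\H_0^{s\aone}(\1_k)\cong\K^{MW}_0$ and that the sheaves in play are strictly $\aone$-invariant), which is how the paper uses it. Your Postnikov-tower argument for $\Psi\circ\Phi\cong\mathrm{id}$ is fine and transfers unchanged to this setup.
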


\subsubsection*{Contractions of Milnor-Witt K-theory sheaves}
\begin{prop}
\label{prop:contractionsofmilnorwittsheaves}
We have a canonical identification $(\K^{MW}_n)_{-1} \isomto \K^{MW}_{n-1}$.
\end{prop}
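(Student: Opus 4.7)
The plan is to prove this by unwinding the definition of the contraction functor $(-)_{-1}$ on sections and exploiting the invertibility of $\Sigma^{\infty}_{\pone}\gm$ in $\SH(k)$. The argument is morally the $\pone$-stable analog of Proposition \ref{prop:connectivityofgmloopspaces}, but simpler because the loop adjunction becomes an honest internal smash inverse.

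First, I would fix a smooth scheme $U$ and unpack $(\K^{MW}_n)_{-1}(U)$ from the definition: it is the kernel of the restriction map $\K^{MW}_n(\gm \times U) \to \K^{MW}_n(U)$ pulled back along $\id_U \times 1$. Since $\gm$ is canonically pointed by $1$, the unit section provides a splitting $U \to \gm \times U$ of the projection, which yields a stable splitting $\Sigma^{\infty}_{\pone}(\gm \times U)_+ \simeq \Sigma^{\infty}_{\pone} U_+ \vee (\Sigma^{\infty}_{\pone}\gm \wedge \Sigma^{\infty}_{\pone}U_+)$ in $\SH(k)$. Feeding this into the definition of $\bpi_0^s$ identifies $(\K^{MW}_n)_{-1}(U)$ with $\hom_{\SH(k)}(\Sigma^{\infty}_{\pone}\gm \wedge \Sigma^{\infty}_{\pone}U_+,\ \Sigma^{\infty}_{\pone}\gm^{\wedge n})$.

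Next, I would use that $\Sigma^{\infty}_{\pone}\gm$ is an invertible object in $\SH(k)$. Indeed, the Nisnevich cover of $\pone$ by two copies of $\aone$ with intersection $\gm$ gives an $\aone$-weak equivalence $S^1_s \wedge \gm \isomt \pone$, and $\Sigma^{\infty}_{\pone}\pone$ is invertible by construction of $\SH(k)$; hence so is $\Sigma^{\infty}_{\pone}\gm$. Applying the resulting tensor-hom equivalence (i.e., smashing with the inverse $\Sigma^{\infty}_{\pone}\gm^{\wedge -1}$), the previous group is naturally identified with $\hom_{\SH(k)}(\Sigma^{\infty}_{\pone}U_+,\ \Sigma^{\infty}_{\pone}\gm^{\wedge(n-1)}) = \K^{MW}_{n-1}(U)$, where $\Sigma^{\infty}_{\pone}\gm^{\wedge(n-1)}$ is interpreted via the invertibility of $\gm$ when $n \leq 0$.

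Finally, I would observe that the whole chain of identifications is natural in $U \in \Sm_k$, so it defines a morphism of presheaves and, upon sheafification, the desired isomorphism of Nisnevich sheaves. I expect the main (minor) obstacle to be purely bookkeeping: checking that the splitting chosen from the unit $1 \in \gm$ agrees canonically with the one implicit in the definition of $(-)_{-1}$, and keeping the conventions for negative powers of $\gm$ consistent across all integers $n$. There is no hard input beyond the invertibility of $\gm$ in $\SH(k)$, which is built into the construction of this category.
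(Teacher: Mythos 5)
Your argument establishes an isomorphism at the \emph{presheaf} level, but there is a genuine gap when passing to sheaves. By Definition \ref{defn:stableaonehomotopysheaves}, $\K^{MW}_n$ is the \emph{Nisnevich sheafification} of the presheaf $G_n\colon U \mapsto \hom_{\SH(k)}(\Sigma^{\infty}_{\pone}U_+,\ \Sigma^{\infty}_{\pone}\gm^{\wedge n})$. The descent spectral sequence $H^p_{\Nis}(U,\bpi_q^s(\Sigma^{\infty}_{\pone}\gm^{\wedge n})) \Rightarrow G_n(U)$ has contributions from $H^p(U,\bpi_p)$ for $p \geq 1$, so $G_n(V) \neq \K^{MW}_n(V)$ for a general smooth $V$. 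Your second step tacitly replaces $\K^{MW}_n(\gm \times U)$ and $\K^{MW}_n(U)$ by the corresponding $\hom_{\SH(k)}$ groups; what you actually compute is the contraction $(G_n)_{-1}$ of the presheaf, and your wedge-decomposition and $\gm$-invertibility argument correctly identifies $(G_n)_{-1} \cong G_{n-1}$ as presheaves. The problem is that the final "upon sheafification" step requires $a\bigl((G_n)_{-1}\bigr) \cong (a G_n)_{-1}$, i.e., that sheafification commutes with the $\gm$-contraction. This is not formal: Nisnevich covers of $\gm \times U$ are not controlled by Nisnevich covers of $U$, so the functor $U \mapsto F(\gm \times U)$ does not intertwine sheafification in any obvious way.

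That commutation is precisely the content of the paper's route. Proposition \ref{prop:contractionsofmilnorwittsheaves} is derived from Proposition \ref{prop:aonederivedcontraction}, which is the derived-category analog of the $\gm$-loop-space isomorphism (Proposition \ref{prop:connectivityofgmloopspaces}, Morel's Lemma 4.3.11). The proof there reduces, by a Postnikov-tower argument, to the case of Eilenberg--MacLane objects of strictly $\aone$-invariant sheaves --- for which all but one homotopy sheaf vanish, so the descent spectral sequence degenerates and the presheaf/sheaf distinction disappears --- and then uses $\mathrm{cd}_{\Nis}(\gm) \leq 1$ to control the relevant Ext groups. So your appeal to invertibility of $\gm$ in $\SH(k)$ is the right slogan and your chain of identifications is correct presheaf-by-presheaf, but the sheafification step is not "purely bookkeeping": it is exactly where the substantive input (Morel's $\gm$-loop theorem or the paper's Proposition \ref{prop:aonederivedcontraction}) is needed. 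As written, your proposal assumes what it needs to prove at that point.
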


\begin{proof}
This is a consequence of Proposition \ref{prop:aonederivedcontraction}.
\end{proof}

\subsubsection*{Stable representability}
Suppose $({\mathscr X},x)$ is a pointed $k$-space.  We have a natural map
\[
\hom_{\Daone^{\eff}(k)}(C_*^{\aone}({\mathscr X}),\Z\langle n \rangle [2n]) \longrightarrow \hom_{\Daone^{\eff}(k)}(C_*^{\aone}({\mathscr X}) \tensor \Z\langle 1 \rangle [2],\Z\langle n \rangle [2n] \tensor \Z\langle 1 \rangle [2]).
\]
We have identifications $\Z\langle n \rangle [2n] \tensor \Z\langle 1 \rangle [2] \isomt \Z \langle n+1 \rangle[2(n+1)]$ and $C_*^{\aone}(X_+ \wedge \pone) \isomt C_*^{\aone}(X) \tensor \Z\langle 1 \rangle [2]$.  With these identifications, we have a map
\[
{\mathbb H}_{\Nis}^{2n}({\mathscr X},\Z\langle n \rangle) \longrightarrow {\mathbb H}^{2(n+1)}_{\Nis}({\mathscr X} \wedge \pone,\Z \langle n+1 \rangle).
\]
The group ${\mathbb H}^{2(n+1)}_{\Nis}({\mathscr X} \wedge \pone,\Z \langle n+1 \rangle)$ is a summand of ${\mathbb H}^{2(n+1)}_{\Nis}({\mathscr X} \times \pone,\Z \langle n+1 \rangle)$.

\begin{prop}
\label{prop:gmsuspensionofmilnorwittgroups}
For any pointed space $({\mathscr X},x)$ there is a canonical isomorphism
\[
H^n_{\Nis}({\mathscr X},\K^{MW}_n) \longrightarrow H^{n+1}_{\Nis}({\mathscr X} \wedge \pone,\K^{MW}_{n+1}).
\]
\end{prop}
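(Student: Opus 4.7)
The plan is to factor $\pone \simeq S^1_s \wedge \gm$ in the pointed $\aone$-homotopy category, and then to treat the $S^1_s$-suspension and the $\gm$-smash separately. Since $\pone = \aone \cup_{\gm} \aone$ and both copies of $\aone$ are $\aone$-contractible, the Mayer--Vietoris square yields an $\aone$-weak equivalence $\pone \isomto S^1_s \wedge \gm$, hence an $\aone$-weak equivalence $\mathscr{X} \wedge \pone \simeq S^1_s \wedge (\mathscr{X} \wedge \gm)$. Computing both sides as $\mathrm{Ext}$-groups against the $\aone$-local sheaf $\K^{MW}_{n+1}$, the simplicial shift is formal and gives
\[
H^{n+1}_{\Nis}(\mathscr{X} \wedge \pone, \K^{MW}_{n+1}) \cong H^{n}_{\Nis}(\mathscr{X} \wedge \gm, \K^{MW}_{n+1}).
\]

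Next, I would exploit the smash-hom adjunction between $(-) \wedge \gm$ and the contraction functor $(-)_{-1} = \underline{\hom}_{\bullet}(\gm, -)$. For any strictly $\aone$-invariant sheaf $\mathscr{M}$, the associated Eilenberg-MacLane $S^1$-spectrum $\operatorname{H}\mathscr{M}$ is $\aone$-local, and Proposition \ref{prop:connectivityofgmloopspaces} applied to $\operatorname{H}\mathscr{M}$ produces an isomorphism
\[
\bpi_0^{s}\bigl(\underline{\hom}(\Sigma^{\infty}_s \gm, \operatorname{H}\mathscr{M})\bigr) \isomto \mathscr{M}_{-1},
\]
with higher stable $\aone$-homotopy sheaves vanishing (since $\operatorname{H}\mathscr{M}$ is an $\aone$-local Eilenberg-MacLane spectrum). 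Hence $\underline{\hom}(\Sigma^{\infty}_s \gm, \operatorname{H}\mathscr{M}) \simeq \operatorname{H}\mathscr{M}_{-1}$ in $\SH_s(k)$. Rewriting hypercohomology as $\hom$-groups in $\SH_s(k)$ from the appropriate suspension spectrum and using this identification via the internal-hom adjunction yields, for any pointed space $\mathscr{X}$,
\[
H^{n}_{\Nis}(\mathscr{X} \wedge \gm, \mathscr{M}) \cong H^{n}_{\Nis}(\mathscr{X}, \mathscr{M}_{-1}).
\]
Applying this to $\mathscr{M} = \K^{MW}_{n+1}$ and then invoking Proposition \ref{prop:contractionsofmilnorwittsheaves} to identify $(\K^{MW}_{n+1})_{-1}$ with $\K^{MW}_n$ assembles into the claimed isomorphism, and one checks by inspection that the composite is the natural $\pone$-suspension map.

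The chief technical point is in the second step: the sheaf-level adjunction between $(-) \wedge \gm$ and $(-)_{-1}$ must be upgraded to the derived level, which amounts to showing that no higher $\mathrm{Ext}$-contributions appear when one takes $\underline{\hom}(\gm, -)$ of a strictly $\aone$-invariant sheaf. This is exactly the substance of Proposition \ref{prop:connectivityofgmloopspaces} (equivalently Proposition \ref{prop:aonederivedcontraction} in the $\aone$-derived formulation), whose proof rests on Morel's stable $\aone$-connectivity theorem together with the observation that $\pone$ and $\gm$ have Nisnevich cohomological dimension at most one. All other steps in the argument--the $\aone$-weak equivalence $\pone \simeq S^1_s \wedge \gm$, the simplicial shift isomorphism, and the contraction identity for Milnor-Witt K-theory--are either formal consequences of the established model structure or have already been recorded in the paper.
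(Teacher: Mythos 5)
Your proposal is correct and follows essentially the same route as the paper's proof. Both arguments factor $\pone \simeq S^1_s \wedge \gm$ in the pointed $\aone$-homotopy category, handle the simplicial suspension formally, and reduce the $\gm$-smash step to Morel's contraction lemma (\cite[Lemma 4.3.11]{MIntro}, i.e. Proposition \ref{prop:connectivityofgmloopspaces}) together with the identification $(\K^{MW}_{n+1})_{-1} \isomto \K^{MW}_n$ of Proposition \ref{prop:contractionsofmilnorwittsheaves}. The only cosmetic difference is that you phrase the argument entirely in $\SH_s(k)$ via Eilenberg--MacLane $S^1$-spectra, whereas the paper begins with unstable Eilenberg--MacLane spaces $K(\K^{MW}_n,n)$, observes they are $\aone$-local, and then invokes the $S^1$-stable suspension isomorphism in the middle of the argument; the technical content (that $\gm$-looping an Eilenberg--MacLane object on a strictly $\aone$-invariant sheaf produces the Eilenberg--MacLane object on its contraction, with no higher contributions) is identical. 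One small point worth making explicit in your write-up is the last sentence of the paper's proof, which you do gesture at: one must verify that the abstract isomorphism so produced really is induced by the homotopy-module structure map $\K^{MW}_n \to \Omega_{\gm}(\gm \wedge \K^{MW}_n) \to \Omega_{\gm}\K^{MW}_{n+1}$, i.e. that it agrees with the natural $\pone$-suspension map; that naturality check is exactly what makes the isomorphism ``canonical'' as stated.
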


\begin{proof}
This is a consequence of the fact that the sheaves $\K^{MW}_n$ fit together to form a homotopy module (which is, in turn, a consequence of Proposition \ref{prop:contractionsofmilnorwittsheaves}).  In fact, the result holds for any such homotopy module. Nevertheless, here are the details.  For any pointed space $\mathscr{X}$ we have identifications
\[
H^n_{\Nis}({\mathscr X},\K^{MW}_n) = [{\mathscr X},K(\K^{MW}_n,n)]_s \isomto [{\mathscr X},K(\K^{MW}_n,n)]_{\aone}.
\]
Indeed, the first equality is \cite[\S 2 Proposition 1.26]{MV}, and the second identification follows from the fact that the sheaf $\K^{MW}_n$ is strictly $\aone$-invariant, which implies that $K(\K^{MW}_n,n)$ is $\aone$-local.  Now, we can replace ${\mathscr X} \wedge \pone$ by $S^1_s \wedge \gm \wedge \mathscr{X}$ up to $\aone$-weak equivalence.  In the $S^1$-stable homotopy category the suspension isomorphism has been inverted and by adjunction we get identifications
\[
[S^1_s \wedge \gm \wedge \mathscr{X},K(\K^{MW}_{n+1},n+1)]_{\aone} \isomto [\gm \wedge \mathscr{X},K(\K^{MW}_{n+1},n)]_{\aone} \isomto [\mathscr{X},\Omega_{\gm}K(\K^{MW}_{n+1},n)].
\]

Now, there is an obvious map $\gm \wedge \K^{MW}_{n} \to \K^{MW}_{n+1}$ coming from the definition of the sheaves $\K^{MW}_n$ as free strictly $\aone$-invariant sheaves generated by the sheaves of sets $\gm^{\wedge n}$.  For an arbitrary smooth scheme $U$, we have
\[
H^n_{\Nis}(U \wedge \gm,\K^{MW}_{n+1}) \isomto H^n_{\Nis}(U,(\K^{MW}_{n+1})_{-1}) = H^n_{\Nis}(U,\K^{MW}_n)
\]
by means of Proposition \ref{prop:contractionsofmilnorwittsheaves}.  Under these identifications, the map of the statement is induced by the morphism of spaces $\K^{MW}_n \to \Omega_{\gm} \gm \wedge \K^{MW}_n$.  Thus, it suffices to compute $[{\mathscr X},\Omega_{\gm}K(\K^{MW}_{n+1},n)]$.  The result then follows immediately from \cite[Lemma 4.3.11 (p. 427)]{MIntro}.

\end{proof}

\begin{cor}
\label{cor:stablerepresentability}
For a space ${\mathscr X}$ of cohomological dimension $n > 0$, we have a canonical isomorphism
\[
\hom_{\Daone^{\eff}(k)}(C_*^{\aone}({\mathscr X}),\Z\langle n \rangle [2n]) \isomto \hom_{\Daone(k)}
(C_*^{\aone}({\mathscr X}),\Z\langle n \rangle [2n]).
\]
\end{cor}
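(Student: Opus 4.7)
The plan is to identify both the source and target of the canonical map with $H^n_{\Nis}(\mathscr{X}, \K^{MW}_n)$, and to verify that this identification makes the map the identity.

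For the source, I would combine Proposition \ref{prop:enhancedmotiviccomplexes}(i) (rewriting the group as the hypercohomology group $\mathbb{H}^{2n}_{\Nis}(\mathscr{X}, \Z\langle n\rangle)$) with Corollary \ref{cor:enhancedmotivicischowwitt} applied to $p = 2n$, weight $n$, and $d = n$ (so that $p \geq n + d$) to obtain an isomorphism with $H^n_{\Nis}(\mathscr{X}, \K^{MW}_n)$ via the edge map of the hypercohomology spectral sequence.

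For the target, I would first use (the analog of) Proposition \ref{prop:stablederivedcolimit}, together with the identification $\Z\langle n\rangle[2n] = \tilde{C}_*^{\aone}(\pone^{\wedge n})$, to rewrite the group as a colimit
\[
\colim_{m \geq 0} \mathbb{H}^{2(n+m)}_{\Nis}(\mathscr{X}_+ \wedge \pone^{\wedge m}, \Z\langle n+m\rangle),
\]
with the canonical map of the statement being the inclusion of the $m = 0$ term. In order to apply Corollary \ref{cor:enhancedmotivicischowwitt} to each term, I would verify by induction on $m$ that $\mathscr{X}_+ \wedge \pone^{\wedge m}$ has Nisnevich cohomological dimension at most $n + m$, using the cofiber sequence $\mathscr{X} \to \mathscr{X} \times \pone^{\wedge m} \to \mathscr{X}_+ \wedge \pone^{\wedge m}$ in combination with Mayer--Vietoris for the standard affine cover $\pone = \aone \cup_{\gm} \aone$ and $\aone$-invariance of strictly $\aone$-invariant sheaves. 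The edge map then identifies the $m$-th term of the colimit with $H^{n+m}_{\Nis}(\mathscr{X}_+ \wedge \pone^{\wedge m}, \K^{MW}_{n+m})$.

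Under these identifications, the transition map of the colimit becomes the $\pone$-suspension map of Proposition \ref{prop:gmsuspensionofmilnorwittgroups} applied to the pointed space $\mathscr{X}_+ \wedge \pone^{\wedge m}$ in place of $\mathscr{X}$, and is therefore an isomorphism. Hence the colimit collapses onto its $m = 0$ term, and the canonical map of the corollary becomes the identity on $H^n_{\Nis}(\mathscr{X}, \K^{MW}_n)$. The main obstacle is checking that the edge map of the hypercohomology spectral sequence intertwines the transition map in the colimit (induced by tensoring with $\tilde{C}_*^{\aone}(\pone)$) with the suspension isomorphism on $\K^{MW}$-cohomology of Proposition \ref{prop:gmsuspensionofmilnorwittgroups}; this compatibility is formally parallel to the commutative diagram of Lemma \ref{lem:inducedbyh_nsheaves} and follows from naturality of the hypercohomology spectral sequence together with the description of the structure maps in the homotopy module $\K^{MW}_*$.
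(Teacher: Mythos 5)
Your proposal follows essentially the same route as the paper's (very terse) proof: identify the source with $H^n_{\Nis}(\mathscr{X},\K^{MW}_n)$ via Proposition~\ref{prop:enhancedmotiviccomplexes}(i) and Corollary~\ref{cor:enhancedmotivicischowwitt}, present the target as a colimit of effective hypercohomology groups $\mathbb{H}^{2(n+m)}_{\Nis}(\mathscr{X}_+\wedge\pone^{\wedge m},\Z\langle n+m\rangle)$, identify each term with the corresponding $H^{n+m}_{\Nis}(\cdot,\K^{MW}_{n+m})$, and conclude via Proposition~\ref{prop:gmsuspensionofmilnorwittgroups}. The paper simply invokes functoriality of the edge map and then says the result ``follows immediately''; you fill in the two steps left implicit there---the cohomological dimension bound on $\mathscr{X}_+\wedge\pone^{\wedge m}$ and the compatibility of the edge map with the stabilization maps---which is useful bookkeeping. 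One small caution on your cohomological dimension step: the notion used in Corollary~\ref{cor:enhancedmotivicischowwitt} is \emph{a priori} for arbitrary Nisnevich sheaves of coefficients, for which $\aone$-invariance of $\mathscr{X}\times\aone$ is not available; but inspecting the proof of that corollary shows the only coefficients that occur are the (strictly $\aone$-invariant) cohomology sheaves of $\Z\langle n+m\rangle$, so the weaker bound you obtain via $\aone$-invariance and Mayer--Vietoris is exactly what is needed.
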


\begin{proof}
The seemingly odd hypothesis on cohomological dimension is necessitated by Remark \ref{rem:naivecancellationfails}.  The isomorphism $\hom_{\Daone^{\eff}(k)}(C_*^{\aone}({\mathscr X}),\Z\langle n \rangle [2n]) \isomto H^n_{\Nis}({\mathscr X},\K^{MW}_n)$ obtained by combining Proposition \ref{prop:enhancedmotiviccomplexes}(i) and Corollary \ref{cor:enhancedmotivicischowwitt} is contravariantly functorial in ${\mathscr X}$.  Via this isomorphism, the result follows immediately from Proposition \ref{prop:gmsuspensionofmilnorwittgroups}.
\end{proof}

\subsection{Projectivity and Base change}
\label{ss:furtherreductions}
In this section, we prove that the sheaves $\H_0^{s\aone}(X)$ are birational invariants of smooth proper varieties if $k$ is an infinite field.  The sheaf $\H_0^{s\aone}(\mathscr{X})$ is by construction an object in the heart of the homotopy $t$-structure on $\Daone(k)$ (see Definition \ref{defn:homotopymodule} and Theorem \ref{thm:aonederivedheart}).  Moreover, almost by its construction, it is initial for such objects.  Precisely, we have the following result.

\begin{prop}
\label{prop:universality}
If $\mathscr{M}_*$ is a homotopy module, then there is a canonical bijection
\[
H^0_{\Nis}(\mathscr{X},\mathscr{M}_0) \isomto \hom_{{\Ab^{s\aone}_k}}(\H_0^{s\aone}(\mathscr{X})_*,\mathscr{M}_*)
\]
contravariantly functorial in $\mathscr{X}$.
\end{prop}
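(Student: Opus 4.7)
The plan is to mimic the proof of Lemma \ref{lem:simplicialhurewicz} one level up, using the description of homotopy modules as the heart of the homotopy $t$-structure on $\Daone(k)$ (Theorem \ref{thm:aonederivedheart}) in place of the analogous identification of strictly $\aone$-invariant sheaves with the heart of $\Daone^{\eff}(k)$. In fact, both sides of the claimed bijection will be shown to coincide with $\hom_{\Daone(k)}(C_*^{s\aone}(\mathscr{X}),H\mathscr{M})$, where $H\mathscr{M}$ denotes the object of $\Daone(k)$ in the heart corresponding to $\mathscr{M}_*$ under the equivalence of Theorem \ref{thm:aonederivedheart}.

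First I would identify the right-hand side of the proposition. Under the equivalence of Theorem \ref{thm:aonederivedheart}, applied to $\mathscr{C}=C_*^{s\aone}(\mathscr{X})$, the associated homotopy module has $n$-th graded piece $\H_0^{\aone}(C_*^{s\aone}(\mathscr{X})\langle n\rangle[n])$, which recovers $\H_0^{s\aone}(\mathscr{X})_*$. Morel's stable $\aone$-connectivity theorem \cite[Theorem 6.1.8]{MStable}, transferred through the Hurewicz equivalence of Theorem \ref{thm:ponestablehurewiczisomorphism}, shows that $C_*^{s\aone}(\mathscr{X})$ is $(-1)$-connected for the homotopy $t$-structure. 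Since $H\mathscr{M}$ is in the heart, mapping from a $(-1)$-connected object factors through the truncation $\tau_{\leq 0}$, yielding a canonical isomorphism
\[
\hom_{\Ab^{s\aone}_k}(\H_0^{s\aone}(\mathscr{X})_*, \mathscr{M}_*) \isomto \hom_{\Daone(k)}(C_*^{s\aone}(\mathscr{X}), H\mathscr{M}).
\]

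Second I would compute the same group by adjunction against the effective $\aone$-derived category. Let $\Omega^{\infty}_{\pone}H\mathscr{M} \in \Daone^{\eff}(k)$ denote the level-zero chain complex of $H\mathscr{M}$ regarded as a symmetric $\pone$-chain complex; by Proposition \ref{prop:aonederivedcontraction} iterated up the spectrum tower, this object is $(-1)$-connected with $\H_0^{\aone}(\Omega^{\infty}_{\pone}H\mathscr{M})=\mathscr{M}_0$. The $\Sigma^{\infty}_{\pone}$-$\Omega^{\infty}_{\pone}$ adjunction then gives
\[
\hom_{\Daone(k)}(C_*^{s\aone}(\mathscr{X}), H\mathscr{M}) \isomto \hom_{\Daone^{\eff}(k)}(C_*^{\aone}(\mathscr{X}), \Omega^{\infty}_{\pone}H\mathscr{M}).
\]
Finally, I would invoke the proof technique of Lemma \ref{lem:simplicialhurewicz}: the $(-1)$-connectivity of both $C_*^{\aone}(\mathscr{X})$ and $\Omega^{\infty}_{\pone}H\mathscr{M}$, together with functoriality of the Postnikov tower in $\Daone^{\eff}(k)$, delivers an isomorphism with $\hom_{\Ab^{\aone}_k}(\H_0^{\aone}(\mathscr{X}),\mathscr{M}_0)$, which in turn equals $H^0_{\Nis}(\mathscr{X},\mathscr{M}_0)$. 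Naturality in $\mathscr{X}$ is built into each of the bijections above.

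The main obstacle is the bookkeeping around $\Omega^{\infty}_{\pone}H\mathscr{M}$: one must check that the level-zero chain complex of the Eilenberg-MacLane $\pone$-spectrum corresponding to the graded object $\mathscr{M}_*$ really recovers only $\mathscr{M}_0$ as its degree-zero $\aone$-homology (rather than something involving the higher $\mathscr{M}_n$). This is essentially the content of Proposition \ref{prop:aonederivedcontraction} applied repeatedly, but the compatibility with the symmetric $\pone$-spectrum structure needs to be spelled out explicitly. Once this is pinned down, steps one, two, and three above chain together to give the stated bijection, and the proof is complete.
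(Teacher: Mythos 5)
Your proposal takes essentially the same route as the paper: both proofs pass through the intermediary group $\hom_{\Daone(k)}(C_*^{s\aone}(\mathscr{X}),\mathscr{M}_*)$, identifying one side via the $t$-structure axioms (your step 1 is exactly what the paper calls ``a standard argument involving the axioms of a $t$-structure'') and the other side via $(-1)$-connectivity and the $\Sigma^{\infty}_{\pone}$--$\Omega^{\infty}_{\pone}$ adjunction. The only difference is that the paper asserts the first isomorphism $H^0_{\Nis}(\mathscr{X},\mathscr{M}_0) \isomto \hom_{\Daone(k)}(C_*^{s\aone}(\mathscr{X}),\mathscr{M}_*)$ without elaboration, whereas you fill in a proof of it; the bookkeeping caveat you flag about $\Omega^{\infty}_{\pone}H\mathscr{M}$ (namely that $H\mathscr{M}$ is an $\Omega$-spectrum, which is what Proposition \ref{prop:aonederivedcontraction} and the homotopy-module structure guarantee, so that the derived $\Omega^{\infty}_{\pone}$ really is the degree-zero level with $\H_0^{\aone} = \mathscr{M}_0$) is the right concern and is resolvable along the lines you indicate.
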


\begin{proof}
By definition $\mathscr{M}_*$ is an object lying in the heart of $\Daone(k)$.  Applying the functor $\H_0^{s\aone}$ induces a map
\[
H^0_{\Nis}(\mathscr{X},\mathscr{M}_0) \isomto \hom_{\Daone(k)}(C_*^{s\aone}(\mathscr{X}),\mathscr{M}_*) \longrightarrow \hom_{\Ab^{s\aone}_k}(\H_0^{s\aone}(\mathscr{X})_*,\mathscr{M}_*).
\]
A standard argument involving the axioms of a $t$-structure shows the second map is a bijection as well.
\end{proof}

\begin{prop}
\label{prop:birationalinvariance}
If $X$ and $X'$ are birationally equivalent smooth proper varieties, then $\H_0^{s\aone}(X) \cong \H_0^{s\aone}(X')$.
\end{prop}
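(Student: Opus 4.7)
The plan is to use the representability statement Proposition \ref{prop:universality} to convert the claim into a purely sheaf-theoretic birational invariance result, and then to invoke the standard fact that sections of strictly $\aone$-invariant sheaves on smooth proper varieties are a birational invariant over an infinite perfect field.

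First, by Proposition \ref{prop:universality}, the homotopy module $\H_0^{s\aone}(X)_*$ represents the functor $\mathscr{M}_* \mapsto H^0_{\Nis}(X,\mathscr{M}_0) = \mathscr{M}_0(X)$ on the category $\Ab^{s\aone}_k$. Hence, by the Yoneda lemma inside $\Ab^{s\aone}_k$, producing an isomorphism of homotopy modules $\H_0^{s\aone}(X)_* \cong \H_0^{s\aone}(X')_*$---from which the desired isomorphism $\H_0^{s\aone}(X) \cong \H_0^{s\aone}(X')$ follows by taking the degree-zero component---reduces to producing, naturally in $\mathscr{M}_*$, an identification $\mathscr{M}_0(X) \cong \mathscr{M}_0(X')$.

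Second, this in turn reduces to a purely sheaf-theoretic statement: for any strictly $\aone$-invariant Nisnevich sheaf $\mathscr{M}$ on $\Sm_k$ and any pair $X,X'$ of birationally equivalent smooth proper $k$-varieties, the sections $\mathscr{M}(X)$ and $\mathscr{M}(X')$ agree as subgroups of the shared function-field sections $\mathscr{M}(k(X)) = \mathscr{M}(k(X'))$. I intend to identify both with the \emph{unramified} subgroup
\[
\mathscr{M}^{ur}(k(X)/k) := \bigcap_v \mathscr{M}(\mathcal{O}_v) \subseteq \mathscr{M}(k(X)),
\]
where $v$ runs over the geometric discrete valuation rings on $k(X)$ trivial on $k$; strict $\aone$-invariance extends $\mathscr{M}$ canonically to such local rings. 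The right-hand side manifestly depends only on the function field, so this equality yields both the isomorphism and its naturality in $\mathscr{M}_*$, since the identification is literally an equality of subgroups of $\mathscr{M}(k(X))$.

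The main obstacle is the equality $\mathscr{M}(X) = \mathscr{M}^{ur}(k(X)/k)$ for $X$ smooth proper. The inclusion $\subseteq$ uses only the valuative criterion of properness (every such $v$ has a center on $X$) together with functoriality of $\mathscr{M}$ under the resulting dominations of local rings. The reverse inclusion $\supseteq$ is the substantive input: an element of $\mathscr{M}(k(X))$ which extends over the local ring at every codimension-one point of a smooth proper model must actually lie in $\mathscr{M}(X)$. This follows from Morel's Gersten-type resolution for strictly $\aone$-invariant sheaves, and is precisely where the hypothesis that $k$ be infinite and perfect enters, via the underlying Gabber-style geometric presentation lemma. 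Granting these inputs, the rest of the argument is formal.
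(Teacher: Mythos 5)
Your argument is correct and matches the paper's proof: the paper also reduces, via Proposition \ref{prop:universality} and the Yoneda lemma in $\Ab^{s\aone}_k$, to showing that $\mathscr{M}_0(X) = \mathscr{M}_0(X')$ for every strictly $\aone$-invariant sheaf $\mathscr{M}_0$, and then cites \cite[Theorem 8.5.1]{CTHK} for the birational invariance of sections of such a sheaf on smooth proper varieties over an infinite (perfect) field. Your second and third paragraphs are essentially a sketch of the proof of that cited theorem (identify $\mathscr{M}_0(X)$ with the unramified subgroup of $\mathscr{M}_0(k(X))$ via the Gersten resolution and the valuative criterion of properness), so the content agrees; the paper simply defers that step to the reference rather than reproving it.
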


\begin{proof}
Suppose $\mathscr{M}_*$ is a homotopy module.  By Proposition \ref{prop:universality} and the Yoneda lemma, it suffices to show that $\mathscr{M}_0(X) = \mathscr{M}_0(X')$.  However, the sheaf $\mathscr{M}_0$ is strictly $\aone$-invariant.  Since $k$ is infinite, the assertion follows immediately from \cite[Theorem 8.5.1]{CTHK}.
\end{proof}

\subsubsection*{Base Change}
The analog of the following result in the stable $\aone$-homotopy category of $S^1$-spectra is a consequence of \cite[Lemma 5.2.7]{MStable}.

\begin{prop}
\label{prop:basechange}
Suppose $\mathscr{X}$ is a $k$-space, and $L/k$ is a finitely generated separable extension.  Let $\mathscr{X}_L$ denote $\mathscr{X} \times_{\Spec k} \Spec L$.  We then have an identification
\[
\H_0^{\aone}(\mathscr{X})(L) \isomto \hom_{\Daone^{\eff}(L)}(\Z,C_*^{\aone}(\mathscr{X}_L)).
\]
\end{prop}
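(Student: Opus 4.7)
The plan is to reduce the statement to a continuity property of the $\aone$-derived category along cofiltered limits of smooth schemes with affine transition maps, combined with the smooth base change adjunction. Since $L/k$ is separable and finitely generated, I would first express $\Spec L$ as the cofiltered limit $\lim_{\alpha} U_\alpha$, where $U_\alpha = \Spec A_\alpha$ ranges over the smooth finitely generated $k$-subalgebras $A_\alpha \subset L$. By Definition \ref{defn:unstableaonehomologysheaves}, $\H_0^{\aone}(\mathscr{X})$ is the Nisnevich sheafification of the presheaf sending $U$ to $\hom_{\Daone^{\eff}(k)}(C_*^{\aone}(U), C_*^{\aone}(\mathscr{X}))$, and the value of any Nisnevich sheaf at such a pro-smooth point is the filtered colimit of its values on the system. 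Consequently
\[
\H_0^{\aone}(\mathscr{X})(L) \;=\; \colim_\alpha \hom_{\Daone^{\eff}(k)}(C_*^{\aone}(U_\alpha), C_*^{\aone}(\mathscr{X})).
\]

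Next I would invoke the six-functor formalism for $\Daone^{\eff}$ of Ayoub and Cisinski-D\'eglise. The morphism $p : \Spec L \to \Spec k$ has a pullback $p^* : \Daone^{\eff}(k) \to \Daone^{\eff}(L)$, while each smooth structure morphism $q_\alpha : U_\alpha \to \Spec k$ carries the left adjoint pair $q_{\alpha\sharp} \dashv q_\alpha^*$, satisfying $q_{\alpha\sharp}(\Z) \cong C_*^{\aone}(U_\alpha)$ in $\Daone^{\eff}(k)$. The relevant continuity property says that pullback along the cofiltered system realizes $\Daone^{\eff}(L)$ as the colimit of the $\Daone^{\eff}(U_\alpha)$, and in particular gives, using $p^* C_*^{\aone}(\mathscr{X}) = C_*^{\aone}(\mathscr{X}_L)$, the identification
\[
\hom_{\Daone^{\eff}(L)}(\Z, C_*^{\aone}(\mathscr{X}_L)) \;=\; \colim_\alpha \hom_{\Daone^{\eff}(U_\alpha)}(\Z, q_\alpha^* C_*^{\aone}(\mathscr{X})).
\]

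To match the two colimits termwise, I would apply the smooth adjunction: since $q_{\alpha\sharp}(\Z) \cong C_*^{\aone}(U_\alpha)$,
\[
\hom_{\Daone^{\eff}(U_\alpha)}(\Z, q_\alpha^* C_*^{\aone}(\mathscr{X})) \;\cong\; \hom_{\Daone^{\eff}(k)}(C_*^{\aone}(U_\alpha), C_*^{\aone}(\mathscr{X})),
\]
which, after passage to the colimit over $\alpha$, recovers exactly the expression for $\H_0^{\aone}(\mathscr{X})(L)$ from the first paragraph. Tracing the construction shows that the resulting isomorphism is induced by the tautological maps $\Spec L \to U_\alpha$ underlying the pro-system, and is therefore natural in $L/k$, as required.

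The main obstacle is the middle continuity step: one must know that the objects $C_*^{\aone}(U)$ form a family of compact generators whose formation is compatible with base change, and that Nisnevich hypercohomology commutes with the relevant cofiltered limits along affine transition maps. These facts are standard but non-formal, and are the nontrivial input to the argument; they are available in the Cisinski-D\'eglise/Ayoub framework cited earlier in the paper. Everything else---the presentation of $\Spec L$ as a pro-smooth system, the computation of sheaf sections at a point, and the adjunction identifications---is a formal consequence of the setup in Section \ref{ss:formalism}.
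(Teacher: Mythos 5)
Your proposal is correct and follows essentially the same route as the paper: present $\Spec L$ as a cofiltered limit of smooth affine $k$-schemes, use the pullback functor along smooth morphisms together with the $\sharp$-adjunction to rewrite each term, and invoke a continuity result to identify the colimit with mapping groups in $\Daone^{\eff}(L)$. The paper is terser (it simply refers to the analogue of Morel's Lemma 5.1.1 for the continuity step, and uses a directed system of Zariski neighborhoods of the generic point of a fixed model $X$ rather than the category of all smooth finitely generated $k$-subalgebras), but the underlying mechanism is the same.
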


\begin{proof}
Suppose $X \to \Spec k$ is a smooth scheme such that $L = k(X)$.  Taking a directed system of neighborhoods of the generic point of $X$, we can write $L$ as an essentially smooth scheme, i.e., a filtering limit of smooth $k$-schemes with affine \'etale transition morphisms.  For any smooth morphism $f: S \to S'$, there is a pullback functor
\[
f^*: \Daone(S') \longrightarrow \Daone(S)
\]
Use the analog of \cite[Lemma 5.1.1]{MStable} to conclude.
\end{proof}

\subsection{Atiyah duality in the stable $\aone$-derived category}
\label{ss:atiyahduality}
The work of Cisinski-D\'eglise shows that one can apply the duality formalism to the category $\Daone(k)$.  In particular, the categories $\Daone(k)$ contain mapping objects.  Given a smooth variety $X$, we define $C_*^{s\aone}(X)^\vee$ to be the internal function object $\underline{\hom}(C_*^{s\aone}(X),\1)$.

\subsubsection*{Atiyah duality}
Our next goal is to provide a ``concrete" description of the dual.  What is now called Atiyah duality was introduced in \cite{Atiyah}, and has been studied in the context of stable $\aone$-homotopy theory by Hu \cite[Appendix A]{Hu} and Riou \cite{Riou}.  We begin by recalling the following result due to Voevodsky.

\begin{thm}[{\cite[Theorem 2.11]{VMilnor}}]
\label{thm:voevodskyduality}
Let $X$ be a smooth projective variety of pure dimension $d$ over a field $k$.  There exists an integer $n$ and a vector bundle $\nu: V \to X$ of rank $n$ such that
\begin{itemize}
\item[i)] if $\tau_{X}: T_X \to X$ is the tangent bundle to $X$, then $[V \oplus T_X] = [\O_X^{\oplus n +d}]$ in $K_0(X)$, and
\item[ii)] there exists a morphism $T^{n+d} \to Th(\nu)$ in $\hop{k}$ such that the induced map $H^{2d,d}(X) \to \Z$ coincides with the degree map.
\end{itemize}
\end{thm}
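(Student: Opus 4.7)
The plan is to follow the classical Pontryagin-Thom construction adapted to the motivic setting, as studied in Hu \cite[Appendix A]{Hu} and Riou \cite{Riou}.

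For part (i), fix a closed embedding $X \hookrightarrow \mathbb{P}^M$ via a very ample line bundle, with normal bundle $\nu'$. Then $[T_X] + [\nu'] = [T_{\mathbb{P}^M}|_X]$ in $K_0(X)$, and the Euler sequence yields $[T_{\mathbb{P}^M}|_X] = (M+1)[\O_X(1)] - [\O_X]$. Since $\O_X(1)$ is globally generated, there is a vector bundle $W$ on $X$ with $[\O_X(1) \oplus W]$ trivial; combining $\nu'$ with an appropriate number of copies of $W$ and a trivial summand produces a vector bundle $V$ with $[V \oplus T_X] = [\O_X^{\oplus n+d}]$, and by adding trivial factors the rank $n$ can be taken as large as desired.

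For part (ii), apply Jouanolou's trick to replace $X$ by a smooth affine vector bundle torsor $\pi \colon \tilde X \to X$, which is an $\aone$-equivalence. Since $\tilde X$ is smooth affine, it admits a closed embedding $\tilde X \hookrightarrow \mathbb{A}^{n+d}$ (taking $n$ large). By Morel-Voevodsky purity, there is an identification $\mathbb{A}^{n+d}/(\mathbb{A}^{n+d}\setminus \tilde X) \simeq Th(\tilde\nu)$ in $\hop{k}$, where $\tilde\nu$ is the normal bundle of this embedding. The conventions on Thom spaces of virtual bundles (Example \ref{ex:thomspaceofavirtualbundle}) combined with the relation $[\tilde \nu] + [T_{\tilde X}] = [\O_{\tilde X}^{\oplus n+d}]$ identify $Th(\tilde \nu) \simeq Th(\nu)$ in $\hop{k}$. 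The Pontryagin-Thom collapse map (obtained from the quotient $\mathbb{A}^{n+d}/(\mathbb{A}^{n+d}\setminus \{0\}) \to \mathbb{A}^{n+d}/(\mathbb{A}^{n+d}\setminus\tilde X)$, or basepoint-freely via the stable coevaluation in $\SH(k)$) yields the morphism $T^{n+d} \to Th(\nu)$, lifted unstably to $\hop{k}$ thanks to the connectivity estimate of Proposition \ref{prop:thomspaceconnectivity} for $n$ sufficiently large.

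To verify the degree condition, one uses compatibility of the purity isomorphism with pushforward in motivic cohomology: under the Thom isomorphism $H^{2(n+d),n+d}(Th(\nu)) \isomto H^{2d,d}(X) \isomto CH_0(X)$ (Bloch's formula for the last step), the induced map on cohomology corresponds to the pushforward along the structural morphism $X \to \Spec k$, which is the degree map by construction. The main obstacle I anticipate is the basepoint issue: since $X$ (and hence $\tilde X$) may have no $k$-rational point, the naive Pontryagin-Thom collapse cannot be constructed directly in $\hop{k}$. The cleanest workaround is to establish Atiyah duality in $\SH(k)$ (showing $\Sigma^{\infty}_{\pone}X_+$ is strongly dualizable with dual $\Sigma^{-n}_{\pone}\Sigma^{\infty}_{\pone}Th(\nu)$) and extract the desired morphism from the coevaluation map composed with the projection induced by $X \to \Spec k$. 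A secondary subtlety is orientation tracking to ensure that the induced map on top cohomology is the degree map on the nose, without spurious signs or multiplicative factors.
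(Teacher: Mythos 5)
This theorem is quoted in the paper from Voevodsky \cite[Theorem 2.11]{VMilnor} and not proved here, so there is no in-paper argument to compare line by line; the relevant question is whether your outline constitutes a valid independent proof, and I don't think it does yet.

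You correctly identify the central obstruction, which is worth stressing because it kills the strategy as written. After the Jouanolou replacement $\tilde X \to X$ and a closed embedding $\tilde X \hookrightarrow \mathbb{A}^{n+d}$, the quotient $\mathbb{A}^{n+d}/(\mathbb{A}^{n+d}\setminus\{0\}) \to \mathbb{A}^{n+d}/(\mathbb{A}^{n+d}\setminus\tilde X)$ only exists if $0 \in \tilde X$, i.e. if $\tilde X$ (equivalently $X$, since $\tilde X \to X$ is an affine bundle) has a $k$-rational point. But detecting rational points is exactly what the paper is studying, so one cannot assume them. There is in fact a second, less visible, obstruction to the same collapse: $\tilde X$ is affine, hence not proper, so $\tilde X$ is not closed in the compactification $\mathbb{P}^{n+d}$; the "collapse from the one-point compactification" from classical Pontryagin--Thom therefore does not apply even if a rational point were present. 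Voevodsky's construction in \cite{VMilnor} avoids both problems by working directly with the closed immersion $X \hookrightarrow \mathbb{P}^N$ (proper $X$ is closed in $\mathbb{P}^N$, so $\mathbb{P}^N_+ \to \mathbb{P}^N/(\mathbb{P}^N - X) \cong Th(\nu_{X/\mathbb{P}^N})$ is a genuine pointed map requiring no rational point) and then massaging the source into a sphere; it never replaces $X$ by an affine model.

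The proposed workaround, to first establish Atiyah duality in $\SH(k)$ and then read off the map as the coevaluation followed by the collapse to $\Spec k$, has two further problems. First, within this paper the logical dependency runs the other way: Proposition \ref{prop:atiyahduality} is \emph{deduced} from Theorem \ref{thm:voevodskyduality}, so your workaround is circular here unless you supply an independent proof of strong dualizability (which is itself comparable in difficulty to the statement you are proving and which you do not sketch). Second, even granting Atiyah duality, the coevaluation lives in $\SH(k)$, while the statement asks for a morphism in $\hop{k}$; your appeal to Proposition \ref{prop:thomspaceconnectivity} to "lift unstably" would require a motivic Freudenthal-type desuspension argument in a precise range, and the claimed unstable identification $Th(\tilde\nu) \simeq Th(\nu)$ in $\hop{k}$ from the $K_0$-relation is only justified stably by Example \ref{ex:thomspaceofavirtualbundle}. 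Part (i) of your outline is fine, but part (ii) as written has a genuine gap that is acknowledged but not repaired.
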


As a consequence of Theorem \ref{thm:voevodskyduality}(i) and Example \ref{ex:thomspaceofavirtualbundle}, we see that the Thom spectrum of the negative tangent bundle $-\tau_{X}$ coincides with the $(n+d)$-fold $\pone$-desuspension of the Thom spectrum of $\nu$.  As a consequence of Theorem \ref{thm:voevodskyduality}(ii), we deduce the existence of a morphism
\[
\1_k \to \Sigma^{\infty}_{\pone}Th(-\tau_{X}).
\]

\begin{prop}
\label{prop:atiyahduality}
If $X$ is a smooth projective variety, then $C_*^{s\aone}(X)$ is a strongly dualizable object of ${\Daone(k)}$ and its strong dual is $\tilde{C}_*^{s\aone}(Th(-\tau_X))$.
\end{prop}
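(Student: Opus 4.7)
The plan is to deduce the proposition from the corresponding Atiyah-duality statement in the stable motivic homotopy category $\SH(k)$, established by Hu \cite[Appendix A]{Hu} and Riou \cite{Riou} on the basis of Theorem \ref{thm:voevodskyduality}, by transporting that duality along the symmetric monoidal Hurewicz functor $\SH(k) \to \Daone(k)$ constructed in \S\ref{ss:formalism}.

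In more detail, the cited works produce, for $X$ smooth projective of dimension $d$, a unit $\eta: \mathbb{S}^0_k \to \Sigma^{\infty}_{\pone}X_+ \wedge \Sigma^{\infty}_{\pone}Th(-\tau_X)$ and a counit $\epsilon: \Sigma^{\infty}_{\pone}Th(-\tau_X) \wedge \Sigma^{\infty}_{\pone}X_+ \to \mathbb{S}^0_k$ in $\SH(k)$ satisfying the triangle identities. These morphisms are built from the Pontryagin--Thom collapse $T^{n+d} \to Th(\nu)$ of Theorem \ref{thm:voevodskyduality}(ii), from the Thom-space diagonal induced by $\Delta: X \to X \times X$ (whose normal bundle along $\Delta$ is $\tau_X$), and from the purity isomorphisms of Proposition \ref{prop:basicpropertiesofthomspacesI}; the triangle identities ultimately reduce to the identification $[V \oplus T_X] = [\O_X^{\oplus n+d}]$ in $K_0(X)$ of Theorem \ref{thm:voevodskyduality}(i). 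The Hurewicz functor $\SH(k) \to \Daone(k)$ is symmetric monoidal by construction, sends $\Sigma^{\infty}_{\pone}X_+$ to $C_*^{s\aone}(X)$, and---because the virtual-bundle Thom object is defined in both $\SH(k)$ and $\Daone(k)$ by the identical recipe of $\pone$-desuspending the Thom object of a chosen complementary vector bundle (see \S\ref{ss:thomspaces})---sends $\Sigma^{\infty}_{\pone}Th(-\tau_X)$ to $\tilde{C}_*^{s\aone}(Th(-\tau_X))$. Since a symmetric monoidal functor preserves strong dualizability (the images of $\eta$ and $\epsilon$ satisfy the triangle identities automatically, these being diagrams of morphisms that any such functor preserves), we conclude that $C_*^{s\aone}(X)$ is strongly dualizable in $\Daone(k)$ with strong dual $\tilde{C}_*^{s\aone}(Th(-\tau_X))$.

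The main obstacle in this route is largely cosmetic: one must verify that the virtual Thom spectrum of $-\tau_X$ in $\SH(k)$ used by Hu and Riou agrees, under the Hurewicz functor, with the virtual Thom complex in $\Daone(k)$ of \S\ref{ss:thomspaces}. This follows from the parallel definitions together with the choice-independence built into Proposition \ref{prop:thomspaceadditivityinexactsequences}. Alternatively, one could bypass the Hu-Riou result and construct $\eta$ and $\epsilon$ directly in $\Daone(k)$: take the counit to be the purity map associated to the composite embedding $X \stackrel{\Delta}{\hookrightarrow} X \times X \stackrel{0}{\hookrightarrow} p_2^*\nu$, whose normal bundle $\tau_X \oplus \nu$ is trivial of rank $n+d$, followed by the projection $X_+ \wedge T^{n+d} \to T^{n+d}$; take the unit to be the Pontryagin-Thom map of Theorem \ref{thm:voevodskyduality}(ii) composed with a Thom-space diagonal $Th(\nu) \to X_+ \wedge Th(\nu)$ coming from $\Delta$; and verify the triangle identities using Proposition \ref{prop:basicpropertiesofthomspacesII} together with Theorem \ref{thm:voevodskyduality}(ii).
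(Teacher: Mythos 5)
Your primary route is correct and differs from the paper's in a useful way. The paper constructs the unit and counit directly in $\Daone(k)$ (the unit being ``induced by'' Voevodsky's duality map, the counit by the Thom collapse) and then asserts that the triangle identities can be verified ``in the same manner as'' Hu's Appendix A --- i.e., it nominally reruns the diagram chase in the $\aone$-derived category. You instead invoke the already-established strong duality $(\Sigma^{\infty}_{\pone}X_+,\,\Sigma^{\infty}_{\pone}Th(-\tau_X))$ in $\SH(k)$ and push it forward along the monoidal Hurewicz functor $\SH(k)\to\Daone(k)$, using the standard categorical fact that a strong monoidal functor preserves dualizable objects and their duals. This is cleaner: it relocates the diagram verification entirely to $\SH(k)$, where Hu and Riou have already done it, and what remains is purely the compatibility check that the Hurewicz functor takes the virtual Thom spectrum $\Sigma^{\infty}_{\pone}Th(-\tau_X)$ to the virtual Thom complex $\tilde{C}_*^{s\aone}(Th(-\tau_X))$. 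You correctly identify that this holds because both are defined by the same desuspension recipe (Proposition~\ref{prop:thomspaceadditivityinexactsequences} and the virtual-bundle construction in \S\ref{ss:thomspaces}), and because the Hurewicz functor commutes with $\pone$-desuspension. One minor point worth making explicit: the paper only states that the Hurewicz functor is ``monoidal,'' and for the transport argument you genuinely need strong (not merely lax) monoidality at the homotopy-category level, so that $F(\mathbf{1})\cong\mathbf{1}$ and $F(A\wedge B)\cong F(A)\tensor F(B)$; this holds here because the free abelianization is strong monoidal and the normalized-chains functor becomes strong monoidal after passing to homotopy categories by Eilenberg--Zilber, but it deserves a sentence. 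Your alternative direct construction at the end is essentially the paper's own argument, so either route would do.
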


\begin{proof}
By the definition of strong duality, we need to construct maps
\[
\eta: \1_k \to \tilde{C}_*^{s\aone}(Th(-\tau_{X})) \tensor C_*^{s\aone}(X)
\]
and
\[
\epsilon: \tilde{C}_*^{s\aone}(Th(-\tau_{X})) \tensor C_*^{s\aone}(X) \to \1_k.
\]
By definition of the tensor product, this is equivalent to specifying a map $\1_k \to \tilde{C}_*^{s\aone}(Th(-\tau_{X}) \wedge X_+)$.  We identify $X_+$ with the Thom space of the trivial rank $0$ bundle on $X$.  This map is then induced by Voevodsky's duality map.  Similarly, the map $\epsilon$ is induced by the Thom collapse map.  One then needs to check that a number of diagrams commute; this is accomplished in the same manner as \cite[Appendix A]{Hu}.
\end{proof}

For later convenience, we record the following consequence of Proposition \ref{prop:atiyahduality} and the hom-tensor adjunction.

\begin{lem}
\label{lem:existenceofduals}
If $X$ is a smooth projective variety we have a canonical and functorial isomorphism:
\[
\hom_{\Daone(k)}(\1_k,C_*^{s\aone}(X)) \isomto \hom_{\Daone(k)}(C_*^{s\aone}(X)^{\vee},\1_k).
\]
\end{lem}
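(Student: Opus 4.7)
The plan is to deduce the lemma formally from Proposition~\ref{prop:atiyahduality} using the standard hom–tensor yoga in a closed symmetric monoidal category. Recall that in such a category, whenever $Y$ is a strongly dualizable object with dual $Y^\vee$, evaluation $\epsilon\colon Y^\vee \otimes Y \to \1$ and coevaluation $\eta\colon \1 \to Y\otimes Y^\vee$ induce, for arbitrary objects $A$ and $C$, a natural adjunction isomorphism
\[
\hom(A\otimes Y, C) \;\cong\; \hom(A,\,C\otimes Y^\vee),
\]
and, more importantly for us, $Y^\vee$ is itself strongly dualizable with $(Y^\vee)^\vee \cong Y$.

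First, I would apply Proposition~\ref{prop:atiyahduality} to conclude that $Y:=C_*^{s\aone}(X)$ is strongly dualizable with dual $Y^\vee=\tilde C_*^{s\aone}(Th(-\tau_X))$. Second, I would specialize the adjunction displayed above to $A=C=\1_k$, obtaining the formal identity
\[
\hom_{\Daone(k)}(Y,\1_k) \;\cong\; \hom_{\Daone(k)}(\1_k,Y^\vee),
\]
valid for any strongly dualizable $Y$. Third, I would apply this identity to $Y^\vee$ in place of $Y$, which is legitimate since $Y^\vee$ is itself strongly dualizable; combining with the canonical isomorphism $(Y^\vee)^\vee \cong Y$ this yields
\[
\hom_{\Daone(k)}(Y^\vee,\1_k) \;\cong\; \hom_{\Daone(k)}(\1_k,(Y^\vee)^\vee) \;\cong\; \hom_{\Daone(k)}(\1_k,Y),
\]
which is precisely the asserted isomorphism.

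Functoriality in $X$ is automatic: morphisms of smooth projective varieties induce morphisms of their Atiyah duals in $\Daone(k)$ compatibly with the evaluation and coevaluation data, and the displayed adjunction is natural in all of its arguments. There is no real obstacle to overcome here; the lemma is a purely categorical consequence of strong dualizability, and the only point requiring any care is verifying that the duality data constructed in the proof of Proposition~\ref{prop:atiyahduality} genuinely witnesses $C_*^{s\aone}(X)$ as a strong dual (so that both the reflexivity $Y^{\vee\vee}\cong Y$ and the internal-hom adjunction are available), which is already part of that proposition's content.
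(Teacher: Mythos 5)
Your argument is correct and follows exactly the route the paper has in mind; there is no separate proof in the paper, only the remark that the lemma is ``a consequence of Proposition~\ref{prop:atiyahduality} and the hom--tensor adjunction,'' which is precisely what you flesh out. Two small observations: your second step, namely $\hom(Y,\1_k)\cong\hom(\1_k,Y^\vee)$, actually needs no dualizability at all (it is just the internal-hom adjunction applied to $Y^\vee=\underline{\hom}(Y,\1_k)$), while your third step, via $(Y^\vee)^\vee\cong Y$, is where strong dualizability from Atiyah duality enters; alternatively one can get the statement in one step from the adjunction $-\otimes Y^\vee\dashv -\otimes Y$ with $A=C=\1_k$, giving $\hom(\1_k,Y)\cong\hom(Y^\vee,\1_k)$ directly.
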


\subsubsection*{Consequences of duality}
The next result is a variant of \cite[Lemma 2.1]{Atiyah} in the context of $\aone$-homology. We use the notation of Theorem \ref{thm:voevodskyduality}.

\begin{thm}
\label{thm:aonehomologyintermsofthomspaces}
For any smooth proper $k$-variety $X$ of dimension $d$, and for any separable, finitely generated extension $L/k$ there is a canonical isomorphism
\[
\H_0^{s\aone}(X)(L) \isomto H^{n + d}_{\Nis}(Th(\nu_L),\K^{MW}_{n + d}),
\]
functorial with respect to field extensions.
\end{thm}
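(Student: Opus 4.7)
The plan is to mimic the chain of isomorphisms (a)--(i) in the proof of Lemma \ref{lem:suslinchow}, replacing $\DM$ and the Tate motive $\Z(n)$ by $\Daone$ and the enhanced Tate complex $\Z\langle n\rangle$ throughout, and using Atiyah duality (Proposition \ref{prop:atiyahduality}) in place of Friedlander-Voevodsky duality. A preliminary reduction sends $X$ to a smooth projective variety: by Proposition \ref{prop:birationalinvariance}, $\H_0^{s\aone}$ is a birational invariant on smooth proper $k$-varieties, so (Chow's lemma together with resolution of the dominating variety, where applicable) we may assume $X$ is smooth projective, and Theorem \ref{thm:voevodskyduality} then furnishes a rank $n$ vector bundle $\nu\colon V\to X$ with $[V\oplus T_X]=[\O_X^{n+d}]$ in $K_0(X)$, making the right-hand side of the statement well-defined.

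The main chain of identifications then proceeds as follows. Using the evident stable analog of Proposition \ref{prop:basechange} (proved identically via the smooth pullback functors $f^*\colon \Daone(S')\to\Daone(S)$), one has
\[
\H_0^{s\aone}(X)(L)\isomto\hom_{\Daone(L)}(\1_L,\;C_*^{s\aone}(X_L)).
\]
Atiyah duality (Proposition \ref{prop:atiyahduality} and Lemma \ref{lem:existenceofduals}) rewrites this as $\hom_{\Daone(L)}(\tilde{C}_*^{s\aone}(Th(-\tau_{X_L})),\,\1_L)$. The $K_0$-identity $[V\oplus T_X]=[\O_X^{n+d}]$ together with the discussion of Thom spectra of virtual bundles (cf.\ Example \ref{ex:thomspaceofavirtualbundle}) identifies $\tilde{C}_*^{s\aone}(Th(-\tau_{X_L}))$ with $\tilde{C}_*^{s\aone}(Th(\nu_L))\tensor\Z\langle -(n+d)\rangle[-2(n+d)]$, and the hom-tensor adjunction then rewrites the preceding group as
\[
\hom_{\Daone(L)}(\tilde{C}_*^{s\aone}(Th(\nu_L)),\;\Z\langle n+d\rangle[2(n+d)]).
\]

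To unstabilize, note that $\mathrm{rk}(\nu_L)=n$ and $\dim X_L=d$, so by Lemma \ref{lem:cohomologicaldimensionofthomspaces} the space $Th(\nu_L)$ has Nisnevich cohomological dimension at most $n+d$. Provided $d>0$ (the case $d=0$ reduces to Morel's computation via Theorem \ref{thm:ponestablehurewiczisomorphism}), Corollary \ref{cor:stablerepresentability} applied in weight and degree $n+d$ identifies the preceding stable hom with its unstable counterpart $\hom_{\Daone^{\eff}(L)}(\tilde{C}_*^{\aone}(Th(\nu_L)),\,\Z\langle n+d\rangle[2(n+d)])$, which equals $\mathbb{H}^{2(n+d)}_{\Nis}(Th(\nu_L),\Z\langle n+d\rangle)$ by Proposition \ref{prop:enhancedmotiviccomplexes}(i); Corollary \ref{cor:enhancedmotiviccohomologyofthomspaces} finally identifies this with $H^{n+d}_{\Nis}(Th(\nu_L),\K^{MW}_{n+d})$. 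Functoriality in $L$ follows because every step above is natural with respect to smooth pullback along extensions of residue fields.

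The main obstacle I expect is bookkeeping: checking that Atiyah duality is compatible with base change to $L$, and that the identifications induced by the $K_0$-relation $[V\oplus T_X]=[\O_X^{n+d}]$ (including the $\pone$-desuspension of Thom spectra of virtual bundles) are well-defined and natural, independent of the auxiliary choices of affine vector bundle torsor and complementary bundle used in defining Thom spectra of virtual bundles. A secondary technical issue is the reduction to the projective case, which must be handled carefully because the bundle $\nu$ implicit in the statement depends on a chosen projective model of the birational class.
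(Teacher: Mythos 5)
Your proposal follows the paper's proof essentially step for step: the reduction to a smooth projective model via Chow's lemma and Proposition \ref{prop:birationalinvariance}, then the chain base change $\to$ dualizability $\to$ Atiyah duality $\to$ Thom spectrum of the virtual normal bundle $\to$ stable representability (Corollary \ref{cor:stablerepresentability}) $\to$ Proposition \ref{prop:enhancedmotiviccomplexes}(i) $\to$ Corollary \ref{cor:enhancedmotiviccohomologyofthomspaces}, with functoriality built in throughout. The ``obstacles'' you flag (base-change compatibility of duality, well-definedness of Thom spectra of virtual bundles, smoothness of a projective model from Chow's lemma) are real but are handled or glossed over identically in the paper, so this is the same argument.
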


\begin{proof}
First, combining Chow's lemma with Proposition \ref{prop:birationalinvariance}, we can (and will) assume that $X$ is a smooth projective $k$-variety.  We then proceed in a fashion mirroring the proof of Lemma \ref{lem:suslinchow}:
\[
\begin{split}
\H_0^{s\aone}(X)(L) &\stackrel{(a)}{=} \hom_{\Daone(k)}(\Z,C_*^{s\aone}(X_L)) \\
&\stackrel{(b)}{=} \hom_{\Daone(L)}(\Z,C_*^{s\aone}(X_L)) \\
&\stackrel{(c)}{=} \hom_{\Daone(L)}(C_*^{s\aone}(X_L)^{\vee},\Z) \\
&\stackrel{(d)}{=} \hom_{\Daone(L)}(\tilde{C}_*^{s\aone}(Th(-\tau_{X_L})),\Z) \\
&\stackrel{(e)}{=} \hom_{\Daone(L)}(\tilde{C}_*^{s\aone}(Th(\nu_L)),\Z\langle n+d\rangle[2(n+d)])\\
&\stackrel{(f)}{=} {\mathbb H}^{2(n+d)}_{\Nis}(Th(\nu_L),\Z\langle n+d \rangle) \\
&\stackrel{(g)}{=} H^{n+d}_{\Nis}(Th(\nu_L),\K^{MW}_{n+d})
\end{split}
\]
Identification (a) is the definition of $\aone$-homology.  Identification (b) is a consequence of Proposition \ref{prop:basechange}.  Identification (c) is Lemma \ref{lem:existenceofduals}.  Identification (d) is a consequence of Atiyah duality in the $\aone$-derived category, i.e., Proposition \ref{prop:atiyahduality}.  Identification (e) follows from the discussion of Example \ref{ex:thomspaceofavirtualbundle}.  Identification (f) is a consequence of Corollary \ref{cor:stablerepresentability}.  Finally, identification (g) is a consequence of Corollary \ref{cor:enhancedmotiviccohomologyofthomspaces}.  Functoriality with respect to field extensions is evident from the constructions of all the isomorphisms.
\end{proof}

\begin{prop}
The morphism $T^{n+d} \to Th(\nu)$ defined by Voevodsky induces a map
\[
\hom_{\Daone(k)}(C_*^{s\aone}(Th(\nu)),\Z\langle n + d \rangle[2(n+d)]) \longrightarrow \hom_{\Daone(k)}(C_*^{s\aone}({\pone}^{\wedge n+d}),\Z\langle n + d\rangle[2(n+d)])
\]
that coincides with the pushforward map $\H_0^{s\aone}(X) \to \H_0^{s\aone}(\1_k)$.
\end{prop}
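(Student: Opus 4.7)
The plan is to trace the pushforward $f_*: \H_0^{s\aone}(X) \to \H_0^{s\aone}(\1_k)$ induced by the structure morphism $f: X \to \Spec k$ through each of the identifications (a)-(g) in the proof of Theorem \ref{thm:aonehomologyintermsofthomspaces}, exploiting the naturality of each step. By definition, $f_*$ is postcomposition with the chain-level structure map $C_*^{s\aone}(f): C_*^{s\aone}(X) \to \1_k$, and identifications (a) and (b) transport this postcomposition naturally into $\hom_{\Daone(L)}(\1_k, C_*^{s\aone}(X_L))$.

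The decisive step is the passage (c)-(d): under the strong duality furnished by Proposition \ref{prop:atiyahduality} and its consequence Lemma \ref{lem:existenceofduals}, postcomposition with $C_*^{s\aone}(f)$ corresponds to precomposition with the dual morphism $C_*^{s\aone}(f)^{\vee}: \1_k \to \tilde{C}_*^{s\aone}(Th(-\tau_{X_L}))$. By the formalism of strong duality, this dual equals the composite
\[
\1_k \xrightarrow{\;\eta\;} \tilde{C}_*^{s\aone}(Th(-\tau_{X_L})) \tensor C_*^{s\aone}(X_L) \xrightarrow{\;\mathrm{id}\tensor C_*^{s\aone}(f)\;} \tilde{C}_*^{s\aone}(Th(-\tau_{X_L})),
\]
where $\eta$ is the duality unit. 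In the construction of $\eta$ in the proof of Proposition \ref{prop:atiyahduality}, $\eta$ is built from Voevodsky's map $T^{n+d} \to Th(\nu_L)$ together with a diagonal $Th(\nu_L) \to Th(\nu_L) \wedge {X_L}_+$; collapsing $X_L \to \Spec L$ via $\mathrm{id}\tensor C_*^{s\aone}(f)$ kills the diagonal factor, so the composite reduces to precisely the $(n+d)$-fold $\pone$-desuspension of Voevodsky's map (using the Thom-space identification of Example \ref{ex:thomspaceofavirtualbundle}).

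The remaining identifications (e)-(g) are contravariantly functorial in the Thom space of the stable normal bundle: step (e) resuspends $Th(-\tau_{X_L})$ to $Th(\nu_L)$ via Example \ref{ex:thomspaceofavirtualbundle}, step (f) applies the stable representability Corollary \ref{cor:stablerepresentability}, and step (g) uses the edge map of the hypercohomology spectral sequence (Proposition \ref{prop:enhancedmotiviccomplexes} and Corollary \ref{cor:enhancedmotiviccohomologyofthomspaces}). Applying these to our precomposition map converts it into the map induced by Voevodsky's morphism $T^{n+d} \to Th(\nu_L)$ on $\hom_{\Daone(k)}(C_*^{s\aone}(Th(\nu_L)), \Z\langle n+d\rangle[2(n+d)])$, which, after the identification of ${\pone}^{\wedge n+d}$ with $T^{n+d}$, is precisely the map in the proposition statement.

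The main obstacle is the verification at step (c)-(d) that $C_*^{s\aone}(f)^{\vee}$ agrees with the $(n+d)$-fold $\pone$-desuspension of Voevodsky's map. This relies on unwinding how the authors construct the duality unit $\eta$ in Proposition \ref{prop:atiyahduality}: one must check that the composite of $\eta$ with the projection $\mathrm{id}\tensor f$ (which one could call the ``Pontryagin-Thom collapse followed by projection to a point'') is exactly Voevodsky's collapse map after desuspension. This is a diagram-chase of the triangle identities for the $(\eta,\epsilon)$ pair in the proof of Proposition \ref{prop:atiyahduality}, combined with compatibility of the Thom-space identifications in Proposition \ref{prop:basicpropertiesofthomspacesI} and Example \ref{ex:thomspaceofavirtualbundle} under the projection $X_L \to \Spec L$.
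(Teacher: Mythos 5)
Your proposal is correct and follows the same approach as the paper, whose own proof is the single sentence ``This is a consequence of the fact that this map induces the duality map.'' You have made the implicit diagram chase explicit, in particular the key observation that under Lemma \ref{lem:existenceofduals} postcomposition with $C_*^{s\aone}(f)$ becomes precomposition with the dual $C_*^{s\aone}(f)^\vee$, which by the standard formula for the dual of a morphism (rather than by a ``triangle identity'' as you phrase it) equals $(\mathrm{id}\otimes C_*^{s\aone}(f))\circ\eta$ and hence, by the construction of $\eta$ in Proposition \ref{prop:atiyahduality}, collapses to the desuspended Voevodsky map.
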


\begin{proof}
This is a consequence of the fact that this map induces the duality map.
\end{proof}

\section{Twisted Chow-Witt groups and Thom isomorphisms}
\label{s:twistedthomisomorphisms}
Suppose $X$ is a smooth $k$-scheme, and $\xi: E \to X$ is a vector bundle of rank $n$.  At the end of Section \ref{s:duality}, we considered groups of the form $H^i_{\Nis}(Th(\xi),\K^{MW}_j)$.  The first goal of this section, achieved at the end of Section \ref{ss:thomisomorphisms} is to describe these groups more concretely in terms of sheaf cohomology on $X$.  To achieve this, we begin by reviewing the theory of twisted Chow-Witt groups in Section \ref{ss:twistedchowwittgroups}; here the twist refers to a choice of line bundle on $X$, which, as we explain below, can be thought of as an ``$\aone$-local system on $X$."  We then prove a twisted Thom isomorphism (Theorem \ref{thm:thomiso}) that identifies cohomology of Milnor-Witt K-theory sheaves on the Thom space of a vector bundle in terms of twisted Chow-Witt groups.  Much of the work here consists of unpacking definitions, but to connect with the work of previous sections, we need to identify the largely formal definition of Milnor-Witt K-theory sheaves given in Definition \ref{defn:stableaonehomotopygroupsofspheres} with something concretely computable; as we explain, this identification uses Morel's computation of the stable $\aone$-homotopy groups of spheres, and the theorem of Orlov-Vishik-Voevodsky proving Milnor's conjecture on quadratic forms.  Thus, taken together, these identifications can be viewed as the ``reduction to Morel's computations" alluded to in the introduction.

The twisted Thom isomorphism we use constitutes a translation of Atiyah's classical theory of Thom isomorphisms for not necessarily oriented manifolds \cite{Atiyah}.  We briefly recall this; suppose $M$ is a smooth manifold.  If $\xi: E \to M$ is a rank $n$ vector bundle over $M$, then $\xi$ is classified by a map $M \to BO(n)$.  One knows that $\pi_1(BO(n)) = \pi_0(O(n)) = \Z/2$ induced by the determinant.  The classifying map of $\xi$ thus induces a homomorphism $\pi_1(M) \to \Z/2$, i.e., an orientation character of $M$, and only depends on the determinant of $\xi$.  The orientation character therefore induces a rank $1$ local system $\Z[\det \xi]$ on $M$.  One can define a Thom class $\tau(\xi)$ in $H^{n}(Th(M),\Z[\det \xi^{\vee}])$.  The Thom isomorphism theorem can be phrased as saying that the
the cup product
\[
\tau(\xi) \cup \cdot: H^i(M,\Z[\det \xi]) \isomto H^{n+i}(Th(M),\Z).
\]
induces a {\em canonical} isomorphism between source and target.

After these preliminaries, Sections \ref{ss:zerothsheaf} and \ref{ss:rationalpointsuptostablehomotopy} are devoted to the proofs of all the results stated in the introduction.  The reader should be warned that throughout this section, unless otherwise mentioned, the base field $k$ will be assumed to have {\em characteristic unequal to $2$}; this requirement is imposed by the theory of Chow-Witt groups via its dependence on Balmer's Witt theory where the requirement that $2$ be invertible is imposed from the beginning.


\subsection{Twisted Chow-Witt groups}
\label{ss:twistedchowwittgroups}

\subsubsection*{The Gersten resolution for a strictly $\aone$-invariant sheaf}
\begin{prop}\label{prop:gerstenresolution}
Suppose $\mathscr{M}$ is a strictly $\aone$-invariant sheaf of groups and $k$ is an infinite field.  If $X$ is a smooth $k$-variety, then the restriction of $\mathscr{M}$ to the small Zariski site of $X$, which we denote $\mathscr{M}|_{X_{Zar}}$ admits the following resolution:
\[
\mathscr{M}|_{X_{Zar}} \longrightarrow \bigoplus_{x \in X^{(0)}} \mathscr{M}(\kappa(x)) \longrightarrow \bigoplus_{x \in X^{(1)}} \mathscr{M}_{-1}(\kappa_{x}) \longrightarrow \cdots \longrightarrow \bigoplus_{x \in X^{(i)}} \mathscr{M}_{-i}(\kappa_x) \longrightarrow \cdots.
\]
\end{prop}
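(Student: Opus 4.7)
The plan is to construct the resolution as the $E_1$-page of the coniveau spectral sequence, following the strategy Morel uses in \cite{MField} to build the Rost-Schmid complex associated to a strictly $\aone$-invariant sheaf. For any smooth $k$-scheme $X$, one filters the small Zariski site by codimension of support: let $F^pX$ denote the pro-open subscheme obtained by removing closed subschemes of codimension $\geq p+1$. Taking the induced filtration on a flasque/injective resolution of $\mathscr{M}|_{X_{Zar}}$ yields a coniveau spectral sequence
\[
E_1^{p,q} \;=\; \bigoplus_{x \in X^{(p)}} H^{p+q}_{x}(X_{Zar}, \mathscr{M}) \;\Longrightarrow\; H^{p+q}_{Zar}(X, \mathscr{M}).
\]
The plan is to show that the $E_1$-page is concentrated in the row $q=0$, where it equals the claimed Gersten complex, and that $E_2^{0,q} = H^q_{Zar}(X,\mathscr{M}|_X)$ for $q \geq 0$ while the row is exact in positive degrees. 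Exhibiting $\mathscr{M}|_X$ as the kernel of the first differential then produces the asserted resolution by flasque sheaves (the direct sums over points of fixed codimension are supported on flasque skyscraper-type sheaves).

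The core computational step, which is where the hypotheses enter, is the identification
\[
H^{p+q}_{x}(X_{Zar},\mathscr{M}) \;\cong\; \begin{cases} \mathscr{M}_{-p}(\kappa(x)) & q=0,\\ 0 & q \neq 0, \end{cases}
\]
for $x \in X^{(p)}$. First I would reduce to the case where $X = \Spec \O_{X,x}^{h}$ is a henselian local ring at $x$; then I would invoke Gabber's geometric presentation lemma, which for infinite $k$ produces an étale neighborhood in which the closed subscheme $\overline{\{x\}}$ looks (up to an étale extension of $\kappa(x)$) like the zero locus of $p$ coordinates, trivializing the normal bundle $N_x$. The $\aone$-homotopy purity theorem of \cite[\S 3 Theorem 2.23]{MV} then identifies the pair $(X, X \smallsetminus \overline{\{x\}})$ with the Thom space of $N_x$, and Proposition \ref{prop:basicpropertiesofthomspacesI}(ii) reduces the local cohomology to
\[
\widetilde{H}^{p+q}_{Nis}(\mathbb{P}^{1\,\wedge p} \wedge (\Spec \kappa(x))_+, \mathscr{M}),
\]
which, after unwinding the definition of $\mathscr{M}_{-p}$ as the iterated internal function sheaf $\underline{\hom}_{\bullet}(\gm^{\wedge p},\mathscr{M})$ and using strict $\aone$-invariance of $\mathscr{M}$ (so that $S^1_s \wedge \gm \simeq \pone$ really does shift cohomological degree appropriately), collapses to $\mathscr{M}_{-p}(\kappa(x))$ in degree $q=0$ and vanishes otherwise.

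Having the $E_1$-page concentrated on one row forces the spectral sequence to degenerate and makes its $E_1$-row into a resolution of $\mathscr{M}|_{X_{Zar}}$; the differentials are the alternating-sum-of-residue maps, matching the Rost-Schmid-type differentials Morel constructs in \cite[Ch. 5]{MField}. The functoriality of this construction is immediate once the identification of local cohomology groups is in hand, since the inclusion $\overline{\{x\}} \hookrightarrow X$ and the choice of trivialization of $N_x$ only affect intermediate terms and not the $E_1$-page.

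The main obstacle is the identification of local cohomology with contractions: without the assumption that $k$ is infinite, Gabber's presentation lemma can fail, the normal bundle $N_x$ need not be generically trivialized by an étale neighborhood, and one cannot directly compare $H^{*}_x(X_{Zar}, \mathscr{M})$ with the cohomology of a genuine Thom space of a trivial bundle. Everything else is a formal consequence of the standard machinery of coniveau spectral sequences together with strict $\aone$-invariance of $\mathscr{M}$; the real content is this local geometric input together with the purity theorem.
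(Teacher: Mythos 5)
Your proof follows essentially the same route as the paper's: reduce to the local cohomology computation $H^{p+q}_x(X,\mathscr{M}) \cong \mathscr{M}_{-p}(\kappa(x))$ in degree $q=0$ (and vanishing otherwise) via $\aone$-homotopy purity, trivialization of the normal bundle, and the contraction/suspension identification (the paper's Lemma \ref{lem:contractionshift}), then extract the Gersten resolution from the resulting degenerate coniveau spectral sequence. The paper simply cites CTHK (Theorem 5.1.10, Proposition 5.3.2a, Theorem 8.3.1) for the Cousin-complex formalism and the Zariski--Nisnevich comparison rather than rebuilding the coniveau spectral sequence explicitly, but that is only a difference in how much machinery is invoked by reference. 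One small point worth correcting in your discussion of where the hypotheses enter: Gabber's geometric presentation lemma is not what trivializes the normal bundle of $\overline{\{x\}}$ at its generic point --- that is elementary, achieved by merely shrinking the ambient open (which is exactly what the paper's phrase ``By shrinking $X$ and $Z$'' does). The infinite-field hypothesis enters instead through the effaceability / ``key lemma'' input in the CTHK machinery (and the unramifiedness and Zariski--Nisnevich comparison results for strictly $\aone$-invariant sheaves), which the paper's citations bundle in without comment.
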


\begin{proof}
Since $\mathscr{M}$ is strictly $\aone$-invariant, we know that $\mathscr{M}|_{X_{Zar}}$ admits a Cousin resolution, furthermore we know that all cohomology presheaves in the Zariski and Nisnevich topology agree (these statements are a consequence of \cite[Theorem 5.1.10 and Proposition 5.3.2a]{CTHK} and \cite[Theorem 8.3.1]{CTHK} respectively).  We need to show that $H^{q}_x(X,\mathscr{M}) = \mathscr{M}_{-q}(\kappa_x)$.  By definition, the group on the left hand side is $\colim H^q_{\bar{x} \cap U}(U,\mathscr{M}|_U)$.  For any smooth closed immersion $Z \hookrightarrow X$, we have $H^q_Z(X,\mathscr{M}) = H^q_{\Nis}(Th(N_{Z/X}),\mathscr{M})$ by the purity isomorphism.  By shrinking $X$ and $Z$ and using Nisnevich excision, we can assume $N_{Z/X}$ is trivial and the Lemma \ref{lem:contractionshift} shows that $H^q(Th(N_{Z/X}),\mathscr{M}) = H^{q-codim{Z/X}}(X,\mathscr{M})_{-codim{Z/X}}$.
\end{proof}

\begin{lem}[{\cite[Exercise 23.4]{MVW}}]
\label{lem:contractionshift}
If $\mathscr{M}$ is strictly $\aone$-invariant in the Zariski topology, for any smooth scheme $X$ we have a canonical isomorphism
\[
H^m_{X \times \{0\}}(X \times {\mathbb A}^m,\mathscr{M}) \isomto H^{m-n}(X,\mathscr{M})_{-n}
\]
\end{lem}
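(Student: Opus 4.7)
The plan is to recognize that, modulo Morel-Voevodsky purity, the left-hand side is just the Nisnevich cohomology of the Thom space of a trivial rank-$n$ bundle, which factors into simplicial and $\gm$-smash components whose cohomology can be computed by stripping off suspensions one at a time. Throughout I will assume the intended statement reads
\[
H^{m}_{X \times \{0\}}(X \times \mathbb{A}^{n},\mathscr{M}) \isomt H^{m-n}(X,\mathscr{M})_{-n},
\]
since the normal bundle to $X \times \{0\} \hookrightarrow X \times \mathbb{A}^{n}$ has rank $n$, and that is what controls the shift.

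First, I would invoke the Morel-Voevodsky purity theorem (used identically in the proof of Proposition \ref{prop:gerstenresolution}) to identify
\[
H^{m}_{X \times \{0\}}(X \times \mathbb{A}^{n},\mathscr{M}) \cong \mathbb{H}^{m}_{\Nis}(Th(\mathcal{O}_{X}^{\oplus n}),\mathscr{M}),
\]
and then appeal to Proposition \ref{prop:basicpropertiesofthomspacesI}(ii) to obtain the $\aone$-weak equivalence $Th(\mathcal{O}_{X}^{\oplus n}) \simeq (\pone)^{\wedge n} \wedge X_{+}$. Combined with the standard $\aone$-equivalence $\pone \simeq S^{1}_{s} \wedge \gm$ (coming from the cover of $\pone$ by two copies of $\aone$), this rewrites the target as $\mathbb{H}^{m}_{\Nis}(S^{n}_{s} \wedge \gm^{\wedge n} \wedge X_{+},\mathscr{M})$.

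Second, since $\mathscr{M}$ is strictly $\aone$-invariant, the Eilenberg-MacLane spectrum $\mathrm{H}\mathscr{M}$ is $\aone$-local and each $K(\mathscr{M},i)$ represents cohomology in the $\aone$-local homotopy category (\emph{cf.} \cite[\S 2 Proposition 1.26]{MV} as used in the proof of Proposition \ref{prop:gmsuspensionofmilnorwittgroups}). This lets me strip off the $n$ simplicial suspensions via the usual loop-suspension adjunction to obtain
\[
\mathbb{H}^{m}_{\Nis}(S^{n}_{s} \wedge \gm^{\wedge n} \wedge X_{+},\mathscr{M}) \;=\; \mathbb{H}^{m-n}_{\Nis}(\gm^{\wedge n} \wedge X_{+},\mathscr{M}).
\]

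Finally, I would peel off the $n$ copies of $\gm$ one at a time using the basic adjunction identity $H^{i}(\mathscr{Y} \wedge \gm,\mathscr{M}) = H^{i}(\mathscr{Y},\mathscr{M}_{-1})$, iterated $n$ times, arriving at $H^{m-n}(X,\mathscr{M}_{-n})$. The main point requiring care — and the only one that is not pure bookkeeping — is the justification of this last identity, namely that $\Omega_{\gm} K(\mathscr{M},i) \simeq K(\mathscr{M}_{-1},i)$ in $\hspnis$; this requires that $\mathscr{M}_{-1}$ is again strictly $\aone$-invariant and that the contraction commutes with the Eilenberg-MacLane construction. Both are consequences of Proposition \ref{prop:connectivityofgmloopspaces} applied to $\mathrm{H}\mathscr{M}[i]$, so no new input is needed.
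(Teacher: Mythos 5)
Your proof is correct, and you rightly read $\mathbb{A}^{m}$ as $\mathbb{A}^{n}$ — the index mismatch in the stated formula is a typo (the shift is governed by the rank of the normal bundle). The paper gives no proof of its own, citing \cite[Exercise 23.4]{MVW}; the natural solution there would be to iterate the Zariski/Nisnevich localization sequence for $(X\times\mathbb{A}^{1}, X\times\gm)$ together with $\aone$-invariance and the splitting of $H^{*}(X\times\gm,\mathscr{M})$. Your route through the $\aone$-local homotopy category and Thom spaces is a valid alternative, more in keeping with the machinery the paper uses elsewhere. Three small remarks. First, you do not actually need purity at the outset: for the zero section of a trivial bundle, the identification $H^{m}_{X\times\{0\}}(X\times\mathbb{A}^{n},\mathscr{M}) \cong H^{m}_{\Nis}(Th(\mathcal{O}_{X}^{\oplus n}),\mathscr{M})$ is just the long exact sequence of the pair, since $Th(\mathcal{O}_{X}^{\oplus n})$ is \emph{by definition} the cofiber of $(X\times\mathbb{A}^{n}-X\times\{0\})_{+}\to (X\times\mathbb{A}^{n})_{+}$. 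Second, the heart of the matter — that $\Omega_{\gm}K(\mathscr{M},i)\simeq K(\mathscr{M}_{-1},i)$ for strictly $\aone$-invariant $\mathscr{M}$ — is precisely Morel's \cite[Lemma 4.3.11]{MIntro}, which is cited in the paper as Proposition \ref{prop:connectivityofgmloopspaces} and invoked again in the proof of Proposition \ref{prop:gmsuspensionofmilnorwittgroups}; pointing at that result directly is cleaner than deriving the unstable statement from the spectrum-level one as you do. Third, since the lemma is stated for Zariski cohomology and you compute Nisnevich cohomology, you should close the loop by noting that the two agree for strictly $\aone$-invariant sheaves, as recalled in the proof of Proposition \ref{prop:gerstenresolution}.
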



\begin{thm}
\label{thm:milnorwittgersten}
For any smooth scheme $X$, the sheaves $\K^{MW}_n$ admit a resolution of the form
\[
\K^{MW}_n|_X \longrightarrow \bigoplus_{x \in X^{(0)}}\K^{MW}_n(\kappa_x) \longrightarrow \cdots \longrightarrow \bigoplus_{x \in X^{(i)}} \K^{MW}_{n-i}(\kappa_x) \longrightarrow \cdots;
\]
we write $C^*(X,K^{MW}_n)$ for this complex.
\end{thm}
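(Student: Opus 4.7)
The plan is to deduce this theorem directly from the two main preliminary results already established: the general Gersten-type resolution for any strictly $\aone$-invariant sheaf (Proposition \ref{prop:gerstenresolution}), and the identification of iterated contractions of Milnor--Witt K-theory sheaves (Proposition \ref{prop:contractionsofmilnorwittsheaves}). There is essentially no new argument needed; the theorem is a specialization plus an iterated application of previous material.

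First, I would observe that $\K^{MW}_n$, being by definition $\bpi_0^s(\Sigma^\infty_{\pone}\gm^{\wedge n})$, is a stable $\aone$-homotopy sheaf and hence strictly $\aone$-invariant (this is noted in Remark \ref{rem:morelscomputation}, and follows more generally from the fact that stable $\aone$-homotopy sheaves of $\pone$-suspension spectra are strictly $\aone$-invariant, as recorded in the remark after Proposition \ref{prop:stablehomotopycolimit}). Under the standing assumption that $k$ is infinite (inherited from Proposition \ref{prop:gerstenresolution}), I may therefore apply the latter proposition with $\mathscr{M} = \K^{MW}_n$ to obtain a resolution
\[
\K^{MW}_n|_{X_{Zar}} \longrightarrow \bigoplus_{x \in X^{(0)}} \K^{MW}_n(\kappa_x) \longrightarrow \cdots \longrightarrow \bigoplus_{x \in X^{(i)}} (\K^{MW}_n)_{-i}(\kappa_x) \longrightarrow \cdots.
\]

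Second, I would identify the coefficient sheaves in each column. Proposition \ref{prop:contractionsofmilnorwittsheaves} gives a canonical isomorphism $(\K^{MW}_n)_{-1} \isomto \K^{MW}_{n-1}$, and iterating this isomorphism $i$ times produces $(\K^{MW}_n)_{-i} \isomto \K^{MW}_{n-i}$. Substituting these identifications into the resolution above yields exactly the complex $C^*(X,\K^{MW}_n)$ as stated.

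There is no real obstacle here, since all the technical content has been packaged into the two cited propositions; the present theorem is really a naming convention for the specialization of the general Gersten resolution to the case $\mathscr{M} = \K^{MW}_n$. If one wished to make the statement hold for finite fields as well, the only point requiring extra care would be the underlying Gersten-type resolution of Proposition \ref{prop:gerstenresolution}, whose proof invokes \cite{CTHK} and uses the infiniteness hypothesis; but under the blanket hypotheses of this section (in particular the field being infinite, perfect, of characteristic unequal to $2$) this issue does not arise.
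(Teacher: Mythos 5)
Your proof is correct and takes exactly the approach the paper uses: the paper's proof is the one-line "This is immediate from Proposition \ref{prop:gerstenresolution} and Proposition \ref{prop:contractionsofmilnorwittsheaves}," and you have simply spelled out the two steps (strict $\aone$-invariance of $\K^{MW}_n$ feeding into the general Gersten resolution, followed by the iterated contraction identification) in more detail.
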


\begin{proof}
This is immediate from Proposition \ref{prop:gerstenresolution} and Proposition \ref{prop:contractionsofmilnorwittsheaves}.
\end{proof}

\begin{defn}
\label{defn:twistedidealgersten}
We recall from \cite[Section 9]{Fasel1} that for each regular scheme $X$, line bundle $\L$ on $X$ and integer $j$ we have a complex $C^*(X,I^j,\L)$ which in degree $k$ is
\[
C^k(X,I^j,\L) = \bigoplus_{x \in X^{(k)}} I^j(\mathcal{O}_{X,x},\L_x),
\]
where $I^j(\mathcal{O}_{X,x},\L_x)$ is the $j$-th power of the fundamental ideal of the Witt ring of modules of finite length with duality given by the line bundle $\L$ at the point $x$.
In this way we can clearly define a complex of sheaves on $X_{Nis}$, which will be denoted by $C^*(-,I^j,\L)$.
\end{defn}

\begin{rem}
The reader should be warned that Fasel uses slightly different notation, and different indexing, for these complexes.
\end{rem}

\begin{defn}
The Gersten resolution of the sheaf $\K^M_n$ of unramified Milnor $K$-theory is of the form
\[
\K^{M}_n|_X \longrightarrow \bigoplus_{x \in X^{(0)}}\K^{M}_n(\kappa_x) \longrightarrow \cdots \longrightarrow \bigoplus_{x \in X^{(i)}} \K^{M}_{n-i}(\kappa_x) \longrightarrow \cdots,
\]
and similarly for the sheaves $\K^M_n/2$. In light of Proposition \ref{prop:gerstenresolution} the existence of this resolution follows from the fact (see \cite{MIntro}) that $\K^M_*$ is a homotopy module.  We write $C^*(-,K^M_n)$ and $C^*(-,K^M_n/2)$ for these Gersten complexes of sheaves.
\end{defn}


As described in \cite[Section 10]{Fasel1}, there is a map of complexes of sheaves from $C^*(-,I^j,\L)$ to $C^*(-,I^j/I^{j+1},\L)$ and also a map from $C^*(X,K^M_j/2) \to C^*(X,I^j/I^{j+1},\L)$.  The aforementioned map is an isomorphism by the Orlov-Vishik-Voevodsky theorem proving Milnor's conjecture on quadratic forms; see \cite[Theorem 4.1]{OrViVo}, or \cite[Theorem 1.1 and \S 2.3]{MMilnor} for a discussion in the spirit of this paper.  As a consequence, we can compute the cohomology sheaves of the complexes $C^*(-,I^j,\L)$.

\begin{prop}\label{prop:twistedidealgersten}
The complex of sheaves $C^*(-,I^j,\L)$ has the following cohomology:
\begin{enumerate}
\item $H^i C^*(-,I^j,\L) = 0$ for $i < 0$,
\item $H^0 C^*(-,I^j,\L) = \I^j(\L)^{nr}$ (the unramified sheaf associated to $I^j$) for $i = 0$, and
\item $H^i C^*(-,I^j,\L) = 0$ for $i > 0$.
\end{enumerate}
\end{prop}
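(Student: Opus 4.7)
The vanishing in (1) is immediate since $C^*(-,I^j,\L)$ is concentrated in cohomological degrees $\geq 0$ by construction, so there is no content to check for $i<0$. For (2), the unramified sheaf $\I^j(\L)^{nr}$ is defined (or can be characterized) precisely as the kernel of the first residue differential $C^0(-,I^j,\L) \to C^1(-,I^j,\L)$, so this identification is essentially tautological once one sets up the definitions carefully. The main content is therefore statement (3), the acyclicity in positive degrees.

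My strategy for (3) is to identify $C^*(-,I^j,\L)$ with the Gersten-Cousin resolution of a strictly $\aone$-invariant sheaf and then invoke Proposition \ref{prop:gerstenresolution} (compare Theorem \ref{thm:milnorwittgersten}). The first step is to establish that $\I^j(\L)^{nr}$ is strictly $\aone$-invariant. One clean way is via the standard Milnor-Witt pullback square relating $\K^{MW}_j$, $\K^M_j$, $I^j$, and $\K^M_j/2$: given that $\K^{MW}_j$ is strictly $\aone$-invariant (Definition \ref{defn:stableaonehomotopygroupsofspheres} together with the standing results about stable $\aone$-homotopy sheaves), and similarly for $\K^M_j$ and $\K^M_j/2$, strict $\aone$-invariance of $I^j$ (and its $\L$-twisted variant) follows from the fact that the category of strictly $\aone$-invariant sheaves is closed under kernels of surjections.

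The second step is to compare the abstract Gersten complex supplied by Proposition \ref{prop:gerstenresolution} with the concrete complex $C^*(-,I^j,\L)$ of Definition \ref{defn:twistedidealgersten}. On the level of terms, one needs the contraction identification $(I^j(\L))_{-i} \cong I^{j-i}(\L)$ (up to the usual twist that appears when passing across a codimension $i$ subscheme), which is analogous to the identification $(\K^{MW}_n)_{-1}\cong \K^{MW}_{n-1}$ of Proposition \ref{prop:contractionsofmilnorwittsheaves}. On the level of differentials, one must check that the residues appearing in the Cousin complex coincide with the explicit residue maps for the twisted fundamental ideals used to define $C^*(-,I^j,\L)$; both of these identifications are carried out in Fasel's work \cite{Fasel1}. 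Granting the comparison, acyclicity of the Gersten resolution immediately yields (3).

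The main obstacle will be the bookkeeping around the twist by $\L$ and the careful comparison of residue maps. The abstract Cousin machinery produces a complex whose differentials are defined via purity isomorphisms and boundary maps in localization sequences, while $C^*(-,I^j,\L)$ is defined directly using the Balmer-Witt-style second residues for regular local rings with line bundle duality. Matching the two in a way that respects the line bundle twist (and in particular the sign conventions that enter when one chooses trivializations of the relevant determinant line bundles along flags of regular subschemes) is the delicate point; once this is in place, everything follows formally from the $\aone$-invariance input and Proposition \ref{prop:gerstenresolution}.
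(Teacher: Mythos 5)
Your strategy for part (3) is genuinely different from the paper's, and it runs into a real gap at the comparison step. The paper does not try to identify $C^*(-,I^j,\L)$ with the abstract Cousin/Gersten complex of a strictly $\aone$-invariant sheaf at all. Instead it reduces to the case of trivial $\L$ (the statement is local), observes that for $j \leq 0$ the complex $C^*(-,I^j)$ is by definition the Balmer--Walter Gersten--Witt complex, hence exact by \cite[Theorem 7.2]{BalmerWalter}, and then inducts on $j$ using the short exact sequence of complexes
\[
0\longrightarrow C^*(-,I^j,\L) \longrightarrow C^*(-,I^{j-1},\L) \longrightarrow C^*(-,I^{j-1}/I^j,\L) \longrightarrow 0
\]
from \cite[D\'efinition 9.2.10]{Fasel1}: the quotient complex is identified with the Gersten complex of $\K^M_{j-1}/2$ via Orlov--Vishik--Voevodsky, which is known to be exact, and the middle term is exact by the inductive hypothesis, so the subcomplex is exact too. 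This argument stays entirely inside the concrete Balmer--Witt/Fasel complexes and never needs to match their differentials against the abstract purity-based boundary maps.

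The gap in your proposal is precisely the step you flag as ``delicate'' and then defer: matching Fasel's second-residue differentials, with their line-bundle twists and sign conventions, against the Cousin differentials coming from Proposition \ref{prop:gerstenresolution}. Contrary to what you write, this comparison is not carried out in \cite{Fasel1} --- Fasel's development is internal to the concrete complexes and does not pass through Morel's abstract unramified/Cousin formalism. There is also a subtle bookkeeping mismatch: the terms of the Cousin complex in Proposition \ref{prop:gerstenresolution} are $\mathscr{M}_{-i}(\kappa_x)$, untwisted, because the proof locally trivializes normal bundles, whereas Fasel's $C^i(X,I^j,\L)$ carries explicit line-bundle twists at each codimension-$i$ point; reconciling these requires carrying Fasel's twists through the abstract construction rather than simply quoting $(\K^{MW}_n)_{-1}\cong\K^{MW}_{n-1}$. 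Finally, you should be careful about logical ordering: within this paper, the identification of the abstract $\K^{MW}_j$ with the concrete pullback complex is Theorem \ref{thm:morelfaselcomparison}, whose proof \emph{uses} the present Proposition; so any version of your argument must establish the pullback square for $I^j$ at the sheaf level from Morel's results alone, independently of the Gersten acyclicity you are trying to prove. None of this is impossible, but it is precisely the work the paper's short inductive argument is designed to avoid.
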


\begin{proof}
The first two assertions are immediate from the definition. Since we are dealing with a local problem, we may assume that $\L$ is trivial. First consider the case where $j \leq 0$. In this case the complex in question is, by definition, simply the Gersten-Witt complex of Balmer-Walter (see \cite[Theorem 7.2]{BalmerWalter}), which is a resolution. We proceed by induction on $j$, employing the short exact sequence of complexes (see \cite[D\'efinition 9.2.10]{Fasel1})
\[
0\longrightarrow C^*(-,I^j,\L) \longrightarrow C^*(-,I^{j-1},\L) \longrightarrow C^*(-,I^{j-1}/I^j,\L) \to 0.
\]
It follows from the Orlov-Vishik-Voevodsky theorem (references just prior to the statement of the proposition) that the third complex is isomorphic to the Gersten complex of Milnor $K$-theory $\K^M_{j-1}/2$, and hence a resolution; and the middle complex is a resolution by induction. Therefore the first complex is a resolution as well.
\end{proof}

The next result ties together all of the definitions made so far.  For context, recall that we defined Milnor-Witt K-theory sheaves in terms of zeroth stable $\aone$-homotopy sheaves of spectra (Definition \ref{defn:stableaonehomotopygroupsofspheres}).  This definition has the benefit of explaining why the sheaves $\K^{MW}_*$ are strictly $\aone$-invariant and give rise to a homotopy module (see Definition \ref{defn:homotopymodule}).  We observed in Theorem \ref{thm:milnorwittgersten} that this structure is sufficient to give a Gersten resolution for $\K^{MW}_n$.  To connect this {\em a priori} abstract description to the discussion of Fasel that we have reviewed above requires Morel's explicit description of the sheaves $\K^{MW}_n$.

\begin{thm}[Morel]
\label{thm:morelfaselcomparison}
Assume $k$ is an infinite perfect field of characteristic unequal to $2$.  For any smooth scheme $X$, there is a (functorial in $X$) quasi-isomorphism of complexes
\[
C^*(-,K_j^{MW}) = C^*(-,K_j^M)\times_{C^*(-,I^j/I^{j+1},\O_X)} C^*(-,I^j,\O_X).
\]
\end{thm}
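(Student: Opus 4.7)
The plan is to reduce the statement to a term-by-term identification of the Gersten complexes, using Morel's computation of Milnor--Witt K-theory of a field as a fiber product of Milnor K-theory and the fundamental ideal, together with the Orlov--Vishik--Voevodsky theorem that has already been invoked to compute the cohomology of $C^*(-,I^j,\O_X)$ in Proposition \ref{prop:twistedidealgersten}.

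First I would recall Morel's explicit description of Milnor--Witt K-theory of a field $F$ (of characteristic not $2$): there is a Cartesian square
\[
\xymatrix{
K^{MW}_j(F) \ar[r]\ar[d] & K^M_j(F) \ar[d] \\
I^j(F) \ar[r] & (I^j/I^{j+1})(F),
}
\]
where the right vertical map is reduction mod $2$ composed with the Orlov--Vishik--Voevodsky identification $K^M_j/2 \cong I^j/I^{j+1}$, and the bottom horizontal map is the canonical projection. This is essentially the defining presentation of $K^{MW}_*(F)$ (see \cite[\S 2.1-2]{MField}), and via \textup{Remark \ref{rem:morelscomputation}} it agrees with the homotopical definition used in Definition \ref{defn:stableaonehomotopygroupsofspheres}.

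Second, I would apply this field-level identification term by term along the Gersten complex. Both sides of the asserted equality are, by construction, direct sums indexed by points $x \in X^{(k)}$ of contributions coming from the residue field $\kappa_x$ (with values in $K^{MW}_{j-k}$, $K^M_{j-k}$, $I^{j-k}$ and $I^{j-k}/I^{j-k+1}$ respectively). Thus in each degree $k$ the map
\[
C^k(-,K^{MW}_j) \longrightarrow C^k(-,K^M_j)\times_{C^k(-,I^j/I^{j+1},\O_X)} C^k(-,I^j,\O_X)
\]
is an isomorphism of sheaves: it is just the direct sum over $x \in X^{(k)}$ of Morel's field-level Cartesian square applied to $\kappa_x$ (with Witt-theoretic duality trivialized locally, hence the twist by $\O_X$).

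The hard part, and the heart of what is really being asserted, is the compatibility of the differentials. One has to verify that the residue maps $\partial^{MW}\colon K^{MW}_{j-k}(\kappa_x) \to K^{MW}_{j-k-1}(\kappa_y)$ defined by Morel (coming from contractions, cf.\ Proposition \ref{prop:contractionsofmilnorwittsheaves}) correspond under the above identifications to the pair consisting of the classical Milnor residue and the Witt-theoretic residue used by Fasel to build $C^*(-,I^j,\L)$ (cf.\ Definition \ref{defn:twistedidealgersten}); this is precisely the content of Morel's construction of Milnor--Witt residues, which are designed so that the projections to Milnor K-theory and to powers of the fundamental ideal intertwine the respective residue maps, and it amounts to tracking the effect of contraction with a uniformizer through the pullback square. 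Granting this compatibility, the degree-wise isomorphisms assemble into an isomorphism (hence a fortiori quasi-isomorphism) of complexes of Nisnevich sheaves, which is evidently functorial in $X$. The perfectness and infiniteness of $k$, together with the hypothesis $\operatorname{char} k \neq 2$, are needed to invoke Proposition \ref{prop:gerstenresolution} (and hence Theorem \ref{thm:milnorwittgersten}) on the left and the Orlov--Vishik--Voevodsky theorem on the right.
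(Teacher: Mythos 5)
Your proposal takes a genuinely different and, as written, incomplete route. You attempt to construct a term-by-term \emph{isomorphism} of complexes, which (as you yourself note) requires identifying the differentials on both sides. The paper instead argues at the level of cohomology \emph{sheaves}, sidestepping the differential comparison entirely: it invokes Morel's theorem (\cite[Theorem 5.3]{MPUissance}) that the sections of $H^0$ of the fiber-product complex over fields are Milnor--Witt K-theory of fields, observes via \cite[Theorem 5.39, Remark 5.41]{MField} that these agree with the homotopically defined $\K^{MW}_j$, constructs a morphism of sheaves $\K^{MW}_j \to H^0(\text{fiber product})$, and then uses strict $\aone$-invariance of both sides to promote this to an isomorphism. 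Both complexes are then resolutions of the same sheaf and hence quasi-isomorphic, which is all the theorem asserts.

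The step you defer under ``Granting this compatibility\ldots'' is the genuine gap, and it is not a formality that can be waved away by saying ``Morel designed the residues this way.'' On the left, the complex $C^*(-,K^{MW}_j)$ of Theorem~\ref{thm:milnorwittgersten} has differentials produced by the abstract Cousin/coniveau machinery of Proposition~\ref{prop:gerstenresolution}: they are boundary maps coming from local cohomology and purity, not explicit residue formulas. On the right, the differentials in $C^*(-,K^M_j)\times_{C^*(-,I^j/I^{j+1},\O_X)} C^*(-,I^j,\O_X)$ are Fasel's explicit second-residue maps for Milnor K-theory and for the Gersten--Witt complex of Balmer--Walter. Identifying the two requires comparing the Cousin complex with the Rost--Schmid-style complex built from explicit residues --- a nontrivial theorem of Morel in its own right --- and then a further comparison of Morel's residues with Fasel's. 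You neither prove this nor cite a reference that does. The paper's design avoids precisely this: by comparing only the $H^0$ sheaves and appealing to Theorem~\ref{thm:twistedmilnorwittresolution} and Proposition~\ref{prop:gerstenresolution} for acyclicity, it never has to identify differentials degree by degree. If you want to salvage your approach, you would need to explicitly invoke (and verify applicability of) Morel's identification of the Cousin complex of a strictly $\aone$-invariant sheaf with its Rost--Schmid complex, plus a comparison of Morel's and Fasel's residue conventions, including the twist bookkeeping at codimension-$k$ points that you currently brush aside with ``Witt-theoretic duality trivialized locally.''
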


\begin{proof}
Consider the zeroth cohomology sheaf $H^0(C^*(-,K_j^M)\times_{C^*(-,I^j/I^{j+1},\O_X)} C^*(-,I^j,\O_X))$.  In \cite[Theorem 5.3]{MPUissance}, Morel gives an explicit description of the sections of the aforementioned cohomology sheaf over finitely generated extensions $L/k$ in terms of what are called Milnor-Witt K-theory of fields; this implicitly uses the Orlov-Vishik-Voevodsky theorem proving Milnor's conjecture on quadratic forms.  This terminology is justified by \cite[Theorem 5.39 and Remark 5.41]{MField}, which implicitly uses the assumption that $k$ is perfect, and asserts that these Milnor-Witt K-theory of fields are precisely the sections of what we called Milnor-Witt K-theory sheaves over fields (recall Definition \ref{defn:stableaonehomotopygroupsofspheres}).

Moreover, there is a morphism of sheaves from $\K^{MW}_j$ to the zeroth cohomology sheaf above; see \cite[\S 2.2]{MMilnor} for an explanation of this construction.  Since the zeroth cohomology sheaf considered above is strictly $\aone$-invariant by its very construction, the discussion of the previous paragraph shows that the aforementioned morphism is an isomorphism.  Thus, both complexes are (functorially in $X$) resolutions of the sheaf $\K^{MW}_j$ and are therefore quasi-isomorphic.
\end{proof}


Again following Fasel, we define (with slightly different notation and indexing):

\begin{defn}\label{defn:chowwittcomplex}
The $j$-th Chow-Witt complex of $X$ with twist $\L$ is the pull-back complex of sheaves
\[
C^*(-,K_j^{MW},\L) := C^*(-,K_j^M)\times_{C^*(-,I^j/I^{j+1},\L)} C^*(-,I^j,\L).
\]
\end{defn}


\begin{thm}\label{thm:twistedmilnorwittresolution}
The only non-vanishing cohomology sheaf of the complex of sheaves $C^*(-,K_j^{MW},\L)$ occurs in degree zero.
\end{thm}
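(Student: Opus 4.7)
The plan mirrors the proof of Theorem \ref{thm:morelfaselcomparison}, but carried out entirely within the twisted setting. The termwise surjective reduction map $C^*(-, I^j, \L) \to C^*(-, I^j/I^{j+1}, \L)$ exhibits the pullback $C^*(-, K_j^{MW}, \L)$ as the kernel of a surjection, giving a short exact sequence of complexes of Nisnevich sheaves
\[
0 \longrightarrow C^*(-, K_j^{MW}, \L) \longrightarrow C^*(-, K_j^M) \oplus C^*(-, I^j, \L) \longrightarrow C^*(-, I^j/I^{j+1}, \L) \longrightarrow 0,
\]
where the third map is the difference of the two structure maps of the pullback.

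The associated long exact sequence in cohomology sheaves reduces the theorem to the vanishing of the higher cohomology sheaves of each of the three complexes on the right. The complex $C^*(-, K_j^M)$ is the Gersten resolution of $\K^M_j$, and hence has vanishing higher cohomology (Proposition \ref{prop:gerstenresolution} applied to the strictly $\aone$-invariant sheaf $\K^M_j$, or equivalently Kato's theorem). The vanishing for $C^*(-, I^j, \L)$ is exactly Proposition \ref{prop:twistedidealgersten}(3). Finally, the Orlov-Vishik-Voevodsky theorem (invoked just before Proposition \ref{prop:twistedidealgersten}) identifies $C^*(-, I^j/I^{j+1}, \L)$ with the Gersten complex of $\K^M_j/2$, which is once again a resolution. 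These three vanishings immediately yield $H^i C^*(-, K_j^{MW}, \L) = 0$ for $i \geq 2$ and for $i < 0$.

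The only remaining case is $i = 1$, which requires surjectivity of the induced map
\[
H^0 C^*(-, K_j^M) \oplus H^0 C^*(-, I^j, \L) \longrightarrow H^0 C^*(-, I^j/I^{j+1}, \L).
\]
Surjectivity already holds on the second summand, by the long exact sequence associated with the auxiliary short exact sequence
\[
0 \longrightarrow C^*(-, I^{j+1}, \L) \longrightarrow C^*(-, I^j, \L) \longrightarrow C^*(-, I^j/I^{j+1}, \L) \longrightarrow 0
\]
arising from the fundamental filtration of the Witt ring, together with the vanishing $H^1 C^*(-, I^{j+1}, \L) = 0$ from Proposition \ref{prop:twistedidealgersten}(3). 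The one genuinely technical point in the argument is verifying that the relevant maps of twisted Gersten-Witt complexes are truly termwise surjective; this reduces to surjectivity of the quotient map $I^j \to I^j/I^{j+1}$ on local Witt groups, which is unaffected by a duality twist by a line bundle. With this in hand the argument is entirely formal.
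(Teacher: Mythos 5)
Your proof is correct and supplies precisely the Mayer--Vietoris argument the paper leaves implicit behind its one-line proof (``This is a straightforward consequence of Proposition \ref{prop:twistedidealgersten}''). The short exact sequence of complexes arising from the pullback, the vanishings from Proposition \ref{prop:twistedidealgersten} and the Gersten resolutions of $\K^M_j$ and $\K^M_j/2$, and the $H^1$-surjectivity obtained from the ideal-filtration sequence $0\to C^*(-,I^{j+1},\L)\to C^*(-,I^j,\L)\to C^*(-,I^j/I^{j+1},\L)\to 0$ are exactly the intended ingredients.
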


\begin{proof}
This is a straightforward consequence of Proposition \ref{prop:twistedidealgersten}.
\end{proof}

\begin{defn}\label{defn:twistedmilnorwitt}
Given a regular scheme $X$ and a line bundle $\L$ on $X$, we define the $\L$-twisted Milnor-Witt $K$-theory sheaves on $X$ by means of the formula:
\[
\K^{MW}_j(\L) := H^0 C^*(-,K_j^{MW},\L).
\]
\end{defn}


By Theorem \ref{thm:twistedmilnorwittresolution}, $C^*(-,K^{MW}_j,\L)$ is a flasque resolution of the sheaf $\K^{MW}_j(\L)$, so that we deduce the following result.

\begin{cor}\label{cor:milnorwittcohomology}
For any closed subscheme $Y\subseteq X$ with open complement $U = X - Y$, we have an isomorphism
\[
H^*_Y(X,\K^{MW}_j(\L)) = H^*(\mathrm{ker}\{ C^*(X,K^{MW}_j,\L) \longrightarrow C^*(U,K^{MW}_j,\L)\}).
\]
\end{cor}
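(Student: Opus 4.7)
The plan is to deduce this from the fact that $C^*(-,K^{MW}_j,\L)$ is a flasque resolution of $\K^{MW}_j(\L)$, and then apply the standard description of local (supported) cohomology in terms of the kernel of restriction from a flasque resolution.

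First, I would observe that by Theorem \ref{thm:twistedmilnorwittresolution} together with Definition \ref{defn:twistedmilnorwitt}, the complex of sheaves $C^*(-,K^{MW}_j,\L)$ is a resolution of $\K^{MW}_j(\L)$. Next, I would verify that this resolution is by flasque sheaves: each term $C^k(-,K^{MW}_j,\L)$ is, by definition (pulled back from the analogous terms $C^k(-,K^M_j)$ and $C^k(-,I^j,\L)$), a direct sum of skyscraper-type sheaves supported at points of a fixed codimension. Concretely, for an open $V \subseteq X$, the restriction $C^k(V,K^{MW}_j,\L)$ is the sum over points $x \in V^{(k)}$ of the stalk contributions, so the restriction map from $C^k(X,K^{MW}_j,\L) \to C^k(V,K^{MW}_j,\L)$ is surjective (it is simply the projection onto those summands indexed by $x \in V^{(k)}$). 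Hence each $C^k(-,K^{MW}_j,\L)$ is flasque.

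Second, I would invoke the standard homological computation of cohomology with supports: given any flasque resolution $\mathcal{F} \to \mathcal{I}^\bullet$ of a sheaf on $X$, the cohomology $H^*_Y(X,\mathcal{F})$ is canonically the cohomology of the complex of global sections supported on $Y$, which fits in a short exact sequence of complexes
\[
0 \longrightarrow \Gamma_Y(X,\mathcal{I}^\bullet) \longrightarrow \mathcal{I}^\bullet(X) \longrightarrow \mathcal{I}^\bullet(U) \longrightarrow 0,
\]
the right-hand surjectivity being exactly the flasqueness established in the previous step. Specializing $\mathcal{I}^\bullet = C^*(-,K^{MW}_j,\L)$ and $\mathcal{F} = \K^{MW}_j(\L)$ identifies $\Gamma_Y(X,\mathcal{I}^\bullet)$ with the kernel of $C^*(X,K^{MW}_j,\L) \to C^*(U,K^{MW}_j,\L)$, and the claimed identification follows.

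The only real point requiring care is the flasqueness verification: strictly speaking one needs that the restriction $C^k(-,K^{MW}_j,\L)(X) \to C^k(-,K^{MW}_j,\L)(V)$ is surjective for every open $V$, which reduces via the fiber-product definition of $C^*(-,K^{MW}_j,\L)$ (Definition \ref{defn:chowwittcomplex}) to the corresponding surjectivities for the Gersten complexes of $K^M_j$, $I^j/I^{j+1}$ and $I^j$ twisted by $\L$. Each of these is a direct sum over points $x$ of contributions depending only on the local data at $x$, so the restriction maps are literally projections onto the summands indexed by points lying in $V$, hence surjective. This is the main (and essentially only) technical step; everything else is a formal application of the flasque resolution formalism.
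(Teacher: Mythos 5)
Your proof is correct and takes essentially the same route as the paper: the paper's argument is simply the sentence ``By Theorem \ref{thm:twistedmilnorwittresolution}, $C^*(-,K^{MW}_j,\L)$ is a flasque resolution of the sheaf $\K^{MW}_j(\L)$, so that we deduce the following result.'' You have supplied the flasqueness verification (that each term is a direct sum of point-supported contributions, so restriction to opens is a projection, hence surjective) which the paper leaves implicit; the rest is the standard flasque-resolution computation of cohomology with supports, exactly as you describe.
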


\begin{defn}
\label{defn:quadraticzerocycles}
For a smooth proper $k$-variety $X$ of dimension $d$, we define the Chow-Witt group of quadratic zero cycles as
\[
\CH_0(X) := H^{d}_{\Nis}(X,\K^{MW}_d(\omega_X)).
\]
Note that, by the Gersten resolution, $\CH_0(X)$ is a quotient of $\bigoplus_{x \in X^{(d)}} \K^{MW}_0(\kappa_x/k)$.
\end{defn}

\begin{rem}\label{rem:ChowWittgroups}
Theorem \ref{thm:twistedmilnorwittresolution} immediately implies that there is a canonical isomorphism
\[
H^p_{\Nis}(X,\K^{MW}_p(\L)) = \CH^p(X,\L),
\]
where the right-hand side is the twisted Chow-Witt group defined by Fasel in \cite[D\'efinition 10.2.16]{Fasel1}.  In particular, for a smooth proper variety $X$, we have a canonical identification $\CH_0(X) = \CH^d(X,\omega_X)$, and this explains our choice of notation.
\end{rem}




\subsection{The Thom isomorphism theorem}
\label{ss:thomisomorphisms}
\subsubsection*{Recollection: functoriality.}
Fasel defines (in \cite[Section 10.4]{Fasel1} and \cite[Sections 5 through 7]{Fasel2}) pullbacks, products and proper pushforwards for the complexes $C^*(-,K^{MW}_j, \L)$ as follows:

\begin{thm}
\label{thm:milnorwittresfunctoriality}
Let $X$, $Y$ and $Z$ be regular schemes and suppose $f:X\to Y$ is a morphism and $g:X\to Z$ is proper of relative dimension $n$. Then:
\begin{enumerate}

\item Assume $f$ is flat. For any line bundle $\L$ on $Y$, we have pullback homomorphisms
\[
f^*: C^*(Y,K^{MW}_j,\L) \longrightarrow C^*(X,K^{MW}_j, f^* \L)
\]
which are functorial for composition of flat morphisms.

\item For $f$ arbitrary, for all $i$, and for any line bundle $\L$ on $Y$, we have functorial Gysin pullbacks
\[
f^!: H^i(C^*(Y,K^{MW}_j,\L)) \longrightarrow H^i(C^*(X,K^{MW}_j, f^* \L)).
\]
If $f$ is flat, then $f^!$ coincides with the map induced on cohomology by the flat pullback $f^*$.

\item For any line bundle $\L$ on $Z$ we have pushforward homomorphisms
\[
g_*: C^*(X,K^{MW}_j,g^* \L\otimes\omega_X) \longrightarrow C^{*-n}(Z,K^{MW}_{j-n},\L\otimes\omega_Y)
\]
which are functorial for composition of proper morphisms.

\item Let $\L$ and $\L^\prime$ be line bundles on $X$. There is a functorial (with respect to the Gysin pullbacks) graded associative product
\[
H^*(C^*(X,K^{MW}_j, \L))\otimes H^*(C^*(X,K^{MW}_k, \L^\prime))\longrightarrow H^*(C^*(X,K^{MW}_{j+k}, \L\otimes \L^\prime))
\]
defined from an exterior product via pullback along the diagonal.
\end{enumerate}
\end{thm}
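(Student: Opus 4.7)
The plan is to build each of the four functorial operations componentwise on the two Gersten-type factors $C^*(-,K_j^M)$ and $C^*(-,I^j,\L)$, and then descend to $C^*(-,K_j^{MW},\L)$ by the universal property of the fiber product used in Definition \ref{defn:chowwittcomplex}. At every stage the key compatibility to verify is that, modulo the natural projections, the two constructions agree in the common quotient $C^*(-,I^j/I^{j+1},\L)$, which by the Orlov-Vishik-Voevodsky theorem is the mod-$2$ Milnor K-theory Gersten complex.

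For part (1), flat pullback on the Milnor complex is the tame-symbol-based construction of Rost, while on $C^*(-,I^j,\L)$ it is given by Fasel (extending Balmer-Walter's flat pullback on Gersten-Witt complexes); both respect the filtration by powers of the fundamental ideal, so compatibility modulo $2$ is automatic. Functoriality in compositions of flat morphisms is inherited factor by factor. For part (2), the Gysin pullback is constructed by the usual deformation-to-the-normal-cone argument: factor $f$ as the regular closed immersion of the graph followed by a smooth projection, reduce the closed immersion case via the deformation space $D(X,Z) \to \aone$ and $\aone$-invariance to the zero-section inclusion of the normal bundle, and conclude using the Thom isomorphism on each of the two factors (the Milnor K-theory version is Rost's, the $I^j$ version is in \cite[Chapter 10]{Fasel1}). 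Agreement modulo $2$ follows from the corresponding compatibility of the Thom isomorphisms, and the independence from the chosen factorization is standard.

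For part (3), I would construct proper pushforward levelwise using the transfer maps on residue groups: the Bass-Tate-Kato norm on Milnor K-theory and Scharlau's trace on Witt groups (together with its extension to powers of the fundamental ideal). The twist by $\omega_X$ is forced by the definition of Scharlau's trace, which is canonical only after fixing a trivialization of the relative dualizing module, and compatibility of Kato's residue with the Milnor-Scharlau second residue ensures agreement modulo $2$. Functoriality for composition reduces to functoriality on each factor, which is part of Fasel's construction. For part (4), I would define an exterior product on $C^*(-,K^M_j,\L) \boxtimes C^*(-,K^M_k,\L')$ via the Loday product on Milnor K-theory and on $C^*(-,I^j,\L) \boxtimes C^*(-,I^k,\L')$ via the Gille-Nenashev product on Gersten-Witt complexes, check their compatibility modulo $2$, and then define the internal product by Gysin pullback along the diagonal using part (2).

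The truly routine part is that each separate factor (the Milnor complex and the twisted $I^j$ complex) carries all four structures individually, with the required functorialities. The main obstacle, and the reason this is a nontrivial recollection rather than a purely formal exercise, is verifying all the modulo-$2$ compatibilities under the Orlov-Vishik-Voevodsky identification so that the constructions on the two factors actually glue through the fiber-product definition; these compatibilities have been checked systematically in \cite{Fasel1, Fasel2}, which is what ultimately allows the theorem to be assembled as stated.
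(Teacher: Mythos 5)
The paper does not prove this theorem: it is stated as a recollection, with the constructions attributed to \cite[Section 10.4]{Fasel1} and \cite[Sections 5--7]{Fasel2}. Your outline---building each operation separately on the Milnor factor and on the twisted fundamental-ideal factor, checking compatibility modulo $2$ via the Orlov--Vishik--Voevodsky identification, and descending through the fiber product of Definition~\ref{defn:chowwittcomplex}---is a faithful description of the architecture underlying Fasel's construction, so your proposal is consistent with the paper's (citation-level) treatment. One precision worth noting: for the Gysin pullback in part (2), the ingredient actually used in the deformation-to-the-normal-cone step is purity at the level of complexes (the d\'evissage quasi-isomorphism of Proposition~\ref{prop:milnorwittlocalization}), which is logically prior to, and independent of, the cup-product Thom isomorphism Theorem~\ref{thm:thomiso} proved later in \S\ref{ss:thomisomorphisms}; phrasing the reduction as ``conclude using the Thom isomorphism'' risks suggesting a circularity that is not there once one distinguishes these two statements.
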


\subsubsection*{Interlude: the projection formula}
We will need a projection formula for the cohomology of the Chow-Witt complexes.  In some of the results below we will, for notational convenience, suppress the line bundle twists that are implicit; to recover the twists relevant to a given diagram, we refer the reader to Theorem \ref{thm:milnorwittresfunctoriality}.  The crucial ingredient in the proof is a proper base change theorem; we thank Jean Fasel for showing us the proof of the following result.

\begin{prop}
\label{prop:properbasechange}
If
\[
\xymatrix{
{X^\prime} \ar[r]^v\ar[d]_g & {X} \ar[d]^f \\
 {Y^\prime} \ar[r]_u & {Y}
}
\]
is a cartesian square of smooth schemes with $f$ proper, then $u^! f_* = g_* v^!$.
\end{prop}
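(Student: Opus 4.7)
The plan is to reduce to two elementary cases and leverage existing functoriality from Theorem \ref{thm:milnorwittresfunctoriality}. First I would factor $u$ through its graph as $u = p_2 \circ \Gamma_u$, where $\Gamma_u \colon Y' \hookrightarrow Y' \times Y$ is the graph morphism (a regular closed immersion because $Y$ is smooth) and $p_2 \colon Y' \times Y \to Y$ is the second projection (smooth, hence flat). Base-changing $f$ along each of these factors successively produces two stacked cartesian squares; because Gysin pullbacks and proper pushforwards each compose functorially, the identity $u^! f_* = g_* v^!$ for $u$ follows once it is established for $p_2$ and for $\Gamma_u$ separately. Thus the problem splits into a flat case and a regular-embedding case.

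In the flat case, $v$ is likewise flat (flatness is stable under base change), the Gysin pullbacks $u^!$ and $v^!$ coincide with the chain-level flat pullbacks $u^*$ and $v^*$, and the identity is to be checked on the chain complexes $C^*(-,K^{MW}_j,\L)$. Unwinding the definitions, it suffices to verify that on a generator supported at a point $x\in X$, the two sides agree after identifying the points of $X'$ lying over a given point $y'\in Y'$ with the points of $X$ lying over $u(y')$; this is exactly the content of the cartesianness of the square together with the compatibility of the transfer (trace) maps on $K^{MW}$ with flat pullback, which is built into Fasel's definition of $f_*$ and which parallels the classical Chow-theoretic base change.

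For the regular-embedding case, $u$ is a regular closed immersion between smooth schemes with normal bundle $N_u$, and the same holds for $v$ with normal bundle $g^*N_u$. I would invoke Fasel's deformation-to-the-normal-cone construction applied simultaneously to $u$ and $v$: this produces a cartesian square of deformation spaces over $\aone$ whose restriction to $\gm$ is the original square and whose restriction to $0$ is the analogous square for the zero sections of $N_u$ and $g^*N_u$. Since the Gysin map $u^!$ is defined as specialization in this family, and proper pushforward commutes with pullback to the general and special fibers of $\aone$ by the flat case handled above, the base-change identity reduces to its analogue for the zero section of a vector bundle. That residual statement is verified directly, using $\aone$-invariance of the twisted Milnor–Witt cohomology groups and the explicit decomposition of the zero-section Gysin map via the Thom isomorphism.

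The main obstacle is the bookkeeping of line bundle twists throughout the deformation argument: at each stage the relevant complex $C^*(-,K^{MW}_j,\L)$ carries a different twist (built from pullbacks of relative canonical bundles and determinants of normal bundles), and one must verify that the canonical isomorphisms relating these twists intertwine $u^!$ with $v^!$ and $f_*$ with $g_*$ in the precise way prescribed by Theorem \ref{thm:milnorwittresfunctoriality}. Once this twist bookkeeping is settled, the Milnor K-theory component of base change is classical and the Witt-theoretic component follows from Balmer–Gille–Walter functoriality for Witt groups of triangulated categories with duality; gluing them along the comparison of Definition \ref{defn:chowwittcomplex} then yields the result.
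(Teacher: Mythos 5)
Your proposal follows the same strategy the paper uses (modeled on Rost's Proposition 12.5): factor $u$ through its graph as a closed immersion followed by a flat projection, use functoriality of Gysin pullbacks to split into a flat case and a closed-immersion case, then handle the closed-immersion case by deformation to the normal cone combined with the flat case. The only real difference is that the paper disposes of both sub-cases by direct citation to Fasel (Corollaire 12.3.7 for the flat case, Definition 5.5 plus 12.3.7 again for the immersion case) rather than re-deriving them, and it does not flag the twist bookkeeping as a separate concern since Fasel's statements already carry the twists.
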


\begin{proof}
The proof follows the same pattern as \cite[Proposition 12.5]{RostChow}.  Any morphism of smooth schemes can be factored as a closed immersion followed by a smooth (in particular, flat) morphism.  Then, we contemplate the diagram
\[
\xymatrix{
X' \ar[r]^{\Gamma_v}\ar@{=}[d] & X' \times X \ar[r]^{p_X}\ar[d]^{g \times id} & X \ar@{=}[d] \\
X' \ar[r]\ar[d]^{g} & Y' \times X \ar[r]\ar[d]^{id \times f}& X \ar[d]^{f} \\
Y' \ar[r]^{\Gamma_u} & Y' \times Y \ar[r]^{p_Y} & Y.
}
\]
Using the fact that the Gysin pullback coincides with the usual flat pullback for flat morphisms \cite[Proposition 7.4]{Fasel2}, the functoriality of pullbacks \cite[Theorem 5.11]{Fasel2} reduces us to proving the result in the case where $u$ is flat or a closed immersion.

If $u$ is flat, this is \cite[Corollaire 12.3.7]{Fasel1}.  For a closed immersion, it follows, in light of the naturality of the deformation to the normal cone construction, from the way the Gysin map is defined (see \cite[Definition 5.5]{Fasel2}) and another application of \cite[Corollaire 12.3.7]{Fasel1}.

\end{proof}

\begin{cor}
\label{cor:projectionformula}
Suppose $f: X \to Y$ is a proper morphism of smooth schemes.  For any classes $\eta, \eta'$ (with arbitrary twists) we have a canonical identification
\[
f_*(f^! \eta \cup \eta') = \eta \cup f_*\eta'.
\]
\end{cor}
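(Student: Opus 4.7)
The plan is to reduce the projection formula to proper base change (Proposition \ref{prop:properbasechange}) via the description of the cup product as a Gysin pullback along the diagonal. Throughout the argument we suppress twists, as in the corollary's statement.

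By Theorem \ref{thm:milnorwittresfunctoriality}(4), the cup product is by definition the pullback of an exterior product along the diagonal, i.e.\ $\alpha \cup \beta = \Delta^!(\alpha \boxtimes \beta)$. Using compatibility of the exterior product with Gysin pullback together with functoriality of Gysin pullbacks (Theorem \ref{thm:milnorwittresfunctoriality}(2)), and noting that the composite $(f \times \mathrm{id}_X) \circ \Delta_X$ equals the graph morphism $\Gamma_f = (f,\mathrm{id}_X) \colon X \to Y \times X$, one rewrites the left-hand side as
\[
f^!\eta \cup \eta' \;=\; \Delta_X^!\bigl((f \times \mathrm{id}_X)^!(\eta \boxtimes \eta')\bigr) \;=\; \Gamma_f^!(\eta \boxtimes \eta').
\]

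Next, one observes that the square
\[
\begin{CD}
X @>{\Gamma_f}>> Y \times X \\
@VfVV @VV{\mathrm{id}_Y \times f}V \\
Y @>{\Delta_Y}>> Y \times Y
\end{CD}
\]
is cartesian: the fiber product $Y \times_{Y \times Y}(Y \times X)$ consists of triples $(y,y',x)$ with $y=y'$ and $y=f(x)$, identified with $X$ via $x \mapsto (f(x),f(x),x)$, and the induced projections to $Y \times X$ and $Y$ are $\Gamma_f$ and $f$, respectively. Since $\mathrm{id}_Y \times f$ is proper, applying $f_*$ and invoking Proposition \ref{prop:properbasechange} yields
\[
f_*(f^!\eta \cup \eta') \;=\; f_*\Gamma_f^!(\eta \boxtimes \eta') \;=\; \Delta_Y^!\bigl((\mathrm{id}_Y \times f)_*(\eta \boxtimes \eta')\bigr).
\]

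To finish, one uses the Künneth-type compatibility of the exterior product with proper pushforward, $(\mathrm{id}_Y \times f)_*(\eta \boxtimes \eta') = \eta \boxtimes f_*\eta'$. Applying $\Delta_Y^!$ and again interpreting $\Delta_Y^!(\eta \boxtimes f_*\eta')$ as $\eta \cup f_*\eta'$ gives the desired identity. The one step that is not quite formal is this Künneth compatibility: deriving it from the projection formula for $\mathrm{id}_Y \times f$ would be circular, so it should instead be extracted from the cycle-level construction of $\boxtimes$ on the Gersten complexes $C^*(-,K^{MW}_*,\mathcal{L})$ in \cite{Fasel2}, where the exterior product is built componentwise out of external operations on Milnor--Witt K-theory of residue fields and proper pushforward is built from corestrictions; the compatibility of these two sets of operations is the required input, and is the main technical obstacle in the proof.
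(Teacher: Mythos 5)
Your proof takes essentially the same route as the paper: both identify the cup product as a Gysin pullback of an exterior product along the diagonal, apply proper base change (Proposition \ref{prop:properbasechange}) to the cartesian square
\[
\xymatrix{
X \ar[r]^-{\Gamma_f}\ar[d]_-{f} & Y\times X \ar[d]^-{1\times f} \\
Y \ar[r]_-{\Delta_Y} & Y\times Y,
}
\]
and use the factorization $\Gamma_f = (f\times 1)\circ\Delta_X$ together with functoriality of Gysin pullbacks (Theorem \ref{thm:milnorwittresfunctoriality}) to obtain $\Delta_Y^!(1\times f)_* = f_*\Delta_X^!(f\times 1)^*$. The one difference is presentational: the paper states this operator identity and declares the rest ``standard,'' while you spell out the two remaining steps — compatibility of $\boxtimes$ with pullback to rewrite $f^!\eta\cup\eta' = \Gamma_f^!(\eta\boxtimes\eta')$, and the K\"unneth-type identity $(1\times f)_*(\eta\boxtimes\eta')=\eta\boxtimes f_*\eta'$ — and correctly flag the latter as the substantive input that must be extracted from the cycle-level construction of the exterior product and proper pushforward in Fasel's work rather than derived from the projection formula itself (which would be circular). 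This is a point the paper's proof also implicitly relies on without comment, so your observation is a genuine (if minor) improvement in precision rather than a deviation.
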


\begin{proof}
Given base change, the proof is standard. Applying Proposition \ref{prop:properbasechange} to the square
\[
\xymatrix{
{X} \ar[r]^{\Gamma_f^t}\ar[d]_f & {Y\times X} \ar[d]^{1\times f} \\
 {Y} \ar[r]_{\Delta_Y} & {Y\times Y},
}
\]
we conclude that $\Delta^!_Y (1\times f)_* = f_* (\Gamma_f^t)^! = f_* \Delta^!_X (f\times 1)^*$ where the second equality comes from the equation $\Gamma_f^t = (f\times 1) \Delta_X$.
\end{proof}

\begin{rem}
The product structure on the cohomology of Chow-Witt complexes is not (graded) commutative (cf. \cite[Remark 6.7]{Fasel2}). Nevertheless the above proof clearly applies {\em mutatis mutandis} to show that $f_*(\eta \cup f^! \eta') = f_*\eta \cup \eta'.$
\end{rem}

\subsubsection*{Thom isomorphisms}
Suppose $\xi: E \to X$ is a vector bundle with zero section $i: X \to E$.  Let $\det \xi$ be the determinant bundle.  With the definition of $\det \xi^{\vee}$-twisted Chow-Witt groups above, we have the following purity result; see \cite[Remarque 10.4.8]{Fasel1}.

\begin{prop}\label{prop:milnorwittlocalization}
For a regular subscheme $Y\subseteq X$ of codimension $c$ with normal bundle $\nu$ and complement $U = X - Y$, there is a quasi-isomorphism
\[
C^{*-c}(Y,K^{MW}_j,\det\nu)\longrightarrow \mathrm{ker}\{ C^*(X,K^{MW}_j) \longrightarrow C^*(U,K^{MW}_j)\}.
\]
\end{prop}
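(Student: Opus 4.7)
The plan is to establish the quasi-isomorphism termwise by combining standard purity/localization results for each of the three complexes appearing in the pullback description of the Chow-Witt complex (Definition \ref{defn:chowwittcomplex}). First, I would observe that the kernel of the restriction $C^*(X, K^{MW}_j) \to C^*(U, K^{MW}_j)$ in cohomological degree $k$ is indexed by points of $X^{(k)}$ lying in $Y$, which are precisely the codimension-$(k-c)$ points of $Y$. This indexing already matches that of $C^{k-c}(Y, K^{MW}_j, \det\nu)$, so the task reduces to constructing the right map on the individual summands and checking that it is a quasi-isomorphism of complexes of sheaves.

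Next, because $C^*(-, K^{MW}_j, \L)$ is by definition the fibre product
\[
C^*(-,K^M_j)\times_{C^*(-,I^j/I^{j+1},\L)} C^*(-,I^j,\L),
\]
and because forming the kernel of a componentwise map of fibre products commutes with taking fibre products, it suffices to construct compatible purity quasi-isomorphisms for each of the three constituent complexes. For Milnor $K$-theory this is the classical Gersten localization statement (Rost, Kato) and no twist appears. The analogous statement for the Gersten-Witt complex $C^*(-, I^j, \L)$ is due to Balmer-Gille-Walter for $j \leq 0$ and was extended by Fasel to arbitrary $j$ (see \cite[Corollaire 10.4.5]{Fasel1}); the purity isomorphism is again built from deformation to the normal cone, but now acquires the twist by $\det \nu$ on $Y$ coming from the dualising module of the closed immersion $Y \hookrightarrow X$. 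For the quotient complex $C^*(-, I^j/I^{j+1}, \L)$ I would either argue directly using the short exact sequence $0 \to C^*(-,I^{j+1},\L) \to C^*(-,I^j,\L) \to C^*(-,I^j/I^{j+1},\L) \to 0$ (cf. the proof of Proposition \ref{prop:twistedidealgersten}) together with the five-lemma, or identify it with $C^*(-, K^M_j/2)$ via Orlov-Vishik-Voevodsky and reduce to the Milnor case.

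The remaining and most delicate step is to verify that these three purity maps assemble into a morphism of pullback squares, i.e.\ that they commute with the comparison maps $C^*(-, K^M_j) \to C^*(-, I^j/I^{j+1}, \L)$ and $C^*(-, I^j, \L) \to C^*(-, I^j/I^{j+1}, \L)$. Once this compatibility is established, the conclusion follows from the observation that a morphism of pullback squares of complexes in which all three component maps are quasi-isomorphisms induces a quasi-isomorphism on pullbacks (apply the five-lemma to the associated Mayer-Vietoris long exact sequence of the square, or use the mapping cone description).

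The hard part will be precisely this last compatibility check. The twist by $\det \nu$ appears only on the Witt side, while on the Milnor side no twist is present; one must verify that Fasel's deformation-to-the-normal-cone construction for $I^j$ is compatible, modulo $I^{j+1}$, with the analogous construction on $K^M_j/2$, and that the resulting square of purity maps commutes strictly and not just up to homotopy. In practice this follows from the naturality of the deformation to the normal cone with respect to short exact sequences of Witt-type complexes and from the compatibility of the Orlov-Vishik-Voevodsky identification with residue maps, but unpacking it requires a careful inspection of the constructions in \cite{Fasel1,Fasel2}.
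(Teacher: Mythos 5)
The paper does not actually give a proof of this proposition; it simply points to \cite[Remarque 10.4.8]{Fasel1}, so your proposal fills in an argument that the paper leaves implicit. Your route---reducing to the three constituent complexes of the fibre product from Definition \ref{defn:chowwittcomplex}, invoking Gersten/d\'evissage purity for each, and then assembling---is sound and consistent with how Fasel's own constructions work, but a few points are worth flagging.

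First, the purity comparison on each of the three pieces is stronger than a quasi-isomorphism: after the d\'evissage identification of the Witt groups of finite-length modules over $\O_{X,x}$ with twisted Witt groups of $\kappa_x$, the termwise comparison is an \emph{isomorphism of complexes}. (The point indexing $X^{(k)}\cap Y = Y^{(k-c)}$ matches, and the identification of summands is a bijection of Witt groups and Milnor $K$-groups, not merely a cohomology equivalence.) This makes the last stage of your argument essentially immediate and dissolves the worry about ``strict versus up to homotopy'' commutativity. In particular the Mayer--Vietoris/five-lemma mechanism you invoke is not needed; but if you do want to run it, you should note that pullbacks of chain complexes are not in general homotopy-invariant, so the argument requires that one of the two maps into $C^*(-,I^j/I^{j+1},\L)$ is an epimorphism of complexes---which it is, since $C^*(-,I^j,\L)\to C^*(-,I^j/I^{j+1},\L)$ is surjective in each degree.

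Second, the mechanism underlying the Witt-side purity is d\'evissage (in the Balmer--Walter/Gille framework), not deformation to the normal cone. Deformation to the normal cone enters Fasel's theory when constructing Gysin pullbacks $f^!$ for non-flat morphisms (cf. Theorem \ref{thm:milnorwittresfunctoriality} and Proposition \ref{prop:properbasechange}), which is a separate piece of the machinery from the localization/purity identification you need here. Keeping the two distinct also clarifies why the compatibility of the three purity maps with the structure maps of the fibre product is less delicate than you suggest: the d\'evissage filtration by powers of the fundamental ideal is preserved by construction, and the comparison with $C^*(-,K^M_j/2)$ via Orlov--Vishik--Voevodsky is already built to respect residues (this is exactly what Theorem \ref{thm:morelfaselcomparison} records). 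So the ``hard part'' you identify at the end is in fact handled by the naturality of d\'evissage together with the cited compatibility, rather than requiring a fresh verification.
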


\begin{defn}\label{defn:thomclass}
Let $X$ be a smooth scheme and $\xi: E\to X$ a vector bundle of rank $r$ and with zero section $i: X\to E$. By Proposition \ref{prop:milnorwittlocalization}, we have an isomorphism $i_*: H^0_{\Nis}(X,\K^{MW}_0) \to H^r_{i(X)}(E,\K^{MW}_r(\det \xi^{\vee}))$.

The image $\tau(\xi) = i_*(1_X)$ of $1_X$ under this isomorphism is called the Thom class of $\xi$. Pull back along $\xi$ and cup product with the Thom class defines homomorphisms $\tau_{p,q}(\xi): H^p_{\Nis}(X,\K^{MW}_q(\det \xi)) \to H^{p+r}_{i(X)}(E,\K^{MW}_{q+r})$, and the latter group is canonically isomorphic (since $E - i(X) \to E \to Th(\xi)$ is a cofibration sequence, {\it cf.} Lemma \ref{lem:cohomologicaldimensionofthomspaces}) to $H^{p+r}_{\Nis}(Th(\xi),\K^{MW}_{q+r}).$
\end{defn}

\begin{thm}[Thom isomorphism]
\label{thm:thomiso}
The bigraded $H^*_{\Nis}(X,\K^{MW}_*)$-module $H^*_{i(X)}(E,\K^{MW}_*(\det \xi^{\vee}))$ is free of rank one on the Thom class $\tau(\xi)$. Consequently we have a collection of Thom isomorphisms $\tau_{p,q}(\xi): H^p_{\Nis}(X,\K^{MW}_q(\det \xi)) \to H^{p+r}_{\Nis}(Th(\xi),\K^{MW}_{q+r})$ defined by pullback followed by cup-product with the Thom class.
\end{thm}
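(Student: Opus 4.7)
The plan is to show that the Thom map $\tau_{p,q}(\xi)$ coincides with the pushforward $i_*: H^p(X,\K^{MW}_q(\det\xi)) \to H^{p+r}_{i(X)}(E,\K^{MW}_{q+r})$ along the zero section, and then invoke the purity isomorphism of Proposition~\ref{prop:milnorwittlocalization} to conclude that this pushforward is bijective. The only nontrivial input is the projection formula (Corollary~\ref{cor:projectionformula}), together with the fact from Theorem~\ref{thm:milnorwittresfunctoriality}(2) that the Gysin pullback $\xi^!$ coincides with the flat pullback $\xi^*$ for the vector bundle $\xi$, and with functoriality of Gysin pullbacks under composition.

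First I would verify that $\tau_{p,q}(\xi) = i_*$. Since $\xi \circ i = \mathrm{id}_X$, functoriality of the $(-)^!$ operation and the coincidence $\xi^! = \xi^*$ give $i^!\xi^*\alpha = \alpha$. Substituting $\eta = \xi^*\alpha$ and $\eta' = 1_X$ into the projection formula $i_*(i^!\eta \cup \eta') = \eta \cup i_*\eta'$ then yields
\[
i_*(\alpha) \;=\; i_*(i^!\xi^*\alpha \cup 1_X) \;=\; \xi^*\alpha \cup i_*(1_X) \;=\; \xi^*\alpha \cup \tau(\xi) \;=\; \tau_{p,q}(\xi)(\alpha).
\]
The line-bundle twists match up: the pushforward of Theorem~\ref{thm:milnorwittresfunctoriality}(3) for the closed immersion $i$ introduces a twist by $\omega_X \otimes i^*\omega_E^{-1} \cong \det\xi$, precisely the twist appearing in the source of $\tau_{p,q}(\xi)$, and the cup product with $\tau(\xi) \in H^r_{i(X)}(E,\K^{MW}_r(\det\xi^\vee))$ absorbs the remaining $\det\xi$ factor on the target.

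Having made this identification, the bijectivity of $\tau_{p,q}(\xi)$ on each bidegree follows at once from Proposition~\ref{prop:milnorwittlocalization}, which exhibits the Chow-Witt complex of $X$ twisted by $\det\xi$, placed in degree $*-r$, as quasi-isomorphic to the kernel complex computing $H^*_{i(X)}(E,\K^{MW}_*)$ via Corollary~\ref{cor:milnorwittcohomology}. The module structure statement is then formal: $\tau_{p,q}(\xi)$ is $H^*_{\Nis}(X,\K^{MW}_*)$-linear (using the action on the target induced by $\xi^*$) because cup product is bilinear and $\xi^*$ is a ring map, so an iso of abelian groups sending $1$ to $\tau(\xi)$ automatically exhibits $H^{*}_{i(X)}(E,\K^{MW}_*(\det\xi^\vee))$ as a free $H^*_{\Nis}(X,\K^{MW}_*)$-module of rank one on the Thom class.

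The main obstacle is really only bookkeeping: one must carefully track how the line-bundle twists transform under pullback, pushforward, and cup product to confirm that the projection-formula identity above makes sense in the correct twisted Chow-Witt groups. With the conventions of Definitions~\ref{defn:chowwittcomplex}--\ref{defn:twistedmilnorwitt} and the functoriality formalism of Theorem~\ref{thm:milnorwittresfunctoriality}, this is routine; beyond it, the proof is essentially an immediate consequence of Proposition~\ref{prop:milnorwittlocalization} and the projection formula, without any need for Mayer--Vietoris, a splitting principle, or a reduction to the trivial bundle.
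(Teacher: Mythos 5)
Your proof is correct and takes essentially the same approach as the paper: both identify $\tau_{p,q}(\xi)$ with the pushforward $i_*$ via the projection formula and the identity $i^!\xi^* = \mathrm{id}$, then invoke the purity quasi-isomorphism (Proposition \ref{prop:milnorwittlocalization}) to conclude bijectivity. The only cosmetic difference is that you deduce $i^!\xi^* = \mathrm{id}$ from functoriality of $(-)^!$ and $\xi^! = \xi^*$, whereas the paper cites a lemma of Fasel directly.
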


\begin{proof}
Let $\xi: E \to X$ be a vector bundle on $X$.  Consider the following diagram:
\[
\xymatrix{
H^p_{\Nis}(X,\K^{MW}_q(\det \xi)) \ar[r]^{i_*} \ar[d]^{\xi^*} & H^{p+r}_{i(X)}(E,\K^{MW}_{q+r}) \\
H^p_{\Nis}(E,\K^{MW}_q(\xi^* \det \xi)) \ar[ur]^{\cup \tau(\xi)}.
}
\]
The top horizontal morphism is an isomorphism by Proposition \ref{prop:milnorwittlocalization}.

By \cite[Proposition 6.8]{Fasel2}, the fundamental class $1_X$ is both a left and right unit.  Moreover, $i^!\xi^* = id$ by \cite[Lemma 5.10]{Fasel2}.  Therefore, for any class $\eta$, we have $\eta = i^!\xi^*\eta \cup 1$.  Combining the observation just made with the projection formula \ref{cor:projectionformula}, we then have
\[
i_*(\eta) = i_*(i^!\xi^*\eta \cup 1) = \xi^*\eta \cup i_*(1),
\]
that is, the diagram commutes.

\end{proof}

\subsection{The zeroth stable $\aone$-homotopy sheaf}
\label{ss:zerothsheaf}
The goal of this section is to prove Theorems \ref{thmintro:main} and \ref{thmintro:compatibilities} from the introduction.

\begin{thm}
\label{thm:main}
Suppose $k$ is an infinite perfect field having characteristic unequal to $2$, and $X$ is a smooth proper $k$-variety. For any separable, finitely generated extension $L/k$, there are isomorphisms
\[
\bpi_0^s(\Sigma^{\infty}_{\pone}X_+)(L) \isomto \CH_0(X_L),
\]
functorial with respect to field extensions.
\end{thm}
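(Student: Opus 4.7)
\medskip

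The plan is to chain together the three principal results of the paper, which between them already carry out all the substantive work. Given the amount of machinery assembled above, the proof becomes a matter of bookkeeping --- the only new ingredient needed is the identification of the line-bundle twist that arises naturally from Atiyah duality with the canonical bundle $\omega_X$.

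First, I would apply the stable Hurewicz isomorphism of Theorem \ref{thm:ponestablehurewiczisomorphism} to identify
\[
\bpi_0^s(\Sigma^{\infty}_{\pone}X_+)(L) \isomto \H_0^{s\aone}(X)(L).
\]
This reduces the problem to a computation in the stable $\aone$-derived category, where the duality formalism is available. Next, I would invoke Theorem \ref{thm:aonehomologyintermsofthomspaces}, whose proof already follows the outline (a)--(g) modeled on Lemma \ref{lem:suslinchow}, to obtain
\[
\H_0^{s\aone}(X)(L) \isomto H^{n+d}_{\Nis}(Th(\nu_L),\K^{MW}_{n+d}),
\]
where $d = \dim X$ and $\nu: V \to X$ is the rank $n$ vector bundle of Theorem \ref{thm:voevodskyduality} satisfying $[V \oplus T_X] = [\O_X^{\oplus n+d}]$ in $K_0(X)$. (Birational invariance, Proposition \ref{prop:birationalinvariance}, combined with Chow's lemma, reduces us to the case where $X$ is smooth projective, which is where Atiyah duality applies.)

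Third, I would apply the twisted Thom isomorphism of Theorem \ref{thm:thomiso}, with $\xi = \nu_L$ of rank $r = n$ and bidegree $(p,q) = (d,d)$, to obtain
\[
H^d_{\Nis}(X_L,\K^{MW}_d(\det \nu_L)) \isomto H^{n+d}_{\Nis}(Th(\nu_L),\K^{MW}_{n+d}).
\]
The twist is then identified via the relation $[V \oplus T_X] = [\O_X^{\oplus n+d}]$, which forces $\det V \tensor \det T_X \cong \O_X$, hence $\det \nu \cong (\det T_X)^{\vee} = \omega_X$. Consequently the left-hand side equals $H^d_{\Nis}(X_L,\K^{MW}_d(\omega_{X_L})) = \CH_0(X_L)$ by Definition \ref{defn:quadraticzerocycles}.

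The hardest conceptual point --- making sense of the line-bundle twist without recourse to an $\aone$-local systems formalism --- has already been dispatched in Section \ref{ss:twistedchowwittgroups} and \ref{ss:thomisomorphisms}, so the main thing left to verify is the naturality statement. Each of the three isomorphisms is manifestly functorial in the field extension: the Hurewicz morphism is natural in $\mathscr{X}$ (hence in base change by Proposition \ref{prop:basechange}), the chain of identifications in Theorem \ref{thm:aonehomologyintermsofthomspaces} has its functoriality explicitly asserted there, and the Thom isomorphism is constructed from Gysin maps that commute with flat base change by Theorem \ref{thm:milnorwittresfunctoriality}. I expect this naturality verification to be the main (but routine) obstacle, amounting to checking that the formation of the auxiliary bundle $\nu$ may be carried out compatibly with base change $L'/L/k$, which is evident since $\nu$ is pulled back from $X$ and all the Chow--Witt constructions are natural with respect to such pullback.
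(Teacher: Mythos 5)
Your proposal is correct and follows the same route as the paper's own proof: reduce via the $\pone$-stable Hurewicz theorem (Theorem \ref{thm:ponestablehurewiczisomorphism}), apply the Thom-space description from Theorem \ref{thm:aonehomologyintermsofthomspaces}, then conclude with the twisted Thom isomorphism (Theorem \ref{thm:thomiso}) and the identification $\det \nu \cong \omega_X$ forced by $[V \oplus T_X] = [\O_X^{\oplus n+d}]$. The only difference is that you spell out the functoriality check in slightly more detail than the paper, which simply asserts it.
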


\begin{proof}
By Theorem \ref{thm:ponestablehurewiczisomorphism}, we only need exhibit a natural isomorphism $\H_0^{s\aone}(X)(L)\isomto \CH_0(X_L)$. We can assume $X$ has dimension $d$.  By Theorem \ref{thm:aonehomologyintermsofthomspaces} we have a canonical isomorphism (functorial with respect to field extensions)
\[
\H_0^{s\aone}(X)(L) \isomto H^{n + d}_{\Nis}(Th(\nu_L),\K^{MW}_{n + d}),
\]
where $\nu: V\to X$ is a vector bundle of rank $n$ such that $[T_X \oplus V] = n+d$ in $K_0(X)$. By the Thom isomorphism \ref{thm:thomiso}, the right hand side is canonically isomorphic to $H^d_{\Nis}(X_L,\K_d^{MW}(\det \nu_L))$, and by properties of the determinant, $\det \nu_L = \omega_{X_L}$ (the canonical bundle). Finally, $H^d_{\Nis}(X_L,\K_d^{MW}(\omega_{X_L})) = \CH_o(X_L)$ by definition.
\end{proof}

\begin{thm}
\label{thm:compatibilities}
The isomorphisms of \textup{Theorem \ref{thm:main}} satisfy the following compatibilities.
\begin{itemize}
\item[A)] The ``Hurewicz" homomorphism $\bpi_0^s(\Sigma^{\infty}_{\pone}X_+) \to \H_0^S(X)$ induces the forgetful map $\CH_0(X_L) \to CH_0(X_L)$ upon evaluation on sections over a separable finitely generated extension $L/k$.
\item[B)] The pushforward map $\bpi_0^s(\Sigma^{\infty}_{\pone}X_+)(L) \to \bpi_0^s(\Sigma^{\infty}_{\pone}\Spec k_+)(L)$ induces by means of the Thom isomorphism \textup{Theorem \ref{thm:thomiso}} and Morel's identification of $\bpi_0^s(\Sigma^{\infty}_{\pone}\Spec k_+)(L)$ with $GW(L)$ a morphism $\tdeg: \CH_0(X_L) \to \CH_0(\Spec L)$ that coincides with the corresponding pushforward defined by Fasel.
\item[C)] The two identifications just mentioned are compatible, i.e., the diagram
\[
\xymatrix{
\CH_0(X_L) \ar[r]\ar[d]^-{\tdeg} & CH_0(X_L) \ar[d]^-{\deg} \\
GW(L) \ar[r]^-{\operatorname{rk}} & \Z
}
\]
commutes.
\end{itemize}
\end{thm}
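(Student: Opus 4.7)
My plan is to verify each of the three compatibilities in turn, exploiting the fact that the isomorphism in Theorem \ref{thm:main} is built as a composition of natural transformations between functors whose motivic analogs (from Lemma \ref{lem:suslinchow}) already give the Chow-theoretic statement. The overall strategy is therefore: place the seven identifications (a)--(g) of Theorem \ref{thm:aonehomologyintermsofthomspaces} side by side with the nine identifications (a)--(i) of Lemma \ref{lem:suslinchow}, connected by the Hurewicz/abelianization functor $\Daone(k) \to \DM(k)$ of Proposition \ref{prop:aonederivedmotivicadjunction}, and check compatibility step by step.

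For part A, this functor sends $\Z\langle n \rangle$ to $\Z(n)$ and is compatible with Atiyah duality (since it takes strongly dualizable objects to strongly dualizable objects) and with the stability/cancelation step (since both rely on formal properties of the corresponding categories). The only essential input is therefore that the induced map on zeroth cohomology sheaves $\K^{MW}_n \to \K^M_n$, which is precisely the forgetful map appearing in the right-hand side of Definition \ref{defn:chowwittcomplex}, induces via the Thom isomorphism of Theorem \ref{thm:thomiso} followed by Definition \ref{defn:quadraticzerocycles} and Remark \ref{rem:ChowWittgroups}, exactly the forgetful map $\CH_0(X_L) \to CH_0(X_L)$. This is transparent since both sides are computed by Gersten complexes and the natural map is the forgetful map degree by degree. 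Concretely, this step is the naturality of Lemma \ref{lem:inducedbyh_nsheaves} in the Thom space $Th(\nu_L)$.

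For part B, I would argue that in the stable $\aone$-homotopy setting the pushforward for the structure morphism $X \to \Spec k$ is, by construction and by Atiyah duality (Proposition \ref{prop:atiyahduality}), realized as the map on $\aone$-homology induced by Voevodsky's dualization $T^{n+d} \to Th(\nu)$ of Theorem \ref{thm:voevodskyduality}(ii); this is the content of the final proposition of Section \ref{ss:atiyahduality}. Thus under the identification of Theorem \ref{thm:aonehomologyintermsofthomspaces} the pushforward becomes pullback along Voevodsky's duality map, and after applying the Thom isomorphism of Theorem \ref{thm:thomiso} it descends to a morphism $\CH_0(X_L) \to GW(L)$. The main obstacle is to show that this morphism agrees with Fasel's pushforward $\tdeg$ of Theorem \ref{thm:milnorwittresfunctoriality}(iii). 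The plan is to characterize both by the projection formula: the duality-derived pushforward because it is, by definition, adjoint to pullback under the unit and counit of Proposition \ref{prop:atiyahduality} (which are themselves built from Voevodsky's duality map and the Thom collapse), and Fasel's pushforward by Corollary \ref{cor:projectionformula}. Combining this with the fact that both agree after composing with the Thom isomorphism evaluated on the unit class $1_X$ (since both pushforwards are normalized so that the fundamental class maps to the Euler characteristic, and both are uniquely determined by the projection formula together with their value on $1_X$) concludes the identification.

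For part C, this is essentially formal given parts A and B. Applying part A to both $X$ and $\Spec k$ gives a commutative square whose vertical arrows are forgetful maps $\CH_0(\cdot) \to CH_0(\cdot)$ and whose horizontal arrows are the Hurewicz maps; specialized to $\Spec L$, the right vertical is exactly the rank map $GW(L) \to \Z$ since it is the evaluation of the sheaf map $\K^{MW}_0 \to \K^M_0 \cong \Z$ on $L$-sections. Combining this with the square of part B (pushforward in stable $\aone$-homotopy versus pushforward in Suslin homology, intertwined by the Hurewicz map, which is natural in $X$) yields precisely the diagram of the statement; the bottom edge is the degree map by Lemma \ref{lem:suslindegree}. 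The hardest part of the whole argument will be part B: disentangling the abstract Spanier-Whitehead/Atiyah duality pushforward and matching it with Fasel's cycle-level definition requires careful control of the naturality of the Thom class under the duality map, and the projection-formula characterization is the cleanest route I see.
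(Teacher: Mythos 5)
Your proposal for part A is sound and essentially more explicit than what the paper actually writes: the paper simply points to Proposition~\ref{prop:stabletosuslinhomology}, Lemma~\ref{lem:suslinchow} and Theorem~\ref{thm:main}, while you spell out the step-by-step comparison along the Hurewicz/``add transfers'' functor and correctly identify Lemma~\ref{lem:inducedbyh_nsheaves} as the critical compatibility. Your part C matches the paper, which disposes of C with the one-line observation that the Hurewicz maps are functorial.

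Part B, however, contains a genuine gap. You propose to characterize both the duality-derived pushforward and Fasel's pushforward by the projection formula together with their value on $1_X$. But the projection formula only shows that each pushforward is $GW(k)$-linear (after identifying $GW(k)$ with $H^0_{\Nis}(\Spec k,\K^{MW}_0)$ acting via $f^!$); it does not determine the map on $\CH_0(X_L) = H^d_{\Nis}(X_L,\K^{MW}_d(\omega_{X_L}))$. That group is generated, as a $GW(L)$-module, not by a single class but by the local contributions $GW(\kappa_x,\omega_{\kappa_x/L})$ at closed points $x\in X^{(d)}$, with $\kappa_x$ ranging over genuine finite extensions of $L$. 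The fundamental class $1_X$ lives in $H^0(X,\K^{MW}_0)$, not in $\CH_0(X)$, so ``value on $1_X$'' plus projection formula constrains only the image of the sub-$GW(L)$-module $f^!(GW(L))\cup\CH_0(X_L)$, which is much smaller than $\CH_0(X_L)$ whenever $X$ has closed points with nontrivial residue field. Thus the rigidity you invoke does not hold.

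What the paper actually does for B is, first, exploit the local description just mentioned to reduce to the case $X=\Spec F$ with $F/k$ a finite field extension (using the Gersten surjection from the direct sum of local twisted Grothendieck-Witt groups and the collapse map $X\to X/(X-x)$), and second, in that case compare Morel's cohomological transfer (which is what the duality map produces at the level of the Gersten complex for $\aone_k\subset\pone_k$) with Fasel's Scharlau transfer for the field trace. The actual content, which your proposal leaves untouched, is the residue identity $-\partial_\infty^{can}=tr_{F/k}\circ\partial_x^{can}$, verified on Milnor $K$-theory by inspection of ranks and on Witt groups by Schmid's reciprocity theorem. Without some replacement for this comparison of two concretely defined transfers, the projection-formula argument cannot close the gap.
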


\begin{proof}
\noindent A) The homomorphism in the statement is the map from Proposition \ref{prop:stabletosuslinhomology} composed with the stable Hurewicz homomorphism $\bpi_0^s(\Sigma^{\infty}_{\pone}X_+) \to \H_0^{s\aone}(X)$ of \ref{eqn:hurewiczhomomorphism}. Now, combine Lemma \ref{lem:suslinchow} and Theorem \ref{thm:main}.\\
\noindent B) Combining Theorem \ref{thm:aonehomologyintermsofthomspaces} and Theorem \ref{thm:thomiso}, evaluation of the pushfoward map $\H_0^{s\aone}(X) \to \H_0^{s\aone}(\Spec k)$ on sections over a finitely generated, separable extension $L/k$ gives rise to a morphism
\[
H^d_{\Nis}(X_L,\K^{MW}_d(\omega_X)) \to H^0_{\Nis}(\Spec L,\K^{MW}_0).
\]
By Theorem \ref{thm:main} and \cite[Lemma 2.10]{MField}, this morphism can be viewed as a morphism $\CH_0(X_L) \to GW(L)$; we want to know that this coincides with the pushforward defined by Fasel.

Unwinding the definitions, to identify the two pushforwards, we have to prove commutativity of the diagram
\begin{equation}
\label{eqn:commutativity}
\xymatrix{
H^{d}(X_L,\K^{MW}_d(\omega_{X_L})) \ar[d]\ar[r]^{\tdeg} & H^0(\Spec L,\K^{MW}_0) \ar[d] \\
H^{n+d}(Th(\nu_L),\K^{MW}_{n+d}) \ar[r] & H^{n+d}(T^{n+d}_L,\K^{MW}_{n+d}),
}
\end{equation}
where the left vertical arrow is the Thom isomorphism, the right vertical arrow is the suspension isomorphism, and the lower horizontal arrow is the morphism induced by Voevodsky's duality theorem \ref{thm:voevodskyduality}. To simplify of notation, we may use base change \ref{prop:basechange} and assume $k = L$ in what follows.\newline
\noindent {\em Step 1.}  We claim it suffices to prove the result for the pushforward induced by a finite field extension $\Spec F \to \Spec k$.  To see this, observe that the morphism $\tdeg$ has a local definition. Indeed, there are transfer homomorphisms
\[
tr_x: GW(\kappa_x,\omega_{\kappa_x/k}) \to GW(k)
\]
and the map $\tdeg$ is induced by the homomorphism $\bigoplus_{x \in X^{(d)}} tr_x$; this follows by combining the local definition of $\operatorname{deg}$ for Chow groups with the study of the push-forward for Witt groups in \cite[\S 6.4]{Fasel1} together with the identification $GW(F) \cong W(F) \times_{\Z/2} \Z$. We note here for later use that the map $tr_x$ is a Scharlau transfer with respect to the field trace of $\kappa_x$ over $k$.

On the other hand, collapsing the complement of $x$ in $X$ gives a morphism $X \to X/(X - x).$  Since the normal space to $x$ is trivial, the space $X/(X-x)$ is isomorphic to $T^{\wedge d}$; to fix such an isomorphism we need a choice of trivialization.  Keeping track of the twist by $\omega_{X}$, the collapse map just mentioned gives rise to a homomorphism
\[
H^d_{\Nis}(X/X - x,\K^{MW}_d(\omega_X)) \longrightarrow H^{d}_{\Nis}(X,\K^{MW}_{d}(\omega_{X})),
\]
where the group on the left hand side is defined as the $d$-th cohomology of the cone of the morphism $C^*(X - x,K^{MW}_*,\omega_{X-x}) \to C^*(X,K^{MW}_*,\omega_X)$; the resulting complex is supported at $x$ by its very construction.\newline
We claim there is a suspension isomorphism $H^d(X/X - x,\K^{MW}_d(\omega_X)) \isomt GW(\kappa_x,\omega_{\kappa_x/k})$.  Indeed, by inspection of the Gersten resolution (the only term that appears is $K^{MW}_0(\kappa_x,\omega_{\kappa_x/k})$), this is a consequence of Morel's isomorphism.

Using the fact that $H^{d}_{\Nis}(X,\K^{MW}_{d}(\omega_{X}))$ admits a surjection from a sum of groups of the form $GW(\kappa_x,\omega_{x/k})$, it suffices to prove that $tr_x$ coincides with the homomorphism $GW(\kappa_x,\omega_{\kappa_x/k}) \to GW(k)$ given by composing the morphism of the previous paragraph with the composite of the morphisms from the three unlabeled edges in Diagram \ref{eqn:commutativity} for any $x \in X^{(d)}$.  By functoriality of the Thom isomorphism, this corresponds to proving the initial compatibility in the case where $X = \Spec F$, as claimed. We will write $tr_{F/k}$ for Fasel's pushforward (the Scharlau transfer for the field trace). \newline
\noindent{\em Step 2.}  We now assume that $X = \Spec F$; the duality construction simplifies in this case. Pick an embedding $i: \Spec F \hookrightarrow \pone_k$.  There is an induced map $\pone_k \to Th(\nu_i)$, sometimes called the ``co-transfer map," ({\em cf.} \cite{LevineSlices}) which is the duality map.  We need to show that the composite map
\[
t: H^0_{\Nis}(\Spec F,\K^{MW}_0(\nu_i)) \isomto H^1_{\Nis}(Th(\nu_i),\K^{MW}_1) \to H^1_{\Nis}(\pone_k,\K^{MW}_1) \isomto H^0_{\Nis}(\Spec k,\K^{MW}_0),
\]
where the first map is the Thom isomorphism and the last map is the inverse suspension isomorphism, is precisely the pushforward map $tr_{F/k}$ of Fasel.

Following \cite[\S 3]{MField}, the composite map is precisely Morel's ``cohomological" transfer.  This transfer can be described at the level of the Gersten resolution as follows. Consider $\aone_k$ with coordinate $t$.  There is a short exact sequence of the form
\[
0 \longrightarrow K^{MW}_1(k) \longrightarrow K^{MW}_1(k(t)) \stackrel{\sum_{\nu} \partial_{\nu}}{\longrightarrow} \bigoplus_{\nu} K^{MW}_0(\kappa_{\nu},\omega_{\kappa_{\nu}/k}) \longrightarrow 0.
\]
Here the maps $\partial_{\nu}$ are the residue maps in Milnor-Witt $K$-theory described by Morel in \cite[Theorem 2.15]{MField}, and the short exact sequence is proven in \cite[Theorem 2.24]{MField}.
Picking a primitive element $\theta$ of $F$ with minimal polynomial $f(t)$, we identify $F$ with the closed point $x$ of $\aone_k$ corresponding to $f(t)$. Given an element $\alpha\in K_0^{MW}(F) = K_0^{MW}(\kappa_x)$ the exact sequence above shows that we can lift $\alpha$ along the differential in the Gersten complex to some $\tilde{\alpha}\in K_1^{MW}(k(t))$. In other words, we have the equation $\partial_x^{can}(\tilde{\alpha}) = \alpha$; here $\partial_x^{can}$ is the $x$-component of the differential in the Gersten complex, which is obtained from $\partial_x$ via twist with the canonical bundle. Now $t(\alpha) = -\partial_\infty^{can}(\tilde{\alpha})$. To prove our desired compatibility it therefore suffices to show that $-\partial_\infty^{can} = tr_{F/k}\circ \partial_x^{can}$. Let $\beta\in K_1^{MW}(k(t))$. It is clear from Morel's description of the residues that the ranks of $-\partial_\infty^{can}(\beta)$ and $tr_{F/k}\circ \partial_x^{can}(\beta)$ coincide (the sign comes from the fact that the parameter at $\infty$ is a multiple of $1/t$). It is therefore sufficient to show that the equation $-\partial_\infty^{can} = tr_{F/k}\circ \partial_x^{can}$ holds on (canonically twisted) Witt groups; but that is an immediate consequence of Schmid's reciprocity theorem \cite[2.4.5]{Schmid}.

\noindent C) The Hurewicz maps are functorial by construction.
\end{proof}







\subsection{Rational points up to stable $\aone$-homotopy}
\label{ss:rationalpointsuptostablehomotopy}
Finally, in this section, we prove Theorem \ref{thmintro:rationalpointsuptostablehomotopy}.

\begin{lem}
\label{lem:morphismsofstrictlyaoneinvariantsheaves}
If $f: \mathscr{M} \to \mathscr{M}'$ is a morphism of strictly $\aone$-invariant sheaves of groups, then $f$ is an isomorphism (resp. monomorphism, resp. epimorphism) if and only if it is bijective (resp. injective, resp. surjective) on sections over any finitely generated, separable extension $L/k$.
\end{lem}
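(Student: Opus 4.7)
The plan is to reduce all three statements to a single vanishing criterion: a strictly $\aone$-invariant sheaf $\mathscr{N}$ is zero if and only if $\mathscr{N}(L) = 0$ for every finitely generated separable extension $L/k$. Morel's theorem allows me to replace ``sheaves of groups'' by ``sheaves of abelian groups,'' so I work throughout in $\Ab_k^{\aone}$.

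Granted the vanishing criterion, I would apply it to $\ker f$ and $\coker f$, formed in $\Ab_k^{\aone}$; Morel's theorem asserts that $\Ab_k^{\aone}$ is an abelian subcategory of Nisnevich abelian sheaves, closed under kernels and cokernels taken in the ambient category. Because any Nisnevich cover of a field is split, $\Spec L$ has vanishing higher Nisnevich cohomology with coefficients in any abelian sheaf, so every short exact sequence of Nisnevich abelian sheaves stays short exact upon evaluation on $L$-sections; hence $(\ker f)(L) = \ker(f(L))$ and $(\coker f)(L) = \coker(f(L))$. Applying the vanishing criterion to $\ker f$ then gives the injectivity/monomorphism equivalence, and applying it to $\coker f$ gives the surjectivity/epimorphism equivalence; the isomorphism case is the conjunction.

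The vanishing criterion itself is immediate from the Gersten resolution of Proposition \ref{prop:gerstenresolution}. For any smooth connected $U/k$, the first two terms of that resolution give an injection $\mathscr{N}(U) \hookrightarrow \mathscr{N}(\kappa(U))$. Since $U/k$ is smooth, the function field $\kappa(U)/k$ is a finitely generated separable extension, so the hypothesis forces $\mathscr{N}(\kappa(U)) = 0$ and hence $\mathscr{N}(U) = 0$; as smooth schemes generate the Nisnevich site, this forces $\mathscr{N} = 0$. The only real obstacle to this plan is citational: one must invoke Morel's structural results to know that $\Ab_k^{\aone}$ is abelian and closed under kernels and cokernels in Nisnevich abelian sheaves, and to know that strictly $\aone$-invariant sheaves of groups are abelian; and one must remember that the Gersten resolution requires $k$ to be infinite, which is a standing assumption in this section.
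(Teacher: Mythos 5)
Your proposal is correct and is exactly what the paper's one-line proof ("immediate from the Gersten resolution") intends: the injection $\mathscr{N}(U)\hookrightarrow\bigoplus_{x\in U^{(0)}}\mathscr{N}(\kappa(x))$ furnished by the first two terms of Proposition~\ref{prop:gerstenresolution} gives the vanishing criterion, which you then apply to $\ker f$ and $\coker f$. Your additional remarks---that one must invoke Morel's theorem to know $\Ab^{\aone}_k$ is abelian with kernels and cokernels computed in Nisnevich sheaves, and that evaluation on finitely generated separable field extensions is exact---are the right bookkeeping steps to make "immediate" rigorous.
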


\begin{proof}
This is immediate from the Gersten resolution \ref{prop:gerstenresolution}.
\end{proof}

\begin{prop}
\label{prop:epimorphicity}
Suppose $k$ is a perfect field, and $X$ is a smooth, proper $k$-scheme.  The canonical morphism $\H_0^{s\aone}(X) \to \H_0^{S}(X)$ is an epimorphism.
\end{prop}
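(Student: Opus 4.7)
The plan is to reduce the assertion, which concerns a morphism of Nisnevich sheaves, first to a statement about sections over fields, and then to an elementary surjectivity claim at the level of Gersten complexes.

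First I would observe that both $\H_0^{s\aone}(X)$ and $\H_0^S(X)$ are strictly $\aone$-invariant sheaves of abelian groups: for the source, this follows from Theorem \ref{thm:ponestablehurewiczisomorphism} together with the Remark following Proposition \ref{prop:stablehomotopycolimit} (stable $\aone$-homotopy sheaves of a $\pone$-suspension spectrum are strictly $\aone$-invariant), and for the target, this is a standard fact since $\H_0^S(X)$ is a sheaf with transfers in the sense of Voevodsky. Lemma \ref{lem:morphismsofstrictlyaoneinvariantsheaves} then reduces the claim to showing that the induced homomorphism on sections over every finitely generated separable extension $L/k$ is surjective.

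Next I would apply Theorem \ref{thm:main} and Lemma \ref{lem:suslinchow} to identify $\H_0^{s\aone}(X)(L)$ with $\CH_0(X_L)$ and $\H_0^S(X)(L)$ with $CH_0(X_L)$ respectively. Under these identifications, Theorem \ref{thm:compatibilities}(A) (applied via the Hurewicz isomorphism of Theorem \ref{thm:ponestablehurewiczisomorphism}) says that the canonical map in question becomes the forgetful homomorphism $\CH_0(X_L) \to CH_0(X_L)$, namely the map on degree-$d$ Nisnevich cohomology induced by the natural morphism of sheaves $\K^{MW}_d(\omega_{X_L}) \to \K^M_d$.

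Finally I would prove that this forgetful map is surjective using the Gersten resolutions of Theorem \ref{thm:milnorwittgersten} and the analogous resolution for $\K^M_d$. The morphism is induced by a map of Gersten complexes that is \emph{termwise surjective}: at a codimension-$d$ point $x$ the relevant component is the rank homomorphism
\[
\K^{MW}_0(\kappa_x,\omega_{\kappa_x/L}) \;=\; GW(\kappa_x,\omega_{\kappa_x/L}) \;\longrightarrow\; \Z \;=\; \K^M_0(\kappa_x),
\]
which is surjective by Morel's identification, and for codimension-$i$ points with $i < d$ it is the projection $\K^{MW}_{d-i} \to \K^{MW}_{d-i}/\eta = \K^M_{d-i}$. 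Since $\CH_0(X_L)$ and $CH_0(X_L)$ are precisely the cokernels of the respective Gersten differentials from codimension $d-1$ to codimension $d$, and the codimension-$d$ component is surjective, a one-line diagram chase yields surjectivity on cokernels.

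The only delicate step is the compatibility assertion in the second paragraph: one must verify that the canonical sheaf map $\H_0^{s\aone}(X) \to \H_0^S(X)$ of Proposition \ref{prop:stabletosuslinhomology}, after identification via Theorems \ref{thm:main} and \ref{lem:suslinchow}, really is induced by the sheaf morphism $\K^{MW}_d(\omega_{X_L}) \to \K^M_d$. This is the content of Theorem \ref{thm:compatibilities}(A), and in turn it comes down to the naturality of each of the intermediate identifications—Atiyah duality, Voevodsky cancelation, the Thom isomorphism, and the edge-map comparison of Lemma \ref{lem:inducedbyh_nsheaves}—with respect to the functor ``add transfers'' of Proposition \ref{prop:aonederivedmotivicadjunction}. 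Once this naturality is granted, the argument above is purely formal.
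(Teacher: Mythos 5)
Your proof is correct and arrives at the same conclusion, but it takes a somewhat different route than the paper's. The paper stays on the Thom-space side of duality: after reducing to sections over fields via Lemma \ref{lem:morphismsofstrictlyaoneinvariantsheaves}, it identifies the map with $H^{n+d}_{\Nis}(Th(V),\K^{MW}_{n+d}) \to H^{n+d}_{\Nis}(Th(V),\K^M_{n+d})$ induced by the epimorphism of strictly $\aone$-invariant sheaves $\K^{MW}_{n+d} \to \K^M_{n+d}$, and then invokes Lemma \ref{lem:cohomologicaldimensionofthomspaces} to say that $H^{n+d}_{\Nis}(Th(V),-)$ is right exact because $n+d$ is the cohomological dimension of $Th(V)$. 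You instead carry the identification all the way through the Thom isomorphism (via Theorem \ref{thm:main} and Theorem \ref{thm:compatibilities}(A)) to land on the forgetful map $\CH_0(X_L) \to CH_0(X_L)$ and then argue termwise on the Gersten complexes over $X$ itself. Both versions are really the same right-exactness phenomenon viewed at different spots: your ``diagram chase on cokernels of the $(d-1)\to d$ differential'' is exactly what right-exactness of top-degree cohomology amounts to once Gersten resolutions compute it. The paper's argument is lighter in its dependencies (it only needs the sheaf surjection $\K^{MW}_{n+d}\to\K^M_{n+d}$ plus the cohomological-dimension bound, not the Thom isomorphism or the compatibility theorem), while yours is more explicit about what the map does at the level of cycles, which is arguably more illuminating even if it front-loads Theorems \ref{thm:main} and \ref{thm:compatibilities}(A). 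One small remark: you appeal to Theorem \ref{thm:compatibilities}(A), and the introduction's version of that statement actually asserts the surjectivity you are trying to prove; since the body version (Theorem \ref{thm:compatibilities}) only asserts the identification with the forgetful map and the surjectivity claim is deferred to this Proposition, there is no circularity, but it is worth being aware of the discrepancy in statement between the introduction and the body.
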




\begin{proof}
Since both sheaves in question are strictly $\aone$-invariant, by Lemma \ref{lem:morphismsofstrictlyaoneinvariantsheaves} it suffices to prove that the map in question is surjective on sections over finitely generated extension fields $L/k$.  Let $d = \dim X$ and $\nu: V \to X$ the vector bundle of Theorem \ref{thm:voevodskyduality}.

Via duality, the homomorphism $\H_0^{s\aone}(X)(L) \to \H_0^{S}(X)(L)$ is identified with the homomorphism $H^{n+d}_{\Nis}(Th(V),\K^{MW}_{n+d}) \to H^{n+d}_{\Nis}(Th(V),\K^M_{n+d})$ induced by the epimorphism of strictly $\aone$-invariant sheaves $\K^{MW}_{n+d} \to \K^M_{n+d}$. Now, by Lemma
\ref{lem:cohomologicaldimensionofthomspaces} the cohomological dimension of $Th(V)$ is $n+d$ so the functor $H^{n+d}_{\Nis}(Th(V),-)$ is right exact.
\end{proof}

\begin{defn}
Suppose $X$ is a smooth $k$-variety.  We say $X$ {\em has a rational point up to stable $\aone$-homotopy} if the structure map $\Sigma^{\infty}_{\pone}X_+ \to \So_k$ is a split epimorphism.  If $X$ has a rational point up to stable $\aone$-homotopy, a choice of splitting $\So_k \to \Sigma^{\infty}_{\pone}X_+$ is called a rational point up to stable $\aone$-homotopy.
\end{defn}

\begin{lem}
\label{lem:rationalpointuptostablehomotopyequivalentconditions}
Suppose $X$ is a smooth $k$-variety.  The following conditions are equivalent.
\begin{itemize}
\item[i)] The variety $X$ has a rational point up to stable $\aone$-homotopy.
\item[ii)] The structure map $\H_0^{s\aone}(X) \to \H_0^{s\aone}(\Spec k)$ is a split epimorphism.
\item[iii)] There exists an element $x \in \bpi_0^{s}(X)(k)$ lifting $1 \in \bpi_0^{s}(\So_k)$.
\item[iv)] There exists an element $x \in \H_0^{s\aone}(X)(k)$ lifting $1 \in \H_0^{s\aone}(X)(k)$.
\end{itemize}
\end{lem}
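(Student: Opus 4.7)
The plan is to establish the cycle of implications (i) $\Leftrightarrow$ (iii) $\Leftrightarrow$ (iv) $\Leftrightarrow$ (ii), using three key inputs: the representability definition of the stable $\aone$-homotopy sheaves (Definition \ref{defn:stableaonehomotopysheaves}), the stable Hurewicz isomorphism (Theorem \ref{thm:ponestablehurewiczisomorphism}), and the universal property of the zeroth stable $\aone$-homology sheaf among homotopy modules (Proposition \ref{prop:universality}).

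First, I would establish (i) $\Leftrightarrow$ (iii) by unpacking definitions. Morphisms $\So_k \to \Sigma^{\infty}_{\pone}X_+$ in $\SH(k)$ are, by Definition \ref{defn:stableaonehomotopysheaves} applied to $U = \Spec k$ and $i = 0$, precisely elements of $\bpi_0^s(\Sigma^{\infty}_{\pone}X_+)(k)$. Under this identification, postcomposition with the structure map $\Sigma^{\infty}_{\pone}X_+ \to \So_k$ corresponds to the pushforward $\bpi_0^s(\Sigma^{\infty}_{\pone}X_+)(k) \to \bpi_0^s(\So_k)(k)$, and the composite $\So_k \to \Sigma^{\infty}_{\pone}X_+ \to \So_k$ equals $\mathrm{id}_{\So_k}$ if and only if the corresponding element of $\bpi_0^s(\So_k)(k) = GW(k)$ is $1$, which is exactly the content of (iii).

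Next, (iii) $\Leftrightarrow$ (iv) is an immediate consequence of the Hurewicz isomorphism $\bpi_0^s(\Sigma^{\infty}_{\pone}X_+) \isomto \H_0^{s\aone}(X)$ of Theorem \ref{thm:ponestablehurewiczisomorphism}, together with its naturality: since the Hurewicz functor is induced by a monoidal functor, it is compatible with the pushforward map coming from the structure morphism $X \to \Spec k$, so lifts of $1$ on one side biject with lifts of $1$ on the other.

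For (iv) $\Leftrightarrow$ (ii), the direction (ii) $\Rightarrow$ (iv) is immediate: a splitting evaluated on sections over $k$ yields an abelian group splitting, and the image of $1 \in \H_0^{s\aone}(\Spec k)(k)$ is the required lift. Conversely, for (iv) $\Rightarrow$ (ii), the main tool is Proposition \ref{prop:universality}: an element $x \in \H_0^{s\aone}(X)(k) = H^0_{\Nis}(\Spec k, \H_0^{s\aone}(X)_0)$ corresponds canonically to a morphism of homotopy modules $s: \K^{MW}_* \to \H_0^{s\aone}(X)_*$, where we have used $\H_0^{s\aone}(\Spec k)_* = \K^{MW}_*$. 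Composing with the pushforward $p: \H_0^{s\aone}(X)_* \to \K^{MW}_*$ yields an endomorphism of $\K^{MW}_*$ as a homotopy module, which by Proposition \ref{prop:universality} applied to $\mathscr{X} = \Spec k$ is classified by its value at $1$; this value is the pushforward of $x$, which equals $1$ by hypothesis, so $p \circ s = \mathrm{id}$. This realizes $s$ as a splitting of $p$ in the category of homotopy modules, hence in particular as sheaves of abelian groups, giving (ii). The one step that could be regarded as the main obstacle, namely promoting the existence statement (iv) to the splitting statement (ii), is thus handled cleanly by the universal property.
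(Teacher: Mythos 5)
Your argument is correct and follows essentially the same outline as the paper's: the equivalence (i) $\Leftrightarrow$ (iii) via representability of $\bpi_0^s$ over $\Spec k$ together with the identification of $1\in GW(k)$ with $\mathrm{id}_{\So_k}$, the equivalence (iii) $\Leftrightarrow$ (iv) via naturality of the stable Hurewicz isomorphism (Theorem \ref{thm:ponestablehurewiczisomorphism}), and finally bringing (ii) into the cycle. The one difference is in how (ii) is attached: the paper states (i) $\Leftrightarrow$ (ii) as a direct consequence of the Hurewicz theorem (implicitly using for (ii) $\Rightarrow$ (i) the same representability over $\Spec k$ that underlies (i) $\Leftrightarrow$ (iii)), whereas you instead prove (iv) $\Leftrightarrow$ (ii) and in the harder direction invoke Proposition \ref{prop:universality} to promote an element of $\H_0^{s\aone}(X)(k)$ lifting $1$ to a genuine splitting of homotopy modules, hence of sheaves. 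This is a slightly more detailed treatment of the one step (upgrading existence of a lift to existence of a sheaf-level splitting) that the paper passes over tersely; both routes are sound.

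One implicit point worth making explicit when you invoke Definition \ref{defn:stableaonehomotopysheaves}: the sheaf $\bpi_0^s(\Sigma^\infty_{\pone}X_+)$ is by definition a Nisnevich sheafification, so its sections over $\Spec k$ agree with the presheaf sections $\hom_{\SH(k)}(\So_k,\Sigma^\infty_{\pone}X_+)$ because $\Spec k$ has no nontrivial Nisnevich covers; similarly for the field sections used in Proposition \ref{prop:universality}. With that understood, your use of Proposition \ref{prop:universality} is correct: the covariant functoriality of the bijection in the homotopy-module variable (implicit in the adjunction) shows the endomorphism $p\circ s$ of $\K^{MW}_*$ is classified by the pushforward of $x$, and since that pushforward is $1$, $p\circ s$ is the identity.
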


\begin{proof}
The equivalences (i) $\Leftrightarrow$ (ii) and (iii) $\Leftrightarrow$ (iv) are consequences of the stable Hurewicz theorem; one only needs to note that the Hurewicz isomorphism is functorial in the input space.  To see that (i) $\Leftrightarrow$ (iii), observe that elements $x \in \bpi_0^{s}(X)(k)$ are precisely morphisms of sheaves $\So_k \to \bpi_0^{s}(\Sigma^{\infty}_{\pone}X_+)$.  Under this identification, the element $1 \in \bpi_0^{s}(\So_k)$ corresponds to the identity morphism $\So_k \to \So_k$.
\end{proof}

\begin{thm}
\label{thm:stablerationalpointnonformallyrealcase}
Assume $k$ is an infinite perfect field (having characteristic unequal to $2$).  A smooth proper $k$-variety $X$ has a rational point up to stable $\aone$-homotopy if and only if $X$ has a $0$-cycle of degree $1$.
\end{thm}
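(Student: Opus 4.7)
The plan is to use Lemma~\ref{lem:rationalpointuptostablehomotopyequivalentconditions} together with Theorem~\ref{thm:main} to reduce the assertion to a concrete question about the pushforward $\tdeg\colon \CH_0(X) \to GW(k)$: namely, $X$ has a rational point up to stable $\aone$-homotopy if and only if $\langle 1 \rangle \in GW(k)$ lies in the image of $\tdeg$. The forward direction is immediate from the commutative diagram of Theorem~\ref{thm:compatibilities}(C): if $\tdeg(\tilde{\alpha}) = \langle 1 \rangle$, then the image of $\tilde{\alpha}$ in $CH_0(X)$ has degree $\operatorname{rk}\langle 1 \rangle = 1$, so $X$ has a $0$-cycle of degree $1$.

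For the converse, given $\alpha \in CH_0(X)$ of degree $1$, I first invoke Proposition~\ref{prop:epimorphicity} together with Lemma~\ref{lem:morphismsofstrictlyaoneinvariantsheaves} to lift $\alpha$ to some $\tilde{\alpha}_0 \in \CH_0(X)$; the commutative diagram of Theorem~\ref{thm:compatibilities}(C) then implies that $\tdeg(\tilde{\alpha}_0) \in GW(k)$ has rank $1$. Any rank-one element of $GW(k)$ has the form $\langle a \rangle + m h$ for some $a \in k^{\times}$ and $m \in \Z$, as one sees from the exact sequence $0 \to \Z \cdot h \to GW(k) \to W(k) \to 0$ combined with diagonalization of rank-one Witt classes. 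Exploiting the $\K^{MW}_0$-module structure on $\bpi_0^s(\Sigma^{\infty}_{\pone} X_+)$ from Proposition~\ref{prop:modulestructureonstableaonehomotopysheaves} and the identities $\langle a \rangle^2 = \langle 1 \rangle$ and $\langle a \rangle \cdot h = h$, I replace $\tilde{\alpha}_0$ by $\langle a \rangle \cdot \tilde{\alpha}_0$ to obtain $\tilde{\alpha}_1 \in \CH_0(X)$ with $\tdeg(\tilde{\alpha}_1) = \langle 1 \rangle + m h$.

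The hard part will be removing the residual hyperbolic contribution $m h$. My plan is to use the Gersten-resolution presentation of $\CH_0(X)$ as a quotient of $\bigoplus_{x \in X^{(d)}} GW(\kappa_x, \omega_{\kappa_x/k})$ supplied by Theorem~\ref{thm:twistedmilnorwittresolution}, combined with the identification from the proof of Theorem~\ref{thm:compatibilities}(B) of the restriction of $\tdeg$ to each local summand with the Scharlau transfer $tr_{\kappa_x/k}$ for the field trace. By Corollary~\ref{cor:projectionformula} the image $J := \tdeg(\CH_0(X))$ is a $GW(k)$-ideal generated by Scharlau transfers from residue fields of closed points of $X$. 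Using the closed points of coprime residue-field degrees provided by the $0$-cycle of degree $1$ hypothesis---and exploiting that hyperbolic classes arise as transfers $tr_{L/k}(\langle b \rangle)$ of the form $\langle c, -c \rangle = h$ when appropriate quadratic subextensions are available, combined with the $GW(k)$-module structure on $J$---I aim to show $m h \in J$. Subtracting from $\tilde{\alpha}_1$ a preimage of $m h$ then yields $\tilde{\alpha} \in \CH_0(X)$ with $\tdeg(\tilde{\alpha}) = \langle 1 \rangle$, and Lemma~\ref{lem:rationalpointuptostablehomotopyequivalentconditions} concludes that $X$ has a rational point up to stable $\aone$-homotopy.
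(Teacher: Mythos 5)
Your forward direction is correct and matches the paper. Your converse, however, has a genuine gap at its pivotal step. You assert that any rank-one element of $GW(k)$ has the form $\langle a\rangle + mh$ for some $a \in k^\times$ and $m \in \Z$. This is false in general: a rank-one element of $GW(k)$ corresponds to a pair $(1, w) \in \Z \times_{\Z/2} W(k)$ where $w$ merely has odd dimension index, but $w$ need not be represented by a one-dimensional form. For instance, over $\Q$ the class $\langle 1,1,1\rangle - h$ has rank $1$ in $GW(\Q)$, but the Witt class $\langle 1,1,1 \rangle$ has anisotropic dimension $3$ (it is positive definite over $\real$) and cannot equal $\langle a \rangle$ for any $a$. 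Consequently, the element $\langle a \rangle$ you need to multiply by to reduce $\tdeg(\tilde\alpha_0)$ to $\langle 1 \rangle + mh$ may not exist, and the rest of the argument cannot proceed.

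A secondary concern is that you leave the ``hard part'' (showing $mh$ lies in the image of $\tdeg$ and can be subtracted off) as an outline rather than a proof; but the error above already sinks the strategy. The paper's actual proof avoids any structural claim about rank-one elements of $GW(k)$ by splitting into two cases. When $k$ is not formally real, the Witt ring $W(k)$ is local with maximal ideal the kernel of the mod-$2$ rank map, so the rank-$1$ lift $\bar{x}$ is automatically a \emph{unit} in $GW(k)$, and one scales by $\bar{x}^{-1}$ using the $GW(k)$-linearity of $\tdeg$. When $k$ is formally real that argument fails, and instead one exhibits closed points with residue fields of coprime degrees (one of odd degree), invokes the identification of the local components of $\tdeg$ with Scharlau transfers, and uses the split surjectivity of the Scharlau transfer in odd degree together with the comparison of arbitrary transfers with the trace transfer up to a unit, to conclude $\langle 1\rangle$ is in the image directly. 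If you wish to repair your approach, you would need either the case split as in the paper, or a correct characterization of how the image of $\tdeg$ sits inside $GW(k)$ that does not rely on the false normal form for rank-one elements.
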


\begin{proof}
Consider the diagram
\[
\xymatrix{
\H_0^{s\aone}(X) \ar[r]\ar[d] & \H_0^{S}(X) \ar[d] \\
\H_0^{s\aone}(\Spec k) \ar[r] & \H_0^{S}(\Spec k).
}
\]
By Proposition \ref{prop:epimorphicity} both horizontal arrows are epimorphisms.  By Lemma \ref{lem:morphismsofstrictlyaoneinvariantsheaves} it follows that upon taking sections over $k$ the induced horizontal maps are still epimorphisms.  Moreover, by Theorems \ref{thm:main} and \ref{thm:compatibilities}, the left vertical map coincides with the pushforward map $\tdeg: \CH_0(X) \longrightarrow GW(k)$.

If $X$ has a rational point up to stable homotopy, by the equivalent conditions of Lemma \ref{lem:rationalpointuptostablehomotopyequivalentconditions} we know that there exists an element $x \in \H_0^{s\aone}(X)(k)$ lifting $1 \in K^{MW}_0(k)$.  Since $1 \in \H_0^{s\aone}(\Spec k)(k)$ is sent to $1$ in $\H_0^{S}(X)(k)$, it follows immediately that the image of $x$ in $\H_0^S(X)(k)$ is mapped to a $0$-cycle of degree $1$.

Conversely, suppose $X$ has a $0$-cycle of degree $1$.  By definition, the map $\H_0^S(X) \to \Z$ is a split epimorphism.  Let $x \in \H_0^S(X)(k)$ be the corresponding lift of $1$.  Since the map $\H_0^{s\aone}(X)(k) \to \H_0^{S}(X)(k)$ is surjective, it follows that there exists $\tilde{x}$ in $\H_0^{s\aone}(X)(k)$ lifting this element. By assumption, this element is not in the kernel of the induced homomorphism $\H_0^{s\aone}(X)(k) \to K^{MW}_0(k)$.  Thus, let  $\bar{x}$ be the image of $\tilde{x}$ in $K_0^{MW}(k)$.

We know that $K^{MW}_0(k) \isomt GW(k)$ \cite[Lemma 2.10]{MField}, and that $GW(k)$ can be realized as the fiber product $\Z \times_{\Z/2} W(k)$, where the homomorphism $\Z \to \Z/2$ is just reduction mod $2$, while the map $W(k) \to \Z/2$ is the mod $2$ rank homomorphism.  The kernel of $W(k) \to \Z/2$ is precisely the fundamental ideal $I(k)$.  By assumption $\bar{x}$ can be written $(1,\bar{x}')$ where $\bar{x}'$ is an element of $W(k)$ whose image in $\Z/2$ is $1$.

{\em Case 1.}  Suppose $k$ is not formally real, i.e., $-1$ is a sum of squares in $k$.  Under this hypothesis on $k$, the Witt ring $W(k)$ is local with unique maximal ideal equal to the kernel of the mod $2$ rank homomorphism \cite[Proposition 31.4(2)]{EKM}.  In particular, $\bar{x}'$ is invertible.  Since the oriented degree map is $GW(k)$-linear, after multiplication by $\bar{x}'^{-1}$, we can assume the image of the lift $\tilde{x}$ of $x$ is $1$.

{\em Case 2.}  Suppose $k$ is formally real.  Consider the diagram
\[
\xymatrix{
H^n_{\Nis}(X,\K^{MW}_n(\omega_X)) \ar[d]^{\tdeg}\ar[r]& H^n_{\Nis}(X,\K^M_n) \ar[d]^{\deg}\\
H^0_{\Nis}(\Spec k,\K^{MW}_0) \ar[r] & H^0_{\Nis}(X,\K^M_0).
}
\]
The Gersten resolution for $\K^{MW}_n(\omega_X)$ on $X$ gives rise to a surjection
\[
\bigoplus_{x \in X^{(n)}} GW(\kappa_x,\Lambda^n {\mathfrak m}_x/{\mathfrak m}_x^2) \longrightarrow H^n_{\Nis}(X,\K^{MW}_n(\omega_X)).
\]
The morphism $\tdeg$ can be lifted to a map
\[
\bigoplus_{x \in X^{(n)}} GW(\kappa_x,\Lambda^n {\mathfrak m}_x/{\mathfrak m}_x^2) \longrightarrow GW(k)
\]
defined as the sum of maps $GW(\kappa_x,\Lambda^n {\mathfrak m}_x/{\mathfrak m}_x^2) \to GW(k)$; see \cite[Corollaire 6.4.3]{Fasel1} for the construction of this homomorphism.  Fixing a trivialization of $\Lambda^n {\mathfrak m}_x/{\mathfrak m}_x^2$, the homomorphism just mentioned can be identified with the transfer homomorphism $GW(\kappa_x) \to GW(k)$ induced by viewing a symmetric bilinear form over $\kappa_x$ as a linear map over $k$ and composing with the field trace.

The map $H^n_{\Nis}(X,\K^{MW}_n(\omega_X)) \to H^n_{\Nis}(X,\K^M_n)$ is induced by the rank homomorphism
\[
\bigoplus_{x \in X^{(n)}} GW(\kappa_x,\Lambda^n {\mathfrak m}_x/{\mathfrak m}_x^2) \longrightarrow \bigoplus_{x \in X^{(n)}} \Z.
\]
If $x \in X^{(n)}$, then the local contribution at $x$ to the degree map is the homomorphism $\Z \to \Z$ given by multiplication by $[\kappa_x:k]$.  A $0$-cycle of degree $1$ on $X$ comes from an element of the form $\sum_{x \in X^{(n)}} n_x x$ where $n_x$ is zero for all but finitely many points in $X^{(n)}$ and the degrees of the extensions $[\kappa_x:k]$ are coprime.  Fix such a representative of our $0$-cycle of degree $1$ and let $I$ be the finite set of points $X^{(n)}$ that appear in our representation.  By definition, the restricted map $\oplus_{x \in I} \Z \to \Z$ is still surjective.  Thus, to prove our result, it suffices to prove that the corresponding map $\bigoplus_{x \in I} GW(\kappa_x) \to GW(k)$ is surjective.

Using the decomposition of $GW(k)$ as a fiber product of $\Z$ and $W(k)$, since the $\kappa_x$ have coprime degrees, we can assume that some $\kappa_x$ has odd degree over $k$.  Now, if $k$ is formally real, every finite extension $L$ of $k$ is simple.  If $L/k$ is a simple extension of odd degree, the Scharlau transfer $s_*: W(L) \to W(k)$ is a split surjection \cite[Lemme 6.4.4]{Fasel1}.
The transfer morphisms discussed are constructed in the following way: Given a $k$-linear homomorphism $\phi: L\to k$, one can define a transfer $t_\phi$ by viewing a symmetric bilinear $L$-form as a linear map $V\otimes V \to L$ and composing this with $\phi$. It is observed in the proof \cite[20.7]{EKM} that for any linear form $\phi$, there is an element $a\in L^\times$ such that $t_\phi(\mathfrak{b}) = s_*(a\mathfrak{b})$ for all symmetric bilinear forms $\mathfrak{b}$. That is, we may find and automorphism $m_a: W(L) \to W(L)$ such that $s_* \circ m_a = t_\phi$; this implies that all possible transfers (in particular the one associated to the field trace) are split surjective.
\end{proof}

\begin{footnotesize}
\bibliographystyle{alpha}
\bibliography{orientedzerocycles}
\end{footnotesize}
\end{document}